%% file: main.tex
\pdfoutput=1
\documentclass[toc]{mathprint} %
\usepackage{macros}

\title[Visibility problem]{Visibility problem in the plane}

\author
  {Tuomas Orponen}
  {Department of Mathematics and Statistics, University of Jyväskylä, P.O.\ Box 35 (MaD), FI-40014 University of Jyväskylä, Finland}
  {tuomas.t.orponen@jyu.fi}

\author
  {Alex Rutar}
  {Department of Mathematics and Statistics, University of Jyväskylä, P.O.\ Box 35 (MaD), FI-40014 University of Jyväskylä, Finland}
  {alex@rutar.org}

\begin{document}
\begin{abstract}
    We disprove the visibility conjecture in the plane and prove the sharp upper bound for the almost-sure dimension of visible parts.

    Precisely, let $K \subset \mathbb{R}^{2}$ be a compact set.
    For $\sigma \in S^{1}$, let $\mathrm{Vis}_{\sigma}(K) \subset K$ be the visible part of $K$ in direction $\sigma$.
    We prove that $\operatorname{dim}_{\mathrm{H}} \mathrm{Vis}_{\sigma}(K) \leq \tfrac{3}{2}$ for $\mathcal{H}^{1}$ almost every $\sigma \in S^{1}$.
    This is sharp: we construct a compact set $K \subset \R^{2}$ such that $\Hd \vis_{\sigma}(K)\geq \tfrac{3}{2}$ for all $\sigma \in S^1$.
\end{abstract}

\section{Introduction}
\subsection{Visible parts and their dimensions}
We start by introducing basic notation.
\begin{notation}\label{not1}
    For $z \in \R^{2}$ and $\sigma \in [-1,1]$, let $\ell^{+}_{z,\sigma} \coloneqq \{z + (r,\sigma r) : r \geq 0\}$ be the \emph{ray with slope $\sigma$ starting from $z$}.
    Similarly, let $\ell_{z,\sigma} \coloneqq \{z + (r,\sigma r) : r \in \R\}$ be the \emph{line with slope $\sigma$ passing through $z$}.
    For $\omega = (z,\sigma) \in \R^{2} \times [-1,1]$, we also write $\ell^{+}_{\omega} \coloneqq \ell^{+}_{z,\sigma}$ and $\ell_{\omega} = \ell_{z,\sigma}$.
\end{notation}

\begin{definition}[$\vis_{\sigma}(K)$]\label{def:visiblePart}
    Let $K \subset \R^{2}$ be a compact set, and let $\sigma \in [-1,1]$.
    The \emph{visible part of $K$ in direction $\sigma$} is the set
    \begin{equation*}
        \vis_{\sigma}(K) \coloneqq \{z \in K : K \cap \ell^{+}_{z,\sigma} = \{z\}\}.
    \end{equation*}
    The \emph{bi-visible part of $K$ in direction $\sigma$} is the set
    \begin{equation*}
        \vis_{\sigma}^{2}(K) \coloneqq \{z \in K : K \cap \ell_{z,\sigma} = \{z\}\}.
    \end{equation*}
\end{definition}
\begin{remark}
    Evidently $\vis^{2}_{\sigma}(K) \subset \vis_{\sigma}(K)$ for all $\sigma \in [0,1)$.
    This inclusion can be strict for all $\sigma \in [0,1)$.
    For example $\vis_{\sigma}(S^{1})$ is an arc for all $\sigma \in [0,1)$ whereas $\#\vis_{\sigma}^{2}(S^{1}) \equiv 2$.
\end{remark}
The \emph{visibility problem} in fractal geometry is the quest for finding upper bounds on the Hausdorff dimension of $\vis_{\sigma}(K)$, typically when $K$ is compact.
The first positive results were published by Järvenpää, Järvenpää, MacManus, and O'Neil \cite{zbl:1026.28002} in 2003.
They showed that if $K \subset \R^{2}$ is (i) a quasi-circle or (ii) a connected self-similar set without rotations, then $\Hd \vis_{\sigma}(K) \leq 1$ for all $\sigma \in [0,1)$.
They also showed that the visible parts of graphs $\Gamma \subset \R^{2}$ of continuous functions $[0,1] \to [0,1]$ are at most $1$-dimensional in all directions, except possibly one: the (bi-)visible part of $\Gamma$ in the vertical direction is evidently $\Hd \Gamma$-dimensional, and $\Hd \Gamma$ can take any value in $[1,2]$.

While no ``visibility conjecture'' was formulated in \cite{zbl:1026.28002}, it soon turned into common belief that if $K \subset \R^{2}$ is compact, then $\Hd \vis_{\sigma}(K) \leq 1$ for almost all $\sigma \in [0,1)$.
This has been explicitly suggested (at least) in \cite[Problem~11]{zbl:1049.28007}, \cite[Conjecture~1.3]{zbl:1267.28006}, \cite[Section 10]{zbl:1338.28007}, and \cite[Section~7]{arxiv:2602.22002}.
The hypothesis has been confirmed for certain special families of sets such as quasi-circles, fractal percolation, and some self-similar and self-affine sets (see \cite{zbl:1251.28007,zbl:1267.28006,zbl:1026.28002,zbmath:7541839,zbl:1479.28008}).
There is also a strong partial result for planar continua due to O'Neil \cite{zbl:1152.28318}.
In \cite{zbl:1119.28003}, it is shown that if $s \in (1,2]$, and $K \subset \R^{2}$ is a compact set with $\mathcal{H}^{s}(K) < \infty$, then $\mathcal{H}^{s}(\vis_{\sigma}(K)) = 0$ for a.e.\ $\sigma \in [0,1)$.

For general compact sets, the first partial result appeared in \cite{zbl:1561.28083}, and this first a.e.\ upper bound ($1.99$) was subsequently lowered by Matheus and D{\k a}browski \cite{zbmath:8092718,zbl:1484.28010}.

In this paper, we prove the sharp $\mathcal{H}^{1}$ almost sure upper bound for general compact sets. In fact, our method yields an $\mathcal{H}^{\alpha}$ almost sure upper bound for all $\alpha \in [0,1]$, but we do not know if that bound is sharp for $\alpha \in (0,1)$, see \cref{q1}.
\begin{itheorem}\label{main}
    Let $K \subset \R^{2}$ be compact and $\alpha\in[0,1]$.
    Then $\Hd\vis_\sigma^2(K) \leq\Hd\vis_{\sigma}(K) \leq 2 - \tfrac{\alpha}{2}$ for $\mathcal{H}^{\alpha}$-a.e.\ $\sigma \in [-1,1]$.
    On the other hand, there exists a compact set $K \subset \R^{2}$ such that $\Hd \vis_{\sigma}(K) \geq\Hd\vis_\sigma^2(K) \geq \tfrac{3}{2}$ for all $\sigma \in [-1,1]$.
\end{itheorem}
We will prove the upper bound in \cref{main} by reducing it to a $\delta$-discretised problem concerning lower bounds on ``ray incidences'', discussed in \cref{ss:lb-ray}.
The set $K$ witnessing the $\tfrac{3}{2}$-bound is constructed by an iterative process, whose main ingredient is the ``basic building block'' described in \cref{p:bbb}.
The proof can be found in \cref{s:lower-bound}.
To pass from angles in $[-1,1]$ to the full circle (as claimed in the abstract), see \cref{r:s1} for the short argument suggested to us by E.\ J\"arvenp\"a\"a.

\begin{remark}
    While the conclusions about $\vis_{\sigma}(K)$ are the main content of \cref{main}, proving the lower bound $\Hd \vis_{\sigma}^{2}(K) \geq \tfrac{3}{2}$ has no additional cost, and it sheds light on a problem proposed in \cite[Section 6.4]{MR3617376}: for Lebesgue typical $\sigma \in [-1,1]$, how much of a compact set $K \subset \R^{2}$ can be covered by ``$s$-light'' lines $\ell \subset \R^{2}$ with slope $\sigma$, satisfying $\Hd (K \cap \ell) \leq s$, where $s < \Hd K - 1$?
    \cref{main} shows that a $\tfrac{3}{2}$-dimensional set can be covered by such ``$s$-light'' lines, for every $\sigma \in [-1,1]$, and every $s > 0$.
\end{remark} 
While \cref{main} resolves the visibility problem as commonly stated in the literature, the problem remains open in the connected and Ahlfors regular cases:
\begin{question}\label{problem2}
    Assume that $K \subset \R^{2}$ is compact and either (a) connected, or (b) Ahlfors $s$-regular with $s \in [0,2]$.
    Is it true that $\Hd \vis_{\sigma}(K) \leq \min\{1,s\}$ for a.e.\ $\sigma \in [-1,1]$?
\end{question}
\begin{remark}
    The set constructed for the lower bound in \cref{main} is far from Ahlfors regular (let alone connected).
    There is partial progress towards \cref{problem2}: D{\k a}browski has proved that if $K \subset \R^{2}$ is compact and Ahlfors $s$-regular with $s \in [1,2]$, then 
    \begin{displaymath} \Hd \vis_{\sigma}(K) \leq s - a(s - 1) \end{displaymath}
    for a.e.\ $\sigma \in [-1,1]$, where $a > 0.183$ is absolute.
    In particular, $\mathcal{H}^{1}$ a.e.\ visible parts of Ahlfors $\tfrac{3}{2}$-regular sets are strictly less than $\tfrac{3}{2}$-dimensional.

    Remarkably, \cref{problem2} is open for general self-similar sets, even without rotations.
\end{remark}
We also explicitly record the question about the sharpness of Theorem~ref{main} for $\alpha \in (0,1)$:
\begin{question}\label{q1}
    Let $\alpha\in(0,1)$.
    Does there exists a compact set $K\subset\R^2$ along with a set of directions $\Sigma \subset[-1,1]$ satisfying $\mathcal{H}^\alpha(\Sigma) > 0$ such that $\Hd\vis_\sigma^2(K) \geq 2 - \tfrac{\alpha}{2}$  for all $\sigma\in\Sigma$?
\end{question}
The following ``endpoint'' problem is also still open:
\begin{question}
    Does there exist a compact set $K \subset \R^{2}$ such that $\mathcal{H}^{3/2}(\vis_{\sigma}(K)) > 0$ for all $\sigma \in [-1,1]$?
    By the results in \cite{zbl:1119.28003}, this is only conceivable if $\mathcal{H}^{3/2}(K) = \infty$.
\end{question}

\subsection{Higher dimensions?} Visible parts can readily be defined and studied in higher dimensions, and this is indeed done in many of the papers in previous literature, cited above. We thank T. Keleti for pointing out that if $K \subset \R^{2}$ is the set produced by \cref{main} with $\Hd \vis_{\sigma}^{2}(K) \geq \tfrac{3}{2}$ for all $\sigma \in [-1,1]$, then the product $K \times [0,1]^{d - 2}$ gives an example of a compact set in $\R^{d}$ with at least $(d - \tfrac{1}{2})$-dimensional bi-visible parts in an open set of directions in $\R^{d}$. We give the details below.

First, we extend \cref{not1} to all dimensions $d \geq 2$ by setting 
\begin{displaymath} \ell_{\mathbf{z},\bm{\sigma}} \coloneqq \{\mathbf{z} + (r,\bm{\sigma} r) : r \in \R\}, \qquad \mathbf{z} \in \R^{d}, \, \bm{\sigma} \in [-1,1]^{d - 1}. \end{displaymath}
For $\bm{\sigma} \in [-1,1]^{d - 1}$ and $K \subset \R^{d}$, we define the bi-visible part $\vis_{\bm{\sigma}}^{2}(K)$ exactly as in \cref{def:visiblePart}. Rays and visible parts are defined similarly. 

Now, let $K \subset \R^{2}$ be the compact set produced by \cref{main}, and write 
\begin{displaymath} K^{d} \coloneqq K \times [0,1]^{d - 2}, \qquad d \geq 2. \end{displaymath}
For $\bm{\sigma} = (\sigma,\sigma_{2},\ldots,\sigma_{d - 1}) \in [-1,1]^{d - 1}$, write $\pi(\bm{\sigma}) \coloneqq \sigma \in [-1,1]$. We claim that 
\begin{equation}\label{higherDimensions} \vis_{\pi(\bm{\sigma})}^{2}(K) \times [0,1]^{d - 2} \subset \vis_{\bm{\sigma}}^{2}(K^{d}), \qquad \bm{\sigma} \in [-1,1]^{d - 1}.  \end{equation} 
This will show that $\Hd \vis_{\bm{\sigma}}^{2}(K^{d}) \geq \tfrac{3}{2} + (d - 2) = d - \tfrac{1}{2}$ for all $\bm{\sigma} \in [-1,1]^{d - 1}$.

To prove \cref{higherDimensions}, fix $\bm{\sigma} = (\sigma,\sigma_{2},\dots,\sigma_{d - 1}) \in [-1,1]^{d - 1}$ and $\mathbf{z} = (z,z_{3},\ldots,z_{d}) \in \vis_{\sigma}^{2}(K) \times [0,1]^{d - 2}$, where $\sigma = \pi(\bm{\sigma})$ and $z \in \vis_{\sigma}^{2}(K)$.

To prove that $\mathbf{z} \in \vis_{\bm{\sigma}}^{2}(K^{d})$, we need to fix $\mathbf{w} \in K^{d} \cap \ell_{\mathbf{z},\bm{\sigma}}$, and show that $\mathbf{w} = \mathbf{z}$. To this end, first use the definition of $\mathbf{w} \in \ell_{\mathbf{z},\bm{\sigma}}$ to express
\begin{displaymath} \mathbf{w} = \mathbf{z} + (r,\bm{\sigma}r) = (z + (r,\sigma r),z_{3} + r\sigma_{2},\ldots,z_{d} + r\sigma_{d - 1}). \end{displaymath}
Next, recalling that $\mathbf{w} \in K^{d} = K \times [0,1]^{d - 2}$, we have $z + (r,\sigma r) \in K$. Since evidently $z + (r,\sigma r) \in \ell_{z,\sigma}$, it follows from $z \in \vis^{2}_{\sigma}(K)$ that $r = 0$. Thus $\mathbf{w} = \mathbf{z}$, as claimed.

The method of proving the $\tfrac{3}{2}$-upper bound in \cref{main} does not immediately extend to higher dimensions. The following question seems natural:
\begin{question} Let $K \subset \R^{d}$ be compact. Is it true that $\Hd \vis_{\bm{\sigma}}(K) \leq d - \tfrac{1}{2}$ for $\mathcal{H}^{d - 1}$ almost every $\bm{\sigma} \in [-1,1]^{d - 1}$? \end{question}

\subsection{Lower bounds on ray incidences}\label{ss:lb-ray}
In this section we discuss the technical tools behind the upper bound $\Hd \vis_{\sigma}(K) \leq 2 - \tfrac{\alpha}{2}$ in \cref{main}.
In \cite{zbmath:8038252}, Cohen, Pohoata and Zakharov proved a beautiful result guaranteeing many $\delta$-discretised point-line incidences.
The $\delta$-discretised result behind \cref{main} is a variant of Cohen, Pohoata and Zakharov's theorem.
We now proceed to state both the original result, and the variant we will prove in this paper.
\begin{definition}[Phase space and Frostman $(\delta,\alpha,\beta,C)$-sets]\label{def:PhaseSpace}
    Let $\Omega \coloneqq [-1,1]^{3}$ be the \emph{phase space}.
    For $u,v,w \in (0,1]$ and $(a,b,\sigma) \in \Omega$ we define the \emph{phase space rectangle}
    \begin{equation*}
        \mathbf{R}_{u \times v \times w}(a,b,\sigma) \coloneqq \{(a + r_{1},b + \sigma r_{1} + r_{2},\sigma + r_{3}) : (r_{1},r_{2},r_{3}) \in [-u,u] \times [-v,v] \times [-w,w]\}.
    \end{equation*}
    Let $\alpha,\beta \geq 0$, $\delta \in (0,1]$, and $C > 0$.
    A finite set $\mathbf{X} \subset \Omega$ is called a \emph{Frostman $(\delta,\alpha,\beta,C)$-set} if $\mathbf{X}$ is non-empty, and
    \begin{equation}\label{frostmanCondition}
        |\mathbf{X} \cap \mathbf{R}_{u \times uw \times w}(a,b,\sigma)| \leq Cu^{\alpha}w^{\beta}|\mathbf{X}|, \qquad (a,b,\sigma) \in \R^{3}, \, u,w \in (0,1], \, uw \geq \delta.
    \end{equation}
\end{definition}

\begin{notation}
    For $\omega = (a,b,c) \in \Omega$, we write $z_{\omega} \coloneqq (a,b)$.
    We also remind the reader of the lines $\ell_{\omega}$ and rays $\ell_\omega^+$ defined in \cref{not1}.
    We will also write $P[\mathbf{X}] \coloneqq \{z_{\omega} : \omega \in \mathbf{X}\}$ for the ``base point projection'' of $\mathbf{X}$.
\end{notation}

\begin{definition}[Line and ray incidences]\label{def:incidences}
    Let $\mathbf{X} \subset \Omega$ and $\delta \in (0,1]$.
    For $\mathbf{A},\mathbf{B} \subset \mathbf{X}$, we define the \emph{line incidences}
    \begin{equation*}
        \mathcal{I}_{\delta}(\mathbf{A},\mathbf{B}) \coloneqq |\{(\omega_{1},\omega_{2}) \in \mathbf{A} \times \mathbf{B} : \dist(z_{\omega_{2}},\ell_{\omega_{1}}) \leq \delta\}|.
    \end{equation*}
    We also define the \emph{ray incidences} by replacing $\ell_{\omega_{1}}$ by $\ell_{\omega_{1}}^{+}$:
    \begin{equation*}
        \mathcal{I}_{\delta}^{+}(\mathbf{A},\mathbf{B}) \coloneqq |\{(\omega_{1},\omega_{2}) \in \mathbf{A} \times \mathbf{B} : \dist(z_{\omega_{2}},\ell_{\omega_{1}}^{+}) \leq \delta\}|.
    \end{equation*}
    We abbreviate $\mathcal{I}_{\delta}(\mathbf{A},\mathbf{A}) \eqqcolon \mathcal{I}_{\delta}(\mathbf{A})$  and $\mathcal{I}_{\delta}^{+}(\mathbf{A},\mathbf{A}) \eqqcolon \mathcal{I}_{\delta}^{+}(\mathbf{A})$.
\end{definition}
Here is Cohen, Pohoata and Zakharov's theorem \cite[Theorem 1.9]{zbmath:8038252} in its original form:
\begin{theorem}[\cite{zbmath:8038252}]\label{thm:CPZ}
    Let $\alpha,\beta \in (1,2]$ with $\alpha + \beta > 3$, and let $\varepsilon > 0$.
    Then, there exist $\eta = \eta(\alpha,\beta,\varepsilon) > 0$ and $\delta_{0} = \delta_{0}(\alpha,\beta,\varepsilon) > 0$ such that the following holds for all $\delta \in 2^{-\N} \cap (0,\delta_{0}]$:

    Let $\mathbf{X} \subset \Omega$ be a Frostman $(\delta,\alpha,\beta,\delta^{-\eta})$-set.
    Then,
    \begin{equation}\label{form35}
        \mathcal{I}_{\delta}(\mathbf{X}) \geq \delta^{1 + \varepsilon}|\mathbf{X}|^{2}.
    \end{equation}
\end{theorem}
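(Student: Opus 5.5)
This statement is \cite[Theorem 1.9]{zbmath:8038252}, and in the paper it is quoted rather than reproved. If I had to prove it, the plan would be a multiscale induction. The key observation is structural: if $\omega_{1},\omega_{2}$ both lie in a phase space rectangle $\mathbf{R}_{u\times \delta\times w}(a,b,\sigma)$ with $uw\lesssim\delta$, then $z_{\omega_{2}}-z_{\omega_{1}}=(r_{1},\sigma_{1}r_{1}+O(\delta))$ with $|r_{1}|\le 2u$. Hence $\dist(z_{\omega_{2}},\ell_{\omega_{1}})\lesssim\delta$, so every pair inside such a rectangle is a (slightly enlarged) incidence. By Cauchy--Schwarz, $\mathcal{I}_{C\delta}(\mathbf{X})\gtrsim|\mathbf{X}|^{2}/N$, where $N$ is the number of such rectangles meeting $\mathbf{X}$. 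This is enough whenever $\mathbf{X}$ is concentrated. (The constant $C$ is absorbed at the end by replacing $\varepsilon$ with $\varepsilon/2$ and $\delta$ with $\delta/C$.) The real content is the case where $\mathbf{X}$ is spread out, and there the Frostman hypothesis with $\alpha+\beta>3$ must be used.

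\textbf{Step 1 (Fourier / high--low).} Write
\[
\mathcal{I}_{\delta}(\mathbf{X})=\sum_{\omega_{1}\in\mathbf{X}}|\{\omega_{2}\in\mathbf{X}: |\pi_{\sigma_{1}}(z_{\omega_{2}})-\pi_{\sigma_{1}}(z_{\omega_{1}})|\lesssim\delta\}|,
\qquad \pi_{\sigma}(x,y)=y-\sigma x .
\]
Then $\mathcal{I}_{\delta}(\mathbf{X})$ is a bilinear pairing between the base-point measure $\mu$ on $P[\mathbf{X}]$ and the measure $\nu$ on $\mathbf{X}$, tested against $\delta$-tubes. I would decompose the tube kernel into a low-frequency part, at scale $\Delta=\delta^{1-\kappa}$, and a high-frequency part.
\begin{itemize}
\item The low part equals $(\delta/\Delta)$ times the incidence count at scale $\Delta$, up to harmless constants.
\item The high part is bounded, via Plancherel, by an $L^{2}$ ``Furstenberg-type'' energy of $\mathbf{X}$ at frequencies $\gtrsim\Delta^{-1}$.
\end{itemize}
The Frostman condition \eqref{frostmanCondition}, integrated over the family $\mathbf{R}_{u\times uw\times w}$, bounds this energy by $\delta^{\kappa(\alpha+\beta-3)-O(\eta)}\,\delta|\mathbf{X}|^{2}$. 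The quantity $\alpha+\beta>3$ is exactly the phase-space dimension threshold making this exponent positive.

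\textbf{Step 2 (induction on scales).} The affine map sending $\mathbf{R}_{u\times uw\times w}$ to $\Omega$ maps lines through base points to lines through base points, and preserves incidences up to rescaling $\delta\mapsto\delta/(uw)$. Restricted to a rectangle, $\mathbf{X}$ is not literally Frostman, but after a standard pigeonholing/uniformisation (a $\delta^{-\eta}$ loss) it becomes a Frostman $(\delta/\Delta,\alpha,\beta,\delta^{-O(\eta)})$-set. Two incidence counts then combine:
\begin{itemize}
\item the scale-$\Delta$ count of the coarse set of rectangles, which is itself Frostman;
\item the rescaled counts inside the rectangles.
\end{itemize}
The aim is to show that if $\mathcal{I}_{\delta}(\mathbf{X})\ge\delta^{1+\varepsilon'}|\mathbf{X}|^{2}$ holds at coarser scales, then Step 1 (when the high part is small) or the concentration bound above (when $\mathbf{X}$ is concentrated in few rectangles) upgrades it to scale $\delta$ with a loss of only $\delta^{\kappa\cdot o(1)}$. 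Iterating over $O(1/\kappa)$ scales then gives \eqref{form35}, after choosing $\eta\ll\kappa\ll\varepsilon$.

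\textbf{Main obstacle.} I expect the high-frequency estimate in Step 1 to be the crux. One needs an $L^{2}$ bound on $\sum_{\omega}\mathbf{1}_{T_{\delta}(\ell_{\omega})}$ at high frequencies in which the trivial incidences (pairs with $\omega_{1}\approx\omega_{2}$) are controlled simultaneously in both the spatial and the angular variables. This requires the two-parameter Frostman condition, not just separate $\alpha$- and $\beta$-dimensional bounds. I would first try to prove it by dyadically decomposing pairs $(\omega_{1},\omega_{2})$ according to $|z_{\omega_{1}}-z_{\omega_{2}}|\sim u$ and $|\sigma_{1}-\sigma_{2}|\sim w$, and bounding each piece by the number of points in the corresponding rectangle $\mathbf{R}_{u\times uw\times w}$.
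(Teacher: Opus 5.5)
There is a genuine gap, and it sits where the whole difficulty of the theorem lies. The paper only quotes this result, but it reproduces the Cohen--Pohoata--Zakharov argument for rays in \cref{s:non-endp}, and that is the right comparison.

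Your Step~1 is the high--low inequality \cref{form54} of \cite{zbmath:8038252}. That inequality only \emph{transfers} a lower bound from a coarse scale to a fine one. Summing it over dyadic $w\in[\delta,\Delta]$, with $M_{w\times w}(\mathbf{X})\lesssim\delta^{-\eta}w^{\alpha}|\mathbf{X}|$ and $M_{1\times w}(\mathbf{X})\lesssim\delta^{-\eta}w^{\beta}|\mathbf{X}|$, gives, up to constants,
\[
\frac{\mathcal{I}_{\delta}(\mathbf{X})}{\delta|\mathbf{X}|^{2}}\gtrsim\frac{\mathcal{I}_{\Delta}(\mathbf{X})}{\Delta|\mathbf{X}|^{2}}-O\big(\delta^{-\eta}\Delta^{(\alpha+\beta-3)/2}\big).
\]
The error is governed by the coarse scale $\Delta$, not by $\delta/\Delta$. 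With your exponent $\kappa(\alpha+\beta-3)$ and $\kappa\ll\varepsilon$, your stated error would even exceed the target $\delta^{1+\varepsilon}|\mathbf{X}|^{2}$. This step uses only the one-parameter bounds on $M_{w\times w}$ and $M_{1\times w}$, so it is the routine part, not the crux you take it to be.

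The crux is to produce, at some scale $\Delta$, a lower bound $\mathcal{I}_{\Delta}(\mathbf{X})\gg\delta^{-\eta}\Delta^{1+(\alpha+\beta-3)/2}|\mathbf{X}|^{2}$ from scratch. Nothing in your proposal does this. Your Cauchy--Schwarz concentration bound gives only about $\Delta^{2}|\mathbf{X}|^{2}$ for spread-out sets. Even the sharper initial estimate (\cref{lemma:initialEstimate}) does not suffice. Feed it the worst-case bounds the Frostman hypothesis provides: $|\mathbf{X}|_{\Delta\times\Delta\times 1},|\mathbf{X}|_{1\times\Delta\times\Delta}\lesssim\Delta^{-2}$ and $|\mathbf{X}|_{\Delta\times\Delta\times\Delta}\gtrsim\delta^{\eta}\Delta^{1-\alpha-\beta}$. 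It then yields a normalised lower bound of roughly $\Delta^{4-\alpha-\beta}$, which beats the error $\Delta^{(\alpha+\beta-3)/2}$ only when $\alpha+\beta>\tfrac{11}{3}$.

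Step~2 does not fill this hole, for two reasons.
\begin{enumerate}
\item The pieces $\psi_{\mathbf{R}}(\mathbf{X}\cap\mathbf{R})$ need not be Frostman $(\alpha,\beta)$-sets after uniformisation. The hypothesis bounds $|\mathbf{X}\cap\mathbf{R}'|$ only relative to $|\mathbf{X}|$. Transferring it to a piece needs $|\mathbf{X}\cap\mathbf{R}|\gtrsim\delta^{O(\eta)}u^{\alpha}w^{\beta}|\mathbf{X}|$, which fails whenever $\mathbf{X}$ branches faster than the $(\alpha,\beta)$-rate at coarse scales. In the language of \cref{lemma2}, the bound $f_{\mathbf{X}}(x,y,x+y)\ge\alpha x+\beta y$ does not pass to $f_{\mathbf{X}}(\cdot;x_{0},y_{0})$, and the pieces can have exponents summing to less than $3$.
\item Only incidences inside a single rectangle survive rescaling. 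Since $\sum_{\mathbf{R}}|\mathbf{X}\cap\mathbf{R}|^{2}\lesssim\delta^{-\eta}u^{\alpha}w^{\beta}|\mathbf{X}|^{2}$, the rescaled bounds $(\delta/uw)^{1+\varepsilon}|\mathbf{X}\cap\mathbf{R}|^{2}$ add up to at most about $\delta^{1+\varepsilon-\eta}u^{\alpha-1-\varepsilon}w^{\beta-1-\varepsilon}|\mathbf{X}|^{2}$. That loses a genuine power of $\delta$ unless $u,w=\delta^{o(1)}$, and you give no way to recover it from cross-rectangle incidences.
\end{enumerate}

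The missing idea is the Cohen--Pohoata--Zakharov selection mechanism. One uniformises $\mathbf{X}$ (\cref{lemma1}) and encodes it by its branching function. One then uses the deep combinatorial \cref{thm3}, which is where $\alpha+\beta>3$ is really used, to find an effective triple $(t;x,y)$. Such a triple has $\max\{t,x,y\}\le c_{2}$ and $b_{\mathbf{X}}(t;x,y)+e_{\mathbf{X}}(s;x,y)\ge c_{1}$ for all $s\ge t$. Inside one coarse rectangle $\mathbf{R}_{\delta^{x}\times\delta^{x+y}\times\delta^{y}}$, the initial estimate at scale $\delta^{t}$ then dominates every subsequent high--low error (\cref{prop1}). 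Because $x,y\le c_{2}$, restricting to that one rectangle costs only $\delta^{O(c_{2})}$.
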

Roughly speaking, \cref{thm:CPZ} says that a $(\delta,\alpha,\beta)$-Frostman set $\mathbf{X} \subset \Omega$ spans as many $\delta$-discretised point-line incidences as it would if the points and lines were placed ``at random''.
Starting from a counter assumption to our main result, \cref{main}, with $s > 2 - \tfrac{\alpha}{2}$, it is relatively straightforward to construct an appropriate Frostman $(\delta,s,s + \alpha - 1,C)$-set $\mathbf{X} \subset \Omega$ to which one is tempted to apply \cref{thm:CPZ}.
The construction of $\mathbf{X}$ is accomplished in \cref{s3}.
However, a direct application of \cref{thm:CPZ} fails to prove \cref{main} for two distinct reasons:
\begin{enumerate}[nl]
    \item We need a lower bound for $\mathcal{I}_{\delta}^{+}(\mathbf{X})$ in place of $\mathcal{I}_{\delta}(\mathbf{X})$.
    \item The $\delta^{\varepsilon}$-factor in \cref{form35} is unacceptable for us.
        Roughly speaking, this lower bound could be used to prove the existence of two $\delta^{\varepsilon}$-separated incidences.
        For \cref{main}, we need to upgrade $\delta^{\varepsilon}$ to a constant independent of $\delta$.
        However, this conclusion may fail for Frostman $(\delta,\alpha,\beta,\delta^{-\eta})$-sets, as in \cref{thm:CPZ}, where the Frostman constant depends on $\delta$.

\end{enumerate}
In the application arising from \cref{main}, fortunately, the Frostman constant of $\mathbf{X}$ is independent of $\delta$.
So, we need to establish a variant of \cref{thm:CPZ} where both the hypothesis and conclusion are stronger.
This is \cref{thm:ray-inc-endp} below.
\begin{itheorem}\label{thm:ray-inc-endp}
    Let $\alpha,\beta \in (1,2]$ with $\alpha + \beta > 3$.
    Then, there exists $\kappa = \kappa(\alpha, \beta) \geq 1$ such that the following holds for all $C \geq 1$ and all $\delta \in 2^{-\N} \cap (0, \kappa^{-1}C^{-\kappa}]$:

    Let $\mathbf{X} \subset \Omega$ be a Frostman $(\delta,\alpha,\beta,C)$-set.
    Then there exist $\mathbf{A},\mathbf{B} \subset \mathbf{X}$ such that
    \begin{equation*}
        \dist(P[\mathbf{A}],P[\mathbf{B}]) \gtrsim_{\alpha,\beta} C^{-\kappa} \quad \text{and} \quad \mathcal{I}_{\delta}^+(\mathbf{A},\mathbf{B}) \gtrsim_{\alpha,\beta} C^{-\kappa} \cdot \delta |\mathbf{X}|^{2}.
    \end{equation*}
    In particular, there exists a pair $(\omega_{1},\omega_{2}) \in \mathbf{A} \times \mathbf{B}$ such that
    \begin{equation}\label{form38}
        |z_{\omega_{1}} - z_{\omega_{2}}| \gtrsim_{\alpha,\beta} C^{-\kappa} \quad \text{and} \quad \dist(z_{\omega_{2}},\ell_{\omega_{1}}^+) \leq \delta.
    \end{equation}
\end{itheorem}

\begin{remark}
    In particular, \cref{thm:ray-inc-endp} implies that $\mathcal{I}_{\delta}^+(\mathbf{X}) \gtrsim_{\alpha,\beta,C} \delta|\mathbf{X}|^{2}$.
    It is conceivable (but non-trivial) that this alone guarantees $c$-separated incidences (with $c \sim_{\alpha,\beta,C} 1$).
    In any case, proving the stated ``bi-partite'' version of \cref{thm:ray-inc-endp} poses no additional difficulties, and gives the $\rho$-separation of the incidences for free.
\end{remark}

Perhaps surprisingly, we are able to formally deduce \cref{thm:ray-inc-endp} from the following incidence lower bound for rays which still has the $\delta^{\varepsilon}$-factor in the conclusion.
The statement is the same as \cite[Theorem 1.9]{zbmath:8038252}, except with rays in place of lines.
\begin{itheorem}\label{thm:CPZ2}
    Let $\alpha,\beta \in (1,2]$ with $\alpha + \beta > 3$, and let $\varepsilon > 0$.
    Then, there exist $\eta = \eta(\alpha,\beta,\varepsilon) > 0$ and $\delta_{0} = \delta_{0}(\alpha,\beta,\varepsilon) > 0$ such that the following holds for all $\delta \in 2^{-\N} \cap (0,\delta_{0}]$:

    Let $\mathbf{X} \subset \Omega$ be a Frostman $(\delta,\alpha,\beta,\delta^{-\eta})$-set.
    Then,
    \begin{equation*}
        \mathcal{I}^{+}_{\delta}(\mathbf{X}) \geq \delta^{1 + \varepsilon}|\mathbf{X}|^2.
    \end{equation*}
\end{itheorem}

\subsection{Outline of the paper}
In the proof of \cref{main}, we begin with the upper bound and then proceed with the construction for the lower bound.

As mentioned earlier, the main upper bound on visible parts (in \cref{main}) is deduced from \cref{thm:ray-inc-endp}.
The first step is to find an appropriate $\delta$-discretised version of \cref{main}.
This is accomplished in \cref{s1}.
The proof of the $\delta$-discretised counterpart is then completed in \cref{s3} by identifying an appropriate Frostman set $\mathbf{X} \subset \Omega$, and applying \cref{thm:ray-inc-endp}.

Most of the proof of the upper bound is occupied by the proof of \cref{thm:ray-inc-endp}.
There are two distinct steps.
The first one is to obtain the (strictly stronger) version of \cref{thm:CPZ} where $\mathcal{I}_{\delta}(\mathbf{X})$ is simply replaced by $\mathcal{I}_{\delta}^{+}(\mathbf{X})$, as stated in \cref{thm:CPZ2}.
For those familiar with Cohen, Pohoata, and Zakharov's proof of \cref{thm:CPZ}, we mention that the least trivial step in the proof of our extension is to prove a version of the ``high-low lemma'' for rays in place of lines; see \cref{thm:highLowRays}.
In the corresponding proof of the high-low lemma for lines (\cite[Theorem 1.7]{zbmath:8038252}), a certain orthogonality property for pairs of lines is used, which is no longer true for (all) pairs of rays.
The solution is to examine the pairs of rays where the orthogonality relation may fail, and prove that there are not too many of them.

\cref{thm:ray-inc-endp} is formally deduced from \cref{thm:CPZ2}.
The idea is to first apply \cref{thm:CPZ2} at a suitable intermediate scale $\Delta = \Delta(\alpha,\beta,C) > 0$.
Then, one applies the high-low inequality (for rays) to show that many $\Delta$-separated $\Delta$-discretised point-ray incidences also guarantee many $\Delta$-separated $\delta$-discretised point-ray incidences with $\delta \ll \Delta$.

To summarise, the logical structure of the proof of the upper bound is as follows:
\begin{equation*}
    \text{\cref{thm:CPZ2}}\!\quad\!\Longrightarrow\!\quad\!\text{\cref{thm:ray-inc-endp}}\!\quad\!\Longrightarrow\!\quad\!\text{\cref{p:vis-disc}}\!\quad\!\Longrightarrow\!\quad\!\text{\cref{main}}.
\end{equation*}
The sections follow the implications in reverse order.
We prove \cref{thm:CPZ2} in \cref{s:non-endp}; the first implication is established in \cref{s:endp}; and the final two implications are established in \cref{s1}.

The proof of the lower bound construction in \cref{main} can be found in \cref{s:lower}.
\cref{s2} contains $\delta$-discretised statements (the main result being \cref{thm1}) required for the construction, and the actual construction takes place in \cref{s:lower-bound}.
\cref{appA} contains the construction of an auxiliary ``basic building block''.
The details of this object are explained in \cref{s2}.

\subsection*{Notation}
We use the convention that $\N = \{0,1,2,\ldots,\}$.
For $r \in 2^{-\N}$, and $K \subset \R^{d}$ (including $K \subset \Omega = [-1,1]^{3}$), the notation $\mathcal{D}_{r}(K)$ stands for the dyadic cubes of side-length $r$ which intersect $K$.
We also write $\mathcal{D} \coloneqq \bigcup_{r\in 2^{-\N}} \mathcal{D}_{r}(\R^{2})$ (note that we only include in $\mathcal{D}$ dyadic cubes of side-length $\leq 1$).
The side-length of $Q \in \mathcal{D}$ is denoted $\ell(Q)$.
We abbreviate $\mathcal{D}_{r} \coloneqq \mathcal{D}_{r}([0,1)^{2})$.

For $K \subset \R^{d}$ and $r \in 2^{-\N}$, we use the notation
\begin{displaymath} |K|_{r} \coloneqq |\mathcal{D}_{r}(K)|. \end{displaymath}
The notation $|K|$ stands for the cardinality of $K$ (or $+\infty$ if $K$ is infinite).

For $K \subset \R^{d}$, and $r > 0$, the notation $[K]_{r}$ stands for the open $r$-neighbourhood of $K$. For $z_{1},z_{2} \in \R^{d}$, the notation $|z_{1} - z_{2}|$ means the Euclidean distance of $z_{1},z_{2}$.

For $A,B \geq 0$, the notation $A \lesssim_{p} B$ means that $A \leq CB$, where $C > 0$ is a constant depending only on $p$.
If there is no ``$p$'' visible, then $C$ is absolute.
For $A,B,\delta > 0$, the notation $A \lessapprox_{\delta} B$ means the same as $A \leq C(\log \tfrac{1}{\delta})^{C}B$, where $C > 0$ is absolute.

The notation $\mathcal{H}^{t}_{\infty}$, $t \geq 0$, will refer to the \emph{dyadic $t$-dimensional Hausdorff content}:
\begin{equation}\label{def:content}
    \mathcal{H}^{t}_{\infty}(K) = \inf \left\{ \sum_{i} \ell(Q_{i})^{t} : K \subset \bigcup_{i} Q_{i} \right\}, \qquad K \subset \R^{d},
\end{equation}
where the infimum runs over all families $\{Q_{i}\} \subset \mathcal{D}$.
We abbreviate $\mathcal{H}^{t}_{\infty}(\mathcal{P}) \coloneqq \mathcal{H}^{t}_{\infty}(\cup \mathcal{P})$ for $\mathcal{P} \subset \mathcal{D}$.
\begin{remark}
    The dyadic Hausdorff content of subsets of $[-1,1]^{d}$, or any other fixed bounded set, is comparable to the standard Hausdorff content (where covers by dyadic cubes are replaced by arbitrary covers), up to a constant depending only on $d$.
    With the exception of \cref{s:lower} and \cref{appA}, the reader does not need to keep in mind that we are using the dyadic variant.
    However, in \cref{s:lower} we will encounter an iterative argument where tracking the size of the Hausdorff content is important.
    This is more convenient to do for the dyadic content.
\end{remark}

If $\mathcal{P}$ consists of dyadic cubes of side-length $\geq \delta \in 2^{-\N}$ and $t \in [0,d]$, it is easy to check (and will be implicitly used) that the optimal cover in the definition of $\mathcal{H}^{t}_{\infty}(\mathcal{P})$ also consists of dyadic cubes side-length $\geq \delta$.
To wit, if $\{\mathcal{Q}_{j}\} \subset \mathcal{D}$ is an arbitrary cover of $Q \in \mathcal{D}_{\delta}$ by dyadic cubes, then $\sum_{j} \ell(Q_{j})^{t} \geq \ell(Q)^{t}$.

We will also use the following notational convention: if $\mathcal{Q} \subset \mathcal{D}_{\Delta}$ and $\mathcal{P} \subset \mathcal{D}_{\delta}$ with $\delta \leq \Delta$, we will write 
\begin{equation*}
    \mathcal{Q} \cap \mathcal{P} \coloneqq \{p \in \mathcal{P} : p \subset Q \text{ for some } Q \in \mathcal{Q}\}.
\end{equation*} 

For $\delta > 0$, a \emph{$\delta$-tube} refers to the closed $(\delta/2)$-neighbourhood of a line in $\R^{2}$.
The \emph{slope} of a non-vertical line $\ell = \{(x,y) : y = ax + b\} \subset \R^{2}$ is $\sigma(\ell) \coloneqq a$.

\section{Reduction of the visibility conjecture to an incidence lower bound}\label{s1}
In this section we reduce \cref{main} to the incidence lower bound in \cref{thm:ray-inc-endp}.
\subsection{Initial discretisation}
We first reduce \cref{main} to the $\delta$-discretised counterpart in \cref{p:vis-disc} below. We recall the (standard) notion of \emph{Frostman $(\delta,t,C)$-sets} in $\R^{d}$. These should not be confused with Frostman $(\delta,\alpha,\beta,C)$-sets from \cref{def:PhaseSpace}.

\begin{definition} Let $\delta \in 2^{-\N}$ and $C,t \geq 0$. A set $P \subset \R^{d}$ is called a \emph{Frostman $(\delta,t,C)$-set} if 
\begin{displaymath} |P \cap B(x,r)|_{\delta} \leq Cr^{t}|P|_{\delta}, \qquad x \in \R^{d}, \, \delta \leq r \leq 1. \end{displaymath} 
\end{definition} 

\begin{proposition}\label{p:vis-disc}
    For every $\alpha\in (0,1]$ and $s > 2 - \tfrac{\alpha}{2}$ and $C > 0$ there exists $\eta = \eta(C,s, \alpha) > 0$ and $\delta_{0} = \delta_{0}(C,s, \alpha) > 0$ such that the following holds for all $\delta \in 2^{-\N} \cap (0,\delta_{0}]$:

    Let $\Sigma \subset \delta \cdot \Z \cap [-1,1]$ be a non-empty Frostman $(\delta, \alpha, C)$-set.
    For each $\sigma \in \Sigma$, let $P_{\sigma} \subset (\delta \cdot \Z)^{2} \cap [-1,1]^{2}$ be a non-empty Frostman $(\delta,s,C)$-set.
    Write $P \coloneqq \bigcup_{\sigma \in \Sigma} P_{\sigma}$.
    Then, there exist $\sigma \in \Sigma$ and $z \in P_{\sigma}$ such that
    \begin{equation}\label{form7}
        \diam(P \cap [\ell^+_{z,\sigma}]_{\delta}) \geq \eta.
    \end{equation}
\end{proposition}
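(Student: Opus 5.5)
The plan is to build a phase-space set $\mathbf{X} \subset \Omega$ from the data and apply \cref{thm:ray-inc-endp}. Define
\begin{displaymath} \mathbf{X} \coloneqq \{(z,\sigma) : \sigma \in \Sigma, \, z \in P_{\sigma}\}, \end{displaymath}
suitably weighted so that each slope $\sigma \in \Sigma$ contributes the same mass. For instance, first pigeonhole a subset $\Sigma' \subset \Sigma$ with $|P_{\sigma}|$ roughly constant up to a factor $2$, losing only a $\log(1/\delta)$ factor; alternatively, duplicate points to equalise. I will aim to show that $\mathbf{X}$ is a Frostman $(\delta,s,s+\alpha-1,C')$-set with $C' \lesssim C^{O(1)}$. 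Set $\alpha' \coloneqq s$ and $\beta' \coloneqq s + \alpha - 1$. Then $\alpha' > 3/2 > 1$, $\beta' > 1 - \alpha/2 + \alpha \geq 1$, and
\begin{displaymath} \alpha' + \beta' = 2s + \alpha - 1 > 4 - \alpha + \alpha - 1 = 3, \end{displaymath}
which is exactly where the hypothesis $s > 2 - \alpha/2$ enters.

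The Frostman verification is the key computation. Fix a rectangle $\mathbf{R}_{u \times uw \times w}(a,b,\sigma_{0})$. Its $\sigma$-coordinates lie in an interval of length $2w$, which contains at most $C w^{\alpha}|\Sigma|$ elements of $\Sigma$. For each such $\sigma$, the fibre is contained in a parallelogram around $(a,b)$ of horizontal width $u$ and vertical height $\lesssim uw$, tilted by slope $\sigma_{0}$. This parallelogram is covered by $\lesssim 1/w$ balls of radius $uw$ (note $uw \geq \delta$), so
\begin{displaymath} |P_{\sigma} \cap \cdot| \lesssim w^{-1} C (uw)^{s} |P_{\sigma}|. \end{displaymath}
Multiplying, the rectangle contains $\lesssim C^{2} u^{s} w^{s+\alpha-1} |\mathbf{X}|$ points, using the equal mass per slope. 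The pigeonholing loss is the one delicate point. Rather than pay $\log(1/\delta)$, which is fatal since the constant must be independent of $\delta$, I will use a weighted multiset: repeat each $z \in P_{\sigma}$ with multiplicity $M/|P_{\sigma}|$. \cref{thm:ray-inc-endp} is stated for sets, so I would instead perturb the duplicates in the $\sigma$ coordinate by less than $\delta$, or use dyadic pigeonholing only over the $\lesssim \log C$ relevant ranges. Either way, the constant stays $\lesssim_{C,s,\alpha} 1$, at worst after a harmless sub-selection that preserves the Frostman property up to constants.

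Now apply \cref{thm:ray-inc-endp} with $\kappa = \kappa(s, s+\alpha-1)$ and $\delta_{0} \coloneqq \kappa^{-1} C'^{-\kappa}$. It yields $(\omega_{1},\omega_{2}) \in \mathbf{X}^{2}$ with $|z_{\omega_{1}} - z_{\omega_{2}}| \gtrsim C'^{-\kappa}$ and $\dist(z_{\omega_{2}},\ell^{+}_{\omega_{1}}) \leq \delta$. Write $\omega_{1} = (z,\sigma)$ with $z \in P_{\sigma}$. Then $z, z_{\omega_{2}} \in P \cap [\ell^{+}_{z,\sigma}]_{\delta}$, possibly after enlarging the neighbourhood to $2\delta$ to account for closed versus open distances, which is fixed by applying the theorem at scale $\delta/2$. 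Hence the diameter is $\geq \eta \coloneqq c\, C'^{-\kappa}$, which proves \eqref{form7}.

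The main obstacle I expect is the uniformisation of $|P_{\sigma}|$ without losing $\delta$-dependent constants. The Frostman estimate itself is routine once that is settled.
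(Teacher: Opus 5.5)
Your argument is correct once you commit to the duplication-and-perturbation option. It differs from the paper only in how the fibres $P_{\sigma}$ are given equal mass.

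\textbf{The paper's route.} The paper never weights anything. It first replaces $\Sigma$ and each $P_{\sigma}$ by Katz--Tao $(\delta,\alpha,1)$- and $(\delta,s,1)$-subsets of cardinality $\gtrsim \delta^{-\alpha}/C$ and $\gtrsim \delta^{-s}/C$ respectively. Frostman sets always contain such subsets, and proving the proposition for subsets suffices. Then:
\begin{itemize}
\item every $|P_{\sigma}|$ automatically lies in $[\delta^{-s}/C, O(\delta^{-s})]$;
\item the rectangle count becomes the absolute bound $\lesssim w^{\alpha}(uw)^{s}\delta^{-\alpha-s}$, and comparing with $|\mathbf{X}| \gtrsim C^{-2}\delta^{-\alpha-s}$ gives Frostman constant $O(C^{2})$;
\item $\mathbf{X}$ remains a $\delta$-separated set whose slopes lie in $\Sigma$, so the pair returned by \cref{thm:ray-inc-endp} is directly of the required form.
\end{itemize}

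\textbf{Your route.} You keep the full sets, give the points of $P_{\sigma}$ multiplicity $\lceil M/|P_{\sigma}|\rceil$ with $M = \max_{\sigma}|P_{\sigma}|$, and separate the copies by distinct tiny shifts of the slope coordinate. Your rectangle count then goes through with constant $O(C^{2})$, exactly as you computed, and you avoid the Katz--Tao subset lemma.

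The price is bookkeeping at the end. The first element of the returned pair is $(z,\sigma')$ with a perturbed slope, not $(z,\sigma)$ as in your last paragraph. For $z, z_{\omega_{2}} \in [-1,1]^{2}$ one has $\dist(z_{\omega_{2}},\ell^{+}_{z,\sigma}) \leq \dist(z_{\omega_{2}},\ell^{+}_{z,\sigma'}) + 3|\sigma - \sigma'|$. So you should take the shifts much smaller than $\delta$, say $< \delta/100$, rather than merely $< \delta$. Then applying the theorem at scale $\delta/2$, as you propose, gives $\dist(z_{\omega_{2}},\ell^{+}_{z,\sigma}) < \delta$. The Frostman condition at scale $\delta/2$ follows from the one at scale $\delta$ with only an absolute loss.

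\textbf{Your other suggested fix does not work as stated.} Dyadic pigeonholing over ``$\lesssim \log C$ relevant ranges'' fails because a Frostman $(\delta,s,C)$-set can have any cardinality between $\delta^{-s}/C$ and $\sim \delta^{-2}$. So $\sim \log(1/\delta)$ ranges can genuinely occur, and pigeonholing among them costs the fatal $\log(1/\delta)$ factor you already identified. The number of ranges only drops to $O(\log C)$ after each $P_{\sigma}$ is trimmed to size $\sim \delta^{-s}$ while keeping the upper non-concentration bounds. That trimming is precisely the paper's Katz--Tao reduction, and after it no pigeonholing is needed at all.
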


We now deduce \cref{main} from \cref{p:vis-disc}.
The argument is nearly the same as the proof of the implication between \cite[Theorem 3.2]{arxiv:2407.00306} and \cite[Theorem 3.1]{arxiv:2407.00306}, but we give all the details for the reader's convenience.
\begin{proof}[of \cref{main} assuming \cref{p:vis-disc}]
    The case $\alpha = 0$ of \cref{main} is trivial, so we may assume that $\alpha \in (0,1]$.
    We make a counter assumption: there exists a compact set $K \subset [0,1]^{2}$, $s \in (2 - \tfrac{\alpha}{2},2]$, and $\kappa > 0$, such that
    \begin{equation}\label{form1}
        \mathcal{H}_\infty^{\alpha}(\{\sigma \in [-1,1] : \mathcal{H}^{s}_{\infty}(\vis_{\sigma}(K)) > \kappa\}) > \kappa.
    \end{equation}
    Let $C \geq 1$ be an absolute constant, and let $\eta \coloneqq \eta(C\kappa^{-1},s,\alpha) > 0$ be the constant provided by \cref{p:vis-disc}.
    In fact, $C$ is the constant from the following version of Frostman's lemma: if $\delta \in 2^{-\N}$, $s \in (0,2]$, and $H \subset [0,1]^{2}$, then $\mathcal{D}_{\delta}(H)$ contains a non-empty Frostman $(\delta,s,C/\mathcal{H}^{s}_{\infty}(H))$-set.
    For a proof of this statement, see \cite[Proposition 3.9]{zbmath:08099050}.

    For $\sigma \in [-1,1]$, let
    \begin{equation*}
        \vis_{\sigma}^{\eta}(K) \coloneqq \{x \in K : K \cap \ell^+_{x,\sigma} \subset B(x,\eta/8)\}.
    \end{equation*}
    Recall that $B(x,\eta/8)$ is an open disc, so if $x \in \vis_{\sigma}^{\eta}(K)$ and $y \in K$ with $y \in \ell^+_{x,\sigma}$, then $|x - y| < \eta/8$.
    Clearly $\vis_{\sigma}(K) \subset \vis_{\sigma}^{\eta}(K)$, so \cref{form1} implies
    \begin{equation}\label{form2}
        \mathcal{H}_\infty^{\alpha}(\{\sigma \in [-1,1] : \mathcal{H}^{s}_{\infty}(\vis_{\sigma}^{\eta}(K)) > \kappa\}) > \kappa.
    \end{equation}
    For every $\varepsilon > 0$, we further define
    \begin{equation}\label{form36}
        \vis_{\sigma}^{\eta,\varepsilon}(K) \coloneqq \{x \in K : K \cap [\ell^+_{x,\sigma}]_{\varepsilon} \subset B(x,\eta/8)\},
    \end{equation}
    where $[\ell^+_{x,\sigma}]_{\varepsilon}$ stands for the open $\varepsilon$-neighbourhood of the ray $\ell^+_{x,\sigma}$.
    We claim that
    \begin{equation}\label{form3}
        \vis_{\sigma}^{\eta}(K) = \bigcup_{\varepsilon > 0} \vis_{\sigma}^{\eta,\varepsilon}(K), \qquad \sigma \in [-1,1].
    \end{equation}
    The inclusion $\vis_{\sigma}^{\eta,\varepsilon}(K) \subset \vis_{\sigma}^{\eta}(K)$ clearly holds for all $\varepsilon > 0$, so it suffices to prove the other inclusion.
    Fix $x \in \vis_{\sigma}^{\eta}(K)$, so $K \cap \ell^+_{x,\sigma} \subset B(x,\eta/8)$.
    If $x \notin \bigcup_{\varepsilon > 0} \vis_{\sigma}^{\eta,\varepsilon}(K)$, then for every $\varepsilon > 0$ there exists $y_{\varepsilon} \in K \cap [\ell^+_{x,\sigma}]_{\varepsilon} \, \setminus \, B(x,\eta/8)$.
    Since $K$ is compact, we may find a sequence $\{\varepsilon_{n}\}$ such that $y_{\varepsilon_{n}} \to y \in K \cap \ell^+_{x,\sigma}$.
    Since $|x - y_{\varepsilon_{n}}| \geq \eta/8$, we also have $|x - y| \geq \eta/8$.
    This contradicts the definition of $x \in \vis_{\sigma}^{\eta}(K)$ and proves \cref{form3}.

    As $\varepsilon \searrow 0$, the sets $\vis_{\sigma}^{\eta,\varepsilon}(K)$ increase to $\vis_{\sigma}^{\eta}(K)$.
    Therefore, if $\mathcal{H}^{s}_{\infty}(\vis_{\sigma}^{\eta}(K)) > \kappa$, Davies' increasing sets lemma \cite[Theorem~4]{zbl:0187.00903} implies that also $\mathcal{H}^{s}_{\infty}(\vis_{\sigma}^{\eta,\varepsilon}(K)) > \kappa$ for $\varepsilon > 0$ sufficiently small.
    This observation implies that also the sets $\{\sigma \in [-1,1] : \mathcal{H}^{s}_{\infty}(\vis_{\sigma}^{\eta,\varepsilon}(K)) > \kappa\}$ increase to $\{\sigma \in [-1,1] : \mathcal{H}^{s}_{\infty}(\vis_{\sigma}^{\eta}(K)) > \kappa\}$ as $\varepsilon \searrow 0$.
    By a second application of \cite[Theorem~4]{zbl:0187.00903}, we infer from \cref{form2} that
    \begin{equation}\label{form4}
        \mathcal{H}_\infty^{\alpha}(\{\sigma \in [-1,1] : \mathcal{H}^{s}_{\infty}(\vis_{\sigma}^{\eta,\varepsilon}(K)) > \kappa\}) > \kappa
    \end{equation}
    for $\varepsilon > 0$ sufficiently small.

    Let $\varepsilon > 0$ be so small that \cref{form4} holds, and let
    \begin{equation*}
        \delta \in 2^{-\N} \cap (0,c \min\{\varepsilon,\eta,\delta_{0}\}],
    \end{equation*}
    where $c \in (0,\tfrac{1}{16}]$ is an absolute constant to be determined, and $\delta_{0} = \delta_{0}(C\kappa^{-1},s,\alpha) > 0$ is the threshold provided by \cref{p:vis-disc}.
    Applying Frostman's lemma (or more precisely again \cite[Proposition 3.9]{zbmath:08099050}), let
    \begin{equation*}
        \mathcal{S} \coloneqq \mathcal{S}_{\delta} \subset \mathcal{D}_{\delta}(\{\sigma \in [-1,1] : \mathcal{H}^{s}_{\infty}(\vis_{\sigma}^{\eta,\varepsilon}(K)) > \kappa\})
    \end{equation*}
    be a non-empty $(\delta, \alpha, C\kappa^{-1})$-set.
    Let $\Sigma$ denote the left endpoints of dyadic intervals in $\mathcal{S}$.

    For each $\sigma \in \Sigma$, let $\bar{\sigma}$ denote an element in the dyadic interval containing $\sigma$ with $\mathcal{H}^{s}_{\infty}(\vis_{\bar{\sigma}}^{\eta,\varepsilon}(K)) > \kappa$.
    Thus, Frostman's lemma yields a Frostman $(\delta,s,C\kappa^{-1})$-set
    \begin{equation*}
        \mathcal{P}_{\sigma} \subset \mathcal{D}_{\delta}(\vis_{\bar{\sigma}}^{\eta,\varepsilon}(K)).
    \end{equation*}
    We let $P_{\sigma} \subset \cup \mathcal{P}_{\sigma}$ consist of the lower left corners of the squares in $\mathcal{P}_{\sigma}$.
    Then $P_{\sigma}$ is also a Frostman $(\delta,s,C\kappa^{-1})$-set.

    Write $P \coloneqq \bigcup_{\sigma \in \Sigma} P_\sigma$, as in \cref{p:vis-disc}.
    We now claim a contradiction to \cref{p:vis-disc}.
    Namely,  if $\sigma \in \Sigma$ and $z \in P_{\sigma}$, we claim that
    \begin{equation}\label{form5}
        \diam(P \cap [\ell^+_{z,\sigma}]_{\delta}) \leq \eta/2.
    \end{equation}
    This contradiction will complete the proof of \cref{main}.

    To prove \cref{form5}, fix $\sigma \in \Sigma$ and $z \in P_{\sigma}$.
    By definition, this means that there exist $\bar{\sigma} \in [-1,1]$ with $|\sigma - \bar{\sigma}| \leq \delta$, and a point $x \in \vis_{\bar{\sigma}}^{\eta,\varepsilon}(K)$ which shares the dyadic $\delta$-square with $z$.
    Unwrapping the definitions further (see \cref{form36}),
    \begin{equation*}
        K \cap [\ell^+_{x,\bar{\sigma}}]_{\varepsilon} \subset B(x,\eta/8).
    \end{equation*}
    We claim that this, and $\delta \leq \tfrac{1}{16} \min\{\varepsilon,\eta\}$, implies $[K]_{2\delta} \cap [\ell^+_{x,\bar{\sigma}}]_{\varepsilon/2} \subset B(x,\eta/4)$.
    Indeed, if $y \in [K]_{2\delta} \cap [\ell^+_{x,\bar{\sigma}}]_{\varepsilon/2}$, then by the triangle inequality exists a point $\bar{y} \in K \cap [\ell^+_{x,\bar{\sigma}}]_{\varepsilon} \subset B(x,\eta/8)$ with $|\bar{y} - y| \leq 2\delta$, and therefore $y \in B(x,\eta/4)$.

    Next, we claim that $P \cap [\ell^+_{z,\sigma}]_{\delta} \subset [K]_{2\delta} \cap [\ell^+_{x,\bar{\sigma}}]_{\varepsilon/2}$, which implies \cref{form5}.
    Clearly $P \subset [K]_{2\delta}$.
    Second, the inclusion $[-2,2]^{2} \cap [\ell^+_{z,\sigma}]_{\delta} \subset [\ell^+_{x,\bar{\sigma}}]_{\varepsilon/2}$ holds if $\delta \leq c \varepsilon$ for a sufficiently small absolute constant $c > 0$, using the facts that $|z - x| \leq 2\delta$ and $|\sigma - \bar{\sigma}| \leq \delta$.

    We have now proved \cref{form5}, and consequently \cref{main}.
\end{proof}

\subsection{From the discretised visibility conjecture to incidence lower bounds}\label{s3}
In this section we deduce \cref{p:vis-disc} from \cref{thm:ray-inc-endp}.
Since in \cref{s1} we saw that \cref{p:vis-disc} implies \cref{main}, this completes the proof of \cref{main} (modulo proving \cref{thm:ray-inc-endp}).

\begin{proof}[of \cref{p:vis-disc} assuming \cref{thm:ray-inc-endp}]
    Using the data $\{P_{\sigma}\}_{\sigma \in \Sigma}$ provided in the statement of \cref{p:vis-disc}, we construct a $(\delta,s,s + \alpha - 1,O(C^{2}))$-set $\mathbf{X} \subset \Omega$, and apply \cref{thm:ray-inc-endp} to derive \cref{form7}.
    Note that $s > 2 - \tfrac{\alpha}{2}$ is equivalent to $s + (s + \alpha - 1) > 3$.

    First of all, to prove \cref{p:vis-disc}, we may assume that $\Sigma$ is a Katz--Tao $(\delta,\alpha, 1)$-set with $|\Sigma|\gtrsim \delta^{-\alpha}/C$ and that each $P_{\sigma} \subset (\delta \Z)^{2} \cap [-1,1]^{2}$ is a Katz--Tao $(\delta,s,1)$-set with $|P_{\sigma}| \gtrsim \delta^{-s}/C$.
    This is because $\Sigma$ and each $P_{\sigma}$ contains such a subset, and it suffices to prove \cref{p:vis-disc} with these subsets in place of the original sets.

    Define
    \begin{equation*}
        \mathbf{X} \coloneqq \{(a,b,\sigma) \in (\delta \Z)^{3} \cap \Omega : (a,b) \in P_{\sigma}\}.
    \end{equation*}
    Then, $\mathbf{X}$ is $\delta$-separated, and
    \begin{equation}\label{form8}
        C^{-2}\delta^{-\alpha - s} \lesssim \sum_{\sigma \in \Sigma} |P_{\sigma}| = |\mathbf{X}| \lesssim \delta^{-\alpha - s}.
    \end{equation}
    We claim that $\mathbf{X}$ is a Frostman $(\delta,s,s + \alpha - 1,O(C^{2}))$-set in the sense of \cref{def:PhaseSpace}.
    To see this, fix $u,w \in (0,1]$ with $uw \geq \delta$ and $(a,b,\sigma) \in \Omega$.
    It suffices to show that
    \begin{equation}\label{form9}
        |\mathbf{X} \cap \mathbf{R}_{uw \times uw \times w}(a,b,\sigma)| \lesssim w^\alpha(uw)^{s} \cdot \delta^{-\alpha - s} \stackrel{\mathclap{\cref{form8}}}{\lesssim} C^{2}w^\alpha(uw)^{s}|\mathbf{X}|.
    \end{equation}
    This is because the rectangle $\mathbf{R}_{u \times uw \times w}(a,b,\sigma)$ appearing in \cref{def:PhaseSpace} can be covered by $\lesssim w^{-1}$ rectangles of the form $\mathbf{R}_{uw \times uw \times w}(a',b,\sigma)$.

    To prove \cref{form9}, note that if $(a',b',\sigma') \in \mathbf{X}$ lies in the rectangle
    \begin{equation*}
        \mathbf{R}_{uw \times uw \times w}(a,b,\sigma) = \{(a + r_{1},b + \sigma r_{1} + r_{2},\sigma + r_{3}) : (r_{1},r_{2},r_{3}) \in [-uw,uw]^{2} \times [-w,w]\},
    \end{equation*}
    then $(a',b') \in B((a,b),2uw) \eqqcolon B_{uw} \subset \R^{2}$ and $\sigma' \in B(\sigma,w) \cap [-1,1] \eqqcolon I$.
    Therefore, recalling the definition of $\mathbf{X}$,
    \begin{align*} |\mathbf{X} \cap \mathbf{R}_{uw \times uw \times w}(a,b,\sigma)| & \leq |\{(a',b',\sigma') \in B_{uw} \times I : (a',b') \in P_{\sigma'}\}|\\
& \leq \sum_{\sigma' \in \Sigma \cap I} |P_{\sigma'} \cap B_{uw}| \lesssim (w/\delta)^\alpha \cdot (uw/\delta)^{s} = w^\alpha(uw)^{s} \cdot \delta^{-\alpha - s}.
    \end{align*}
    This completes the proof of \cref{form9}, and shows that $\mathbf{X}$ is a Frostman $(\delta,s,s + \alpha - 1,C')$-set with $C' \lesssim C^{2}$.

    To complete the proof of \cref{p:vis-disc}, apply \cref{thm:ray-inc-endp} with parameters $\alpha,s + \alpha - 1$ and $C'$ to obtain a constant $\kappa = \kappa(\alpha,s,C') \geq 1$.
    Then, for $\rho \sim_{\alpha,s} (C')^{-\kappa}$, \cref{thm:ray-inc-endp} gives a pair $(z_{1},\sigma_{1}),(z_{2},\sigma_{2}) \in \mathbf{X}$ such that $|z_{1} - z_{2}| \geq \rho$ and $\dist(z_{2},\ell^+_{z_{1},\sigma_{1}}) \leq \delta$.
    
    Finally, note that $\dist(z_{2},\ell^+_{z_{1},\sigma_{1}}) \leq \delta$ implies $\{z_{1},z_{2}\} \subset P \cap [\ell^+_{z_{1},\sigma_{1}}]_{\delta}$.
    Therefore $\diam(P \cap [\ell^+_{z_{1},\sigma_{1}}]_{\delta}) \geq |z_{1} - z_{2}| \geq \rho$.
    This proves \cref{form7} with $\eta = \rho$.
\end{proof}

\section{Incidence lower bound for rays with polynomial constant dependence}\label{s:endp}
We now embark on proving \cref{thm:ray-inc-endp}.
In this section, we prove \cref{thm:ray-inc-endp} conditional on \cref{thm:CPZ2}.
We defer the proof of \cref{thm:CPZ2} until \cref{s:non-endp}.

\subsection{Bi-partisation at an intermediate scale}
The plan is as follows: we will apply \cref{thm:CPZ2} at a large intermediate scale $\Delta$, and then use \cref{thm:highLowRays} ("the high-low estimate") to guarantee many incidences at scale $\delta$.

We start by recording a well-known lemma on finding large bi-partite subgraphs.
The lemma is due to Erd{\H o}s \cite{zbl:0134.43403}, but the original formulation concerns undirected graphs, while we need the result for directed graphs.
This variant is also well-known, but we repeat the short proof and, with no extra effort, record a weighted generalisation.
The proof is from \cite[Theorem~2.2.1]{zbmath:6566409}, where the result is also stated for undirected and unweighted graphs, with constant ``$\tfrac{1}{2}$'' instead of ``$\tfrac{1}{4}$''.
\begin{lemma}\label{lemma:erdos}
    Let $G = (V,E)$ be a directed graph without loops, and let $w \colon E \to (0,\infty)$ be a weight function.
    Then there exists a bi-partite subgraph $(V_{1} \cup V_{2},\bar{E})$ with $\sum_{e \in \bar{E}} w(e) \geq \tfrac{1}{4}\sum_{e \in E} w(e)$, where $V_{1},V_{2} \subset V$ are disjoint, and
    \begin{equation}\label{form53}
        \bar{E} = \{(v_{1},v_{2}) \in E : v_{1} \in V_{1}, \, v_{2} \in V_{2}\}.
    \end{equation}
\end{lemma}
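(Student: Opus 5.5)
We will use the probabilistic method: choose a random partition of $V$ and show that the expected weight of the retained edges is exactly $\tfrac14$ of the total weight. For each vertex $v \in V$, independently flip a fair coin, and put $v$ into $V_{1}$ on heads and into $V_{2}$ on tails. This gives disjoint random sets $V_{1},V_{2} \subset V$ (in fact a partition of $V$). Define $\bar{E}$ by \cref{form53}.

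Fix an edge $e = (v_{1},v_{2}) \in E$. Since $G$ has no loops, $v_{1} \neq v_{2}$, so the events $\{v_{1} \in V_{1}\}$ and $\{v_{2} \in V_{2}\}$ are independent, each with probability $\tfrac12$. Hence $\mathbb{P}(e \in \bar{E}) = \tfrac14$. This is exactly where the no-loop hypothesis is needed: a loop $(v,v)$ could never lie in $\bar{E}$. By linearity of expectation,
\begin{displaymath} \mathbb{E}\Big[\sum_{e \in \bar{E}} w(e)\Big] = \sum_{e \in E} w(e)\,\mathbb{P}(e \in \bar{E}) = \tfrac{1}{4}\sum_{e \in E} w(e). \end{displaymath}

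A random variable is at least its expectation with positive probability. Since there are only finitely many partitions when $V$ is finite, some realisation $(V_{1},V_{2})$ satisfies $\sum_{e \in \bar{E}} w(e) \geq \tfrac14 \sum_{e\in E} w(e)$, which is the claim. If $E$ is infinite but $\sum_{e\in E} w(e)$ converges, the same expectation argument works via Fubini or monotone convergence; if the sum diverges, the statement is trivial or can be reduced to finite subgraphs. In the application $V$ is finite anyway.

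There is no real obstacle here. The only points needing care are the independence of the two endpoint events, which uses $v_{1} \neq v_{2}$, and noting that the directedness is why the constant is $\tfrac14$ rather than $\tfrac12$: only the orientation $v_{1} \in V_{1}$, $v_{2} \in V_{2}$ is counted.
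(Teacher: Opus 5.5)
Your argument is correct and is the same as the paper's: put each vertex in $V_{1}$ independently with probability $\tfrac12$ and set $V_{2} = V \setminus V_{1}$. Each edge then lands in $\bar{E}$ with probability $\tfrac14$, and linearity of expectation gives a realisation carrying at least a quarter of the total weight. The one thing to drop is your aside on infinite $E$ with divergent total weight, which is neither trivial nor obviously reducible to finite subgraphs; the lemma is only needed, and only proved in the paper, for finite graphs.
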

\begin{proof}
    Include a vertex $v \in V$ to $V_{1}$ with probability $\tfrac{1}{2}$, and let $V_{2} \coloneqq V \, \setminus \, V_{1}$.
    For $e = (v_{1},v_{2}) \in E$, let $X_{e}$ be the random variable which takes value $w(e)$ if and only if $v_{1} \in V_{1}$ and $v_{2} \in V_{2}$.
    Evidently $\mathbb{E}[X_{e}] = w(e)/4$, and therefore
    \begin{equation*}
        \mathbb{E} \Big[ \sum_{e \in E} X_{e} \Big] = \tfrac{1}{4}\sum_{e \in E}w(e).
    \end{equation*}
    It follows that there exists a random set $V_{1}$ such that $\sum_{e \in E} X_{e} \geq \tfrac{1}{4}\sum_{e \in E}w(e)$.
    Now, with $\bar{E}$ as in \cref{form53}, it holds $\sum_{e \in \bar{E}} w(e) \geq \tfrac{1}{4}\sum_{e \in E} w(e)$.
\end{proof}

\begin{definition}
    We say that sets $\mathbf{A},\mathbf{B} \subset \Omega$ are \emph{spatially $r$-separated} if $\dist(P[\mathbf{A}],P[\mathbf{B}]) \geq r$.
\end{definition}
The following lemma is the key ingredient in the proof of \cref{thm:ray-inc-endp}.
Observe that the sets $\mathbf{A}$, $\mathbf{B}$, and $\mathbf{X}$ are $\delta$-separated so the net effect is that the incidences at scale $3\Delta$ are counted with multiplicity.
\begin{lemma}\label{lemma:bipartite}
    For every $\alpha,\beta \in (1,2]$ with $\alpha + \beta > 3$ and $\varepsilon > 0$, there exist $\eta = \eta(\alpha,\beta,\varepsilon) > 0$ and $\Delta_{0} = \Delta_{0}(\alpha,\beta,\varepsilon) > 0$ such that the following holds for all $\delta,\Delta \in 2^{-\N} \cap (0,\Delta_{0}]$ with $\delta \leq \Delta$:

    Let $\mathbf{X} \subset \Omega$ be a Frostman $(\delta,\alpha,\beta,\Delta^{-\eta})$-set.
    Then, there exist spatially $\Delta$-separated subsets $\mathbf{A},\mathbf{B} \subset \mathbf{X}$ such that
    \begin{equation}\label{form11}
        \mathcal{I}^+_{\Delta}(\mathbf{A},\mathbf{B}) \geq \Delta^{1 + \varepsilon}|\mathbf{X}|^{2}.
    \end{equation}
\end{lemma}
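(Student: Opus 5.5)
The plan is to apply \cref{thm:CPZ2} at scale $\Delta$ to a $\Delta$-discretised version of $\mathbf{X}$. After that, \cref{lemma:erdos} will extract spatially separated bipartite pieces, and the incidences that are not spatially separated will be discarded by a trivial counting bound.

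\emph{Step 1: discretisation.} Let $\mathbf{X}_\Delta$ be the set of (say) lower-left corners of the dyadic $\Delta$-cubes of $\Omega$ meeting $\mathbf{X}$. Give each $\mathbf{q} \in \mathbf{X}_\Delta$ the weight $m(\mathbf{q}) \coloneqq |\mathbf{X} \cap \mathbf{q}|$. Since \cref{thm:CPZ2} is stated for unweighted sets, I will first pigeonhole (losing a $\log(1/\Delta)$ factor, i.e.\ $\Delta^{-o(1)}$) to a subfamily $\mathbf{Y} \subset \mathbf{X}_\Delta$ on which $m \sim M$ and $M|\mathbf{Y}| \gtrapprox |\mathbf{X}|$.

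I then check that $\mathbf{Y}$ is a Frostman $(\Delta,\alpha,\beta,\Delta^{-2\eta})$-set. For $uw \geq \Delta \geq \delta$, a rectangle $\mathbf{R}_{u\times uw\times w}$ enlarged by $O(\Delta)$ is still covered by $O(1)$ rectangles of comparable dimensions. Hence
\begin{equation*}
|\mathbf{Y} \cap \mathbf{R}| \leq M^{-1}|\mathbf{X} \cap \mathbf{R}'| \lesssim \Delta^{-\eta}u^\alpha w^\beta |\mathbf{X}|/M \lessapprox \Delta^{-\eta}u^\alpha w^\beta|\mathbf{Y}|.
\end{equation*}
(The non-trivial point here is that phase space rectangles are sheared, but a $\Delta$-cube is contained in $\mathbf{R}_{O(\Delta)\times O(\Delta)\times O(\Delta)}$, which is fine since $uw \geq \Delta$.) \cref{thm:CPZ2}, with $\varepsilon/2$ and the corresponding $\eta$, then gives $\mathcal{I}^+_\Delta(\mathbf{Y}) \geq \Delta^{1+\varepsilon/2}|\mathbf{Y}|^2$.

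\emph{Step 2: lifting to $\mathbf{X}$.} If $\dist(z_{\mathbf{q}_2},\ell^+_{\mathbf{q}_1}) \leq \Delta$, then for all $\omega_i \in \mathbf{X}\cap \mathbf{q}_i$ we have $\dist(z_{\omega_2},\ell^+_{\omega_1}) \leq 3\Delta$. This is where the slack ``$3\Delta$'' in the remark before the lemma comes from. Hence $\mathcal{I}^+_{3\Delta}(\mathbf{X}) \gtrsim M^2 \Delta^{1+\varepsilon/2}|\mathbf{Y}|^2 \gtrapprox \Delta^{1+\varepsilon/2}|\mathbf{X}|^2$. I will run the argument with $\Delta/4$ in place of $\Delta$ so that the final scale is $\leq \Delta$.

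\emph{Step 3: removing near-diagonal pairs.} I expect this to be the main obstacle. I will discard pairs $(\omega_1,\omega_2)$ with $|z_{\omega_1}-z_{\omega_2}| \leq C_0\Delta$. These pairs lie in $\mathbf{R}_{C_0\Delta \times C_0\Delta \times 1}$-type sets (with $u = C_0\Delta$, $w = 1$), so by the Frostman condition each $\omega_1$ has at most $\Delta^{-\eta}(C_0\Delta)^{\alpha}|\mathbf{X}|$ such partners. The total is therefore $\lesssim \Delta^{\alpha-\eta}|\mathbf{X}|^2 \ll \Delta^{1+\varepsilon/2}|\mathbf{X}|^2$, since $\alpha > 1$ and $\eta,\varepsilon$ are small. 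Choosing $\varepsilon < \alpha - 1$ (we may shrink $\varepsilon$), most incidences are between $C_0\Delta$-separated base points.

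\emph{Step 4: bipartition.} To obtain genuinely spatial separation, I partition the plane into dyadic squares of side $\Delta$ and form the directed graph on these squares, weighted by the number of surviving incidences between distinct non-adjacent squares. Applying \cref{lemma:erdos} gives disjoint families $V_1, V_2$. Pairs of adjacent squares are already excluded because $C_0 \geq 3$. However, $V_1$ and $V_2$ may still contain adjacent squares. To fix this I will first randomly (or by pigeonholing) restrict to one of $O(1)$ shifted sublattices of squares spaced $3\Delta$ apart, i.e.\ colour the squares mod $3$ in each coordinate. For any surviving pair at distance $\geq C_0\Delta$, the two squares lie in some pair of colour classes. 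Pigeonholing the colour pair $(c_1,c_2)$ loses a factor $81$. Then applying \cref{lemma:erdos} within those classes (only needed if $c_1 = c_2$) yields families with $\dist \geq \Delta$. Setting $\mathbf{A},\mathbf{B}$ to be the points of $\mathbf{X}$ with base points in $V_1$, $V_2$ respectively gives \cref{form11}, after absorbing constants and logarithms into $\Delta^{-\varepsilon/2}$ for $\Delta \leq \Delta_0$.
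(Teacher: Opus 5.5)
There is a genuine gap in Step 4, in the case where the pigeonholed colour pair has $c_{1} \neq c_{2}$. Steps 1--3 are essentially fine, up to constants. You treat $c_{1} \neq c_{2}$ as if the two colour classes were already spatially separated (``only needed if $c_{1}=c_{2}$''), but they never are. In each coordinate, two residues mod $3$ differ by $-1$, $0$ or $1$ modulo $3$, so any two distinct classes contain squares that touch. For example, $[0,\Delta)^{2}$ has colour $(0,0)$ and $[\Delta,2\Delta)\times[0,\Delta)$ has colour $(1,0)$. Taking $V_{1},V_{2}$ to be the two classes can therefore give $\dist(P[\mathbf{A}],P[\mathbf{B}]) = 0$. \cref{lemma:erdos} does not help here, since it only produces disjoint vertex sets, not spatially separated ones. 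You also cannot force $c_{1}=c_{2}$ by pigeonholing. Nothing prevents all the incidences delivered by \cref{thm:CPZ2} from joining squares of different colours, and restricting both endpoints to one class after the fact can then destroy all of them.

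The missing idea is to separate \emph{before} invoking the incidence theorem, as the paper does. Split $\mathcal{D}_{\Delta}([-1,1)^{2})$ into $4$ subfamilies of pairwise $\Delta$-separated squares. Keep only those $\omega \in \mathbf{X}$ whose base point lies in the subfamily containing the base points of at least $|\mathbf{X}|/4$ elements. This subset is still Frostman with constant $4\Delta^{-\eta} \leq \Delta^{-2\eta}$, so \cref{thm:CPZ2} applies to it at scale $\Delta$. Distinct squares of the graph are now automatically $\Delta$-separated. The same-square pairs are negligible by your Step 3 count ($\lesssim \Delta^{\alpha-\eta}|\mathbf{X}|^{2}$). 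Then \cref{lemma:erdos} alone yields the required $\mathbf{A},\mathbf{B}$.

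Your Steps 1--2 are also unnecessary. A Frostman $(\delta,\alpha,\beta,C)$-set is automatically a Frostman $(\Delta,\alpha,\beta,C)$-set for $\delta \leq \Delta$, since fewer rectangles are tested, and \cref{thm:CPZ2} imposes no separation, so it applies to $\mathbf{X}$ directly. Going direct avoids the weight pigeonholing, which only costs $O(\log(1/\Delta))$ if you first discard cubes with $m(\mathbf{q}) \leq \Delta^{4}|\mathbf{X}|$, because $|\mathbf{X}|$ is not bounded in terms of $\Delta$. It also avoids the lifting loss, which is an absolute multiple of $\Delta$ but in general larger than $3\Delta$.
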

\begin{proof}
    We start by fixing parameters.
    We may assume that $\varepsilon \in (0,(\alpha - 1)/2]$, since the statement gets stronger when $\varepsilon$ decreases.
    Next, let $\eta > 0$ be so small that
    \begin{equation}\label{form75}
        2\eta < (\alpha - 1)/2 \leq \alpha - 1 - \varepsilon.
    \end{equation}
    Additionally, assume that $2\eta \leq \eta_{0}(\alpha,\beta,\varepsilon/2)$, where $\eta_{0}(\alpha,\beta,\varepsilon/2) > 0$ is the constant given by \cref{thm:CPZ2} applied with parameters $\alpha,\beta,\varepsilon/2$.
    Let $\Delta_{0} = \Delta_{0}(\alpha,\beta,\varepsilon/2) > 0$ be the scale threshold given by \cref{thm:CPZ2}, and assume that $\delta,\Delta \in 2^{-\N} \cap (0,\Delta_{0}]$ with $\delta \leq \Delta$.
    Finally, we will assume that $\Delta_{0}$ is small in terms of $\varepsilon,\eta$, so that various absolute constants appearing in the proof are bounded from above by $\Delta_{0}^{-\eta}$ and $\Delta_{0}^{-\varepsilon/2}$.

    We start by replacing $\mathbf{X}$ by a subset with spatial separation.
    Write $\mathcal{Q} \coloneqq \mathcal{D}_{\Delta}([-1,1)^{2})$.
    Split $\mathcal{Q}$ into $4$ sub-families $\mathcal{Q}_{1},\ldots,\mathcal{Q}_{4}$ such that $\dist(Q,Q') \geq \Delta$ for all $i=1,\ldots, 4$ and $Q, Q'\in \mathcal{Q}_{i}$ with $Q \neq Q'$.
    Let $\mathbf{X}_{j} \coloneqq \{\omega \in \mathbf{X} : z_{\omega} \in \cup \mathcal{Q}_{j}\}$.
    Then there exists $j \in \{1,\ldots,4\}$ such that $|\mathbf{X}_{j}| \geq \tfrac{1}{4}|\mathbf{X}|$.
    Now $\mathbf{X}_{j}$ satisfies the same hypotheses as does $\mathbf{X}$ with constant ``$2\eta$'', and additionally: \emph{the family $\mathcal{Q}_{j} \coloneqq \mathcal{D}_{\Delta}(P[\mathbf{X}_{j}])$ is $\Delta$-separated}.
    To simplify notation, we now assume that $\mathbf{X}$ originally has this property, and we write $\mathcal{Q} \coloneqq \mathcal{D}_{\Delta}(P[\mathbf{X}])$.

    Recall that $2\eta \leq \eta_{0}(\alpha,\beta,\varepsilon/2)$, $\Delta \leq \Delta_{0}(\alpha,\beta,\varepsilon/2)$, and note that $\mathbf{X}$ is a $(\Delta,\alpha,\beta,\Delta^{-2\eta})$-set (simply because $\delta \leq \Delta$).
    \cref{thm:CPZ2} applied at scale $\Delta$ now gives
     \begin{equation}\label{form10}
        \mathcal{I}^+_{\Delta}(\mathbf{X}) \geq \Delta^{1 + \varepsilon/2}|\mathbf{X}|^{2}.
    \end{equation}

    Now we proceed to find the sets $\mathbf{A},\mathbf{B} \subset \mathbf{X}$ in \cref{form11}.
    For each pair $(Q_{1},Q_{2}) \in \mathcal{Q} \times \mathcal{Q}$ (including diagonal pairs $(Q,Q)$), define
    \begin{equation}\label{def:mQ}
        m_{Q_{1},Q_{2}} \coloneqq |\{(\omega_{1},\omega_{2}) \in \mathbf{X}^{2} : z_{\omega_{1}} \in Q_{1}, \, z_{\omega_{2}} \in Q_{2} \text{ and } \dist(z_{\omega_{2}},\ell^+_{\omega_{1}}) \leq \Delta\}|.
    \end{equation}
    Let us show that the diagonal sum $\sum_{Q} m_{Q,Q}$ is negligible.
    First, estimate
    \begin{equation*}
        \sum_{Q \in \mathcal{Q}} m_{Q,Q} \leq \sum_{Q} |\{(\omega_{1},\omega_{2}) \in \mathbf{X}^{2} : z_{\omega_{1}},z_{\omega_{2}} \in Q\}| \leq |\mathbf{X}| \cdot \max_{Q \in \mathcal{Q}} |\{\omega \in \mathbf{X} : z_{\omega} \in Q\}|.
    \end{equation*}
    Writing $Q = [a_{0},a_{0} + \Delta) \times [b_{0},b_{0} + \Delta) \in \mathcal{D}_{\Delta}([-1,1)^{2})$, it is easy to check that $\{\omega \in \mathbf{X} : z_{\omega} \in Q\} \subset \mathbf{R}_{\Delta \times \Delta \times 1}(a_{0},b_{0},0)$.
    Since $\mathbf{X}$ was assumed to be a Frostman $(\delta,\alpha,\beta,\Delta^{-\eta})$-set, we may infer that $|\{\omega \in \mathbf{X} : z_{\omega} \in Q\}| \leq \Delta^{\alpha - \eta}|\mathbf{X}|$.
    In particular, since we assumed at \cref{form75} that $\alpha - \eta > 1 + \varepsilon > 1 + \varepsilon/2$,
    \begin{equation*}
        \sum_{Q \in \mathcal{Q}} m_{Q,Q} \leq \tfrac{1}{10} \Delta^{1 + \varepsilon/2}|\mathbf{X}|^{2},
    \end{equation*}
    provided $\Delta > 0$ is sufficiently small.
    As a consequence of this and \cref{form10},
    \begin{equation*}
        \sum_{Q_{1} \neq Q_{2}} m_{Q_{1},Q_{2}} \gtrsim \Delta^{1 + \varepsilon/2}|\mathbf{X}|^{2}.
    \end{equation*}

    We now apply \cref{lemma:erdos} to the weighted directed graph $G = (\mathcal{Q},E,\{w(e)\}_{e \in E})$, where $E = \{(Q_{1},Q_{2}) : Q_{1} \neq Q_{2}\}$ and $w(Q_{1},Q_{2}) = m_{Q_{1},Q_{2}}$.
    The result is a bi-partite sub-graph $(\mathcal{A} \cup \mathcal{B},\overline{\mathcal{E}})$, where $\mathcal{A},\mathcal{B} \subset \mathcal{Q}$ are disjoint, $\overline{\mathcal{E}} = \{(Q_{1},Q_{2}) \in \mathcal{E} : Q_{1} \in \mathcal{A} \text{ and } Q_{2} \in \mathcal{B}\}$, and
    \begin{equation*}
        \sum_{(Q_{1},Q_{2}) \in \overline{\mathcal{E}}} m_{Q_{1},Q_{2}} \geq \tfrac{1}{4} \sum_{Q_{1} \neq Q_{2}} m_{Q_{1},Q_{2}} \gtrsim \Delta^{1 + \varepsilon/2}|\mathbf{X}|^{2}.
    \end{equation*}
    Define $\mathbf{A} \coloneqq \{\omega \in \mathbf{X} : z_{\omega} \in \cup \mathcal{A}\}$ and $\mathbf{B} \coloneqq \{\omega \in \mathbf{X} : z_{\omega} \in \cup \mathcal{B}\}$.
    The sets $\mathbf{A},\mathbf{B}$ are spatially $\Delta$-separated, since $\mathcal{A},\mathcal{B}$ are disjoint subsets of the $\Delta$-separated family $\mathcal{Q}$.
    Finally, recalling the definition of $m_{Q_{1},Q_{2}}$ from \cref{def:mQ}, and using $\overline{\mathcal{E}} \subset \mathcal{E}$,
    \begin{align*}
        \mathcal{I}^{+}_{\Delta}&(\mathbf{A},\mathbf{B}) \stackrel{\mathrm{def.}}{=} |\{(\omega_{1},\omega_{2}) \in \mathbf{A} \times \mathbf{B} : \dist(z_{\omega_{2}},\ell^+_{\omega_{1}}) \leq \Delta\}|\\
                             &= \sum_{(Q_{1},Q_{2}) \in \mathcal{A} \times \mathcal{B}} |\{(\omega_{1},\omega_{2}) \in \mathbf{X}^{2} : z_{\omega_{1}} \in Q_{1}, \, z_{\omega_{2}} \in Q_{2} \text{ and } \dist(z_{\omega_{2}},\ell^+_{\omega_{1}}) \leq \Delta\}|\\
                             & \stackrel{\mathclap{\mathrm{def.}}}{=} \sum_{(Q_{1},Q_{2}) \in \overline{\mathcal{E}}} m_{Q_{1},Q_{2}} \gtrsim \Delta^{1 + \varepsilon/2}|\mathbf{X}|^{2}.
    \end{align*}
    This proves \cref{form11} for $\Delta > 0$, depending on $\varepsilon$.
\end{proof}

\subsection{Proof of \texorpdfstring{\cref{thm:ray-inc-endp}}{Theorem~\ref{thm:ray-inc-endp}}}
Now we use \cref{lemma:bipartite} along with the \emph{high-low inequality} (for rays) to prove \cref{thm:ray-inc-endp}.
If one wished to prove \cref{thm:ray-inc-endp} with lines in place of rays, the version of the high-low inequality in \cite[Theorem 1.7]{zbmath:8038252} stated in \cref{form54} would suffice.
We give the statement of an appropriate version for rays in \cref{thm:highLowRays} below, but postpone the proof to \cref{s:highLowAppendix}.

Let $\mathbf{A},\mathbf{B} \subset \Omega$ be finite sets.
The reader may think of $\mathbf{A}$ as parametrising a family of rays, and $\mathbf{B}$ as parametrising a family of points.
Accordingly, define
\begin{equation*}
    g \coloneqq \sum_{\omega \in \mathbf{B}} \delta_{z_{\omega}} \quad \text{and} \quad f_{w} \coloneqq w^{-1} \sum_{\omega \in \mathbf{A}} \mathbf{1}_{[\ell_{\omega}^{+}]_{w/2}}, \, w \in (0,1],
\end{equation*}
where $[\ell_{\omega}^{+}]_{w/2}$ is the open $(w/2)$-neighbourhood of $\ell_{\omega}^{+}$.
Let $\chi \in C_{c}^{\infty}(\R^{2})$ be a fixed non-negative function with $\spt \chi \subset B(1)$.
For $w > 0$, write $\chi_{w} \coloneqq w^{-2}\chi(\cdot/w)$, and define
\begin{equation}\label{form40}
    \mathcal{J}_{w}^{+}(\mathbf{A},\mathbf{B}) \coloneqq w \cdot \langle \chi_{w} \ast \chi_{w/2} \ast f_{w},g\rangle \coloneqq w \cdot \int \chi_{w} \ast \chi_{w/2} \ast f_{w} \, dg.
\end{equation}
The definition of $\mathcal{J}^+_{w}$ depends on $\chi$, but we suppress this from the notation.
The quantity $\mathcal{J}_{w}^{+}$ should be viewed as a variant of the incidence count $\mathcal{I}_{w}^{+}$ which is more amenable to $L^{2}$-orthogonality arguments; we will see this benefit in the proof of \cref{thm:highLowRays}.
For now, we only need to know that $\mathcal{J}_{w}^{+}$ is comparable to $\mathcal{I}_{w}^{+}$ in the following sense:
 \begin{lemma}\label{lemma:incidenceComparison}
     If $\chi \in C_{c}^{\infty}(\R^{2})$ is arbitrary with $\spt \chi \subset B(1)$, then $\mathcal{J}_{w}^{+}(\mathbf{A},\mathbf{B}) \lesssim_{\chi} \mathcal{I}_{2w}^{+}(\mathbf{A},\mathbf{B})$.
 Moreover, if $\chi \geq \mathbf{1}_{B(1/2)}$, then $\mathcal{J}_{w}^{+}(\mathbf{A},\mathbf{B}) \gtrsim \mathcal{I}^{+}_{w}(\mathbf{A},\mathbf{B})$.
\end{lemma}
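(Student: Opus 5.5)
The plan is to expand $\mathcal{J}_{w}^{+}$ as a sum over pairs and compare the resulting kernel pointwise with indicator functions of neighbourhoods of rays. By linearity,
\begin{equation*}
\mathcal{J}_{w}^{+}(\mathbf{A},\mathbf{B}) = \sum_{\omega_{1} \in \mathbf{A}} \sum_{\omega_{2} \in \mathbf{B}} (\chi_{w} \ast \chi_{w/2} \ast \mathbf{1}_{[\ell^{+}_{\omega_{1}}]_{w/2}})(z_{\omega_{2}}),
\end{equation*}
since the factor $w$ in front cancels the factor $w^{-1}$ in $f_{w}$. Both inequalities will therefore follow from pointwise bounds on the single function $h_{\omega_{1}} \coloneqq \chi_{w} \ast \chi_{w/2} \ast \mathbf{1}_{[\ell^{+}_{\omega_{1}}]_{w/2}}$, compared with $\mathbf{1}\{\dist(\cdot,\ell^{+}_{\omega_{1}}) \leq 2w\}$ from above and $\mathbf{1}\{\dist(\cdot,\ell^{+}_{\omega_{1}}) \leq w\}$ from below.

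\emph{Upper bound.} The kernel $\chi_{w} \ast \chi_{w/2}$ is supported in $B(3w/2)$ and satisfies $\|\chi_{w} \ast \chi_{w/2}\|_{L^{1}} = \|\chi\|_{L^{1}}^{2}$. Hence $0 \leq h_{\omega_{1}} \leq \|\chi\|_{L^{1}}^{2}$, and $h_{\omega_{1}}(z) = 0$ unless $z$ lies within $3w/2 + w/2 = 2w$ of $\ell^{+}_{\omega_{1}}$. Summing over pairs gives $\mathcal{J}_{w}^{+} \leq \|\chi\|_{L^{1}}^{2}\, \mathcal{I}^{+}_{2w}$. Note that $\chi$ is non-negative, as fixed in the definition.

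\emph{Lower bound.} Assume $\chi \geq \mathbf{1}_{B(1/2)}$. Then $\chi_{w} \geq w^{-2}\mathbf{1}_{B(w/2)}$ and $\chi_{w/2} \geq 4w^{-2}\mathbf{1}_{B(w/4)}$. Fix $z$ with $\dist(z,\ell^{+}_{\omega_{1}}) \leq w$, and let $p \in \ell^{+}_{\omega_{1}}$ be a nearest point. It then suffices to show that $(\mathbf{1}_{B(w/2)} \ast \mathbf{1}_{B(w/4)} \ast \mathbf{1}_{[\ell^{+}]_{w/2}})(z) \gtrsim w^{4}$. Writing this as a double integral over $x \in B(w/2)$ and $y \in B(w/4)$ of $\mathbf{1}_{[\ell^{+}]_{w/2}}(z - x - y)$, we need a set of $(x,y)$ of measure $\gtrsim w^{4}$ for which $z - x - y$ lies within $w/2$ of the ray. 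Choose $x$ in a small ball around $(z - q)/2$, where $q$ is a point on $\ell^{+}$ near $p$; this has $|z - q|/2 \leq w/2$ up to slack. Then choose $y$ in a small ball so that $z - x - y$ lies in a ball of radius $\sim w$ around a point of $\ell^{+}$ well inside the neighbourhood.

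The main care point is the lower bound when $p$ is the endpoint $z_{\omega_{1}}$, that is, when $z$ lies behind the ray. The neighbourhood $[\ell^{+}]_{w/2}$ still contains the full disc $B(p,w/2)$ at the endpoint, so we can choose the target region to be a disc of radius $\sim w/8$ inside $B(p,w/2)$ located along the direction from $p$ to $z$. The shifts needed total at most $|z - p| \leq w$, and these can be split between $x$ (reach $w/2$) and $y$ (reach $w/4$) together with the radius $w/2$ slack of the neighbourhood, since $w/2 + w/4 + w/2 > w$. Doing this with fixed proportional margins gives a set of $(x,y)$ of measure $\gtrsim w^{4}$ with absolute constants. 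Hence $h_{\omega_{1}}(z) \gtrsim 1$, and summing over pairs yields $\mathcal{J}^{+}_{w} \gtrsim \mathcal{I}^{+}_{w}$.
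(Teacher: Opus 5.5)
Your proposal is correct and follows essentially the same route as the paper: expand $\mathcal{J}^{+}_{w}$ by linearity, then compare $\chi_{w}\ast\chi_{w/2}\ast\mathbf{1}_{[\ell^{+}_{\omega}]_{w/2}}$ pointwise with indicators of neighbourhoods of the ray, using the kernel's support radius $3w/2$ and $L^{1}$-norm $\|\chi\|_{L^{1}}^{2}$ for the upper bound. Your lower bound is in fact slightly more careful than the paper's one-line sketch, which only records $\gtrsim \mathbf{1}_{[\ell^{+}_{\omega}]_{w/2}}$ (literally giving $\mathcal{I}^{+}_{w/2}$); you verify the bound on the whole closed $w$-neighbourhood via $B(p,w/2)\subset[\ell^{+}]_{w/2}$ and $w/2+w/4+w/2>w$, which is what the stated $\mathcal{I}^{+}_{w}$ requires.
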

\begin{proof}
    The upper bound follows from $\chi_{w} \ast \chi_{w/2} \ast \mathbf{1}_{[\ell^+_{\omega}]_{w/2}} \lesssim_{\chi} \mathbf{1}_{[\ell^+_{\omega}]_{2w}}$, which yields
    \begin{equation*}
        \mathcal{J}_{w}^{+}(\mathbf{A},\mathbf{B}) \lesssim_{\chi} \sum_{\omega_{1} \in \mathbf{A}} \sum_{\omega_{2} \in \mathbf{B}} \mathbf{1}_{[\ell_{\omega_{1}}^{+}]_{2w}}(z_{\omega_2}) = \mathcal{I}_{2w}^{+}(\mathbf{A},\mathbf{B}),
    \end{equation*}
    as desired.
    If $\chi \geq \mathbf{1}_{B(1/2)}$, then we also have a nearly matching lower bound $\chi_{w} \ast \chi_{w/2} \ast \mathbf{1}_{[\ell^{+}_{\omega}]_{w/2}} \gtrsim \mathbf{1}_{[\ell^{+}_{\omega}]_{w/2}}$, which yields the stated lower bound.
\end{proof}
Before stating the high-low theorem, we need one more piece of notation: for $\mathbf{X} \subset \Omega$ and $w > 0$, define (following the notation in \cite{zbmath:8038252})
\begin{equation}\label{form29}
    M_{w \times w}(\mathbf{X}) \coloneqq \sup_{z_{0} \in \R^{2}} |\{\omega \in \mathbf{X} : z_{\omega} \in B(z_{0},w)\}|
\end{equation}
and
\begin{equation}\label{form29a}
    M_{1 \times w}(\mathbf{X}) \coloneqq \sup_{\ell_{0} \in \mathcal{A}(2,1)} |\{\omega \in \mathbf{X} : \ell_{\omega} \in B_{d_{\mathcal{A}}}(\ell_{0},w)\}|.
\end{equation}
The ball $B_{d_{\mathcal{A}}}(\ell_{0},w) \subset \mathcal{A}(2,1)$ is defined relative to the following metric $d_{\mathcal{A}}$ on $\mathcal{A}(2,1)$ (the same metric as in \cite[(3)]{zbmath:8038252}).
For $\ell_{1},\ell_{2} \in \mathcal{A}(2,1)$ with slopes $\sigma_{1},\sigma_{2} \in [-1,1]$, we set
\begin{equation*}
    d_{\mathcal{A}}(\ell_{1},\ell_{2}) \coloneqq |\dist(\ell_{1},0) - \dist(\ell_{2},0)| + |\sigma_{1} - \sigma_{2}|.
\end{equation*}
We are then prepared to state the high-low inequality for rays:

\begin{theorem}\label{thm:highLowRays} Let $\mathbf{A},\mathbf{B} \subset \Omega$ be finite sets.
    Then,
    \begin{equation*}
        \Big| \frac{\mathcal{J}_{w}^{+}(\mathbf{A},\mathbf{B})}{w|\mathbf{A}||\mathbf{B}|} - \frac{\mathcal{J}^{+}_{w/2}(\mathbf{A},\mathbf{B})}{(w/2)|\mathbf{A}||\mathbf{B}|} \Big| \leq A_{\chi}\Big(\frac{M_{1 \times w}(\mathbf{A})M_{w \times w}(\mathbf{B})}{|\mathbf{A}||\mathbf{B}|} \cdot w^{-3}\log w^{-1} \Big)^{1/2}, \quad w \in (0,\tfrac{1}{2}],
    \end{equation*}
    where the constant $A_{\chi} > 0$ depends only on $\chi$.
\end{theorem}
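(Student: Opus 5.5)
The plan is to follow the $L^{2}$ argument behind the high-low inequality for lines in \cite[Theorem 1.7]{zbmath:8038252}. The only new task is to control the pairs of rays for which the line orthogonality breaks down.

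\textbf{Reduction to an $L^{2}$ bound.} Assume (after replacing $\chi$ by $\chi \ast \tilde\chi$ if needed) that $\chi$ is radial. Set $\varphi \coloneqq \chi_{w} \ast \chi_{w/2}$ and $\psi \coloneqq \chi_{w/2} \ast \chi_{w/4}$. The quantity to bound is
\begin{displaymath} \langle \varphi \ast f_{w} - \psi \ast f_{w/2}, g\rangle = \langle k, \chi_{w/2} \ast g\rangle, \qquad k \coloneqq \chi_{w} \ast f_{w} - \chi_{w/4} \ast f_{w/2}, \end{displaymath}
since both convolution kernels share the factor $\chi_{w/2}$. By Cauchy--Schwarz it is at most $\|k\|_{2}\|\chi_{w/2} \ast g\|_{2}$. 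For the second factor, $\|\chi_{w/2} \ast g\|_{2}^{2} = \sum_{\omega,\omega' \in \mathbf{B}} \langle \chi_{w/2}\ast\delta_{z_{\omega}},\chi_{w/2}\ast\delta_{z_{\omega'}}\rangle$. Each term is $\lesssim w^{-2}$ and vanishes unless $|z_{\omega}-z_{\omega'}| \leq w$, so this is $\lesssim w^{-2}|\mathbf{B}| M_{w \times w}(\mathbf{B})$. Dividing by $|\mathbf{A}||\mathbf{B}|$, it then suffices to prove
\begin{displaymath} \|k\|_{2}^{2} \lesssim_{\chi} w^{-1}\log(w^{-1})\, |\mathbf{A}| M_{1\times w}(\mathbf{A}). \end{displaymath}

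\textbf{Expanding $\|k\|_{2}^{2}$ and sorting pairs.} Write $k = \sum_{\omega \in \mathbf{A}} k_{\omega}$ with $k_{\omega} = w^{-1}\chi_{w}\ast\mathbf{1}_{[\ell^+_\omega]_{w/2}} - (w/2)^{-1}\chi_{w/4}\ast\mathbf{1}_{[\ell^+_\omega]_{w/4}}$. Each $k_{\omega}$ is supported in $[\ell^{+}_{\omega}]_{2w}$, is bounded by $\lesssim w^{-1}$, and is smooth at scale $w$ with $|\nabla^{j} k_{\omega}| \lesssim_{j} w^{-1-j}$. Away from the $O(w)$-neighbourhood of the endpoint $z_{\omega}$, $k_{\omega}$ coincides with the corresponding line function, whose transversal cross-sections have integral zero (both terms have unit transversal mass). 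Expand $\|k\|_{2}^{2} = \sum_{\omega,\omega'}\langle k_{\omega},k_{\omega'}\rangle$ and split the pairs according to the angle $\theta$ between the slopes. The cases are as follows.
\begin{enumerate}
\item \emph{Near-parallel pairs, $\theta \lesssim w$.} The inner product is $\lesssim w^{-1}$ (overlap area $\lesssim w$, integrand $\lesssim w^{-2}$). It vanishes unless $d_{\mathcal{A}}(\ell_{\omega},\ell_{\omega'}) \lesssim w$, so these pairs contribute $\lesssim w^{-1}|\mathbf{A}|M_{1\times w}(\mathbf{A})$.
\item \emph{Transversal pairs, $\theta \sim 2^{-j} \gg w$, whose intersection point is at distance $\gg w$ from both endpoints.} Here both functions are locally line functions on the parallelogram of overlap. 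Integrating $k_{\omega}$ along its cross-sections against the (smooth, nearly constant at scale $w/\theta$ in that direction) function $k_{\omega'}$ and integrating by parts gives $|\langle k_{\omega},k_{\omega'}\rangle| \lesssim_{N} \theta^{-1}(w/\theta)^{N}$. This is exactly the line computation from \cite{zbmath:8038252}. The number of $\omega'$ whose line meets $[\ell_{\omega}]_{2w}$ at angle $\sim\theta$ is $\lesssim (\theta/w)M_{1\times w}(\mathbf{A})\cdot w^{-1}$-many $d_{\mathcal{A}}$-balls at most, up to a polynomial in $\theta/w$ that the decay absorbs. Summing over dyadic $\theta$ gives $\lesssim w^{-1}|\mathbf{A}|M_{1\times w}(\mathbf{A})$.
\item \emph{Bad transversal pairs, $\theta \sim 2^{-j} \gg w$, where $\ell^{+}_{\omega'}$ meets the $O(w)$-neighbourhood of $z_{\omega}$ (or vice versa).} Orthogonality can fail here, so only the trivial bound $|\langle k_{\omega},k_{\omega'}\rangle| \lesssim w^{-2}\cdot w^{2}/\theta = \theta^{-1}$ is available. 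For fixed $\omega$, the lines $\ell_{\omega'}$ passing through $B(z_{\omega},Cw)$ with slope in a $\theta$-interval around the slope of $\ell_{\omega}$ are covered by $\lesssim \theta/w$ balls of radius $w$ in $d_{\mathcal{A}}$. Hence there are $\lesssim (\theta/w)M_{1\times w}(\mathbf{A})$ such $\omega'$. Each dyadic angle thus contributes $\lesssim w^{-1}M_{1\times w}(\mathbf{A})$ per $\omega$, and summing over the $\lesssim \log w^{-1}$ dyadic angles gives $\lesssim w^{-1}\log(w^{-1})|\mathbf{A}|M_{1\times w}(\mathbf{A})$. This is the only source of the logarithm.
\end{enumerate}

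\textbf{Expected obstacle.} The delicate step is case 3 together with the transition between cases 2 and 3. There I must check that the endpoint error of $k_{\omega}$ is genuinely localised to an $O(w)$-ball around $z_{\omega}$, which holds since the kernels are supported at scale $\leq w$. I must also check that when the crossing point is at distance $r \gg w$ from an endpoint, the integration-by-parts gain degrades only gracefully. If the decay near endpoints is worse than expected, I would fall back to the following: treat every pair whose crossing lies within distance $w/\theta$ of an endpoint as bad. The counting then changes to lines through a $(w/\theta)$-ball at angle $\theta$, which is still $\lesssim \theta^{-1}\cdot(\theta/w)\cdot$ balls. I expect this to be harmless up to constants, but it needs care.
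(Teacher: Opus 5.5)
Your strategy is the paper's. It uses the same Cauchy--Schwarz step, your $k_{\omega}$ is the paper's $\Phi^{+}_{\omega}$, and you make the same split between pairs where line orthogonality survives and pairs where an endpoint sits near the other line. You also use the same trivial bound $\min\{w^{-1},\theta^{-1}\}$ on the bad pairs and the same count of $\lesssim\theta/w$ $d_{\mathcal{A}}$-balls per dyadic angle, which produces the $\log w^{-1}$. As written, however, there are gaps.

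\textbf{The case split and the obstacle you flag.} The $O(w)$-neighbourhoods of two lines crossing at angle $\theta$ overlap in a rhombus of length $\sim w/\theta$. Suppose the crossing is at distance $r$ from $z_{\omega}$ with $w\ll r\lesssim w/\theta$. Then the endpoint lies inside the overlap and $k_{\omega}$ is not a line function there. The cut at $z_{\omega}$ destroys the mean-zero integration, leaving a contribution generically of size $\theta^{-1}$. So your case~2 as stated ("crossing at distance $\gg w$ from both endpoints") contains pairs for which the claimed cancellation fails.

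The right threshold is $r\lesssim w/\theta$, i.e.\ $\dist(z_{\omega},\ell_{\omega'})\lesssim w$. This is exactly what your case~3 condition already describes, and your case~3 count of $\theta/w$ balls is correct for it. So assign those pairs to case~3 and nothing is lost. Your fallback count, by contrast, is wrong: "lines through a $(w/\theta)$-ball at angle $\theta$" is a family of $\sim w^{-1}$ balls. That would give $\sum_{\theta}\theta^{-1}w^{-1}M_{1\times w}(\mathbf{A})\sim w^{-2}M_{1\times w}(\mathbf{A})$ per $\omega$.

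What genuinely remains for the complementary pairs (both endpoints at distance $\geq 20w$ from the other line) is an exact dichotomy: $\Phi^{+}_{\omega}\Phi^{+}_{\omega'}$ is either $\equiv 0$ or $\equiv\Phi_{\omega}\Phi_{\omega'}$. Your phrase "locally line functions on the parallelogram" is this statement, but it needs proof. The paper proves it in Claim~\ref{c3} by a monotonicity argument for $h(x)=(a_{1}x+b_{1})-(a_{2}x+b_{2})$, using that all rays point in the positive $x$-direction.

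\textbf{Localisation.} You never localise. Away from $z_{\omega}$, $k_{\omega}$ is a fixed nonzero transversal profile of height $\sim w^{-1}$ repeated along an infinite ray. Hence $\|k\|_{L^{2}(\R^{2})}=\infty$ and your Cauchy--Schwarz step is vacuous. Likewise, "overlap area $\lesssim w$" in case~1 is false for parallel rays. Since $g$ is supported in $[-1,1]^{2}$, one restricts to $[-2,2]^{2}$ and inserts a smooth cutoff $\psi$, as the paper does.

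The cutoff breaks exact orthogonality. The transversal bound from \cite{zbmath:8038252} is then only $|\int\Phi_{\omega}\Phi_{\omega'}\psi|\lesssim\min\{w^{-1},w^{2}/d_{\mathcal{A}}(\ell_{\omega},\ell_{\omega'})^{3}\}$, not $\theta^{-1}(w/\theta)^{N}$ for every $N$, since the transversal profile has only finitely many vanishing moments. This decay suffices only with the correct count. Partners at $d_{\mathcal{A}}\sim 2^{-j}$ occupy $\lesssim(2^{-j}/w)^{2}$ balls, and $w^{2}2^{3j}(2^{-j}/w)^{2}=2^{j}$ sums to $\lesssim w^{-1}$. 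Your case~2 count should therefore be $(\theta/w)^{2}$ balls.

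\textbf{The radial reduction.} Replacing $\chi$ by a radial kernel is not a valid reduction, because $\mathcal{J}^{+}_{w}$ depends on $\chi$. It is also unnecessary. Move $\chi_{w/2}$ across the pairing as its reflection, and note that the mean-zero transversal profile of $k_{\omega}$ holds for any $\chi$.
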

\begin{remark}
    Let us compare \cref{thm:highLowRays} to \cite[Theorem 1.7]{zbmath:8038252}.
    First of all, one could also define $\mathcal{J}_{w}(\mathbf{A},\mathbf{B})$ as above, but with the ray $\ell_{\omega}^{+}$ inside the definition of $f_{w}$ replaced by the line $\ell_{\omega}$.
    Then the following holds:
    \begin{equation}\label{form54}
        \Big| \frac{\mathcal{J}_{w}(\mathbf{A},\mathbf{B})}{w|\mathbf{A}||\mathbf{B}|} - \frac{\mathcal{J}_{w/2}(\mathbf{A},\mathbf{B})}{(w/2)|\mathbf{A}||\mathbf{B}|} \Big| \leq A_{\chi}\Big(\frac{M_{1 \times w}(\mathbf{A})M_{w \times w}(\mathbf{B})}{|\mathbf{A}||\mathbf{B}|} \cdot w^{-3} \Big)^{1/2}, \quad w \in (0,1].
    \end{equation}
    This is proved in \cite[Appendix A.1]{zbmath:8038252} (and it implies the high-low inequality as stated in \cite[Theorem 1.7]{zbmath:8038252}).
    The inequality \cref{form54} is a ``$\log w^{-1}$'' factor sharper than the one stated in \cref{thm:highLowRays}.
\end{remark}
We are then equipped to prove \cref{thm:ray-inc-endp}.
\begin{proofref}{thm:ray-inc-endp}
    Let $\chi \in C^{\infty}_{c}(\R^{2})$ be a non-negative function satisfying $\mathbf{1}_{B(1/2)} \leq \chi \leq \mathbf{1}_{B(1)}$, and let $A \geq 1$ be an absolute constant to be determined, which in particular is larger than the constant $A_{\chi}$ from \cref{thm:highLowRays} associated to $\chi$.
    Let $\mathbf{X} \subset \Omega$ be a Frostman $(\delta,\alpha,\beta,C)$-set.
    Let $\eta = \eta(\alpha,\beta,\varepsilon) > 0$ and $\Delta_{0} = \Delta_{0}(\alpha,\beta,\varepsilon) > 0$ be the constants given by \cref{lemma:bipartite} with
    \begin{equation*}
        \varepsilon \coloneqq \varepsilon(\alpha,\beta) \coloneqq \frac{\alpha + \beta - 3}{6} \in (0,1].
    \end{equation*}
    (Note that $\eta,\Delta_{0}$ actually only depend on $\alpha,\beta$.) Finally, choose
    \begin{equation*}
        \kappa \coloneqq \kappa(\alpha,\beta) \coloneqq \frac{1}{\min\{\eta,\varepsilon\}}.
    \end{equation*}
    We claim that \cref{thm:ray-inc-endp} holds with this choice of $\kappa$.

    We begin by choosing an intermediate scale $\Delta$.
    The first requirement is that $\Delta \leq \min\{\Delta_0, C^{-1/\eta}\}$.
    This guarantees that \cref{lemma:bipartite} can be applied at scale $\Delta$ (since $\mathbf{X}$ is a Frostman $(\delta,\alpha,\beta,\Delta^{-\eta})$-set).
    The second requirement is that $\Delta$ is sufficiently small so that the following inequality holds:
    \begin{equation}\label{form13}
        \Delta^{\varepsilon} - A \sum_{w \in 2^{-\N} \cap (0,\Delta]} (C^{2}w^{\alpha + \beta - 3}\log w^{-1})^{1/2} \geq \tfrac{1}{2}\Delta^{\varepsilon}.
    \end{equation}
    This can be arranged: if $\Delta = 2^{-N}$ is sufficiently small depending only on $\varepsilon$, it holds $k^{1/2} \leq 2^{-\varepsilon k}$ for all $k \geq N$.
    Thus there is a constant $c_\varepsilon > 0$ so that if $\Delta \leq c_\varepsilon C^{-1/\varepsilon}$,
    \begin{equation*}
        \sum_{k=N}^\infty k^{1/2}2^{-3\varepsilon k} \leq \sum_{k=N}^\infty 2^{-2\varepsilon k} \leq \frac{\Delta^{2\varepsilon}}{1-2^{-2\varepsilon}} \leq \frac{\Delta^{\varepsilon}}{2AC}.
    \end{equation*}

    Let $\Delta \in 2^{-\N}$ be chosen so that
    \begin{equation}\label{e:Dch}
        \Delta \leq \min\{C^{-1/\eta}, c_\varepsilon C^{-1/\varepsilon}, \Delta_0\} < 2\Delta
    \end{equation}
    Let $\mathbf{A},\mathbf{B} \subset \mathbf{X}$ be the spatially $\Delta$-separated subsets obtained by \cref{lemma:bipartite} applied at scale $\Delta$.

    First, note that the conclusion \cref{form11} combined with \cref{lemma:incidenceComparison} implies $\mathcal{J}^+_{\Delta}(\mathbf{A},\mathbf{B}) \gtrsim \mathcal{I}^+_{\Delta}(\mathbf{A},\mathbf{B}) \geq \Delta^{1 + \varepsilon}|\mathbf{X}|^{2}$.
    It follows from the Frostman $(\delta,\alpha,\beta,C)$-set property of $\mathbf{X}$ that
    \begin{equation}\label{form77}
        M_{w \times w}(\mathbf{B}) \lesssim Cw^{\alpha}|\mathbf{X}| \quad \text{and} \quad M_{1 \times w}(\mathbf{A}) \lesssim Cw^{\beta}|\mathbf{X}|, \quad w \in 2^{-\N}.
    \end{equation}
    (To check this detail, note that sets of the form $\{\omega : z_{\omega} \in B(z_{0},w)\}$ and $\{\omega : \ell_{\omega} \in B_{d_{\mathcal{A}}}(\ell_{0},w)\}$ appearing in the definitions of $M_{w \times w}(\mathbf{B})$ and $M_{1 \times w}(\mathbf{A})$ are contained, respectively, in phase space rectangles of the form $\mathbf{R}_{Aw \times Aw \times 1}(\omega_{0})$ and $\mathbf{R}_{1 \times Aw \times Aw}(\omega_{0})$, where $A \geq 1$ is absolute.
    This is discussed in \cite[(17)]{zbmath:8038252}.) 
    
    Fix $w \in [\delta/2,\Delta]$, and apply the high-low inequality (\cref{thm:highLowRays}) at scale $w$ as follows:
    \begin{align*}
        \Big| \frac{\mathcal{J}^+_{w}(\mathbf{A},\mathbf{B})}{w|\mathbf{X}|^{2}} - \frac{\mathcal{J}^+_{w/2}(\mathbf{A},\mathbf{B})}{(w/2)|\mathbf{X}|^{2}} \Big|
        & = \frac{|\mathbf{A}||\mathbf{B}|}{|\mathbf{X}|^{2}} \Big| \frac{\mathcal{J}^+_{w}(\mathbf{A},\mathbf{B})}{w|\mathbf{A}||\mathbf{B}|} - \frac{\mathcal{J}^+_{w/2}(\mathbf{A},\mathbf{B})}{(w/2)|\mathbf{A}||\mathbf{B}|} \Big|\\
        & \leq A_{\chi} \left(\frac{|\mathbf{A}||\mathbf{B}|}{|\mathbf{X}|^{2}} \right)^{1/2}\left(\frac{M_{1 \times w}(\mathbf{A})M_{w \times w}(\mathbf{B})}{|\mathbf{X}|^{2}} \cdot w^{-3} \log w^{-1} \right)^{1/2}\\
        & \stackrel{\mathclap{\cref{form77}}}{\leq}\,\, A(C^{2}w^{\alpha + \beta - 3} \log w^{-1})^{1/2}.
    \end{align*}
    Finally, summing over $w \in [\delta/2,\Delta]$ and using \cref{lemma:incidenceComparison},
    \begin{align*}
        \frac{\mathcal{I}^+_{\delta}(\mathbf{A},\mathbf{B})}{\delta |\mathbf{X}|^{2}} \gtrsim \frac{\mathcal{J}^+_{\delta/2}(\mathbf{A},\mathbf{B})}{\delta|\mathbf{X}|^{2}}
        &\geq \Delta^{\varepsilon} - A \sum_{w \in 2^{-\N} \cap (0,\Delta]} (C^{2}w^{\alpha + \beta - 3} \log w^{-1})^{1/2}\\
        &\stackrel{\mathclap{\cref{form13}}}{\geq}\,\,\tfrac{1}{2}\Delta^{\varepsilon}\stackrel{\textup{\cref{e:Dch}}}{\gtrsim_{\alpha,\beta}} C^{-\max\{\varepsilon/\eta, 1\}}.
    \end{align*}
    Since $\mathbf{A},\mathbf{B}$ are spatially $\Delta$-separated, the proof of \cref{thm:ray-inc-endp} is complete.
    \end{proofref}

\section{Incidence lower bound for rays with \texorpdfstring{$\delta^{\varepsilon}$}{δ\^ε}-loss}\label{s:non-endp}
We now begin the proof of \cref{thm:CPZ2}.
The argument is virtually the same as the proof of \cite[Theorem 1.9]{zbmath:8038252} (or \cref{thm:CPZ}), apart from a few technical difficulties caused by considering ray incidences instead of line incidences.
Fortunately, we are able to use as black boxes some of the most complicated parts in the proof of \cite[Theorem 1.9]{zbmath:8038252} which deal with the ``combinatorics of Lipschitz functions''.

\subsection{An initial estimate for rays}\label{ss:initial} The purpose of this section is to prove \cref{lemma:initialEstimate}.
This is a counterpart of the ``initial estimate'' in \cite[Proposition 4.2]{zbmath:8038252}, with $\mathcal{I}^{+}_{\eta}$ in place of $\mathcal{I}_{\eta}$.

\begin{definition}[Uniformity]
    Let $\mathcal{S} = \{u_{j} \times v_{j} \times w_{j}\}_{j \in J}$ be a family of scale triples satisfying $v_{j} \geq u_{j}w_{j}$, and let $K \geq 1$.
    A finite set $\mathbf{X} \subset \Omega$ is called \emph{$K$-uniform on $\mathcal{S}$} if for every $j \in J$, $\omega \in \mathbf{X}$,
    \begin{equation*}
        |\mathbf{X} \cap \mathbf{R}_{u_{j} \times v_{j} \times w_{j}}(\omega)| \geq \frac{M_{u_{j} \times v_{j} \times w_{j}}(\mathbf{X})}{K}.
    \end{equation*}
    Here $M_{u \times v \times w}(\mathbf{X}) \coloneqq \sup_{\omega \in \Omega} |\mathbf{X} \cap \mathbf{R}_{u \times v \times w}(\omega)|$.

    We will also say that \emph{$\mathbf{X}$ is uniform on a sequence $\{\Delta_{j}\}_{j = 0}^{m}$}.
    This means that $\mathbf{X}$ is uniform on $\{\Delta_{i} \times \Delta_{j} \times \Delta_{k} : i,j,k \in \{0,\ldots,m\} \text{ and } \Delta_{j} \geq \Delta_{i}\Delta_{k}\}$.
\end{definition}

\begin{remark}
    In \cite{zbmath:8038252} the number $M_{u \times v \times w}(\mathbf{X})$ is defined as a maximum over a fixed boundedly overlapping cover of $\R^{3}$ by $(u \times v \times w)$-rectangles, see above \cite[Lemma 3.2]{zbmath:8038252}.
    The two definitions of $M_{u \times v \times w}(\mathbf{X})$ are comparable up to absolute constants.
\end{remark}

\begin{definition}[Rectangular covering numbers]
    For $u,v,w \in (0,1]$ with $v \geq uw$, and $\mathbf{X} \subset \Omega$, we write $|\mathbf{X}|_{u \times v \times w}$ for the minimal number of rectangles of the form $\mathbf{R}_{u \times v \times w}(\omega)$, $\omega \in \Omega$, required to cover $\mathbf{X}$.
\end{definition}

For $K$-uniform sets, there is a useful relation between $M_{u \times v \times w}(\mathbf{X})$ and $|\mathbf{X}|_{u \times v \times w}$:
\begin{lemma}\label{lemma5}
    Let $\mathbf{X} \subset \Omega$ be a finite set which is $K$-uniform on $\{u \times v \times w\}$.
    Then,
    \begin{equation*}
        M_{u \times v \times w}(\mathbf{X}) \lesssim K|\mathbf{X}|/|\mathbf{X}|_{u \times v \times w}.
    \end{equation*}
\end{lemma}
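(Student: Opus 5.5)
We will prove this by a greedy covering argument combined with the uniformity lower bound. Write $\mathbf{R}(\omega) \coloneqq \mathbf{R}_{u \times v \times w}(\omega)$ and $N \coloneqq |\mathbf{X}|_{u \times v \times w}$. The aim is to find a collection of roughly $N$ points $\omega_1,\ldots,\omega_n \in \mathbf{X}$ whose rectangles $\mathbf{R}(\omega_i)$ are pairwise disjoint. Once we have such a collection with $n \gtrsim N$, uniformity gives
\begin{equation*}
|\mathbf{X}| \geq \sum_{i=1}^n |\mathbf{X} \cap \mathbf{R}(\omega_i)| \geq n\,\frac{M_{u\times v\times w}(\mathbf{X})}{K} \gtrsim N\,\frac{M_{u\times v\times w}(\mathbf{X})}{K},
\end{equation*}
which rearranges to the claim.

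To produce the disjoint family, we run a maximal (Vitali-type) selection. We choose a maximal subset $\{\omega_i\} \subset \mathbf{X}$ such that the rectangles $\mathbf{R}(\omega_i)$ are pairwise disjoint. By maximality, every $\omega \in \mathbf{X}$ has $\mathbf{R}(\omega) \cap \mathbf{R}(\omega_i) \neq \emptyset$ for some $i$.

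The key geometric step is an engulfing property. If $\mathbf{R}(\omega) \cap \mathbf{R}(\omega_i) \neq \emptyset$, then $\omega \in \mathbf{R}_{Au \times Av \times Aw}(\omega_i)$ for an absolute constant $A$. To check this, write $\omega_i = (a,b,\sigma)$ and a point of $\mathbf{R}(\omega_i)$ as $(a + r_1, b + \sigma r_1 + r_2, \sigma + r_3)$. Re-centring this at a nearby base point $(a',b',\sigma')$ with $|a-a'| \lesssim u$ and $|\sigma - \sigma'| \lesssim w$ changes the second coordinate by at most about $|\sigma - \sigma'|\,|r_1| \lesssim uw \leq v$. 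This is exactly where the hypothesis $v \geq uw$ enters. Consequently the rectangles $\mathbf{R}_{Au\times Av\times Aw}(\omega_i)$ cover $\mathbf{X}$.

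Each enlarged rectangle $\mathbf{R}_{Au\times Av\times Aw}(\omega_i)$ is in turn covered by $O(1)$ rectangles of size $u\times v\times w$. We get these by subdividing each coordinate direction into $O(A)$ pieces in the sheared coordinates, again using $v \geq uw$ to absorb the shear error. Hence $N \lesssim n$, which is what the first paragraph needed.

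The only genuine obstacle is verifying these two geometric facts for the sheared rectangles: the engulfing property and the $O(1)$ subdivision. Both reduce to the elementary shear estimate above. Everything else is bookkeeping.
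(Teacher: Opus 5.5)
Your argument is correct, and it takes a different route from the paper, which gives no argument of its own. The paper simply quotes \cite[(27)]{zbmath:8038252}, where $M_{u\times v\times w}$ is defined through a fixed boundedly overlapping cover of $\mathbb{R}^{3}$. It then relies on its remark that this normalisation agrees with the present centred one up to absolute constants.

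Your maximal-disjoint-family (Vitali) argument works directly with the definitions used here: supremum over arbitrary centres, uniformity tested on $\mathbf{R}_{u\times v\times w}(\omega)$ with $\omega\in\mathbf{X}$, and minimal covers. It rests on two geometric inputs:
\begin{itemize}
\item engulfing: $\mathbf{R}(\omega)\cap\mathbf{R}(\omega_{i})\neq\emptyset$ implies $\omega\in\mathbf{R}_{2u\times 4v\times 2w}(\omega_{i})$, using $|(\sigma-\sigma')r_{1}'|\le 2uw\le 2v$;
\item covering an enlarged rectangle by $O(1)$ standard ones.
\end{itemize}
These are the same kind of facts that sit behind the comparability remark. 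So the citation buys brevity, while your route buys a self-contained proof in the paper's own normalisation.

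Two points need tidying.

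First, the subdivision step. A piece with slope parameter near $\sigma+lw$ has to be covered by rectangles centred at slope $\sigma+lw$. Over an $x$-range of length $2u$, its shear mismatch with the base slope $\sigma$ is up to $|l|wu\le Auw\le Av$, not $\le v$. So the error is absorbed by $O(A)$ vertical translates per piece rather than by a single rectangle. The total count is still $O(1)$.

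Second, $|\mathbf{X}|_{u\times v\times w}$ only admits centres in $\Omega=[-1,1]^{3}$, and your subdivision centres may leave $\Omega$ (for instance when $\sigma$ is close to $1$). The fix:
\begin{itemize}
\item subdivide instead into pieces $\mathbf{R}_{\frac{u}{2}\times\frac{v}{4}\times\frac{w}{2}}(c)$ and discard those missing $\mathbf{X}$;
\item recentre each remaining piece at a point $\omega\in\mathbf{X}$ it contains.
\end{itemize}
The same shear computation places the piece inside $\mathbf{R}_{u\times v\times w}(\omega)$, because $2\cdot\tfrac{v}{4}+2\cdot\tfrac{u}{2}\cdot\tfrac{w}{2}\le v$.
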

\begin{proof}
    This is \cite[(27)]{zbmath:8038252}.
\end{proof}

\begin{lemma}\label{lemma:initialEstimate}
    Let $\eta \in 2^{-\N} \cap (0,\tfrac{1}{10}]$ and $K \geq 1$.
    Let $\mathbf{X} \subset \Omega$ be a finite set which is $K$-uniform on $\{(1 \times \eta \times \eta),(\eta \times \eta \times 1),(\eta \times \eta \times \eta)\}$.
    Then,
    \begin{equation}\label{form26a}
        \frac{\mathcal{I}^{+}_{10\eta}(\mathbf{X})}{\eta |\mathbf{X}|^{2}} \gtrsim K^{-5} \frac{|\mathbf{X}|_{\eta \times \eta \times \eta}}{\eta |\mathbf{X}|_{\eta \times \eta \times 1}|\mathbf{X}|_{1 \times \eta \times \eta}}.
    \end{equation}
\end{lemma}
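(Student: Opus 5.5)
The plan is to follow the proof of the initial estimate \cite[Proposition 4.2]{zbmath:8038252} and insert one extra step that turns line incidences into ray incidences. First cover $\mathbf{X}$ by $N \coloneqq |\mathbf{X}|_{\eta \times \eta \times \eta}$ rectangles $\mathbf{R}_{\eta\times\eta\times\eta}$. By $K$-uniformity and \cref{lemma5}, every $\omega \in \mathbf{X}$ has $|\mathbf{X} \cap \mathbf{R}_{\eta\times\eta\times\eta}(\omega)| \geq M_{\eta\times\eta\times\eta}(\mathbf{X})/K \gtrsim K^{-1}|\mathbf{X}|/N$. The same holds for the other two scale triples: every $\omega$ has $\gtrsim K^{-1}|\mathbf{X}|/|\mathbf{X}|_{\eta\times\eta\times 1}$ elements in its $\eta\times\eta\times1$ rectangle (points near $z_\omega$ with arbitrary slope), and $\gtrsim K^{-1}|\mathbf{X}|/|\mathbf{X}|_{1\times\eta\times\eta}$ elements in its $1\times\eta\times\eta$ rectangle (points near the line $\ell_\omega$ with nearly the same slope). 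The upper bounds $M \lesssim K|\mathbf{X}|/|\mathbf{X}|_{\cdot}$ will be used to control overcounting.

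The key geometric observation concerns the tube $\mathbf{R}_{1\times\eta\times\eta}(\omega_1)$. Any $\omega_2$ in it has $z_{\omega_2}$ within $O(\eta)$ of $\ell_{\omega_1}$, and $\sigma_{\omega_2}$ within $\eta$ of $\sigma_{\omega_1}$. Hence $\mathcal{I}_{10\eta}$-type incidences between $\omega_1$ and points of this tube hold in both orientations, for lines. For rays we use the following fact: given $\omega_1,\omega_2$ in a common $1\times\eta\times\eta$ rectangle, either $z_{\omega_2}$ lies within $10\eta$ of $\ell^+_{\omega_1}$, or $z_{\omega_1}$ lies within $10\eta$ of $\ell^+_{\omega_2}$. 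The reason is that one of the two base points lies "forward" of the other in the $x$-coordinate, and the slopes agree to within $\eta$ with all points in $[-1,1]^2$. So each unordered near-parallel pair yields at least one ordered ray incidence, and the loss of the orientation costs only a factor of $2$. Note that the constant $10$ in $10\eta$ is presumably chosen to absorb the comparison between the rectangle and the tube of width $\eta$ together with this slope deviation.

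The counting then runs as follows. Fix an $\eta\times\eta\times1$ "point cell" $B$, i.e. the $\omega$ with $z_\omega$ in an $\eta$-disc. Inside $B$ the slopes split into $\sim|\mathbf{X}\cap B|_{\eta\times\eta\times\eta}$ slope classes. By uniformity, $|\mathbf{X}\cap B|_{\eta\times\eta\times\eta}\gtrsim K^{-2}N/|\mathbf{X}|_{\eta\times\eta\times1}$, which follows by dividing the upper bound on $M_{\eta\times\eta\times1}$ by the upper bound on $M_{\eta\times\eta\times\eta}$. For each $\omega_1$, the tube $\mathbf{R}_{1\times\eta\times\eta}(\omega_1)$ contains $\gtrsim K^{-1}|\mathbf{X}|/|\mathbf{X}|_{1\times\eta\times\eta}$ elements. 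By the dichotomy above, at least half of the ordered pairs in the tube, counted in one of the two orientations, are ray incidences. Summing over $\omega_1\in\mathbf{X}$ gives
\begin{equation*}
  2\mathcal{I}^+_{10\eta}(\mathbf{X}) \geq \sum_{\omega_1}|\mathbf{X}\cap\mathbf{R}_{1\times\eta\times\eta}(\omega_1)| \gtrsim K^{-1}\frac{|\mathbf{X}|^2}{|\mathbf{X}|_{1\times\eta\times\eta}}.
\end{equation*}
This bound still needs to be upgraded to the stated form with the factor $|\mathbf{X}|_{\eta\times\eta\times\eta}/|\mathbf{X}|_{\eta\times\eta\times 1}$. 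For the upgrade, we count incidences not just along nearly parallel directions but across all slopes. For a point $\omega_2$ and each slope class of $\eta$-rectangles, the rays from elements in that class whose tube passes through $z_{\omega_2}$ all contribute. Concretely, we double count the triples (tube $T$ of type $1\times\eta\times\eta$, pair $\omega_1,\omega_2\in T$) and apply Cauchy--Schwarz over the $\sim|\mathbf{X}|_{1\times\eta\times\eta}$ tubes, exactly as in \cite[Proposition 4.2]{zbmath:8038252}. The uniformity factors then convert the numbers of rectangles into the claimed ratio, producing the total loss $K^{-5}$.

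I expect the main obstacle to be matching the precise exponent structure of \cref{form26a}: keeping track of which uniformity bound, lower or upper, is used where, so that at most $K^5$ is lost. This is the same bookkeeping as in \cite{zbmath:8038252}. If a direct tracking proves awkward, I would prove the line version with $\mathcal{I}_{10\eta}$ and a $K^{-5}$ loss by the argument of \cite[Proposition 4.2]{zbmath:8038252}, restricting in that argument to incidences between elements of a common $1\times\eta\times\eta$ rectangle. I would then apply the orientation dichotomy pairwise, at the cost of a factor of $2$.
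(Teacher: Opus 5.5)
Your orientation dichotomy is correct: for $\omega_{2} \in \mathbf{R}_{1\times\eta\times\eta}(\omega_{1})$, one of the two ordered pairs is a ray incidence at scale $2\eta$. The resulting bound $\mathcal{I}^{+}_{10\eta}(\mathbf{X}) \gtrsim K^{-1}|\mathbf{X}|^{2}/|\mathbf{X}|_{1\times\eta\times\eta}$ is also correct. However, the upgrade to \cref{form26a}, which is the real content of the lemma, is never carried out, and neither mechanism you offer can deliver it.

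The dichotomy only applies to pairs lying in a common $1\times\eta\times\eta$ rectangle. By \cref{lemma5} there are at most $\lesssim |\mathbf{X}|\,M_{1\times\eta\times\eta}(\mathbf{X}) \lesssim K|\mathbf{X}|^{2}/|\mathbf{X}|_{1\times\eta\times\eta}$ such ordered pairs. This falls short of the lower bound asserted by \cref{form26a} by a factor $\gtrsim K^{-6}|\mathbf{X}|_{\eta\times\eta\times\eta}/|\mathbf{X}|_{\eta\times\eta\times 1}$, and that factor is unbounded in general. For a large uniform subset (as in \cref{lemma1}) of $(\eta\Z)^{3}\cap\Omega$, it is $\gtrsim \eta^{-1}$ up to polylogarithmic factors.

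Hence neither route works:
\begin{itemize}
\item double counting triples $(T,\omega_{1},\omega_{2})$ with $\omega_{1},\omega_{2}\in T$ and applying Cauchy--Schwarz;
\item your fallback of restricting the line argument to common-tube pairs and then applying the dichotomy.
\end{itemize}
The missing factor comes precisely from pairs in which $\omega_{2}$ has an \emph{arbitrary} slope and $z_{\omega_{2}}$ lies near $\ell_{\omega_{1}}$. For such pairs the dichotomy fails: if $z_{\omega_{2}}$ sits behind $z_{\omega_{1}}$ and $\sigma_{\omega_{2}}$ is far from $\sigma_{\omega_{1}}$, neither ordered pair is a ray incidence. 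For the same reason, your claim that all rays of a slope class whose tube passes through $z_{\omega_{2}}$ contribute is false.

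The missing idea is to impose the orientation by hand along each tube. The paper proceeds as follows.
\begin{itemize}
\item Cover $\mathbf{X}$ by boundedly overlapping rectangles $\mathbf{R}=\mathbf{R}_{2\times\eta\times\eta}(\omega)$ with $\omega\in\mathbf{X}$.
\item Take a maximal $\eta$-separated set $P_{\mathbf{R}}\subset P[\mathbf{X}\cap\mathbf{R}]$; it satisfies $|P_{\mathbf{R}}|\gtrsim|\mathbf{X}\cap\mathbf{R}|_{\eta\times\eta\times\eta}$.
\item Split $P_{\mathbf{R}}$ by $x$-coordinate into a left part $A_{\mathbf{R}}$ and a right part $B_{\mathbf{R}}$ of comparable size.
\item Every $\omega_{1}\in\mathbf{X}\cap 2\mathbf{R}$ based within $2\eta$ of $A_{\mathbf{R}}$ is then ray-incident at scale $10\eta$ to every $\omega_{2}\in\mathbf{X}$, of any slope, based within $2\eta$ of $B_{\mathbf{R}}$.
\end{itemize}
Uniformity gives $\gtrsim K^{-3}|\mathbf{X}|/|\mathbf{X}|_{1\times\eta\times\eta}$ such $\omega_{1}$ and $\gtrsim K^{-1}|P_{\mathbf{R}}|\,|\mathbf{X}|/|\mathbf{X}|_{\eta\times\eta\times 1}$ such $\omega_{2}$. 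The missing factor enters here, after summing with $\sum_{\mathbf{R}}|\mathbf{X}\cap\mathbf{R}|_{\eta\times\eta\times\eta}\gtrsim K^{-1}|\mathbf{X}|_{\eta\times\eta\times\eta}$.

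The split needs $|P_{\mathbf{R}}|\geq 2$. This is why the paper first disposes of the case $|\mathbf{X}|_{\eta\times\eta\times\eta}\leq AK^{2}|\mathbf{X}|_{1\times\eta\times\eta}$. It does so using pairs with $|z_{\omega_{1}}-z_{\omega_{2}}|\leq 3\eta$, which are automatically ray incidences since $z_{\omega_{1}}\in\ell^{+}_{\omega_{1}}$. These give $\mathcal{I}^{+}_{10\eta}(\mathbf{X})\geq K^{-1}|\mathbf{X}|^{2}/|\mathbf{X}|_{\eta\times\eta\times 1}$, which is enough in that case.
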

\begin{proof}
    We start by disposing of a special case where $|\mathbf{X}|_{\eta \times \eta \times \eta} \leq AK^{2}|\mathbf{X}|_{1 \times \eta \times \eta}$.
    Here $A \geq 1$ is an absolute constant to be determined later (above \cref{form59}).
    In this case, note that
    \begin{equation}\label{form76}
        \mathbf{R}_{\eta \times \eta \times 1}(\omega) \subset \{\omega' \in \Omega : |z_{\omega} - z_{\omega'}| \leq 3\eta\}, \qquad \omega \in \Omega, \, \eta \in (0,1].
    \end{equation}
    In particular, since always $z_{\omega} \in \ell_{\omega}^{+}$, we find
    \begin{equation*}
        \mathcal{I}^{+}_{10\eta}(\mathbf{X}) \geq \sum_{\omega \in \mathbf{X}} |\{\omega' \in \mathbf{X} : |z_{\omega} - z_{\omega'}| \leq 3\eta\}| \geq \sum_{\omega \in \mathbf{X}} |\mathbf{X} \cap \mathbf{R}_{\eta \times \eta \times 1}(\omega)| \geq K^{-1}|\mathbf{X}|M_{\eta \times \eta \times 1}(\mathbf{X}),
    \end{equation*}
    using $K$-uniformity in the final estimate.
    Since further $M_{\eta \times \eta \times 1}(\mathbf{X}) \geq |\mathbf{X}|/|\mathbf{X}|_{\eta \times \eta \times 1}$, we see that $\mathcal{I}^{+}_{10\eta}(\mathbf{X}) \geq K^{-1}|\mathbf{X}|^{2}/|\mathbf{X}|_{\eta \times \eta \times 1}$.
    Since $|\mathbf{X}|_{\eta \times \eta \times \eta} \leq AK^{2}|\mathbf{X}|_{1 \times \eta \times \eta}$ by assumption, this yields
    \begin{equation*}
        \frac{\mathcal{I}^{+}_{10\eta}(\mathbf{X})}{\eta|\mathbf{X}|^{2}} \geq A^{-1}K^{-3}\frac{|\mathbf{X}|_{\eta \times \eta \times \eta}}{\eta|\mathbf{X}|_{\eta \times \eta \times 1}|\mathbf{X}|_{1 \times \eta \times \eta}},
    \end{equation*}
    which is better than \cref{form26a}.
    So, in the sequel we may assume that
    \begin{equation}\label{form19a}
        |\mathbf{X}|_{\eta \times \eta \times \eta} \geq AK^{2}|\mathbf{X}|_{1 \times \eta \times \eta}.
    \end{equation}

    We now construct a boundedly overlapping cover of $\mathbf{X}$ by rectangles of the form $\mathbf{R}_{2 \times \eta \times \eta}(\omega)$ with $\omega \in \mathbf{X}$.
    Consider the map $\Pi \colon \Omega \to \R^{2}$ defined by $\Pi(a,b,\sigma) \coloneqq (b - a\sigma,\sigma)$.
    Let $\{(a_{j},b_{j},\sigma_{j})\}_{j \in J} \subset \mathbf{X}$ be a maximal set with the property that the points $\Pi(a_{j},b_{j},\sigma_{j}) \in \R^{2}$ are $(\eta/2)$-separated.
    Define $\mathcal{R} \coloneqq \{\mathbf{R}_{2 \times \eta \times \eta}(a_{j},b_{j},\sigma_{j}) : j \in J\}$.

    We claim that $\mathbf{X} \subset \cup \mathcal{R}$.
    Fix $(a,b,c) \in \mathbf{X}$.
    By definition, there exists $(a_{j},b_{j},\sigma_{j})$, $j \in J$, such that $|(b - a\sigma) - (b_{j} - a_{j}\sigma_{j})| \leq \eta/2$ and $|\sigma - \sigma_{j}| \leq \eta/2$.
    We claim that $(a,b,\sigma) \in \mathbf{R}_{2 \times \eta \times \eta}(a_{j},b_{j},\sigma_{j})$.
    To see this, first expand
    \begin{equation*}
        (a,b,\sigma) = (a_{j} + (a - a_{j}), b_j + (b - b_{j}),\sigma_{j} + (\sigma - \sigma_{j})) \eqqcolon (a_{j} + r_{1},b_{j} + (b - b_{j}),\sigma_{j} + r_{3}),
    \end{equation*}
    where $|r_{1}| = |a - a_{j}| \leq 2$ and $|r_{3}| \leq \eta/2$.
    Next, note that for some $\xi \in \R$ with $|\xi| \leq \eta/2$,
    \begin{equation*}
        b - b_{j} = a\sigma - a_{j}\sigma_{j} + \xi = \sigma_{j}(a - a_{j}) + a(\sigma - \sigma_{j}) + \xi = \sigma_{j}r_{1} + r_{2},
    \end{equation*}
    where $|r_{2}| = |a(\sigma - \sigma_{j}) + \xi| \leq \eta$.
    We have managed to write $(a,b,\sigma)$ in the form $(a,b,\sigma) = (a_{j} + r_{1},b_{j} + \sigma_{j}r_{1} + r_{2},\sigma_{j} + r_{3})$ for some $(r_{1},r_{2},r_{3}) \in [-2,2] \times [-\eta,\eta]^{2}$, and thus $(a,b,\sigma) \in \mathbf{R}_{2 \times \eta \times \eta}(a_{j},b_{j},\sigma_{j})$.

    \begin{claim}
        The rectangles $2\mathcal{R} \coloneqq \{\mathbf{R}_{2 \times 2\eta \times 2\eta}(a_{j},b_{j},\sigma_{j})\}$ have bounded overlap.
    \end{claim}
    \begin{proof}
        It suffices to show that if $(x,y,z) \in \mathbf{R}_{2 \times 2\eta \times 2\eta}(a,b,\sigma) \cap \mathbf{R}_{2 \times 2\eta \times 2\eta}(a',b',\sigma')$, then $|\Pi(a,b,\sigma) - \Pi(a',b',\sigma')| \lesssim \eta$.
        Write
        \begin{equation}\label{form57}
            (x,y,z) = (a + r_{1},b + r_{1}\sigma + r_{2},\sigma + r_{3}) = (a' + r_{1}',b' + r_{1}'\sigma' + r_{2}',\sigma' + r_{3}'),
        \end{equation}
        where $r_{1},r_{1}' \in [-2,2]$ and $r_{2},r_{2}',r_{3},r_{3}' \in [-2\eta,2\eta]$.
        Then $a + r_{1} = a' + r_{1}'$ and $|\sigma - \sigma'| \leq |r_{3}| + |r_{3}'| \leq 4\eta$.
        To simplify the calculation, let us pretend that $\sigma = \sigma'$; this is correct up to $O(\eta)$-terms.
        From \cref{form57} we see that
        \begin{equation*}
            b - b' = (r_{1}' - r_{1})\sigma + O(\eta) = (a - a')\sigma + O(\eta).
        \end{equation*}
        Therefore $|(b - a\sigma) - (b' - a'\sigma)| \lesssim \eta$, and finally $|\Pi(a,b,\sigma) - \Pi(a',b',\sigma')| \lesssim \eta$.
    \end{proof}
    Note that $|\mathcal{R}| \geq |\mathbf{X}|_{1 \times \eta \times \eta}$ by the definition of $|\cdot|_{1 \times \eta \times \eta}$, and $|\mathbf{X} \cap \mathbf{R}| \geq K^{-1}M_{1 \times \eta \times \eta}(\mathbf{X})$ by $K$-uniformity for all $\mathbf{R} \in \mathcal{R}$ (the rectangles in $\mathcal{R}$ are centred at $\mathbf{X}$).
    Consequently,
    \begin{equation*}
        M_{\eta \times \eta \times \eta}(\mathbf{X})|\mathbf{X} \cap \mathbf{R}|_{\eta \times \eta \times \eta} \geq |\mathbf{X} \cap \mathbf{R}| \geq K^{-1}M_{1 \times \eta \times \eta}(\mathbf{X}) \geq K^{-1}|\mathbf{X}|/|\mathbf{X}|_{1 \times \eta \times \eta}.
    \end{equation*}
    Rearranging, and using \cref{lemma5} to estimate
    \begin{equation}\label{form56}
        |\mathbf{X}| \gtrsim K^{-1}M_{\eta \times \eta \times \eta}(\mathbf{X})|\mathbf{X}|_{\eta \times \eta \times \eta},
    \end{equation}
    we find
    \begin{equation}\label{form54b}
        |\mathbf{X} \cap \mathbf{R}|_{\eta \times \eta \times \eta} \gtrsim K^{-2}|\mathbf{X}|_{\eta \times \eta \times \eta}/|\mathbf{X}|_{1 \times \eta \times \eta} \stackrel{\cref{form19a}}{\gtrsim} A, \qquad \mathbf{R} \in \mathcal{R}.
    \end{equation}
    Next, fix $\mathbf{R}= \mathbf{R}_{2 \times \eta \times \eta}(a_{0},b_{0},\sigma_{0}) \in \mathcal{R}$, thus $(a_{0},b_{0},\sigma_{0}) \in \mathbf{X}$, and
    \begin{equation*}
        \mathbf{R} = \{(a_{0} + r_{1},b_{0} + \sigma_{0}r_{1} + r_{2},\sigma_{0} + r_{3}) : (r_{1},r_{2},r_{3}) \in [-2,2] \times [-\eta,\eta]^{2}\}.
    \end{equation*}
    Let $P_{\mathbf{R}} \subset P[\mathbf{X} \cap \mathbf{R}] \subset [-1,1]^{2}$ be a maximal $\eta$-separated set.
    We claim that
    \begin{equation}\label{form55}
        |P_{\mathbf{R}}| \gtrsim |\mathbf{X} \cap \mathbf{R}|_{\eta \times \eta \times \eta} \gtrsim A.
    \end{equation}
    To see this, it suffices to show that $\{\omega_{2} \in \mathbf{R} : |z_{\omega_{2}} - z_{\omega_{1}}| \leq \eta\} \subset B(\omega_{1},3\eta)$ for all $\omega_{1} \in \mathbf{R}$.
    This is true, since every pair $\omega_{1},\omega_{2} \in \mathbf{R}$ differs in the $3^{rd}$ coordinate by $\leq 2\eta$.

    Write $\ell_{\mathbf{R}} \coloneqq \ell_{(a_{0},b_{0},\sigma_{0})} = (a_{0},b_{0}) + \R(1,\sigma_{0})$.
    From the definition of $\mathbf{R}$, one sees that $P[\mathbf{R}] \subset [\ell_{\mathbf{R}}]_{\eta}$ (the $\eta$-neighbourhood of $\ell_{\mathbf{R}}$).
    Summarising the previous observations, $P_{\mathbf{R}}$ is an $\eta$-separated subset of $[\ell_{\mathbf{R}}]_{\eta}$ with cardinality $|P_{\mathbf{R}}|\gtrsim |\mathbf{X} \cap \mathbf{R}|_{\eta\times\eta\times\eta} \gtrsim A$.
    Thus choosing $A \geq 1$ sufficiently large, $|P_{\mathbf{R}}| \geq 2$.
    Since $|\sigma_{0}| \leq 1$, we now extract disjoint subsets $A_{\mathbf{R}},B_{\mathbf{R}} \subset P_{\mathbf{R}}$ with cardinalities
    \begin{equation}\label{form59}
        |A_{\mathbf{R}}| \sim |P_{\mathbf{R}}| \sim |B_{\mathbf{R}}|
    \end{equation}
    such that $\max \{x : (x,y) \in A_{\mathbf{R}}\} \leq \min \{x : (x,y) \in B_{\mathbf{R}}\}$.
    It is then easy to check that the following holds:
    \begin{claim}\label{c6}
        Assume that $z_{1},z_{2} \in \R^{2}$ and $\sigma \in \R$ satisfy
        \begin{equation*}
            \dist(z_{1},A_{\mathbf{R}}) \leq 2\eta, \quad \dist(z_{2},B_{\mathbf{R}}) \leq 2\eta, \quad \text{and} \quad |\sigma - \sigma_{0}| \leq 2\eta.
        \end{equation*}
        Then $\dist(z_{2},\ell_{z_{1},\sigma}^{+}) \leq 10\eta$.
    \end{claim}
    We now define
    \begin{equation*}
        \mathbf{A}(\mathbf{R}) \coloneqq \{\omega \in \mathbf{X} \cap 2\mathbf{R} : \dist(z_{\omega},A_{\mathbf{R}}) \leq 2\eta\},
    \end{equation*}
    where $2\mathbf{R} \coloneqq \mathbf{R}_{2 \times 2\eta \times 2\eta}(a_{0},b_{0},\sigma_{0})$, and
    \begin{equation*}
        \mathbf{B}(\mathbf{R}) \coloneqq \{\omega \in \mathbf{X} : \dist(z_{\omega},B_{\mathbf{R}}) \leq 2\eta\}.
    \end{equation*}
    Note that if $\omega_{1} = (z_{1},\sigma_{1}) \in \mathbf{A}(\mathbf{R})$ and $\omega_{2} = (z_{2},\sigma_{2}) \in \mathbf{B}(\mathbf{R})$, then the points $z_{j}$ and the slope $\sigma_{1}$ satisfy the hypotheses of \cref{c6}.
    Thus,
    \begin{equation*}
        \mathbf{A}(\mathbf{R}) \times \mathbf{B}(\mathbf{R}) \subset \{(\omega_{1},\omega_{2}) \in \mathbf{X}^2 : \dist(z_{\omega_{2}},\ell_{\omega_{1}}^{+}) \leq 10\eta\}.
    \end{equation*}
    Since the rectangles $2\mathbf{R}$ have bounded overlap, this implies
    \begin{equation*}
        \mathcal{I}^{+}_{10\eta}(\mathbf{X}) = |\{(\omega_{1},\omega_{2}) \in \mathbf{X}^2 : \dist(z_{\omega_{2}},\ell_{\omega_{1}}^{+}) \leq 10\eta\}| \gtrsim \sum_{\mathbf{R} \in \mathcal{R}} |\mathbf{A}(\mathbf{R})||\mathbf{B}(\mathbf{R})|.
    \end{equation*}
    We finally claim that
    \begin{equation}\label{form58}
        |\mathbf{A}(\mathbf{R})| \gtrsim K^{-3}\frac{|\mathbf{X}|}{|\mathbf{X}|_{1 \times \eta \times \eta}} \quad \text{and} \quad |\mathbf{B}(\mathbf{R})| \gtrsim K^{-1}|P_{\mathbf{R}}| \cdot \frac{|\mathbf{X}|}{|\mathbf{X}|_{\eta \times \eta \times 1}}
    \end{equation}
    for all $\mathbf{R} \in \mathcal{R}$.
    Once these lower bound have been established, we may complete the proof as follows:
    \begin{align*}
        \sum_{\mathbf{R} \in \mathcal{R}} |\mathbf{A}(\mathbf{R})||\mathbf{B}(\mathbf{R})|
       & \gtrsim K^{-4}\frac{|\mathbf{X}|^{2}}{|\mathbf{X}|_{1 \times \eta \times \eta}|\mathbf{X}|_{\eta \times \eta \times 1}} \sum_{\mathbf{R} \in \mathcal{R}} |P_{\mathbf{R}}|\\
       & \stackrel{\mathclap{\cref{form55}}}{\gtrsim}  K^{-4} \frac{|\mathbf{X}|^{2}}{|\mathbf{X}|_{1 \times \eta \times \eta}|\mathbf{X}|_{\eta \times \eta \times 1}} \sum_{\mathbf{R} \in \mathcal{R}} |\mathbf{X} \cap \mathbf{R}|_{\eta \times \eta \times \eta}\\
       & \stackrel{\mathclap{(\ast)}}{\gtrsim} K^{-5}\frac{|\mathbf{X}|^{2}|\mathbf{X}|_{\eta \times \eta \times \eta}}{|\mathbf{X}|_{1 \times \eta \times \eta}|\mathbf{X}|_{\eta \times \eta \times 1}},
    \end{align*}
    where the estimate $(\ast)$ follows from $K$-uniformity:
    \begin{equation*}
        \sum_{\mathbf{R} \in \mathcal{R}} |\mathbf{X} \cap \mathbf{R}|_{\eta \times \eta \times \eta} \geq \sum_{\mathbf{R} \in \mathcal{R}} \frac{|\mathbf{X} \cap \mathbf{R}|}{M_{\eta \times \eta \times \eta}(\mathbf{X})} \geq \frac{|\mathbf{X}|}{M_{\eta \times \eta \times \eta}(\mathbf{X})} \stackrel{\cref{form56}}{\gtrsim} K^{-1}|\mathbf{X}|_{\eta \times \eta \times \eta}.
    \end{equation*}

    Let us finally prove \cref{form58}, starting with the estimate for $|\mathbf{A}(\mathbf{R})|$.
    First, write
    \begin{equation*}
        |\mathbf{A}(\mathbf{R})| \gtrsim \sum_{z \in A_{\mathbf{R}}} |\{\omega \in \mathbf{X} \cap 2\mathbf{R} : |z_{\omega} - z| \leq 2\eta\}|.
    \end{equation*}
    Fix $z \in A_{\mathbf{R}}$, and recall that $z = z_{\omega_{0}}$ for some $\omega_{0} \in \mathbf{X} \cap \mathbf{R}$.
    Now $\mathbf{X} \cap \mathbf{R}_{\eta \times \eta \times \eta}(\omega_{0}) \subset \{\omega \in \mathbf{X} \cap 2\mathbf{R} : |z_{\omega} - z| \leq 2\eta\}$, so
    \begin{equation*}
        |\{\omega \in \mathbf{X} \cap 2\mathbf{R} : |z_{\omega} - z| \leq 2\eta\}| \geq |\mathbf{X} \cap \mathbf{R}_{\eta \times \eta \times \eta}(\omega_{0})| \gtrsim K^{-1}\frac{|\mathbf{X}|}{|\mathbf{X}|_{\eta \times \eta \times \eta}}
    \end{equation*}
    by $K$-uniformity.
    Recalling from \cref{form59} that $|A_{\mathbf{R}}| \sim |P_{\mathbf{R}}| \gtrsim |\mathbf{X} \cap \mathbf{R}|_{\eta \times \eta \times \eta}$, we get
    \begin{equation*}
        |\mathbf{A}(\mathbf{R})| \gtrsim K^{-1} \frac{|\mathbf{X} \cap \mathbf{R}|_{\eta \times \eta \times \eta}|\mathbf{X}|}{|\mathbf{X}|_{\eta \times \eta \times \eta}}\stackrel{\textup{\cref{form54b}}}{\gtrsim} K^{-3} \frac{|\mathbf{X}|}{|\mathbf{X}|_{1 \times \eta \times \eta}},
    \end{equation*}
    as desired.
    The lower bound for $|\mathbf{B}(\mathbf{R})|$ is more straightforward, and is based on the following observation.
    Fix $z = z_{\omega_{0}} \in B_{\mathbf{R}}$ with $\omega_{0} \in \mathbf{X}$.
    Then,
    \begin{equation*}
        \mathbf{X} \cap \mathbf{R}_{\eta \times \eta \times 1}(\omega_{0}) \subset \{\omega \in \mathbf{X} : |z - z_{\omega}| \leq 2\eta\},
    \end{equation*}
    so $|\{\omega \in \mathbf{X} : |z - z_{\omega}| \leq 2\eta\}| \gtrsim K^{-1}|\mathbf{X}|/|\mathbf{X}|_{\eta \times \eta \times 1}$ by $K$-uniformity.
    Consequently,
     \begin{equation*}
        |\mathbf{B}(\mathbf{R})| \gtrsim \sum_{z \in B_{\mathbf{R}}} |\{\omega \in \mathbf{X}: |z - z_{\omega}| \leq 2\eta\}| \gtrsim K^{-1}|P_{\mathbf{R}}|\cdot \frac{|\mathbf{X}|}{|\mathbf{X}|_{\eta \times \eta \times 1}},
    \end{equation*}
    as desired.
    This completes the proof of \cref{form58}, and therefore the proof of \cref{lemma:initialEstimate}.
\end{proof}

\subsection{Large uniform subsets, branching functions, and effective triples}
In this section we recap concepts and basic results from \cite[Section 3]{zbmath:8038252}, starting with \cite[Lemma 3.6]{zbmath:8038252}:
\begin{lemma}\label{lemma1}
    Let $m,T \in \N$ and $\delta \coloneqq 2^{-mT}$.
    Write
    \begin{equation*}
        K = K(m,T) \coloneqq (Am^{3}\log(1/\delta))^{m^{3} + 2},
    \end{equation*}
    where $A \geq 1$ is a sufficiently large absolute constant.
    For every finite set $\mathbf{X} \subset \Omega$ there exists a subset $\mathbf{X}' \subset \mathbf{X}$ with $|\mathbf{X}'| \geq K^{-1}|\mathbf{X}|$ such that $\mathbf{X}'$ is $K$-uniform on the sequence $\{2^{-jT}\}_{j = 0}^{m}$.
\end{lemma}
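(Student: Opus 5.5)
The plan is the standard iterative pigeonholing/regularisation argument, essentially the proof of \cite[Lemma 3.6]{zbmath:8038252}. The relevant family of scale triples is
\begin{equation*}
\mathcal{S} = \{\Delta_i \times \Delta_j \times \Delta_k : \Delta_j \geq \Delta_i\Delta_k\}, \qquad \Delta_j = 2^{-jT},
\end{equation*}
which contains at most $(m+1)^3 \lesssim m^3$ elements. Enumerate these triples as $s_1,\ldots,s_N$. I will construct a decreasing chain $\mathbf{X} = \mathbf{X}_0 \supset \mathbf{X}_1 \supset \cdots$. At each step I refine the current set so that it becomes roughly uniform on one more triple, while losing only a factor $\lesssim m^3\log(1/\delta)$ in cardinality.

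\textbf{Single-scale step.} Fix a triple $u \times v \times w$. Take a fixed boundedly overlapping partition (or cover) of $\Omega$ by rectangles of this shape. Up to absolute constants, this may be replaced by a dyadic-type partition after a shear in the second coordinate, as in \cite{zbmath:8038252}. Each cell contains between $1$ and $|\mathbf{X}| \leq \delta^{-O(1)}$ points; the polynomial bound can be assumed after reducing $\mathbf{X}$ to a $\delta$-separated set, or we simply pigeonhole over $\lesssim \log(1/\delta)$ dyadic ranges, which suffices since only the ratio matters. Group the cells by the dyadic value $2^k$ of $|\mathbf{X}\cap \text{cell}|$. Then pick the value of $k$ whose cells carry the most points, and keep only those points. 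This loses a factor $\lesssim \log(1/\delta)$, and every surviving point lies in a cell with $\sim 2^k$ points, where $2^k$ is comparable to the new maximum. Comparability between cells and $\mathbf{R}_{u\times v\times w}(\omega)$ centred at surviving $\omega$ then gives uniformity with constant $O(1)$ at this triple.

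\textbf{Iteration; the main obstacle.} Refining at a later triple destroys uniformity at earlier ones, because counts in earlier cells drop. The fix, following \cite{zbmath:8038252}, is to regularise on the full tree. For every point, record the vector of dyadic occupancy numbers
\begin{equation*}
(k_1(\omega),\ldots,k_N(\omega))
\end{equation*}
across all triples, and pigeonhole on the whole vector at once. Any single pass leaves residual non-uniformity, so I repeat the pigeonholing until the vector stabilises. Each repetition costs a factor $(A\log(1/\delta))^{N}$, and each coordinate can only decrease. There is a potential-function issue: the ratio between the current count and the current maximum can fail only boundedly many times per coordinate before the total mass would drop below the allowed threshold. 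Controlling the number of rounds, so that the total loss is $(Am^3\log(1/\delta))^{m^3+2}$, is the delicate point. I would handle it by arguing that at each round either all $N$ uniformity conditions hold with constant $K$, or some coordinate of the maximal count vector halves. Otherwise, if a condition fails by factor $K$, then removing the deficient points loses at most a $1/(N\log)$ fraction, and I would attempt to bound the total number of rounds by $m^3+2$ in this way. Since the statement is exactly \cite[Lemma 3.6]{zbmath:8038252}, if the bookkeeping becomes cumbersome I would simply cite that lemma, noting that our $M_{u\times v\times w}$ agrees with theirs up to absolute constants (see the remark after the uniformity definition), which only changes $A$.
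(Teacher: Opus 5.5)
Your fallback, citing \cite[Lemma 3.6]{zbmath:8038252} and using that the two definitions of $M_{u\times v\times w}$ agree up to absolute constants, is exactly what the paper does. The paper states the lemma as a recap from \cite{zbmath:8038252} and gives no proof of its own. The self-contained argument you sketch first, however, has a genuine gap and does not prove the lemma as written.

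\textbf{Why the sketch fails.}
\begin{enumerate}
\item The accounting cannot close. One pigeonhole on the whole occupancy vector costs $(A\log(1/\delta))^{N}$, with $N\sim m^{3}$ triples. So even $m^{3}+2$ rounds would lose a factor of order $(\log(1/\delta))^{N(m^{3}+2)}$. The stated $K$ leaves room for only about one logarithmic loss per triple in total.
\item Your termination mechanism (``some coordinate of the maximal count vector halves'') only bounds the number of rounds by $O(N\log(1/\delta))$, since each maximum can halve on the order of $\log(1/\delta)$ times. With your per-round loss this gives $(A\log(1/\delta))^{O(N^{2}\log(1/\delta))}$. That is not even polynomial in $1/\delta$, and it destroys the property $K=\delta^{-o_{T\to\infty}(1)}$ that everything downstream relies on.
\item The claim that deleting the deficient points at a failing triple costs only a $1/(N\log(1/\delta))$ fraction is false in general. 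Take one cell with $M$ points and $M^{2}$ cells with one point each: the deficient points are almost the whole set. Moreover, deletions at one triple lower the counts at the others and create new deficient points.
\item A minor point: passing to a $\delta$-separated subset is not allowed, since $\mathbf{X}$ is an arbitrary finite set and this can lose an unbounded factor. To get $O(\log(1/\delta))$ dyadic classes, discard the cells containing fewer than $\delta^{3}/C$ times the current cardinality. There are only $O(\delta^{-3})$ cells, so this costs a small fraction.
\end{enumerate}

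\textbf{The missing idea.} Treat the triples one at a time, pigeonholing only once at each new triple, and restore the earlier triples by deletion rather than by re-pigeonholing.
\begin{enumerate}
\item For each triple $s=u\times v\times w$, fix a genuine partition $\mathcal{P}_{s}$ of $\Omega$ into sheared cells
\[
\{a\in I,\ \sigma\in J,\ b-\sigma_{J}(a-a_{I})\in[\beta,\beta+\tfrac{v}{4})\},
\]
where $I,J$ are intervals of lengths $\tfrac{u}{2},\tfrac{w}{2}$ with left endpoints $a_{I},\sigma_{J}$. Then the cell of $\omega$ lies in $\mathbf{R}_{u\times v\times w}(\omega)$, and each $\mathbf{R}_{u\times v\times w}$ is covered by $O(1)$ cells.
\item Suppose every occupied cell of $\mathcal{P}_{s_{i}}$, $i\le n$, contains at least $\mu_{i}/K_{n}$ points of $\mathbf{X}_{n}$, where $\mu_{i}$ is the maximal cell count.
\item Pigeonhole at $s_{n+1}$ to get a union $\mathbf{Y}$ of whole cells of $\mathcal{P}_{s_{n+1}}$ with counts in $[2^{k},2^{k+1})$ and $|\mathbf{Y}|\ge|\mathbf{X}_{n}|/L$, where $L\lesssim\log(1/\delta)$.
\item Repeatedly delete any cell, at any of $s_{1},\ldots,s_{n+1}$, whose current count is positive but below the fixed threshold $\mu_{i}/(8NK_{n}L)$ (respectively $2^{k}/8$ at $s_{n+1}$).
\item Each cell is deleted at most once, and $\mathcal{P}_{s_{i}}$ has at most $K_{n}|\mathbf{X}_{n}|/\mu_{i}$ occupied cells. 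Hence at most $\tfrac14|\mathbf{Y}|$ points are deleted in total. Since counts only decrease, the upper bounds $\mu_{i}$ and $2^{k+1}$ persist.
\end{enumerate}
This gives $K_{n+1}\lesssim NLK_{n}$ and $|\mathbf{X}_{n+1}|\gtrsim|\mathbf{X}_{n}|/\log(1/\delta)$. After $N\lesssim m^{3}$ steps, both the loss and the uniformity constant are of the form $(AN\log(1/\delta))^{N}$. This has the shape of the stated $K$; in this argument the factor $N\sim m^{3}$ in the base comes from splitting the deletion budget among the triples.
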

\begin{definition}[$(m,T)$-uniformity]\label{def1}
    Let $m,T \in \N$.
    A set $\mathbf{X} \subset \Omega$ is called \emph{$(m,T)$-uniform} if $\mathbf{X}$ is $K$-uniform on $\{2^{-jT}\}_{j = 0}^{m}$ with constant $K = (Am^{3}\log(1/\delta))^{m^{3} + 2}$, where $\delta \coloneqq 2^{-mT}$, and $A \geq 1$ is the absolute constant from \cref{lemma1}.
\end{definition}
From now on, if $m,T \in \N$ are fixed, the symbol ``$K$'' will refer to the constant in \cref{def1}.
A crucial point is that if $m \in \N$ is fixed, and $\delta = 2^{-mT}$, then $K= \delta^{-o_{T \to \infty}(1)}$.

\begin{definition}[Branching function]\label{def:branchingFunction}
    Let $m,T \in \N$, and assume that $\mathbf{X} \subset \Omega$ is $K$-uniform on the sequence $\{2^{-jT}\}_{j = 0}^{m}$.
    Write
    \begin{equation*}
        \mathbf{D}_{m} \coloneqq \{(x,y,z) \in \tfrac{1}{m}\Z^{3} : x,y \in [0,1] \text{ and } z \in [0,\min\{1,x + y\}]\}.
    \end{equation*}
    We define the \emph{branching function $f_{\mathbf{X}} \colon \mathbf{D}_{m} \to [0,\infty)$ of $\mathbf{X}$} via the relation
    \begin{equation*}
        |\mathbf{X}|_{\delta^{x} \times \delta^{z} \times \delta^{y}} = \delta^{-f_{\mathbf{X}}(x,y,z)}, \qquad (x,y,z) \in \mathbf{D}_{m}.
    \end{equation*}
\end{definition}
\begin{remark}
    The well-posedness of $f_{\mathbf{X}}$ does not require the uniformity of $\mathbf{X}$.
    However, note that if $\mathbf{X}$ is uniform, and $(x,y,z) = (i/m,j/m,k/m) \in \mathbf{D}_{m}$, then $\delta^{x} \times \delta^{z} \times \delta^{y} = 2^{-iT} \times 2^{-kT} \times 2^{-jT}$ is one of the scales on which $\mathbf{X}$ is uniform.
\end{remark}

\begin{definition}[Renormalised branching function]
    Let $\mathbf{X},\mathbf{D}_{m},f_{\mathbf{X}}$ be given as in \cref{def:branchingFunction}.
    Fix $(x_{0},y_{0}) \in \tfrac{1}{m}\Z$ such that $(x_{0},y_{0},x_{0} + y_{0}) \in \mathbf{D}_{m}$.
    Define
    \begin{equation*}
        f_{\mathbf{X}}(x,y,z;x_{0},y_{0}) \coloneqq f_{\mathbf{X}}(x_{0} + x,y_{0} + y,x_{0} + y_{0} + z) - f_{\mathbf{X}}(x_{0},y_{0},x_{0} + y_{0}),
    \end{equation*}
    for all triples $(x,y,z) \in \tfrac{1}{m} \Z^{3}$ such that the right hand side is well-defined.
\end{definition}
As the name suggests, $f_{\mathbf{X}}(\cdots;x_{0},y_{0})$ is (roughly speaking) the branching function of the renormalised restriction of $\mathbf{X}$ to a rectangle of the form $\mathbf{R}_{\delta^{x_{0}} \times \delta^{x_{0} + y_{0}} \times \delta^{y_{0}}}(\omega)$.
\cref{lemma2} (which is \cite[Lemma 3.11]{zbmath:8038252}) will make this precise.

\begin{definition}[Rescaling map]\label{def:rescalingMap}
    Let $u,w \in (0,1]$ and $\omega_{0} = (a_{0},b_{0},\sigma_{0}) \in \Omega$.
    For $\mathbf{R} \coloneqq \mathbf{R}_{u \times uw \times w}(\omega_{0})$, we define the \emph{rescaling map}
    \begin{equation*}
        \psi_{\mathbf{R}}(\omega) \coloneqq \Big(\frac{a - a_{0}}{u},\frac{b - (b_{0} + \sigma_{0}(a - a_{0}))}{uw},\frac{\sigma - \sigma_{0}}{w}\Big), \qquad \omega = (a,b,\sigma) \in \Omega.
    \end{equation*}
\end{definition}

\begin{lemma}\label{lemma2}
    Let $m,T \in \N$, and let $\mathbf{X} \subset \Omega$ be $(m,T)$-uniform.
    Let $(x_{0},y_{0},z_{0}) \in \mathbf{D}_{m}$, $\omega_{0} \in \mathbf{X}$, and let $\mathbf{R} \coloneqq \mathbf{R}_{\delta^{x_{0}} \times \delta^{x_{0} + y_{0}} \times \delta^{y_{0}}}(\omega_{0})$.
    Finally, let $\mathbf{X}' \subset \psi_{\mathbf{R}}(\mathbf{X} \cap \mathbf{R})$ be an $(m',T)$-uniform subset with $|\mathbf{X}'| \geq K^{-1}|\mathbf{X} \cap \mathbf{R}|$, where $m' \coloneqq m(1 - x_{0} - y_{0})$.
    Then,
    \begin{equation*}
        |\mathbf{X}'|_{\delta^{x} \times \delta^{z} \times \delta^{y}} \sim_{K^{7}} \frac{|\mathbf{X}|_{\delta^{x_{0} + x} \times \delta^{x_{0} + y_{0} + z} \times \delta^{y_{0} + y}}}{|\mathbf{X}|_{\delta^{x_{0}} \times \delta^{x_{0} + y_{0}} \times \delta^{y_{0}}}} = \delta^{-f_{\mathbf{X}}(x,y,z;x_{0},y_{0})}, \qquad (x,y,z) \in \mathbf{D}_{m'}.
    \end{equation*}
\end{lemma}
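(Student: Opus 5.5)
The plan is a direct count. Only $\mathbf{X}$ needs uniformity; $\mathbf{X}'$ is used solely through $\mathbf{X}' \subset \psi_{\mathbf{R}}(\mathbf{X} \cap \mathbf{R})$ and its cardinality lower bound. Write $u_0 \coloneqq \delta^{x_0}$, $w_0 \coloneqq \delta^{y_0}$ (so $\mathbf{R} = \mathbf{R}_{u_0 \times u_0w_0 \times w_0}(\omega_0)$). For $(x,y,z) \in \mathbf{D}_{m'}$ put $u \coloneqq \delta^{x}$, $v \coloneqq \delta^{z}$, $w \coloneqq \delta^{y}$. The two phase-space scales involved are the ``big'' scale $u_0 \times u_0w_0 \times w_0$ and the ``small'' scale $u_0u \times u_0w_0v \times w_0w$. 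Both are of the form $2^{-iT} \times 2^{-kT} \times 2^{-jT}$, with the constraint $v \geq uw$ inherited from $z \leq x + y$. Hence $\mathbf{X}$ is $K$-uniform on both, and \cref{lemma5} applies to each.

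\textbf{Step 1 (geometry of $\psi_{\mathbf{R}}$).} I will show that $\psi_{\mathbf{R}}$ maps small-scale rectangles $\mathbf{R}_{u_0u \times u_0w_0v \times w_0w}(\omega)$ with $\omega \in \mathbf{R}$ into $O(1)$ rectangles $\mathbf{R}_{u \times v \times w}(\psi_{\mathbf{R}}(\omega))$. Conversely, $\psi_{\mathbf{R}}^{-1}$ of a rectangle $\mathbf{R}_{u \times v \times w}(\omega')$ should lie in $O(1)$ small-scale rectangles. This is an explicit computation. Write $\omega = (a,b,\sigma)$ and a perturbation $(a + r_1, b + \sigma r_1 + r_2, \sigma + r_3)$. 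The second rescaled coordinate changes by
\begin{displaymath}
\frac{(\sigma - \sigma_0)r_1 + r_2}{u_0w_0}.
\end{displaymath}
Since $|\sigma - \sigma_0| \lesssim w_0$ and $|r_1| \leq u_0u$, the shear term contributes $\lesssim u \leq$ well, more precisely $\lesssim (w_0 \cdot u_0 u)/(u_0w_0) \cdot$ (rescaled slope), i.e.\ exactly the $\tilde\sigma r_1'$ term of a rescaled rectangle plus an error $\lesssim uw \leq v$. Consequently
\begin{displaymath}
|\psi_{\mathbf{R}}(\mathbf{Y})|_{u \times v \times w} \sim |\mathbf{Y}|_{u_0u \times u_0w_0v \times w_0w}, \qquad \mathbf{Y} \subset \mathbf{X} \cap \mathbf{R}.
\end{displaymath}
I expect this bookkeeping (handling the shear and the constraint $v \geq uw$) to be the only genuinely fiddly step.

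\textbf{Step 2 (counting in $\mathbf{X} \cap \mathbf{R}$).} Write $M_{\mathrm{big}}$ and $M_{\mathrm{small}}$ for $M_{u_0 \times u_0w_0 \times w_0}(\mathbf{X})$ and $M_{u_0u \times u_0w_0v \times w_0w}(\mathbf{X})$. Since $\omega_0 \in \mathbf{X}$, uniformity together with the definition of $M$ gives $|\mathbf{X} \cap \mathbf{R}| \sim_K M_{\mathrm{big}}$. \cref{lemma5} and the trivial reverse bound give $M_{\mathrm{big}} \sim_K |\mathbf{X}|/|\mathbf{X}|_{\mathrm{big}}$, and likewise for the small scale.

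\textbf{Step 3 (transfer to $\mathbf{X}'$).} For the upper bound, $|\mathbf{X}'|_{u \times v \times w} \leq |\psi_{\mathbf{R}}(\mathbf{X} \cap \mathbf{R})|_{u \times v \times w}$. By Step 1 this is $\lesssim |\mathbf{X} \cap \mathbf{R}|_{\mathrm{small}}$. Every small rectangle meeting $\mathbf{X} \cap \mathbf{R}$ is centred near a point of $\mathbf{X}$ and hence contains $\gtrsim K^{-1}M_{\mathrm{small}}$ points of $\mathbf{X}$, essentially inside (a slight enlargement of) $\mathbf{R}$. Taking a disjoint subfamily therefore gives
\begin{displaymath}
|\mathbf{X} \cap \mathbf{R}|_{\mathrm{small}} \lesssim K\,|\mathbf{X} \cap 2\mathbf{R}|/M_{\mathrm{small}} \lesssim_K M_{\mathrm{big}}/M_{\mathrm{small}}.
\end{displaymath}
For the lower bound, each rescaled rectangle contains $\lesssim M_{\mathrm{small}}$ points by Step 1. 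Hence
\begin{displaymath}
|\mathbf{X}'|_{u \times v \times w} \gtrsim \frac{|\mathbf{X}'|}{M_{\mathrm{small}}} \geq \frac{K^{-1}|\mathbf{X} \cap \mathbf{R}|}{M_{\mathrm{small}}} \gtrsim_K \frac{M_{\mathrm{big}}}{M_{\mathrm{small}}}.
\end{displaymath}

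\textbf{Conclusion.} Combining with Step 2,
\begin{displaymath}
\frac{M_{\mathrm{big}}}{M_{\mathrm{small}}} \sim_{K^{O(1)}} \frac{|\mathbf{X}|_{\mathrm{small}}}{|\mathbf{X}|_{\mathrm{big}}} = \delta^{-f_{\mathbf{X}}(x,y,z;x_0,y_0)}.
\end{displaymath}
Counting the factors of $K$ gives at most $K^{7}$. This is also the content of \cite[Lemma 3.11]{zbmath:8038252}, which one may alternatively cite directly.
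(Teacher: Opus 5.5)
Your proof is correct. It differs from the paper in that the paper does not prove \cref{lemma2} at all; it imports it as \cite[Lemma 3.11]{zbmath:8038252}. Your direct count is a valid self-contained substitute.

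\textbf{Step 1 is exact.} For every $\omega=(a,b,\sigma)$ and every $(r_1,r_2,r_3)$ one has
\begin{displaymath}
\psi_{\mathbf{R}}(a+r_1,\,b+\sigma r_1+r_2,\,\sigma+r_3)=\Big(A+\tfrac{r_1}{u_0},\;B+S\tfrac{r_1}{u_0}+\tfrac{r_2}{u_0w_0},\;S+\tfrac{r_3}{w_0}\Big),\qquad (A,B,S)\coloneqq\psi_{\mathbf{R}}(\omega),
\end{displaymath}
because the shear term $(\sigma-\sigma_0)r_1/(u_0w_0)$ equals $S\cdot r_1/u_0$ on the nose. Hence $\psi_{\mathbf{R}}$ maps $\mathbf{R}_{u_0u\times u_0w_0v\times w_0w}(\omega)$ exactly onto $\mathbf{R}_{u\times v\times w}(\psi_{\mathbf{R}}(\omega))$. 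There is no error term, and $\omega\in\mathbf{R}$ is not needed. Replace the garbled sentence about ``an error $\lesssim uw$'' with this identity.

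\textbf{Remaining routine inputs.} In the upper bound, make the disjoint-subfamily step precise by taking a maximal family of points of $\mathbf{X}\cap\mathbf{R}$ whose small rectangles are pairwise disjoint. This uses the engulfing property of the rectangles, which is where $v\geq uw$ enters. You also need to re-center covers at points of $\Omega$. Both cost only absolute constants.

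\textbf{State the range of $(x,y,z)$ explicitly.} Your claim that both scales are uniform scales for $\mathbf{X}$ needs $x_0+x,\ y_0+y,\ x_0+y_0+z\in\tfrac{1}{m}\Z\cap[0,1]$ together with $z\le x+y$. So $(x,y,z)$ should lie on the $\tfrac{1}{m}$-grid with entries at most $1-x_0-y_0$. This is exactly where the right-hand side is defined, and it is how the lemma is used in \cref{prop1}. A literal reading of $\mathbf{D}_{m'}$ (a $\tfrac{1}{m'}$-grid running up to $1$) would not produce uniform scales.

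\textbf{What each route buys.} Citing CPZ is shorter and matches the paper's strategy of using their machinery as a black box. Your argument makes the lemma self-contained. It also shows that the $(m',T)$-uniformity of $\mathbf{X}'$ is never used: only $\mathbf{X}'\subset\psi_{\mathbf{R}}(\mathbf{X}\cap\mathbf{R})$ and $|\mathbf{X}'|\geq K^{-1}|\mathbf{X}\cap\mathbf{R}|$ enter. Finally, it gives the comparison with a loss of $K^{3}$ (up to absolute constants) rather than $K^{7}$.
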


\begin{definition}[Effective triple for $\mathbf{X}$]\label{def:effectiveTriple1}
    Let $m,T \in \N$, and let $\mathbf{X} \subset \Omega$ be $(m,T)$-uniform with branching function $f_{\mathbf{X}} \colon \mathbf{D}_{m} \to [0,\infty)$.
    Define the functions auxiliary functions
    \begin{equation}\label{def:b}
        b_{\mathbf{X}}(t;x,y) \coloneqq f_{\mathbf{X}}(t,t,t;x,y)- (f_{\mathbf{X}}(t,0,t;x;y) + f_{\mathbf{X}}(0,t,t;x,y) - t)
    \end{equation}
    and
    \begin{equation}\label{def:e}
        e_{\mathbf{X}}(s;x,y) \coloneqq \tfrac{1}{2}(f_{\mathbf{X}}(s,0,s;x,y) + f_{\mathbf{X}}(0,s,s;x,y) - 3s),
    \end{equation}
    for all parameters $x,y,s,t \in \tfrac{1}{m}\Z$ such that the right hand sides are well-defined.

    Let $(x,y) \in \tfrac{1}{m}\Z^{2}$ and $t \in \tfrac{1}{m}\Z$ with $0 \leq t \leq 1 - (x + y)$, and let $c_{1},c_{2} \geq 0$.
    A triple $(t;x,y)$ is called \emph{$(c_{1},c_{2})$-effective for $\mathbf{X}$} if $\max\{t,x,y\} \leq c_{2} \leq \tfrac{1}{3}$, and
    \begin{equation*}
        b_{\mathbf{X}}(t;x,y) + e_{\mathbf{X}}(s;x,y) \geq c_{1}, \qquad s \in [t,1 - (x + y)].
    \end{equation*}
\end{definition}

\begin{remark}\label{rem4}
    The meaning of the functions $b_{\mathbf{X}},e_{\mathbf{X}}$ is explained well in \cite[Section 4.2]{zbmath:8038252}, but let us say a few words.
    If $x = 0 = y$, then by definition
       \begin{equation*}
        \delta^{-b_{\mathbf{X}}(t;0,0)} = \frac{|\mathbf{X}|_{\delta^{t} \times \delta^{t} \times \delta^{t}}}{\delta^{t}|\mathbf{X}|_{\delta^{t} \times \delta^{t} \times 1}|\mathbf{X}|_{1 \times \delta^{t} \times \delta^{t}}}.
    \end{equation*}
    This is the quantity appearing on the right hand side of the ``initial estimate'', \cref{lemma:initialEstimate} with $\eta = \delta^{t}$.
    So, \cref{lemma:initialEstimate} tells us  roughly speaking that $\mathcal{I}^{+}_{\delta^{t}}(\mathbf{X})/(\delta^{t}|\mathbf{X}|^{2}) \geq \delta^{-\beta_{\mathbf{X}}(t;0,0)}$.

    On the other hand, the function $e_{\mathbf{X}}$ could be called the ``high-low error'', because
    \begin{equation*}
        \delta^{e_{\mathbf{X}}(s;0,0)} = \left(|\mathbf{X}|^{-1}_{\delta^{s} \times \delta^{s} \times 1}|\mathbf{X}|^{-1}_{1 \times \delta^{s} \times \delta^{s}} \delta^{-3s} \right)^{1/2}.
    \end{equation*}
    For uniform sets $\mathbf{X}$, the right hand side roughly matches the right hand side of the high-low inequality (\cref{thm:highLowRays}) with $\mathbf{A} = \mathbf{X} = \mathbf{B}$; we will systematically ignore the eventually harmless $\log w^{-1}$ factor (and various other constants) in this heuristic discussion.
    
     Now, assume that $(t;0,0)$ happens to be a $(c_{1},c_{2})$-effective triple.
     Then, combining the ``initial estimate'' (\cref{lemma:initialEstimate}) and the high-low inequality,
     \begin{align*}
         \frac{\mathcal{I}_{\delta}^{+}(\mathbf{X})}{\delta |\mathbf{X}|^{2}}
         & \geq \frac{\mathcal{I}^{+}_{\delta^{t}}(\mathbf{X})}{\delta^{t}|\mathbf{X}|^{2}} - \sum_{s \in [t,1]} \left(|\mathbf{X}|^{-1}_{\delta^{s} \times \delta^{s} \times 1}|\mathbf{X}|^{-1}_{1 \times \delta^{s} \times \delta^{s}} \delta^{-3s} \right)^{1/2}\\
         & \geq \delta^{-b_{\mathbf{X}}(t;0,0)}\Big(1 - \sup_{s \in [t,1]}\delta^{b_{\mathbf{X}}(t;0,0) + e_{\mathbf{X}}(s;0,0)} \Big) \geq \delta^{-b_{\mathbf{X}}(t;0,0)}(1 - \delta^{c_{1}}).
     \end{align*}
     Provided $\delta^{c_{1}} \leq \tfrac{1}{2}$, this shows that $\mathcal{I}_{\delta}^{+}(\mathbf{X})/(\delta |\mathbf{X}|^{2}) \geq \tfrac{1}{2}\delta^{-b_{\mathbf{X}}(t;0,0)}$.
     Moreover, $\delta^{-b_{\mathbf{X}}(t;0,0)} \geq \delta^{O(c_{2})}$ thanks to the (effective triple) assumption $t \leq c_{2}$, and the Lipschitz continuity of $t \mapsto b_{\mathbf{X}}(t;0,0)$.
     In summary, a $(c_{1},c_{2})$-effective triple of the form $(t;0,0)$ yields $\mathcal{I}_{\delta}^{+}(\mathbf{X})/(\delta |\mathbf{X}|^{2}) \geq \delta^{O(c_{2})}$.
     The same conclusion remains true for all $(c_{1},c_{2})$-effective triples $(t;x,y)$, roughly speaking by applying the argument above to $\mathbf{X} \cap \mathbf{R}$, where $\mathbf{R}$ is a $(\delta^{x} \times \delta^{x + y} \times \delta^{y})$-rectangle.
     For the details, see \cref{prop1}.
 \end{remark}

For the proof of \cref{prop1}, we record a corollary of the high-low theorem for rays (\cref{thm:highLowRays}) for uniform sets.
This is a counterpart of \cite[Proposition 4.1]{zbmath:8038252}.

\begin{corollary}\label{cor1}
    Let $w \in 2^{-\N} \cap (0,\tfrac{1}{2}]$, $K \geq 1$, and let $\mathbf{X} \subset \Omega$ be a finite set which is $K$-uniform on the scales $\{w \times w \times 1,1 \times w \times w\}$.
    Then, for every $A \in 2^{\N}$ with $Aw \leq \tfrac{1}{2}$, it holds
    \begin{equation}\label{form61}
        \Big| \frac{\mathcal{J}^{+}_{w}(\mathbf{X})}{w|\mathbf{X}|^{2}} - \frac{\mathcal{J}^{+}_{w/A}(\mathbf{X})}{(w/A)|\mathbf{X}|^{2}} \Big| \lesssim KA^{2}\left(|\mathbf{X}|_{w \times w \times 1}^{-1}|\mathbf{X}|_{1 \times w \times w}^{-1}w^{-3}\log w^{-1} \right)^{1/2},
    \end{equation}
    and
    \begin{equation}\label{form62}\Big|
        \frac{\mathcal{J}^{+}_{w}(\mathbf{X})}{w|\mathbf{X}|^{2}} - \frac{\mathcal{J}^{+}_{Aw}(\mathbf{X})}{(Aw)|\mathbf{X}|^{2}} \Big| \lesssim KA^{1/2}\left(|\mathbf{X}|_{w \times w \times 1}^{-1}|\mathbf{X}|_{1 \times w \times w}^{-1}w^{-3}\log w^{-1} \right)^{1/2}.
    \end{equation}
    The implicit constants depend on the function ``$\chi$'' for which $\mathcal{J}^{+}$ has been defined, see \cref{form40}.
\end{corollary}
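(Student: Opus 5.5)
The plan is to telescope \cref{thm:highLowRays} over dyadic scales between $w/A$ (resp.\ $Aw$) and $w$, with $\mathbf{A} = \mathbf{B} = \mathbf{X}$. The only inputs needed are upper bounds for $M_{1 \times v}(\mathbf{X})$ and $M_{v \times v}(\mathbf{X})$ at each intermediate scale $v$, in terms of the covering numbers at the single scale $w$.

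First, I would convert the $M$-quantities into rectangular maxima. As noted around \cref{form77} (cf.\ \cite[(17)]{zbmath:8038252}), the sets defining $M_{v \times v}(\mathbf{X})$ and $M_{1 \times v}(\mathbf{X})$ lie in $O(1)$ rectangles of type $\mathbf{R}_{v \times v \times 1}$ and $\mathbf{R}_{1 \times v \times v}$. Hence
\begin{displaymath}
M_{v \times v}(\mathbf{X}) \lesssim M_{v \times v \times 1}(\mathbf{X}) \quad \text{and} \quad M_{1 \times v}(\mathbf{X}) \lesssim M_{1 \times v \times v}(\mathbf{X}).
\end{displaymath}
Uniformity is only available at scale $w$, so I would use monotonicity. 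For \cref{form61}, with $v \in [w/A, w]$, the rectangles at scale $v$ are contained in those at scale $w$. Therefore $M_{v \times v \times 1} \leq M_{w \times w \times 1}$, and likewise $M_{1 \times v \times v} \leq M_{1 \times w \times w}$. By \cref{lemma5},
\begin{displaymath}
M_{w \times w \times 1}(\mathbf{X}) \lesssim K|\mathbf{X}|/|\mathbf{X}|_{w \times w \times 1},
\end{displaymath}
and similarly for $1 \times w \times w$. So each term $v$ contributes at most
\begin{displaymath}
K\bigl(|\mathbf{X}|_{w\times w\times 1}^{-1}|\mathbf{X}|_{1\times w\times w}^{-1} v^{-3}\log v^{-1}\bigr)^{1/2}.
\end{displaymath}
Since $v^{-3} \le A^3 w^{-3}$ and $\log v^{-1} \le (1+\log A)\log w^{-1}$, each term is at most $KA^{3/2}(\log A)^{1/2}$ times the target quantity. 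There are $\log_2 A$ terms, so summing gives a total of $\lesssim KA^{3/2}(\log A)^{3/2} \lesssim KA^{2}$. This yields \cref{form61}.

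For \cref{form62}, I would telescope over $v \in [w, Aw/2]$, applying the high-low inequality at scales $v \le 1/2$ (allowed since $Aw \le 1/2$). Monotonicity no longer applies directly, because $v \ge w$. Instead, I would cover a $v \times v \times 1$ rectangle by $\lesssim (v/w)^2$ rectangles of size $w \times w \times 1$, and a $1 \times v \times v$ rectangle by $\lesssim (v/w)^2$ rectangles of size $1 \times w \times w$. This gives
\begin{displaymath}
M_{v\times v\times 1}(\mathbf{X}) \lesssim (v/w)^2 M_{w\times w\times 1}(\mathbf{X}),
\end{displaymath}
and the analogous bound for $1 \times v \times v$. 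Then, using $\log v^{-1} \le \log w^{-1}$, the term at scale $v$ is bounded by
\begin{displaymath}
K\bigl((v/w)^4 (w/v)^{3}\bigr)^{1/2}(\cdots)^{1/2} = K(v/w)^{1/2}(\cdots)^{1/2}.
\end{displaymath}
Summing this geometric series over dyadic $v \le Aw$ gives $\lesssim KA^{1/2}$, which is \cref{form62}.

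The main thing to verify is the covering step in the second case. Covering $\mathbf{R}_{v \times v \times 1}$ by $w \times w \times 1$ rectangles needs $(v/w)^2$ pieces in the $(a,b)$ variables, and the sheared structure of phase-space rectangles requires some care. A quick check suggests a crude covering with an $O((v/w)^{2})$ count suffices, as in the paper's earlier covering remarks (below \cref{form9}). If a worse power appeared it would spoil $A^{1/2}$, so I would verify the shear carefully there. The constants from $\chi$ enter only through $A_\chi$ in \cref{thm:highLowRays}.
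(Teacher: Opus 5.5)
Your proposal is correct and is essentially the paper's argument: telescope \cref{thm:highLowRays} with $\mathbf{A} = \mathbf{B} = \mathbf{X}$, bound the $M$-quantities by monotonicity at the finer scales and by a $(v/w)^{2}$ covering at the coarser scales, and then pass to covering numbers at the single scale $w$ via $K$-uniformity (\cref{lemma5}). The shear issue you flag goes away if you do what the paper does: perform the covering on the ball-based quantities $M_{v \times v}(\mathbf{X})$ and $M_{1 \times v}(\mathbf{X})$, where $(v/w)^{2}$ Euclidean balls or $d_{\mathcal{A}}$-balls of radius $w$ obviously suffice, and convert to phase-space rectangles only at scale $w$.
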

\begin{proof}
    Fix $2^{j} \in 2^{\N} \cap [1,A/2]$.
    Applying \cref{thm:highLowRays} with $\mathbf{A} = \mathbf{X} = \mathbf{B}$, and using the trivial estimates $M_{2^{-j}w \times 2^{-j}w}(\mathbf{X}) \leq M_{w \times w}(\mathbf{X})$ and $M_{1 \times 2^{-j}w}(\mathbf{X}) \leq M_{1 \times w}(\mathbf{X})$,
    \begin{equation}\label{form60a}
        \Big| \frac{\mathcal{J}^{+}_{2^{-j}w}(\mathbf{X})}{(2^{-j}w)|\mathbf{X}|^{2}} - \frac{\mathcal{J}^{+}_{2^{-j - 1}w}(\mathbf{X})}{(2^{-j - 1}w)|\mathbf{X}|^{2}} \Big| \lesssim \Big(\frac{M_{w \times w}(\mathbf{X})M_{1 \times w}(\mathbf{X})}{|\mathbf{X}|^{2}} (2^{-j}w)^{-3} \log (2^{-j}w)^{-1} \Big)^{1/2}.
    \end{equation}
    To proceed, note that $M_{1 \times w}(\mathbf{X})/|\mathbf{X}| \lesssim K|\mathbf{X}|_{w \times w \times 1}^{-1}$ and $M_{1 \times w}(\mathbf{X})/|\mathbf{X}| \lesssim K|\mathbf{X}|_{1 \times w \times w}^{-1}$ by $K$-uniformity, see \cite[(27)]{zbmath:8038252}.
    We mention that $M_{w \times w}(\mathbf{X})$ and $M_{1 \times w}(\mathbf{X})$ defined at \cref{form29}-\cref{form29a} are \emph{a priori} different from $M_{w \times w \times 1}(\mathbf{X})$ and $M_{1 \times w \times w}(\mathbf{X})$ appearing in \cite[(27)]{zbmath:8038252}.
    However, $M_{w \times w}(\mathbf{X}) \sim M_{w \times w \times 1}(\mathbf{X})$ and $M_{1 \times w}(\mathbf{X}) \sim M_{1 \times w \times w}(\mathbf{X})$ by \cite[(21)]{zbmath:8038252}.

    Note also the general inequality $(ab)^{3}\log (ab) \leq a^{4}b^3\log b$, valid for $a \geq 1$ and $b \geq 2$ (in particular $a = 2^{j}$ and $b = w^{-1}$).
    Substituting these estimates back into \cref{form60a},
    \begin{equation*}
        \Big| \frac{\mathcal{J}^{+}_{2^{-j}w}(\mathbf{X})}{(2^{-j}w)|\mathbf{X}|^{2}} - \frac{\mathcal{J}^{+}_{2^{-j - 1}w}(\mathbf{X})}{(2^{-j - 1}w)|\mathbf{X}|^{2}} \Big| \lesssim 2^{2j}K\left( |\mathbf{X}|_{w \times w \times 1}^{-1}|\mathbf{X}|_{1 \times w \times w}^{-1} w^{-3} \log w^{-1} \right)^{1/2}.
    \end{equation*}
    Summing this estimate over $2^{j} \in [1,A]$ yields \cref{form61}.

    For \cref{form62}, we also fix $2^{j} \in 2^{\N} \cap [1,A/2]$, and start with
    \begin{equation*}
        \Big| \frac{\mathcal{J}^{+}_{2^{j}w}(\mathbf{X})}{(2^{j}w)|\mathbf{X}|^{2}} - \frac{\mathcal{J}^{+}_{2^{j + 1}w}(\mathbf{X})}{(2^{j + 1}w)|\mathbf{X}|^{2}} \Big| \lesssim \Big(\frac{M_{2^{j + 1}w \times 2^{j + 1}w}(\mathbf{X})M_{1 \times 2^{j + 1}w}(\mathbf{X})}{|\mathbf{X}|^{2}} (2^{j}w)^{-3} \log (2^{j}w)^{-1} \Big)^{1/2}.
    \end{equation*}
      Now we estimate $M_{2^{j + 1}w \times 2^{j + 1}w}(\mathbf{X}) \lesssim 2^{2j}M_{w \times w}(\mathbf{X})$ and $M_{1 \times 2^{j + 1}w}(\mathbf{X}) \lesssim 2^{2j}M_{1 \times w}(\mathbf{X})$, and $(2^{j}w)^{-3}\log (2^{j}w)^{-1} \leq 2^{-3j}w^{-3}\log w^{-1}$ to get
    \begin{equation*}
        \Big| \frac{\mathcal{J}^{+}_{2^{j}w}(\mathbf{X})}{(2^{j}w)|\mathbf{X}|^{2}} - \frac{\mathcal{J}^{+}_{2^{j + 1}w}(\mathbf{X})}{(2^{j + 1}w)|\mathbf{X}|^{2}} \Big| \lesssim 2^{j/2}K\left(|\mathbf{X}|_{w \times w \times 1}^{-1}|\mathbf{X}|_{1 \times w \times w}^{-1}w^{-3}\log w^{-1} \right)^{1/2}.
    \end{equation*}
    Finally, \cref{form62} follows by summing over $2^{j} \in 2^{\N} \cap [1,A/2]$.
\end{proof}

\subsection{Effective triples yield ray incidences}
\cref{prop1} below is our counterpart of \cite[Proposition 4.2]{zbmath:8038252} where line incidences have been replaced by ray incidences.

 \begin{proposition}\label{prop1}
     Let $c_{1},c_{2} > 0$.
     Then there exist $m_{0} = m_{0}(c_{1}) \in \N$ such that the following holds for all $m \geq m_{0}$ and $T \geq T_{0}(c_{1},c_{2},m)$:

     Write $\delta \coloneqq 2^{-mT}$.
     Let $\mathbf{X} \subset \Omega$ be a finite $(m,T)$-uniform set.
     Assume that there exists a triple $(t;x,y)$ which is $(c_{1},c_{2})$-effective for $\mathbf{X}$.
     Then,
     \begin{equation}\label{form69}
         \mathcal{I}_{10\delta}^{+}(\mathbf{X}) \geq \delta^{1 + 33c_{2}}|\mathbf{X}|^{2}.
     \end{equation}
\end{proposition}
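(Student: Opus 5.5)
The strategy is to make \cref{rem4} rigorous. First we localise to a rectangle at scales $(x,y)$, then we run the initial estimate plus the high-low inequality inside the rescaled rectangle. Finally we sum over rectangles to recover incidences in $\mathbf{X}$.

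\emph{Localisation.} Fix the effective triple $(t;x,y)$ and cover $\mathbf{X}$ by a boundedly overlapping family $\mathcal{R}$ of rectangles $\mathbf{R} = \mathbf{R}_{\delta^{x} \times \delta^{x+y} \times \delta^{y}}(\omega_{0})$ with $\omega_{0} \in \mathbf{X}$. By $K$-uniformity, each satisfies $|\mathbf{X} \cap \mathbf{R}| \gtrsim K^{-1}|\mathbf{X}|/|\mathbf{X}|_{\delta^{x}\times\delta^{x+y}\times\delta^{y}}$. For each $\mathbf{R}$, apply \cref{lemma1} to $\psi_{\mathbf{R}}(\mathbf{X}\cap\mathbf{R})$ to obtain an $(m',T)$-uniform subset $\mathbf{X}'_{\mathbf{R}}$ with $m' = m(1-x-y)$. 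By \cref{lemma2}, its branching function agrees with $f_{\mathbf{X}}(\cdot;x,y)$ up to $K^{O(1)}$ factors, so $b_{\mathbf{X}'}(t;0,0) + e_{\mathbf{X}'}(s;0,0) \geq c_{1} - o_{T\to\infty}(1)$ for all $s \in [t,1-x-y]$.

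\emph{Key geometric observation.} The rescaling $\psi_{\mathbf{R}}$ is an affine shear in the base point: $(a,b) \mapsto ((a-a_{0})/u, (b - b_{0} - \sigma_{0}(a-a_{0}))/(uw))$ with $u = \delta^{x}$, $w = \delta^{y}$. It maps the ray of $\omega$ to the ray of $\psi_{\mathbf{R}}(\omega)$, with slope renormalised by $w$. Because the horizontal scaling is positive, orientation is preserved. This is the crucial point for rays as opposed to lines: a ray incidence at scale $\delta^{1-x-y}$ for $\mathbf{X}'_{\mathbf{R}}$ pulls back to a ray incidence at scale $\lesssim \delta$ for $\mathbf{X}$. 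Vertical distances scale by $uw = \delta^{x+y}$, and slopes lie in $[-1,1]$, so Euclidean distance to the ray is comparable to vertical distance. I will check that the constant $10$ in $\mathcal{I}^{+}_{10\delta}$ absorbs these losses, possibly after adjusting the scale by a bounded factor.

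\emph{Estimate inside one rectangle.} Put $\delta' = \delta^{1-x-y}$, so that $\delta = \delta'^{m'/m}$ in the normalisation of $\mathbf{X}'$. \cref{lemma:initialEstimate} at $\eta = \delta^{t}$ (with $10\eta$ harmlessly changed via \cref{cor1}) gives
\[
\mathcal{J}^{+}_{\delta^{t}}(\mathbf{X}')/(\delta^{t}|\mathbf{X}'|^{2}) \gtrsim K^{-O(1)}\delta^{-b}.
\]
Next, telescope with \cref{cor1} (\cref{form61} with $A = 2^{T}$, at the grid scales $\delta^{s}$, $s \in [t,1-x-y]\cap\frac1m\Z$). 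The error at step $s$ is $\lesssim K 2^{2T}\delta^{e(s)}\log(1/\delta)$. Since $b+e(s)\ge c_1-o(1)$, the total error is at most $mK2^{2T}\log(1/\delta)\,\delta^{c_1}\cdot\delta^{-b}$. Choosing $m_{0}(c_{1})$ so that $2^{2T} = \delta^{-2/m} \le \delta^{-c_1/4}$, and then $T$ large so that $K = \delta^{-o(1)}$, this is at most $\tfrac12 \delta^{-b}$.

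This step is the main obstacle. The $A^{2}$ loss in \cref{form61} forces $m \gtrsim 1/c_{1}$, which is exactly why $m_0$ depends only on $c_1$. Passing between $\mathcal{J}^+$ and $\mathcal{I}^+$ via \cref{lemma:incidenceComparison} costs a factor $2$ in scale, handled by \cref{form62}.

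\emph{Conclusion.} Inside each rectangle we get $\mathcal{I}^{+}_{10\delta'}(\mathbf{X}'_{\mathbf{R}}) \gtrsim \delta'\,\delta^{-b}|\mathbf{X}'_{\mathbf{R}}|^{2}$. Using $b \geq -O(t) \geq -O(c_{2})$ (Lipschitz bounds on $f_{\mathbf{X}}$, as in \cref{rem4}), this is $\geq \delta^{1-x-y+O(c_2)}|\mathbf{X}'_{\mathbf{R}}|^{2}$. Pull back and sum over $\mathcal{R}$ with bounded overlap, using $|\mathbf{X}'_{\mathbf{R}}| \ge K^{-1}|\mathbf{X}\cap\mathbf{R}|$ and Cauchy--Schwarz. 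This gives
\[
\mathcal{I}^{+}_{10\delta}(\mathbf{X}) \gtrsim K^{-O(1)}\delta^{1-x-y+O(c_2)}\,|\mathbf{X}|^{2}/|\mathcal{R}|.
\]
Since the Frostman-type bound $|\mathcal{R}| \le |\mathbf{X}|_{\delta^x\times\delta^{x+y}\times\delta^y} \le \delta^{-3(x+y)}$ holds trivially, and $x,y\le c_2$, we have $\delta^{-x-y}/|\mathcal{R}| \ge \delta^{4(x+y)} \ge \delta^{8c_2}$. Tracking constants (roughly $O(c_2)\le 24c_2$ plus $8c_2$, with $K^{O(1)} \le \delta^{-c_2}$ for $T$ large) yields \cref{form69} with exponent $1+33c_2$.
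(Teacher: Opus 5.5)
\textbf{There is a genuine gap in your ``key geometric observation''.} Your localisation, initial estimate and telescoping match the paper. The $2^{O(T)}=\delta^{-O(1/m)}$ loss per step is indeed what forces $m_0=m_0(c_1)$. Summing over a boundedly overlapping cover, instead of using one rectangle with $|\mathbf{X}'|\gtrsim_{K^{O(1)}}\delta^{3(x+y)}|\mathbf{X}|$ as the paper does, is a harmless variant.

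The observation itself is only partly true. $\psi_{\mathbf{R}}$ does map $\ell^{+}_{\omega}$ onto $\ell^{+}_{\psi_{\mathbf{R}}(\omega)}$ and preserves horizontal order. However, its inverse multiplies horizontal displacements of base points by $u=\delta^{x}$, not by $uw=\delta^{x+y}$. Distance to a ray is comparable to vertical distance to the line only for points ahead of the base point. For a point behind it, the distance is the distance to the base point.

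Here is a concrete failure. Take $z_{\omega_2'}=z_{\omega_1'}-(\bar\delta/2,0)$ with $\bar\delta=\delta^{1-x-y}$, so $\dist(z_{\omega_2'},\ell^{+}_{\omega_1'})\le\bar\delta$. Pulling back, $z_{\omega_2}-z_{\omega_1}=-\tfrac{u\bar\delta}{2}(1,\sigma_0)$, which points backwards along $\ell^{+}_{\omega_1}$. Hence $\dist(z_{\omega_2},\ell^{+}_{\omega_1})\sim\delta^{x}\bar\delta=\delta^{1-y}\gg10\delta$ whenever $y>0$. This is a loss of $\delta^{-y}$, not a bounded factor, so the constant $10$ cannot absorb it. This is exactly the issue isolated in \cref{rem3}. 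Only forward pairs, with $x_{\omega_1'}\le x_{\omega_2'}$, pull back to genuine ray incidences at scale $\delta$: the line incidence survives because vertical distances scale by $\delta^{x+y}$, and the order is preserved.

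\textbf{The missing step is showing that backward pairs are negligible.} Such pairs necessarily satisfy $|z_{\omega_1'}-z_{\omega_2'}|\lesssim\bar\delta$, and you need them to be small compared with your lower bound. The effective-triple hypothesis does not give this; it requires non-concentration of base points at scale $\delta$. The paper obtains this through a dichotomy on $|\mathbf{X}|_{\delta\times\delta\times1}$:
\begin{itemize}
\item If $|\mathbf{X}|_{\delta\times\delta\times1}<\delta^{-1-32c_2}$, then pairs whose base points lie within $5\delta$ of a common point already give $\mathcal{I}^{+}_{10\delta}(\mathbf{X})\gtrsim K^{-2}|\mathbf{X}|M_{\delta\times\delta\times1}(\mathbf{X})\ge K^{-2}\delta^{1+32c_2}|\mathbf{X}|^{2}$. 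This is where the exponent $33c_2$ in the statement comes from.
\item Otherwise, uniformity and \cref{lemma2} bound the nearby pairs by $|\mathbf{X}'|M_{\bar\delta\times\bar\delta\times1}(\mathbf{X}')$, which is at most $\delta^{1+25c_2}|\mathbf{X}|^{2}$ up to $K^{O(1)}$ factors. This lies below the $\delta^{1+24c_2}|\mathbf{X}|^{2}$ lower bound, so the backward pairs can be discarded and only forward pairs kept.
\end{itemize}
Without a step of this kind, your final lower bound for $\mathcal{I}^{+}_{10\delta}(\mathbf{X})$ does not follow.
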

\begin{proof}
    The argument is a rigorous version of the sketch provided in \cref{rem4}.
    We start by fixing parameters.
    First, let $m_{0} = m_{0}(c_{1}) \in \N$ be so large that
    \begin{equation}\label{form65}
        3/m_{0} < c_{1}/2.
    \end{equation}
    Fix $m \geq m_{0}$, and let $a,c > 0$ and $C \geq 1$ be absolute constants to be determined later.
    Then, let $T_{0} = T_{0}(m,c_{1},c_{2}) \in \N$ be so large that the following holds:
    \begin{equation}\label{form66}
        \max\{2CK^{30}a^{-2}\delta^{c_{1}/2}(\log \tfrac{1}{\delta})^{2}, 2K^{21}\delta^{c_{2}}\} \leq c, \qquad T \geq T_{0}.
    \end{equation}
    This is possible, since $K = \delta^{-o_{T \to \infty}(1)}$ for $m \in \N$ fixed, recalling that $\delta = 2^{-mT}$.
    There will be additional requirements on the size of $T$, depending on $m,c_{1},c_{2}$, to be described later.

    We claim that we may assume, without loss of generality, that
    \begin{equation}\label{form73}
        |\mathbf{X}|_{\delta \times \delta \times 1} \geq \delta^{-1 - 32c_{2}}.
    \end{equation}
    The point is that \cref{form69} is easy to show in the opposite case.
    Assume that \cref{form73} fails.
    Let $b > 0$ be an absolute constant to be determined in a moment.
    Let $P_{\delta} \subset P[\mathbf{X}]$ be a maximal $10\delta$-separated subset, so $|P_{\delta}| \sim |\mathbf{X}|_{\delta \times \delta \times 1}$.
    Then, note that
    \begin{equation*}
        \mathcal{I}^{+}_{10\delta}(\mathbf{X}) \geq \sum_{z \in P_{\delta}} |\{(\omega_{1},\omega_{2}) \in \mathbf{X}^{2} : \max\{|z_{\omega_{1}} - z|,|z_{\omega_{2}} - z| < 5\delta\}\}|,
    \end{equation*}
    since the sets on the right are disjoint, and each pair $(\omega_{1},\omega_{2})$ in any of these sets satisfies $\dist(z_{\omega_{2}},\ell_{\omega_{1}}^{+}) \leq 10\delta$.
    Next, note that for $z = z_{\omega_{0}} \in P_{\delta}$ fixed with $\omega_{0} \in \mathbf{X}$,
    \begin{align*}
        |\{(\omega_{1},\omega_{2}) \in \mathbf{X}^{2} : \max\{|z_{\omega_{1}} - z|,|z_{\omega_{2}} - z| < 5\delta\}|
        & = |\{\omega \in \mathbf{X} : |z_{\omega} - z| < 5\delta\}|^{2}\\
        & \stackrel{\mathclap{\cref{form76}}}{\geq} |\mathbf{X} \cap \mathbf{R}_{\delta \times \delta \times 1}(\omega_{0})|^{2}\\
        & \geq K^{-2}M_{\delta \times \delta \times 1}(\mathbf{X})^{2}
    \end{align*}
    by $K$-uniformity.
    Thus, using also $|\mathbf{X}| \leq M_{\delta \times \delta \times 1}(\mathbf{X})|\mathbf{X}|_{\delta \times \delta \times 1} \leq M_{\delta \times \delta \times 1}(\mathbf{X})\delta^{-1 - 32c_{2}}$ (by hypothesis), we obtain
    \begin{equation*}
        \mathcal{I}_{10\delta}^{+}(\mathbf{X}) \gtrsim K^{-2}|\mathbf{X}|_{\delta \times \delta \times 1}M_{\delta \times \delta \times 1}(\mathbf{X})^{2} \geq K^{-2} |\mathbf{X}|M_{\delta \times \delta \times 1}(\mathbf{X}) \geq K^{-2}\delta^{1 + 32c_{2}}|\mathbf{X}|^{2}.
    \end{equation*}
    This proves \cref{form69} for $T \in \N$ large enough (depending on $c_{2}$).
    In the sequel we may therefore assume \cref{form73}.

    Fix $\omega_{0} \in \mathbf{X}$ arbitrarily, and let $\mathbf{R} \coloneqq \mathbf{R}_{\delta^{x} \times \delta^{x + y} \times \delta^{y}}(\omega_{0})$.
    As in \cref{lemma2}, let $\mathbf{X}' \subset \psi_{\mathbf{R}}(\mathbf{X} \cap \mathbf{R})$ be an $(m',T)$-uniform subset with $|\mathbf{X}'| \geq K^{-1}|\mathbf{X} \cap \mathbf{R}|$, where $m' = m(1 - x - y)$ (this subset is provided by \cref{lemma1}).
    By \cref{lemma2},
    \begin{equation}\label{form60}
        |\mathbf{X}'|_{\delta^{x'} \times \delta^{z'} \times \delta^{y'}} \sim_{K^{7}} \delta^{-f_{\mathbf{X}}(x',y',z';x,y)}, \qquad (x',y',z') \in \mathbf{D}_{m'}.
    \end{equation}
    By \cref{lemma:initialEstimate},
    \begin{equation}\label{form64}
        \frac{\mathcal{J}^{+}_{10\delta^{t}}(\mathbf{X}')}{10\delta^{t}|\mathbf{X}'|^{2}} \geq \frac{\mathcal{I}_{10\delta^{t}}^{+}(\mathbf{X}')}{10\delta^{t}|\mathbf{X}'|^{2}} \gtrsim_{K^{5}} \frac{|\mathbf{X}'|_{\delta^{t} \times \delta^{t} \times \delta^{t}}}{\delta^{t}|\mathbf{X}'|_{\delta^{t} \times \delta^{t} \times 1}|\mathbf{X}'|_{1 \times \delta^{t} \times \delta^{t}}} \stackrel{\cref{form60}}{\sim_{K^{21}}} \delta^{-b_{\mathbf{X}}(t;x,y)}.
    \end{equation}
    Here, and below, $\mathcal{J}^{+}$ is defined for some function $\chi \in C_{c}^{\infty}(\R^{2})$ satisfying $\mathbf{1}_{B(1/2)} \leq \chi \leq \mathbf{1}_{B(1)}$; now the first inequality above follows from \cref{lemma:incidenceComparison}.

    Recall the (small) absolute constant $a > 0$ defined above \cref{form66}.
    Recall that $\delta = 2^{-mT}$, abbreviate $\bar{\delta} \coloneqq \delta^{1 - x - y}$, and write
    \begin{equation}\label{form63}
        \begin{aligned}
            \Big| \frac{\mathcal{J}^{+}_{10\delta^{t}}(\mathbf{X}')}{10\delta^{t}|\mathbf{X}'|^{2}}
        & - \frac{\mathcal{J}^{+}_{a\bar{\delta}}(\mathbf{X}')}{a\bar{\delta}|\mathbf{X}'|^{2}} \Big| \leq \Big| \frac{\mathcal{J}^{+}_{10\delta^{t}}(\mathbf{X}')}{10\delta^{t}|\mathbf{X}'|^{2}} - \frac{\mathcal{J}^{+}_{\delta^{t}}(\mathbf{X}')}{\delta^{t}|\mathbf{X}'|^{2}}\Big| + \Big| \frac{\mathcal{J}^{+}_{a\bar{\delta}}(\mathbf{X}')}{a\bar{\delta}|\mathbf{X}'|^{2}} - \frac{\mathcal{J}^{+}_{\bar{\delta}}(\mathbf{X}')}{\bar{\delta}|\mathbf{X}'|^{2}}\Big| \\*
        &\quad + \sum_{j = mt}^{m(1 - x - y) - 1} \Big| \frac{\mathcal{J}^{+}_{2^{-jT}}(\mathbf{X}')}{2^{-jT}|\mathbf{X}'|^{2}} - \frac{\mathcal{J}^{+}_{2^{-(j + 1)T}}(\mathbf{X}')}{2^{-(j + 1)T}|\mathbf{X}'|^{2}} \Big|.
        \end{aligned}
    \end{equation}
    We will apply \cref{cor1} to each of the terms with constants $A \in \{2^{T},1/10,1/a\}$.
    Writing $2^{-jT} = \delta^{j/m}$, using \cref{form60}, and estimating $j \leq m$,
    \begin{align*}\Big|
        \frac{\mathcal{J}^{+}_{2^{-jT}}(\mathbf{X}')}{2^{-jT}|\mathbf{X}'|^{2}} - \frac{\mathcal{J}^{+}_{2^{-(j + 1)T}}(\mathbf{X}')}{2^{-(j + 1)T}|\mathbf{X}'|^{2}} \Big|
        & \lesssim K2^{2T}\left( |\mathbf{X}'|_{2^{-jT} \times 2^{-jT} \times 1}^{-1}|\mathbf{X}'|^{-1}_{1 \times 2^{-jT} \times 2^{-jT}} 2^{3jT} \log 2^{jT} \right)^{1/2}\\*
        & \lesssim_{K^{8}} m2^{3T} \cdot \delta^{e_{\mathbf{X}}(j/m;x,y)}, \quad j \in \{mt,\ldots,m(1 - x - y) -1\}.
    \end{align*}
    Similarly, using $\log \bar{\delta}^{-1} \leq \log \delta^{-1} = mT$,
    \begin{align*}
        \Big| \frac{\mathcal{J}^{+}_{a\bar{\delta}}(\mathbf{X}')}{a\bar{\delta}|\mathbf{X}'|^{2}} - \frac{\mathcal{J}^{+}_{\bar{\delta}}(\mathbf{X}')}{\bar{\delta}|\mathbf{X}'|^{2}}\Big|
        &\lesssim Ka^{-2}\left(|\mathbf{X}|^{-1}_{\bar{\delta} \times \bar{\delta} \times 1}|\mathbf{X}|^{-1}_{1 \times \bar{\delta} \times \bar{\delta}} \bar{\delta}^{-3} \log \bar{\delta}^{-1}\right)^{1/2}\\*
        &\lesssim_{K^{8}} a^{-2}mT \cdot \delta^{e_{\mathbf{X}}(1 - x - y;x,y)}.
    \end{align*}
    Substituting these estimates back into \cref{form63}, and also recalling \cref{form64}, we find, for some absolute constants $c > 0$ and $C \geq 1$,
    \begin{equation}\label{form67}
        \begin{aligned}
            \frac{\mathcal{J}^{+}_{a\bar{\delta}}(\mathbf{X}')}{a\bar{\delta}|\mathbf{X}'|^{2}}
        & \geq cK^{-21} \delta^{-b_{\mathbf{X}}(t;x,y)} - CK^{8}a^{-2}m2^{3T} \sum_{j = mt}^{m(1 - x - y)}\delta^{e_{\mathbf{X}}(j/m;x,y)}\\*
        & \geq \delta^{-b_{\mathbf{X}}(t;x,y)} (cK^{-21} - Cm^{2}K^{8}2^{3T}\delta^{b_{\mathbf{X}}(t;x,y)} \sup_{t \leq s \leq 1 - x - y} \delta^{e_{\mathbf{X}}(s;x,y)}).
        \end{aligned}
    \end{equation}
    This estimate determines the absolute constants $c,C$ in \cref{form66}.
    Since $(t;x,y)$ is a $(c_{1},c_{2})$-effective triple, above
    \begin{equation*}
        \delta^{b_{\mathbf{X}}(t;x,y)} \sup_{t \leq s \leq 1 - x - y} \delta^{e_{\mathbf{X}}(s;x,y)} \leq \delta^{c_{1}}.
    \end{equation*}
    Next, note that $2^{3T} = \delta^{-3/m} \leq \delta^{-3/m_{0}} \leq \delta^{-c_{1}/2}$ according to \cref{form65}.
    Since furthermore $m \leq mT = \log \tfrac{1}{\delta}$, we find
    \begin{equation*}
        Cm^{2}K^{8}2^{3T}\delta^{b_{\mathbf{X}}(t;x,y)} \sup_{t \leq s \leq 1 - x - y} \delta^{e_{\mathbf{X}}(s;x,y)} \leq CK^{8}\delta^{c_{1}/2}(\log \tfrac{1}{\delta})^{2} \stackrel{\cref{form66}}{\leq} \tfrac{c}{2}K^{-21}.
    \end{equation*}
    Substituting this upper bound back into \cref{form67}, and also using \cref{lemma:incidenceComparison},
    \begin{equation}\label{form68}
        \frac{\mathcal{I}^{+}_{2a\delta^{1 - x - y}}(\mathbf{X}')}{a\delta^{1 - x - y}|\mathbf{X}'|^{2}} \gtrsim \frac{\mathcal{J}^{+}_{a\bar{\delta}}(\mathbf{X}')}{a\bar{\delta}|\mathbf{X}'|^{2}} \geq \tfrac{c}{2}K^{-21}\delta^{-b_{\mathbf{X}}(t;x,y)} \stackrel{(\ast)}{\geq} \tfrac{c}{2}K^{-21}\delta^{10c_{2}} \stackrel{\mathclap{\cref{form66}}}{\geq} \delta^{11c_{2}}.
    \end{equation}
    The estimate $(\ast)$, namely $\delta^{-b_{\mathbf{X}}(t;x,y)} \geq \delta^{10c_{2}}$, follows from the hypothesis (in the definition of effective triples) that $\max\{x,y,t\} \leq c_{2}$, and the Lipschitz property of the branching function $f_{\mathbf{X}}$, see \cite[Lemma 3.8]{zbmath:8038252}.
    Here we also need $T$ to be large enough in terms of $c_{2}$.

    Next, as in the proof of \cite[Proposition 4.2]{zbmath:8038252}, we would like to show that ``rescaling preserves incidences'', and therefore \cref{form68} implies \cref{form69}.
    There is a small technicality here (explained in \cref{rem3}) which requires us to perform an additional argument.
    Unwrapping \cref{form68} (and estimating $\delta^{1 - x - y} \geq \delta$), we have so far shown that
    \begin{equation}\label{form70}
        |\{(\omega_{1},\omega_{2}) \in \mathbf{X}' \times \mathbf{X}' : \dist(z_{\omega_{2}},\ell_{\omega_{1}}^{+}) \leq 2a\bar{\delta}\}| \gtrsim a\delta^{1 + 11c_{2}}|\mathbf{X}'|^{2} \geq \delta^{1 + 24c_{2}}|\mathbf{X}|^{2}.
    \end{equation}
    The final inequality follows from $|\mathbf{X}'| \gtrsim_{K^{2}} M_{\delta^{x} \times \delta^{x + y} \times \delta^{y}}(\mathbf{X}) \geq |\mathbf{X}|/|\mathbf{X}|_{\delta^{x} \times \delta^{x + y} \times \delta^{y}} \gtrsim \delta^{3(x + y)}|\mathbf{X}|$ and $\max\{x,y\} \leq c_{2}$, thus $a|\mathbf{X}'|^{2} \geq \delta^{13c_{2}}|\mathbf{X}|^{2}$ if $T \geq 1$ is large enough in terms of $c_{2}$.
    For reasons to become apparent in a moment, we need to upgrade \cref{form70} to the following:
    \begin{equation}\label{form71}
        |\{(\omega_{1},\omega_{2}) \in \mathbf{X}' \times \mathbf{X}' : x_{\omega_{1}} \leq x_{\omega_{2}} \text{ and } \dist(z_{\omega_{2}},\ell_{\omega_{1}}^{+}) \leq 2a\bar{\delta}\}| \gtrsim \delta^{1 + 24c_{2}}|\mathbf{X}|^{2},
    \end{equation}
    where $x_{\omega_{j}}$ refers to the first coordinate of $z_{\omega_{j}}$.
    To see that this can be done, note that if $x_{\omega_{1}} > x_{\omega_{2}}$ and $\dist(z_{\omega_{2}},\ell_{\omega_{1}}^{+}) \leq 3a\bar{\delta}$, then $|z_{\omega_{1}} - z_{\omega_{2}}| \lesssim a\bar{\delta} = a\delta^{1 - x - y}$.
    So, the difference set of the pairs in \cref{form70} and \cref{form71} is contained in the set of ``nearby pairs''
    \begin{equation*}
        \mathcal{N} \coloneqq \{(\omega_{1},\omega_{2}) \in \mathbf{X}' \times \mathbf{X}' : |z_{\omega_{1}} - z_{\omega_{2}}| \lesssim a\delta^{1 - x - y}\}.
    \end{equation*}
    Further, for $\omega_{1} \in \mathbf{X}'$ fixed and $a > 0$ small enough, the set $\{\omega_{2} \in \mathbf{X}' : |z_{\omega_{2}} - z_{\omega_{1}}| \lesssim a\delta^{1 - x - y}\}$ is contained in $\mathbf{X}' \cap \mathbf{R}_{\delta^{1 - x - y} \times \delta^{1 - x - y} \times 1}(\omega_{1})$.
    Therefore, by the $K$-uniformity of $\mathbf{X}'$,
    \begin{align*}
        |\mathcal{N}| \leq |\mathbf{X}'|M_{\delta^{1 - x - y} \times \delta^{1 - x - y} \times 1}(\mathbf{X}')  & \lesssim_{K} \frac{|\mathbf{X}|^{2}}{|\mathbf{X}'|_{\delta^{1 - x - y} \times \delta^{1 - x - y} \times 1}}\\
                                                                                                                & \stackrel{\cref{form60}}{\lesssim_{K^{7}}} \frac{|\mathbf{X}|^{2}|\mathbf{X}|_{\delta^{x} \times \delta^{x + y} \times \delta^{y}}}{|\mathbf{X}|_{\delta^{1 - y} \times \delta \times \delta^{y}}}.
    \end{align*}
    Here moreover $|\mathbf{X}|_{\delta^{x} \times \delta^{x + y} \times \delta^{y}} \leq |\mathbf{X}|_{\delta^{2c_{2}} \times \delta^{2c_{2}} \times \delta^{2c_{2}}} \lesssim \delta^{-6c_{2}}$, and
    \begin{equation*}
        |\mathbf{X}|_{\delta^{1 - y} \times \delta \times \delta^{y}} \geq |\mathbf{X}|_{\delta^{1 - y} \times \delta \times 1} \gtrsim \delta^{y}|\mathbf{X}|_{\delta \times \delta \times 1} \geq \delta^{c_{2}}|\mathbf{X}|_{\delta \times \delta \times 1} \stackrel{\cref{form73}}{\geq} \delta^{-1 - 31c_{2}}.
    \end{equation*}
    Altogether, $|\mathcal{N}| \lesssim_{K^{7}} \delta^{1 + 25c_{2}}|\mathbf{X}|^{2}$.
    Comparing this with the lower bound \cref{form70}, we conclude that \cref{form71} holds, provided $T$ is large enough in terms of $c_{2}$.

    We may now complete the proof of \cref{form69}.
    Recall that $\mathbf{X}' \subset \psi_{\mathbf{R}}(\mathbf{X} \cap \mathbf{R})$, where $\mathbf{R} = \mathbf{R}_{\delta^{x} \times \delta^{x + y} \times \delta^{y}}(a_{0},b_{0},\sigma_{0})$.
    Fix a pair $(\omega_{1}',\omega_{2}') = (\psi_{\mathbf{R}}(\omega_{1}),\psi_{\mathbf{R}}(\omega_{2})) \in \mathbf{X}' \times \mathbf{X}'$ such that
    \begin{equation*}
        x_{\omega_{1}'} \leq x_{\omega_{2}'} \quad \text{and} \quad \dist(z_{\omega_{2}'},\ell_{\omega_{1}'}^{+}) \leq 2a\delta^{1 - x - y}.
    \end{equation*}
    It is shown at the end of the proof of \cite[Proposition 4.2]{zbmath:8038252} that $\dist(z_{\omega_{2}},\ell_{\omega_{1}}) \leq \delta$, provided that $a > 0$ is a small enough (absolute) constant.
    To show \emph{a fortiori} that $\dist(z_{\omega_{2}},\ell_{\omega_{1}}^{+}) \leq \delta$, it suffices to demonstrate that $x_{\omega_{1}} \leq x_{\omega_{2}}$.
    Write $\omega_{j} = (a_{j},b_{j},\sigma_{j})$ for $j \in \{1,2\}$.
    With this notation $x_{\omega_{j}} = a_{j}$, and (recalling the form of $\psi_{\mathbf{R}}$ from \cref{def:rescalingMap}),
    \begin{equation}\label{form74}
        \frac{a_{1} - a_{0}}{\delta^{x}} = x_{\omega_{1}'} \leq x_{\omega_{2}'} = \frac{a_{2} - a_{0}}{\delta^{x}}.
    \end{equation}
    Therefore $x_{\omega_{1}} \leq x_{\omega_{2}}$, as desired.
    Now \cref{form71} implies that $\mathcal{I}^{+}_{\delta}(\mathbf{X}) \geq \delta^{1 + 24c_{2}}|\mathbf{X}|^{2}$, which is stronger than \cref{form69}.
    The proof of \cref{prop1} is complete.
\end{proof}

\begin{remark}\label{rem3}
    Why did we need to upgrade \cref{form70} to \cref{form71}? Assume that $(\omega_{1}',\omega_{2}') = (\psi_{\mathbf{R}}(\omega_{1}),\psi_{\mathbf{R}}(\omega_{2}))$ is a pair satisfying $\dist(z_{\omega_{2}'},\ell_{\omega_{1}'}^{+}) \leq \delta^{1 - x - y}$ (but perhaps not $x_{\omega_{1}'} \leq x_{\omega_{2}'}$).
    From the information $\dist(z_{\omega_{2}'},\ell_{\omega_{1}'}^{+}) \leq \delta^{1 - x - y}$ alone we may infer that $x_{\omega_{2}'} \geq x_{\omega_{1}'} - \delta^{1 - x - y}$.
    Using the relation \cref{form74} this yields $x_{\omega_{2}} \geq x_{\omega_{1}} - \delta^{1 - y}$.
    Since $y > 0$, this information is weaker than $x_{\omega_{2}} \geq x_{\omega_{1}} - \delta$, which is what we would need to conclude $\dist(z_{\omega_{2}},\ell_{\omega_{1}}^{+}) \leq \delta$.
\end{remark}

\subsection{Completing the proof of the upper bound}

With \cref{prop1} in hand, the rest of the proof of \cref{thm:CPZ2} is the same as the proof of \cite[Theorem 1.9]{zbmath:8038252} (or \cref{thm:CPZ}).
We recap the main steps.
First, following \cite[Section 3.9]{zbmath:8038252}, one defines the space $\mathcal{L}$ of \emph{limiting branching functions} $f \colon \mathbf{D} \to [0,\infty)$, where $\mathbf{D} = \{(x,y,z) : x,y \in [0,1] \text{ and } z \in [0,\min\{1,x + y\}]\}$.
We will not need the specifics of the definition here, so we refer the reader to \cite[Section 3.9]{zbmath:8038252}.

\begin{definition}[Effective triples for limiting branching functions]\label{def:effectiveTriple2}
    Let $c_{1}\geq 0$, $c_{2} \geq 0$, and $f \in \mathcal{L}$.
    A triple $(t;x,y)$ with $0 \leq t \leq 1 - (x + y)$ is \emph{$(c_{1},c_{2})$-effective for $f$} if $\max\{t,x,y\} \leq c_{2} \leq \tfrac{1}{3}$, and
    \begin{equation*}
        b(t;x,y) + e(s;x,y) > c_{1}, \qquad t \leq s \leq 1 - (x + y).
    \end{equation*}
\end{definition}
The functions $b,e$ are defined by the same formulae as in \cref{def:b} and \cref{def:e}.

This definition of effective triples is taken from \cite[Section 4.3]{zbmath:8038252}.
Next, we follow the notation in \cite[Section 4.3]{zbmath:8038252} verbatim to define the family $\mathcal{L}^{\mathrm{good}} \subset \mathcal{L}$.
This subset consists of the functions $f \in \mathcal{L}$ such that for every $c_{2} > 0$, there exist $c_{1} > 0$ and $(t;x,y)$ such that $(t;x,y)$ is a $(c_{1},c_{2})$-effective triple for $f$.
In other words, $f$ has effective triples for arbitrarily small values of ``$c_{2}$'' while also retaining $c_{1} > 0$.

Next, following \cite[Section 5.1]{zbmath:8038252}, we define $\mathcal{L}_{\alpha,\beta} \subset \mathcal{L}$, for $\alpha,\beta \geq 0$, to consist of the functions $f \in \mathcal{L}$ such that $f(x,y,x + y) \geq \alpha x + \beta y$ for all $(x,y) \in \mathbf{D}$.
With this notation, \cite[Theorem 5.1]{zbmath:8038252} (one of the main results in \cite{zbmath:8038252}) is the following statement:
\begin{theorem}\label{thm3}
    If $\alpha,\beta \in (1,2]$ with $\alpha + \beta > 3$, then $\mathcal{L}_{\alpha,\beta} \subset \mathcal{L}^{\mathrm{good}}$.
\end{theorem}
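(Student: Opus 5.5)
The plan is to treat \cref{thm3} as a black box imported from \cite[Theorem 5.1]{zbmath:8038252} rather than to reprove it. The key observation is that \cref{thm3} is a statement purely about the limiting branching functions $f \in \mathcal{L}$. It refers neither to lines nor to rays: the functions $b$ and $e$ are defined by the same formulae \cref{def:b} and \cref{def:e} as in \cite{zbmath:8038252}. Moreover, \cref{def:effectiveTriple2}, the class $\mathcal{L}^{\mathrm{good}}$ and the class $\mathcal{L}_{\alpha,\beta}$ are copied verbatim from \cite[Sections 3.9, 4.3, 5.1]{zbmath:8038252}. Hence the first step is simply to check, definition by definition, that our objects coincide with theirs. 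This includes checking that the strict inequality ``$> c_{1}$'' in \cref{def:effectiveTriple2} and the normalisation $c_{2} \leq \tfrac{1}{3}$ match. Once this is done, \cref{thm3} is literally \cite[Theorem 5.1]{zbmath:8038252}. All ray-specific difficulties have already been absorbed into \cref{prop1}, which converts effective triples into ray incidences.

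For the reader's orientation, here is how the argument in \cite{zbmath:8038252} would go. Suppose, for contradiction, that $f \in \mathcal{L}_{\alpha,\beta}$ and that for some $c_{2} > 0$ no triple $(t;x,y)$ with $\max\{t,x,y\} \leq c_{2}$ is $(c_{1},c_{2})$-effective for any $c_{1} > 0$. Then for every such $(t;x,y)$ there exists $s \in [t,1-x-y]$ with $b(t;x,y) + e(s;x,y) \leq 0$. At the base point $(x,y) = (0,0)$, the hypothesis $f(x,y,x+y) \geq \alpha x + \beta y$ gives
\begin{equation*}
f(s,0,s) + f(0,s,s) \geq (\alpha+\beta)s > 3s,
\end{equation*}
so $e(s;0,0) \geq \tfrac{1}{2}(\alpha+\beta-3)s > 0$ for every $s > 0$. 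Consequently, the failure of effectiveness at small $t > 0$ must come from $b(t;0,0)$ being negative enough to cancel this gain. Using the Lipschitz property of $f$ \cite[Lemma 3.8]{zbmath:8038252}, one would then propagate the resulting quantitative information to renormalised base points $(x,y)$. The goal is to extract a structural description of $f$ near the origin that is incompatible with $\alpha + \beta > 3$ and $\alpha,\beta > 1$.

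The main obstacle, and the reason I would not attempt to redo it here, is exactly this propagation step. The renormalised functions $f(\cdot;x,y)$ do not inherit the Frostman lower bound $f(x,y,x+y) \geq \alpha x + \beta y$. Handling this requires the full ``combinatorics of Lipschitz functions'' machinery of \cite[Section 5]{zbmath:8038252}. Since that part is entirely independent of the line/ray distinction, the proof of \cref{thm3} consists of the verification in the first paragraph together with a citation of \cite[Theorem 5.1]{zbmath:8038252}.
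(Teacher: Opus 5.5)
Your proposal is correct and matches the paper, which also does not reprove \cref{thm3} but identifies it verbatim with \cite[Theorem 5.1]{zbmath:8038252}. That identification is legitimate because the statement concerns only limiting branching functions; all ray-specific work sits in \cref{prop1} and \cref{prop2}, and your orientation sketch is not needed for the argument.
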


Next, we state \cite[Proposition 4.4]{zbmath:8038252} for ray incidences:
\begin{proposition}\label{prop2}
    Let $\{(\mathbf{X}_{k},\delta_{k})\}_{k= 1}^{\infty}$ be a sequence, where each $\mathbf{X}_{k} \subset \Omega$ is a finite set, $\delta_{k} \searrow 0$, and every limiting branching function for $\{(\mathbf{X}_{k},\delta_{k})\}_{k= 1}^{\infty}$ is in $\mathcal{L}^{\mathrm{good}}$.
    (The precise meaning of this final hypothesis is clarified above \cite[Proposition 4.4]{zbmath:8038252}, but we do not need the details here.)
    Then, for every $\varepsilon > 0$ there exists $k_{0}(\varepsilon) \in \N$ such that
    \begin{equation*}
        \mathcal{I}^{+}_{\delta_{k}}(\mathbf{X}_{k}) \geq \delta_{k}^{1 + \varepsilon}|\mathbf{X}_{k}|^{2}, \qquad k \geq k_{0}(\varepsilon).
    \end{equation*}
\end{proposition}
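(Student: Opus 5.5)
We prove \cref{prop2} by following the proof of \cite[Proposition 4.4]{zbmath:8038252} verbatim. The only change is that the ingredient \cite[Proposition 4.2]{zbmath:8038252}, which turns effective triples into line incidences, is replaced by its ray counterpart \cref{prop1}.

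We argue by contradiction. Suppose that for some $\varepsilon > 0$ there are infinitely many $k$ with $\mathcal{I}^{+}_{\delta_{k}}(\mathbf{X}_{k}) < \delta_{k}^{1+\varepsilon}|\mathbf{X}_{k}|^{2}$. We pass to this subsequence and fix $m \in \N$ large, to be chosen. For each $k$, write $\delta_{k} \approx 2^{-mT_{k}}$, rounding $\delta_k$ to a dyadic scale of this form at the cost of constant factors. Then apply \cref{lemma1} to obtain $(m,T_{k})$-uniform subsets $\mathbf{X}_{k}' \subset \mathbf{X}_{k}$ with $|\mathbf{X}_{k}'| \geq K^{-1}|\mathbf{X}_{k}|$, where $K = \delta_k^{-o(1)}$. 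Since $\mathcal{I}^{+}$ is monotone under passing to subsets, it suffices to show $\mathcal{I}^{+}_{\delta_{k}}(\mathbf{X}_{k}') \geq \delta_{k}^{1+\varepsilon/2}|\mathbf{X}_{k}'|^{2}$, as the factor $K^{2}$ is absorbed.

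Consider the branching functions $f_{\mathbf{X}_{k}'}$ on $\mathbf{D}_{m}$. They are uniformly Lipschitz by \cite[Lemma 3.8]{zbmath:8038252}. Exactly as in \cite[Section 3.9]{zbmath:8038252}, a diagonal/compactness argument over a sequence $m \to \infty$ produces a further subsequence whose piecewise-linear interpolations converge uniformly to some limiting branching function $f \in \mathcal{L}$. By hypothesis $f \in \mathcal{L}^{\mathrm{good}}$. Choose $c_{2} \coloneqq \varepsilon/100$. Then there exist $c_{1} > 0$ and a triple $(t;x,y)$ that is $(c_{1},c_{2})$-effective for $f$, with strict inequality $b + e > c_{1}$.

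The key step is to transfer effectiveness from $f$ back to the discrete sets. The condition defining an effective triple is an inequality among finitely many values of $f$, uniformly in $s$ on a compact interval. It is therefore stable under uniform perturbation. So for $m \geq m_{0}(c_{1})$, after rounding $(t;x,y)$ to $\frac{1}{m}\Z$, the triple is $(c_{1}/2,2c_{2})$-effective for $\mathbf{X}_{k}'$ whenever $k$ is large, and hence $T_{k}$ is large.

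The main obstacle is the bookkeeping needed to choose $m$ before $T$. The rounding error is $O(1/m)$ by the Lipschitz bound, which forces $m$ large in terms of $c_1$. The dependence must be arranged exactly as in the quantifier order of \cref{prop1}, namely $m \geq m_{0}(c_{1})$ and then $T \geq T_{0}(c_{1},c_{2},m)$. The subtlety is that $c_{1}$ comes from $f$, and $f$ depends on the subsequence. This is handled as in \cite{zbmath:8038252}, by treating each limit point separately within the contradiction argument.

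With the effective triple in hand, \cref{prop1} gives $\mathcal{I}^{+}_{10\delta_{k}}(\mathbf{X}_{k}') \geq \delta_{k}^{1+66c_{2}}|\mathbf{X}_{k}'|^{2}$. To pass from $10\delta_{k}$ to $\delta_{k}$ we apply the argument at scale $\delta_{k}/10$; the constant changes are absorbed into $\delta_{k}^{-o(1)}$. Since $66c_{2} < \varepsilon/2$, this contradicts the assumption for $k$ large, which completes the proof.
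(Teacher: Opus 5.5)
Your proposal takes the same approach as the paper, whose proof of \cref{prop2} is just the proof of \cite[Proposition 4.4]{zbmath:8038252} with \cite[Proposition 4.2]{zbmath:8038252} replaced by \cref{prop1}. Two small slips in your reconstruction: $m$ cannot be fixed in advance but must grow as $m_k \to \infty$ slowly along the sequence, with one uniform subset per $k$ and $K(m_k,T_k) = \delta_k^{-o(1)}$ (this is what resolves the quantifier worry you raise, because the requirements $m \geq m_0(c_1)$ and $T \geq T_0(c_1,c_2,m)$ in the proof of \cref{prop1} reduce to conditions like $3/m < c_1/2$ and $K^{O(1)} \leq \delta^{-c(c_1,c_2)}$, which hold for all large $k$); and $66c_2 < \varepsilon/2$ is false for $c_2 = \varepsilon/100$, although $66c_2 < \varepsilon$ is all the contradiction needs.
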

The proof of \cref{prop2} is the same as the proof of \cite[Proposition 4.4]{zbmath:8038252}, and the difference between $\mathcal{I}^{+}_{\delta_{k}}(\mathbf{X}_{k})$ and $\mathcal{I}_{\delta_{k}}(\mathbf{X}_{k})$ has no effect on the argument (except that one needs to apply \cref{prop1} inside the proof in place of \cite[Proposition 4.2]{zbmath:8038252}).

We now have all the ingredients in place to prove \cref{thm:CPZ2}.
This short argument may be copied from \cite{zbmath:8038252} verbatim, and we do so for completeness.
\begin{proofref}{thm:CPZ2}
    Fix $\alpha,\beta \in (1,2]$ with $\alpha + \beta > 3$, and assume that the conclusion fails.
    Then, for some $\varepsilon > 0$ there exist sequences $\eta_{k} \to 0$ and $\delta_{k} \to 0$, and a sequence of Frostman $(\delta_{k},\alpha,\beta,\delta_k^{-\eta_{k}})$-sets $\mathbf{X}_{k} \subset \Omega$ such that $\mathcal{I}^{+}_{\delta_{k}}(\mathbf{X}_{k}) \leq \delta_{k}^{1 + \varepsilon}|\mathbf{X}_{k}|^{2}$ for all $k \in \N$.
    Any limiting branching function for $\{(\mathbf{X}_{k},\delta_{k})\}_{k = 1}^{\infty}$ is in $\mathcal{L}_{\alpha,\beta} \subset \mathcal{L}^{\mathrm{good}}$ (the last inclusion being \cref{thm3}), so \cref{prop2} produces a contradiction.
\end{proofref}

\section{Construction of the lower bound}\label{s:lower}
\subsection{Discretised results}\label{s2}
In this section we establish $\delta$-discretised constructions to help construct the compact set $K$ in the lower bound for \cref{main}.
The final construction can be found in \cref{s:lower-bound}.
A key component is the ``basic building block'' (BBB) in \cref{p:bbb}.
Informally speaking, the BBB is a compact set $K \subset [0,1]^{2}$ with $\Hd K = \tfrac{3}{2}$ and the following property: for every slope $\sigma \in [0,1)$, there exists a $\tfrac{1}{2}$-dimensional family of lines with slope $\sigma$ which ``pass near $K$ but do not touch $K$''.
For more precision on this mysterious property, see \cref{p:bbb}~\cref{2}-\cref{3}.

The BBB is closely related to another, well-known, construction in projection theory: there exists a compact set $K \subset \R^{2}$ such that $\Hd K = \tfrac{3}{2}$ and 
\begin{equation}\label{form23}
    \Hd \{\sigma \in [0,1) : \mathcal{H}^{1}(\pi_{\sigma}(K)) = 0\} = \tfrac{1}{2}.
\end{equation}
Here $\pi_{\sigma}(x,y) = \sigma x + y$.
As far as we are aware, a set $K$ like this was first constructed by Peltomäki \cite{local:peltomaki-thesis} in the late 80s.
We will however require a more recent (and $\delta$-discretised) incarnation presented in \cite[Appendix A.1]{zbmath:7846313}.
In fact, $K$ is nothing but a ``$\tfrac{3}{2}$-dimensional grid'' and the slopes $\sigma \in [0,1)$ appearing in \cref{form23} come from a ``$\tfrac{1}{2}$-dimensional arithmetic progression''.
It is no coincidence that the pair $(\tfrac{3}{2},\tfrac{1}{2})$ appears both in the description of the BBB, and in \cref{form23}: the BBB is obtained by suitably applying point-line duality to the set underlying \cref{form23}.
The details are so lengthy that they are postponed to \cref{appA}.
However, a depiction of the BBB is shown in \cref{fig1}.

The main purpose of this section is to apply the BBB to build something that can be directly used for the lower bound in \cref{main}.
The main result of the section is \cref{thm1}.
Roughly speaking, \cref{thm1} contains the construction of (nearly) $\tfrac{3}{2}$-dimensional families $\mathcal{P}_{\theta} \subset \mathcal{D}_{\delta}$, indexed by $\theta \in \mathcal{D}_{\delta}([0,1))$, with the following property: $\mathcal{P}_{\theta}$ cannot ``block the view'' to $\mathcal{P}_{\theta'}$ along lines parallel to $\theta$ (or $\theta'$), except if $\theta,\theta'$ are ``neighbours''.

The families $\mathcal{P}_{\theta}$ in \cref{thm1} are obtained by iterating the BBB many times.
All direct communication with the BBB happens within \cref{p:bbb-iter}, which iterates the BBB many times.
Then the proof of \cref{thm1} iterates \cref{p:bbb-iter} a few times more.
\begin{proposition}[Basic building block]\label{p:bbb}
    For every $\epsilon,\Delta \in 2^{-\N}$, there exists $\delta_{0} = \delta_{0}(\epsilon,\Delta) \in (0,\tfrac{1}{2}\Delta]$ such that the following holds for all $\delta \in 2^{-\N} \cap (0,\delta_{0}]$:

    There exists a family $\mathfrak{P} \subset \mathcal{D}_{\delta}$ with the following property:
    Assume that $t \in [1 + \epsilon,\tfrac{3}{2}]$, and $\mathcal{Q} \subset \mathcal{D}_{\Delta}$.
    Then 
    \begin{equation*}
        \mathcal{H}_{\infty}^{t - \epsilon}(\mathcal{Q} \cap \mathfrak{P}) \geq \mathcal{H}^{t}_{\infty}(\mathcal{Q}).
    \end{equation*}
    Moreover, for each $\sigma \in \delta \Z \cap [0,1)$ there exists a family $\mathcal{R}_{\sigma}$ of $(\delta \times \Delta)$-rectangles whose longer $\Delta$-sides have slope $\sigma$, with the following properties:
    \begin{enumerate}[nl,label=(\arabic*)]
        \item \label{1} Let $\mathcal{T}_{\sigma}$ be the $\delta$-tubes spanned by the rectangles $\mathcal{R}_{\sigma}$.
            Every tube in $\mathcal{T}_{\sigma}$ contains exactly one element of $\mathcal{R}_{\sigma}$, and the tubes in $\mathcal{T}_{\sigma}$ are $10\delta$-separated.
        \item \label{2} $\cup \mathfrak{P} \cap (\cup \{10T : T \in \mathcal{T}_{\sigma}\}) = \emptyset$.
        \item \label{3} If $t \in [1 + \epsilon,\tfrac{3}{2}]$, and $\mathcal{Q} \subset \mathcal{D}_{\Delta}$  then 
            \begin{equation*}
                \mathcal{H}^{t - \epsilon}_{\infty}(\mathcal{Q} \cap \mathcal{D}_{\delta}(\cup \mathcal{R}_{\sigma})) \geq \mathcal{H}^{t}_{\infty}(\mathcal{Q}).
            \end{equation*}
    \end{enumerate}
\end{proposition}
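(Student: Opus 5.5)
The plan is to build $\mathfrak{P}$ by point-line duality from the ``$\tfrac{3}{2}$-dimensional grid'' of \cite[Appendix A.1]{zbmath:7846313}, which underlies \cref{form23}, and then to make the construction local at scale $\Delta$. Write $N = \Delta/\delta$. Inside a fixed square $Q \in \mathcal{D}_{\Delta}$, rescaled to $[0,1)^{2}$, I take the scale-$N^{-1}$ incarnation of that construction. This consists of a grid-like set $G_{N} \subset [0,1)^{2}$ of $N^{-1}$-squares with $|G_{N}| \approx N^{3/2}$, which is Frostman with exponent $\tfrac{3}{2}$ uniformly across scales. It comes with a $\tfrac{1}{2}$-dimensional arithmetic-progression set of slopes $S_{N}$ such that, for $s \in S_{N}$, the projection $\pi_{s}(G_{N})$ is covered by $\approx N$ arithmetic progressions and is therefore small.

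Point-line duality $(a,b) \leftrightarrow \{y = ax - b\}$ turns this into the following. The lines with slope $\sigma$ correspond to points on a vertical line, and ``$G_{N}$ avoided by lines of slope $\sigma$'' corresponds to gaps in $\pi_{s}$-type projections. I would define $\mathfrak{P} \cap Q$ as the rescaled copy (by the factor $\Delta$) of the set complementary to $10\delta$-neighbourhoods of the chosen dual tubes. More concretely, I would choose $\mathfrak{P}$ to be a $\tfrac{3}{2}$-dimensional grid at resolution $\delta$ inside each $Q$, of the form $\{(i/\sqrt N + j/N)\Delta\}$ in each coordinate direction with the right spacing. For each $\sigma \in \delta\Z \cap [0,1)$, the family $\mathcal{R}_{\sigma}$ is then a $\tfrac{3}{2}$-dimensional family of $\delta \times \Delta$ rectangles of slope $\sigma$ sitting in the ``gaps'' of the grid. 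The combinatorial heart is that for every $\sigma$ (not only $\sigma \in S_{N}$), after a shift by a suitable rational approximation of $\sigma$ with denominator $\approx\sqrt N$ (Dirichlet), the rows of the grid leave empty $\delta$-tubes of slope $\sigma$ through each $\Delta$-square. These empty tubes can be chosen $10\delta$-separated and at distance $\geq 10\delta$ from $\cup\mathfrak{P}$, which gives \cref{1} and \cref{2}.

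For the Hausdorff content estimates I would use a mass distribution argument. On each $Q \in \mathcal{Q}$, put the normalized $\mathcal{H}^{t}_{\infty}$-optimal weights $\mu(Q) \approx \ell(Q)^{t}$, obtained from a Frostman measure for $\cup\mathcal{Q}$ at scales $\geq \Delta$. Then spread the mass of each $Q$ uniformly over its children in $\mathfrak{P}$ (respectively over $\mathcal{D}_{\delta}(\cup\mathcal{R}_{\sigma})$ inside $Q$). At scales $r \geq \Delta$ the Frostman bound is inherited from $\mathcal{Q}$. At scales $\delta \leq r \leq \Delta$, the $\tfrac{3}{2}$-Frostman property of the grid (and of the rectangle family, which is a product of a $\tfrac{1}{2}$-dimensional transversal set with length-$\Delta$ segments) gives $\mu(B(x,r)) \lesssim \Delta^{t}(r/\Delta)^{3/2} \le (\log)^{C} r^{t}$ when $t \le \tfrac{3}{2}$. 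The $\epsilon$-loss in the exponent absorbs these logarithmic and constant factors once $\delta \le \delta_{0}(\epsilon,\Delta)$. This is exactly why $\delta_{0}$ depends on both parameters, and the restriction $t \geq 1+\epsilon$ is used for the rectangle family, which at scales just above $\delta$ looks $1$-dimensional along the long side.

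The main obstacle I expect is the uniformity in \emph{all} slopes $\sigma \in \delta\Z \cap [0,1)$ in \cref{2}–\cref{3}. A single grid has good gaps only for slopes near rationals with small denominators, so the Dirichlet approximation step has to produce, for each $\sigma$, a $\tfrac{3}{2}$-dimensional (not merely $1$-dimensional) supply of avoiding rectangles. The $10\delta$ clearances must also survive the error $|\sigma - p/q|\Delta \lesssim \delta$ over length $\Delta$. If this fails at a single scale, I would iterate the construction over $\approx \epsilon^{-1}$ intermediate scales between $\delta$ and $\Delta$, losing $\epsilon$ in the content exponent, as the statement permits.
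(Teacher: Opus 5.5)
Your concrete construction fails exactly where you locate the main obstacle, and the problem is structural, not technical. A slope-$\sigma$ tube $T$ can satisfy (2) only if its intercept stays $\gtrsim\delta$ away from $\{y-\sigma x : (x,y)\in\cup\mathfrak{P}\}$.

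Suppose $\mathfrak{P}\cap Q$ is a lattice grid of $\approx N^{3/2}$ squares with spacing $h\approx N^{-3/4}\Delta$, where $N=\Delta/\delta$ (your clustered variant behaves the same way). Along a slope-$\sigma$ segment of length $L$ in $Q$, the blocked intercepts are, up to $O(\delta)$, the union of the progressions $h\Z-hi\sigma$ over $\approx L/h$ consecutive values of $i$. A $10\delta$-gap survives only if $\{i\sigma \bmod 1\}$ over these $i$ has a gap of length $\gtrsim\delta/h$. By the three-gap theorem this forces $|\sigma-p/q|\lesssim h/(qL)$ for some $q\lesssim h/\delta$.

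Take $L=\lambda\Delta$ with $\lambda>0$ small but fixed. This condition holds only for a proportion $o_{N\to\infty}(1)$ of slopes. For every other $\sigma\in\delta\Z\cap[0,1)$, the tubes allowed by (2) meet $Q$ only within $\lambda\Delta$ of two of its corners, so (3) fails for $\mathcal{Q}=\{Q\}$.

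The Dirichlet step cannot repair this. To keep the error $|\sigma-p/q|\Delta$ below $\delta$ you need $q\gtrsim\sqrt N$ for typical $\sigma$. The grid then projects into a progression of step $h/q$, which has $10\delta$-gaps only if $h\gtrsim\sqrt N\delta$. That means the grid has $\lesssim N$ squares per $\Delta$-square, so it is $1$-dimensional, not $\tfrac32$-dimensional.

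Two further problems:
\begin{itemize}
\item Property (2) concerns the full tubes $10T$ across $[0,1]^{2}$. Separately rescaled copies in each $\Delta$-square would have to be compatible along the whole tube, where the approximation error accumulates over length $\approx 1$ rather than $\Delta$.
\item Iterating over $\approx\epsilon^{-1}$ scales does not by itself address any of this, since one fixed $\mathfrak{P}$ must serve all $\delta^{-1}$ slopes at once.
\end{itemize}

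The missing idea is to put the grid on the \emph{other} side of the duality. Then uniformity in $\sigma$ is automatic and no Diophantine approximation is needed.

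In the paper, $\mathcal{P}\subset\mathcal{D}_{\delta}([-3,3]^{2})$ is the slightly rotated $\tfrac32$-dimensional grid of Lemma~\ref{lemma1a}. It is applied at an auxiliary scale $\underline{\Delta}\le\Delta/100$ with $\underline{\Delta}^{-\epsilon/2}\ge\max\{C,\Delta^{-1}\}$, and it has two properties:
\begin{itemize}
\item Every vertical slice at $\sigma\in\delta\Z\cap[0,1)$ contains a $\delta^{1/2}$-separated progression whose $y$-coordinates $Y_{\sigma}$ are equidistributed at scale $\underline{\Delta}$.
\item There is a $\delta^{s}$-separated set $\Sigma\subset[0,1]$, $s=(1-\epsilon)/2$, also equidistributed at scale $\underline{\Delta}$, with $|\pi_{\zeta}(\mathcal{P})|_{\delta}\lesssim\delta^{-(s+3/2)/2}\ll\delta^{-1}$ for $\zeta\in\Sigma$. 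This is why $s<\tfrac12$: at $s=\tfrac12$ the bound is only $\delta^{-1}$.
\end{itemize}

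The tubes $\mathcal{T}_{\sigma}$ are the $\delta$-tubes around the lines $y=\sigma x+b$, $b\in Y_{\sigma}$. They exist for every $\sigma$, and (1) is immediate. Then $\mathfrak{P}$ is \emph{defined} as the set of $\delta$-squares in the columns $[\zeta,\zeta+\delta)\times[0,1)$, $\zeta\in\Sigma$, that meet no $10T$ with $T\in\bigcup_{\sigma}\mathcal{T}_{\sigma}$. So (2) holds by definition.

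The only real input is that $\mathfrak{P}$ is still large. By duality, the slice at $x=\zeta$ of all the tubes $10T$ consists of $\lesssim|\pi_{\zeta}(\mathcal{P})|_{\delta}\ll\delta^{-1}$ squares. Hence $\mathfrak{P}$ is essentially $\Sigma\times[0,1)$, a $(1+s)$-dimensional set.

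Your first instinct (``$\mathfrak{P}$ = complement of the tube neighbourhoods'') is therefore the right one. It only works after confining $\mathfrak{P}$ to these exceptional columns; in a typical column the tubes cover essentially everything.

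Your sketch also misses two points.
\begin{itemize}
\item Property (1) allows only one rectangle per tube. The paper places the rectangle of the $j$-th tube (intercepts in increasing order) over $2I_{i}$, $j\equiv i \bmod m$, where $\{I_{1},\ldots,I_{m}\}=\mathcal{D}_{\Delta/2}([-2,2])$ is measured along the tube direction. Each $\underline{\Delta}$-square is then crossed by $\gtrsim\Delta\underline{\Delta}\delta^{-1/2}$ rectangles, a loss of a factor $\Delta$.
\item The content bounds must hold with constant exactly $1$, because they are iterated infinitely often in the construction of $K$. Shrinking $\delta$ cannot absorb constants: for covering squares of side close to $\Delta$, the gain $\ell(W)^{-\epsilon}$ does not grow.
\end{itemize}
Both losses are paid for by $\underline{\Delta}^{-\epsilon/2}\ge\max\{C,\Delta^{-1}\}$. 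The paper reduces to $\mathcal{Q}\subset\mathcal{D}_{\underline{\Delta}}$ and proves $\sum_{W\in\mathcal{W},\,W\subset Q}\ell(W)^{t-\epsilon}\ge\ell(Q)^{t}$ for every $Q\in\mathcal{Q}$ not strictly contained in a covering square. This is a two-case count using the non-concentration bounds for $\mathfrak{P}$ and $\mathcal{D}_{\delta}(\cup\mathcal{R}_{\sigma})$.
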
 
\begin{remark}
    The family $\mathfrak{P}$ and the families $\mathcal{R}_{\sigma},\mathcal{T}_{\sigma}$ do not depend on $\mathcal{Q}$, $t$.
\end{remark} 

\begin{figure}[t]
    \begin{center}
        \includegraphics[width=0.7\textwidth]{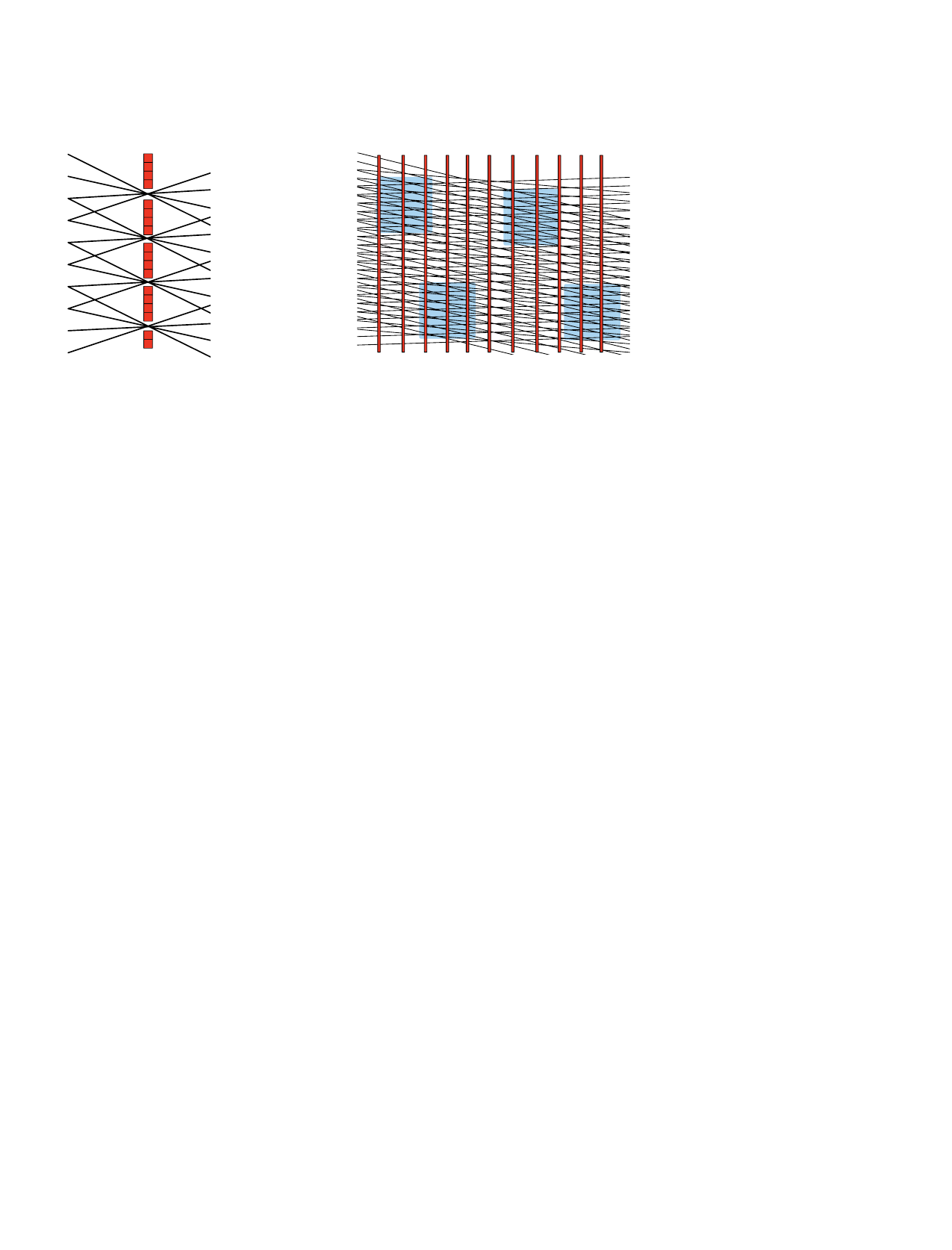}
        \caption{
            The basic building block from \cref{p:bbb} at a microscopic scale (left) and a macroscopic scale (right).
            The picture on the left shows how the tubes $T \in \mathcal{T}_{\sigma}$ avoid $\mathfrak{P}$ at scale $\delta$.
            The picture on the right shows how both $\mathfrak{P}$ and $\mathcal{T}_{\sigma}$ have large intersection with any family of $\mathcal{Q} \in \mathcal{D}_{\Delta}$.
        }
        \label{fig2}
    \end{center}
\end{figure}

From now on, the symbol $\theta$ will denote an interval of slopes, for example $\theta \in \mathcal{D}_{\delta}([0,1))$.
The symbol $\sigma$ will refer to slopes (points in $[0,1)$).
\begin{definition}[$\theta$-disconnected pair]\label{def:disconnectedPair}
    Let $\delta \in 2^{-\N}$ and $\theta \in \mathcal{D}_{\delta}([0,1))$.
    A pair $\mathcal{P}_{1},\mathcal{P}_{2} \subset \mathcal{D}$ is \emph{$\theta$-disconnected} if there exists no line $\ell \subset \R^{2}$ with slope $\sigma(\ell) \in \theta$ such that 
    \begin{equation*}
        (\cup \mathcal{P}_{1}) \cap [\ell]_{\delta/2} \neq \emptyset \quad \text{and} \quad (\cup \mathcal{P}_{2}) \cap [\ell]_{\delta/2} \neq \emptyset.
    \end{equation*}
\end{definition}

We also need the following related notion:
\begin{definition}[$\theta$-diameter]
    Let $\delta \in 2^{-\N}$ and $\theta \in \mathcal{D}_{\delta}([0,1))$.
    The \emph{$\theta$-diameter of $\mathcal{P} \subset \mathcal{D}$} is defined by 
    \begin{equation*}
        \diam_{\theta}(\mathcal{P}) \coloneqq \sup_{\sigma(\ell) \in \theta} \diam(\cup \mathcal{P} \cap [\ell]_{\delta/2}),
    \end{equation*} 
    where the supremum runs over all lines $\ell \subset \R^{2}$ with slope $\sigma(\ell) \in \theta$.
\end{definition} 
\begin{theorem}\label{thm1}
    For each $\epsilon,\Delta \in 2^{-\N}$ there exists $\delta = \delta(\epsilon,\Delta) \in 2^{-\N} \cap (0,\tfrac{1}{2}\Delta]$ such that the following holds:

    For each $\bm{\theta} \in \mathcal{D}_{\Delta}([0,1))$, let $\mathcal{P}_{\bm{\theta}} \subset \mathcal{D}_{\Delta}$ be an arbitrary non-empty family.
    Then, for each $\theta \in \mathcal{D}_{\delta}([0,1))$, there exists a family $\mathcal{P}_{\theta} \subset \mathcal{D}_{\delta}$ with the following properties:
    \begin{enumerate}[nl, A]
        \item\label{A} $\mathcal{P}_{\theta} \subset \mathcal{D}_{\delta}(\mathcal{P}_{\bm{\theta}})$ for $\bm{\theta} \in \mathcal{D}_{\Delta}([0,1))$ and $\theta \in \mathcal{D}_{\delta}(\bm{\theta})$.
        \item \label{B} $\diam_{\theta}(\mathcal{P}_{\theta}) \leq 2\Delta$ for all $\theta \in \mathcal{D}_{\delta}([0,1))$.
        \item\label{C} $\mathcal{H}^{t - \epsilon}_{\infty}(\mathcal{P}_{\theta}) \geq \mathcal{H}^{t}_{\infty}(\mathcal{P}_{\bm{\theta}})$ for all $\bm{\theta} \in \mathcal{D}_{\Delta}([0,1))$, $\theta \in \mathcal{D}_{\delta}(\bm{\theta})$, and $t \in [1 + \epsilon,\tfrac{3}{2}]$.
        \item\label{D} If $\bm{\theta},\bm{\theta}' \in \mathcal{D}_{\Delta}([0,1))$ are distinct, and $\theta \in \mathcal{D}_{\delta}(\bm{\theta})$ and $\theta' \in \mathcal{D}_{\delta}(\bm{\theta}')$, then the pair $\mathcal{P}_{\theta},\mathcal{P}_{\theta'}$ is $\theta$-disconnected and $\theta'$-disconnected.
    \end{enumerate}
\end{theorem}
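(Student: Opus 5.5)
The plan is to deduce \cref{thm1} from an iterated version of the basic building block. The paper announces that all direct contact with the BBB happens in \cref{p:bbb-iter}, but that result is not stated yet. So I first describe what I will extract from \cref{p:bbb}, and then iterate a few times more.

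The key mechanism is the following. Take a fixed slope $\sigma\in\delta\Z\cap[0,1)$. \cref{p:bbb} then gives two things: a family $\mathfrak P$ that avoids the fattened tubes $10T$, $T\in\mathcal T_\sigma$, and rectangles $\mathcal R_\sigma$ lying inside those tubes. Both $\mathfrak P$ and $\mathcal R_\sigma$ keep almost all $t$-dimensional content inside any $\mathcal Q\subset\mathcal D_\Delta$. The idea is to place two families of slopes on opposite sides of this dichotomy:
\begin{itemize}
\item for $\theta\in\mathcal D_\delta(\bm\theta)$, put the content of $\mathcal P_\theta$ inside $\mathcal D_\delta(\cup\mathcal R_\sigma)$, with $\sigma$ the left endpoint of $\theta$;
\item for every other $\bm\theta'\neq\bm\theta$, put $\mathcal P_{\theta'}$ inside $\mathfrak P$.
\end{itemize}
Take a line $\ell$ with slope in $\theta$ that meets $(\cup\mathcal P_\theta)^{(\delta/2)}$. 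Its direction differs from $\sigma$ by at most $\delta$, and it stays within distance $O(\delta)$ of a rectangle of length $\Delta$. Hence inside $[-1,1]^2$ the line lies in the factor-$10$ enlargement of the tube only locally, and this is where the wrinkle appears.

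A single tube $10T$ stays within $O(\delta)$ of $\ell$ only over a length of about $1$, because the slope error is $\delta$. So a blocking point of $\mathcal P_{\theta'}$ must avoid $\mathfrak P$ along the whole unit segment, which holds since $\mathfrak P\cap 10T=\emptyset$ for the entire tube, not just the rectangle. The constant $10$ will have to absorb $\delta/2$ neighbourhoods and the slope error; if it does not, I would rescale $\delta$ by a constant. The same argument gives \cref{B}: the rectangles are $10\delta$-separated tubes, each containing one rectangle of length $\Delta$, so $\ell$ meets at most one rectangle and $\diam_\theta(\mathcal P_\theta)\le 2\Delta$.

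The obstacle is that one BBB handles a single slope $\sigma$ for the "rectangle" role, while \cref{D} demands disconnection for all pairs at once. I would therefore work at intermediate scales $\Delta=\Delta_0>\Delta_1>\dots>\Delta_N=\delta$ and apply \cref{p:bbb} once per scale with $\epsilon/N$. Disconnection of two slopes $\theta,\theta'$ in different $\bm\theta$-cells is to be enforced at one scale, which I take to be the coarsest. I would make this concrete as follows. Order the $\Delta^{-1}$ cells $\bm\theta$. For each ordered pair $(\bm\theta,\bm\theta')$, use a fresh BBB whose rectangles carry slope $\sigma$ for every $\sigma\in\delta\Z\cap\bm\theta$; this is allowed since $\mathcal R_\sigma$ exists for every $\sigma$. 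In that BBB, $\mathcal P_\theta$ goes into the $\mathcal R_\sigma$ family and $\mathcal P_{\theta'}$ into $\mathfrak P$.

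Each such step is a nested refinement, passing from $\mathcal D_{\Delta_{k}}$ to $\mathcal D_{\Delta_{k+1}}$, and costs $\epsilon/N$ in dimension. The content inequality ``$\mathcal H^{t-\epsilon}_\infty(\mathcal Q\cap\cdot)\ge\mathcal H^t_\infty(\mathcal Q)$'' chains across steps, with $N\sim\Delta^{-2}$ and $t$ decreasing each time. This needs $t-\epsilon\ge1+\epsilon/N$ throughout, so I apply the BBB with parameter $\epsilon/N$ on the range $[1+\epsilon/N,3/2]$. The chain yields \cref{C}, and \cref{A} holds by nestedness. I also need disconnection to survive later refinement, and it does, since both properties are monotone under passing to subfamilies.

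The main difficulty is the interaction between steps: \cref{B} and disconnection are proved at scale $\Delta_{k+1}$ for one pair, but must be checked with $\delta/2$-neighbourhoods and $\theta$ intervals of length $\delta$. At scale $\Delta_{k+1}\gg\delta$ the slope tolerance $\delta$ is harmless, because rectangles of width $\Delta_{k+1}$ dominate. I would therefore verify these geometric conditions at the step's own scale, using the tubes $\mathcal T_\sigma$ of width $\Delta_{k+1}$, and let subsequent refinements only shrink the sets.
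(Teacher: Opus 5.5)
Your proposal is correct and uses the paper's mechanism: in each of finitely many rounds, a fresh application of \cref{p:bbb} refines one group of families into the rectangles $\mathcal{R}_{\sigma}$ (all slopes at once) and another into $\mathfrak{P}$; $\epsilon$ is split evenly over the rounds; and disconnectedness and the $\theta$-diameter bound obtained at a coarse scale are inherited by all later refinements. The difference is only bookkeeping --- the paper runs $\Delta^{-1}$ rounds of \cref{p:bbb-iter}, where in round $k$ the cell $\bm{\theta}_{k}$ takes the $\mathfrak{P}$-role against all other cells, while you use one round per ordered pair of cells --- but the reason several rounds are needed is not the one you state (a single $\mathfrak{P}$ avoids $10T$ for every $\sigma$ at once). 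Rather, a family cannot take both roles in the same round, whereas \cref{D} asks for both $\theta$- and $\theta'$-disconnectedness.
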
 

\cref{thm1} will be proved by iterating the following auxiliary result:
\begin{lemma}\label{p:bbb-iter}
    For every $\epsilon,\Delta \in 2^{-\N}$ and $M \in \N$ there exists $\delta = \delta(\Delta,\epsilon,M) \in 2^{-\N}\cap(0,\tfrac{1}{2}\Delta]$ such that the following holds:

    Assume that for each $\bm{\theta} \in \mathcal{D}_{\Delta}([0,1))$ we are given a family $\mathcal{P}_{\bm{\theta}} \subset \mathcal{D}_{\Delta}$.
    Let $\mathcal{M}$ be a finite family of subsets of $\mathcal{D}_{\Delta}$ with $|\mathcal{M}| \leq M$.
    \begin{enumerate}[nl]
        \item For each $\theta \in \mathcal{D}_{\delta}([0,1))$ there exists a family $\mathcal{P}_{\theta} \subset \mathcal{D}_{\delta}$ with the following properties:
            \begin{enumerate}[nl, r]
                \item\label{i} If $\bm{\theta} \in \mathcal{D}_{\Delta}([0,1))$, and $\theta \in \mathcal{D}_{\delta}(\bm{\theta})$, then $\mathcal{P}_{\theta} \subset \mathcal{D}_{\delta}(\mathcal{P}_{\bm{\theta}})$.
                    Moreover $\diam_{\theta}(\mathcal{P}_{\theta}) \leq 2\Delta$.
                \item\label{ii} $\mathcal{H}^{t - \epsilon}_{\infty}(\mathcal{P}_{\theta}) \geq \mathcal{H}^{t}_\infty(\mathcal{P}_{\bm{\theta}})$ for all $\theta \in \mathcal{D}_{\delta}(\bm{\theta})$ and $\bm{\theta} \in \mathcal{D}_{\Delta}([0,1))$, and all $t \in [1 + \epsilon,\tfrac{3}{2}]$.
            \end{enumerate}
        \item For each $\mathcal{F} \in \mathcal{M}$ there exists a subset $\mathcal{F}' \subset \mathcal{D}_{\delta}(\mathcal{F})$ such that $\mathcal{H}^{t - \epsilon}_{\infty}(\mathcal{F}') \geq \mathcal{H}^{t}_{\infty}(\mathcal{F})$ for all $t \in [1 + \epsilon,\tfrac{3}{2}]$.
            The family of these subsets $\mathcal{F}'$ is denoted $\mathcal{M}'$.

        \item Every pair $\mathcal{P}_{\theta},\mathcal{F}'$ is $\theta$-disconnected, with $\theta \in \mathcal{D}_{\delta}([0,1))$ and $\mathcal{F}' \in \mathcal{M}'$.
    \end{enumerate}
\end{lemma}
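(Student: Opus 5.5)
The plan is to apply the basic building block, \cref{p:bbb}, once, at the pair of scales $\Delta \to \delta$. I take $\delta \coloneqq \delta_{0}(\epsilon,\Delta)$ from \cref{p:bbb}, possibly shrunk further by an absolute factor so that the slack estimates below hold. With this choice $\delta$ depends only on $(\epsilon,\Delta)$, and so trivially on $M$ as well. Let $\mathfrak{P} \subset \mathcal{D}_{\delta}$ and $\{\mathcal{R}_{\sigma}\}_{\sigma \in \delta\Z \cap [0,1)}$, with tubes $\mathcal{T}_{\sigma}$, be the objects provided by \cref{p:bbb}. These do not depend on $\mathcal{Q}$ or $t$, and this independence is what allows a single application to serve all of $\mathcal{M}$ and all $\bm{\theta}$ simultaneously.

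The definitions are as follows. For $\mathcal{F} \in \mathcal{M}$, set $\mathcal{F}' \coloneqq \mathcal{F} \cap \mathfrak{P}$, in the notation $\mathcal{Q} \cap \mathcal{P}$ introduced above. For $\theta \in \mathcal{D}_{\delta}(\bm{\theta})$, let $\sigma_{\theta} \in \delta\Z$ be the left endpoint of $\theta$, and set
\begin{displaymath} \mathcal{P}_{\theta} \coloneqq \mathcal{P}_{\bm{\theta}} \cap \mathcal{D}_{\delta}(\cup \mathcal{R}_{\sigma_{\theta}}). \end{displaymath}
With these choices, the inclusion in \cref{i} and the content bounds in \cref{ii} and in item 2 are immediate. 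They follow from the first display of \cref{p:bbb} and from \cref{p:bbb}\cref{3}, applied with $\mathcal{Q} = \mathcal{F}$ and $\mathcal{Q} = \mathcal{P}_{\bm{\theta}}$ respectively, for every $t \in [1+\epsilon,\tfrac{3}{2}]$.

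The remaining work is geometric, and it covers both the $\theta$-diameter bound and the $\theta$-disconnectedness in item 3. Fix a line $\ell$ with slope $\sigma(\ell) \in \theta$, so that $|\sigma(\ell) - \sigma_{\theta}| \leq \delta$. Suppose $[\ell]_{\delta/2}$ meets $\cup\mathcal{P}_{\theta}$, and hence meets some $\delta$-square touching a rectangle $R \in \mathcal{R}_{\sigma_{\theta}}$ lying in a tube $T \in \mathcal{T}_{\sigma_{\theta}}$. Then $\ell$ passes within roughly $2\delta$ of the core line of $T$. Since all relevant points lie in $[-1,1]^{2}$ and the slopes differ by at most $\delta$, the drift of $\ell$ relative to $T$ is $O(\delta)$ over this region. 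Consequently $[\ell]_{\delta/2} \cap [-2,2]^{2} \subset 10T$, possibly after shrinking $\delta$ relative to the absolute constants; this is the point where the factor $10$ in \cref{p:bbb}\cref{2} is used.

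Two conclusions follow. First, $[\ell]_{\delta/2}$ cannot meet $\cup \mathfrak{P} \supset \cup\mathcal{F}'$, by \cref{p:bbb}\cref{2}, which gives the disconnectedness in item 3. Second, since the tubes in $\mathcal{T}_{\sigma_{\theta}}$ are $10\delta$-separated and each contains exactly one rectangle, by \cref{p:bbb}\cref{1}, the set $[\ell]_{\delta/2} \cap \cup\mathcal{P}_{\theta}$ is contained in the $O(\delta)$-neighbourhood of the single rectangle $R$. Its diameter is therefore at most $\Delta + O(\delta) \leq 2\Delta$, which is the diameter bound in \cref{i}.

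I expect this last geometric step to be the main obstacle. One must check carefully that the lines of slope in $\theta$, rather than exactly $\sigma_{\theta}$, still stay inside $10T$ throughout the unit square. One must also check that the $\delta$-squares of $\mathcal{D}_{\delta}(\cup \mathcal{R}_{\sigma})$ do not reach a neighbouring tube, which the $10\delta$-separation should rule out. If the constants turn out too tight, I would first try replacing $\sigma_{\theta}$ by the midpoint of $\theta$ rounded to $\delta\Z$, and then applying \cref{p:bbb} with a slightly smaller $\delta$.
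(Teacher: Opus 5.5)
Your proposal is correct, and it takes a genuinely different and much shorter route than the paper.

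\textbf{How the paper argues.} It first reduces to $M=1$ by applying the $M=1$ case $M$ times at successively finer scales, using nestedness to preserve the earlier disconnectedness. For $M=1$ it enumerates $\mathcal{D}_{\Delta}([0,1))=\{\bm{\theta}_{1},\ldots,\bm{\theta}_{m}\}$ with $m=\Delta^{-1}$, and applies \cref{p:bbb} $m$ times with parameter $\epsilon/m$ at scales $\delta_{k-1}\to\delta_{k}$. At step $k$ it sets $\mathcal{F}_{k}=\mathcal{F}_{k-1}\cap\mathfrak{P}_{k}$, losing $\epsilon/m$ in the exponent each time. It defines $\mathcal{P}_{\theta}$ only for $\theta\subset\bm{\theta}_{k}$. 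Disconnectedness from later generations is inherited via $\cup\mathcal{F}_{k'}\subset\cup\mathcal{F}_{k}$, and at the end everything is trivially refined to $\delta=\delta_{m}$.

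\textbf{How your route differs.} You use that in \cref{p:bbb} a single $\mathfrak{P}$ avoids $10T$ for all $T\in\mathcal{T}_{\sigma}$ and all $\sigma\in\delta\Z\cap[0,1)$ simultaneously. You also use that $\mathfrak{P}$ and $\mathcal{R}_{\sigma}$ do not depend on $\mathcal{Q}$. So one application handles every arc and every $\mathcal{F}\in\mathcal{M}$. As a result, $\delta$ is independent of $M$, and there is no $\epsilon/m$ bookkeeping and no refinement step. The paper's scheme uses each application of the building block only for the slopes of one arc and one target family, so it does not need this full uniformity in $\sigma$ and $\mathcal{Q}$. With \cref{p:bbb} as stated, that extra caution buys nothing, and your argument suffices.

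\textbf{One correction in the geometric step.} The containment $[\ell]_{\delta/2}\cap[-2,2]^{2}\subset 10T$ cannot be secured by shrinking $\delta$. The slope mismatch $|\sigma(\ell)-\sigma_{\theta}|<\delta$ causes a drift of $\delta$ times the length of the region, so its ratio to the half-width $5\delta$ of $10T$ does not depend on $\delta$. On $[-2,2]^{2}$ there is essentially no margin left.

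You only need $[0,1)^{2}$, since $\cup\mathfrak{P}$, $\cup\mathcal{F}'$ and $\cup\mathcal{P}_{\theta}$ all lie there (recall $\mathcal{D}_{\delta}=\mathcal{D}_{\delta}([0,1)^{2})$). Let $\ell_{T}$ be the core line of $T$. If $y_{1}\in[\ell]_{\delta/2}$ lies in a square touching $R\subset T$, then $\dist(y_{1},\ell_{T})\leq(\tfrac12+\sqrt{2})\delta$. Hence, for every $y\in[\ell]_{\delta/2}\cap[0,1)^{2}$,
\begin{displaymath}
\dist(y,\ell_{T})\leq(\tfrac12+\sqrt{2})\delta+\delta+(\sqrt{2}+\delta)\delta<5\delta .
\end{displaymath}
This gives the disconnectedness.

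The same estimate gives the diameter bound. Suppose a square of $\mathcal{P}_{\theta}$ meets $[\ell]_{\delta/2}$ and touches $R'\subset T'\neq T$. Then $\dist(\ell_{T},\ell_{T'})<7\delta$, contradicting the $10\delta$-separation, so only the single rectangle $R$ is involved. The diameter is then at most $\Delta+(1+2\sqrt{2})\delta\leq 2\Delta$ once $\delta\leq\Delta/4$. You may assume this, since \cref{p:bbb} holds for every dyadic $\delta\leq\delta_{0}$. Your fallback with a modified $\sigma_{\theta}$ or a finer scale is not needed.
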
 
\begin{proof}
    We start by reducing the proof to the case where $M = 1$.
    Indeed, if $M > 1$, we apply the proposition $M$ times consecutively.
    We give the details for the case $M = 2$, thus $\mathcal{M} = \{\mathcal{F}_{1},\mathcal{F}_{2}\}$; the general case should be clear after this, and only notationally messier.

    Start by applying the case $M = 1$ of the proposition with constants $\Delta,\epsilon/2$ and data 
    \begin{equation*}
        \{\mathcal{P}_{\bm{\theta}}\}_{\bm{\theta} \in \mathcal{D}_{\Delta}([0,1))} \quad \text{and} \quad \{\mathcal{F}_{1}\}.
    \end{equation*}
    This yields:
    \begin{itemize}[nl]
        \item a scale $\delta_{0} \in 2^{-\N} \cap (0,\tfrac{1}{2}\Delta]$, 
        \item families $\mathcal{P}_{\theta} \subset \mathcal{D}_{\delta_{0}}$, $\theta \in \mathcal{D}_{\delta_{0}}([0,1))$, which satisfy \cref{i}-\cref{ii} (with constant $\epsilon/2$), and
        \item a family $\mathcal{F}_{1}' \subset \mathcal{D}_{\delta_{0}}(\mathcal{F}_{1})$, such that $\mathcal{H}^{t - \epsilon}_{\infty}(\mathcal{F}_{1}') \geq \mathcal{H}^{t}_{\infty}(\mathcal{F}_{1})$ for every $t \in [1 + \epsilon,\tfrac{3}{2}]$, and every pair $\mathcal{P}_{\theta},\mathcal{F}_{1}'$ is $\theta$-disconnected, $\theta \in \mathcal{D}_{\delta_{0}}([0,1))$.
    \end{itemize}

    Next, we reapply the case $M = 1$ of the proposition, with constants $\delta_{0},\epsilon/2$ and data 
    \begin{equation*}
        \{\mathcal{P}_{\theta}\}_{\theta \in \mathcal{D}_{\delta_{0}}([0,1))} \quad \text{and} \quad \{\mathcal{F}_{2}\}.
    \end{equation*}
    This gives another scale $\delta \in 2^{-\N} \cap (0,\delta_{0}]$, new families $\mathcal{P}_{\theta'} \subset \mathcal{D}_{\delta}$, $\theta' \in \mathcal{D}_{\delta}([0,1))$, satisfying \cref{i}-\cref{ii} relative to the families $\{\mathcal{P}_{\theta}\}_{\theta \in \mathcal{D}_{\delta_{0}}([0,1))}$, and finally a set $\mathcal{F}_{2}' \in \mathcal{D}_{\delta}(\mathcal{F}_{2})$, such that every pair $\mathcal{P}_{\theta'},\mathcal{F}_{2}'$ is $\theta'$-disconnected, for $\theta' \in \mathcal{D}_{\delta}([0,1))$.

    We claim that the collections $\{\mathcal{P}_{\theta'}\}_{\theta' \in \mathcal{D}_{\delta}([0,1))}$ and $\mathcal{M}' = \{\mathcal{F}_{1}',\mathcal{F}_{2}'\}$ satisfy all the claims of \cref{p:bbb-iter}.
    To be precise, we redefine $\mathcal{F}_{1}' \coloneqq \mathcal{D}_{\delta}(\mathcal{F}_{1}')$ to comply with the requirement $\mathcal{F}_{1}',\mathcal{F}_{2}' \subset \mathcal{D}_{\delta}$.
    This has no effect on the properties claimed about $\mathcal{F}_{1}'$ so far (Hausdorff content, disconnectedness of $\mathcal{P}_{\theta},\mathcal{F}_{1}'$).

    We start with \cref{i}-\cref{ii}.
    Fix $\bm{\theta} \in \mathcal{D}_{\Delta}([0,1))$ and $\theta' \in \mathcal{D}_{\delta}(\bm{\theta})$.
    Let $\theta \in \mathcal{D}_{\delta_{0}}([0,1))$ be the unique dyadic arc with $\theta' \subset \theta \subset \bm{\theta}$.
    Then $\mathcal{P}_{\theta'} \subset \mathcal{P}_{\theta} \subset \mathcal{P}_{\bm{\theta}}$ by construction, and also
    \begin{equation*}
        \mathcal{H}_{\infty}^{t - \epsilon}(\mathcal{P}_{\theta'}) \geq \mathcal{H}_{\infty}^{t - \epsilon/2}(\mathcal{P}_{\theta}) \geq \mathcal{H}_{\infty}^{t}(\mathcal{P}_{\bm{\theta}}), \qquad t \in [1 + \epsilon,\tfrac{3}{2}].
    \end{equation*} 
    This verifies \cref{i}-\cref{ii} (note also that $\diam_{\theta'}(\mathcal{P}_{\theta'}) \leq \delta_{0} \leq 2\Delta$ for all $\theta' \in \mathcal{D}_{\delta}([0,1))$.) The families $\mathcal{F}_{j}'$, $j \in \{1,2\}$, evidently satisfy $\mathcal{H}^{t - \epsilon}_{\infty}(\mathcal{F}_{j}') \geq \mathcal{H}^{t}_{\infty}(\mathcal{F}_{j})$ for each $t \in [1 + \epsilon,\tfrac{3}{2}]$.
    So, we only need to check that every pair $\mathcal{P}_{\theta'},\mathcal{F}_{j}'$ is $\theta'$-disconnected, for $\theta' \in \mathcal{D}_{\delta}([0,1))$ and $j \in \{1,2\}$.
    This is clear for $j = 2$ by the choice of the sets $\mathcal{P}_{\theta'}$ and $\mathcal{F}_{2}'$.
    So, it remains to check that every pair $\mathcal{P}_{\theta}',\mathcal{F}_{1}'$ is $\theta$-disconnected, for $\theta' \in \mathcal{D}_{\delta}([0,1))$.

    For this we use property \cref{i}.
    Fix $\theta' \in \mathcal{D}_{\delta}([0,1))$, and let $\theta \in \mathcal{D}_{\delta_{0}}([0,1))$ be the dyadic parent of $\theta'$.
    Then $\mathcal{P}_{\theta'} \subset \mathcal{D}_{\delta}(\mathcal{P}_{\theta})$ according to \cref{i}, and moreover $\mathcal{P}_{\theta},\mathcal{F}_{1}'$ is $\theta$-disconnected by the first step of the construction.
    This means that if $\ell \subset \R^{2}$ is a line with $\sigma(\ell) \in \theta$, then $[\ell]_{\delta_{0}/2}$ cannot intersect both $\cup \mathcal{P}_{\theta}$ and $\cup \mathcal{F}_{1}'$.
    This certainly implies that if $\ell \subset \R^{2}$ is a line with $\sigma(\ell) \in \theta' \subset \theta$, then $[\ell]_{\delta/2}$ cannot intersect both $\cup \mathcal{P}_{\theta'} \subset \cup \mathcal{P}_{\theta}$ and $\cup \mathcal{F}_{1}'$.
    Therefore $\mathcal{P}_{\theta'},\mathcal{F}_{1}'$ is $\theta'$-disconnected.
    This concludes the proof of the case $M = 2$ (the proofs in the cases $M \geq 3$ are similar and left to the reader).

    It remains to prove the case $M = 1$.
    Write $\mathcal{M} \eqqcolon \{\mathcal{F}\}$ where $\mathcal{F}\subset \mathcal{D}_{\Delta}$.
    Recall that the families $\mathcal{P}_{\bm{\theta}} \subset \mathcal{D}_{\Delta}$ are indexed by the arcs $\bm{\theta} \in \mathcal{D}_{\Delta}([0,1))$.
    Let us enumerate these arcs $\{\bm{\theta}_{1},\ldots,\bm{\theta}_{m}\}$, where $m = \Delta^{-1}$.
    We first consider the pair $(\mathcal{P}_{\bm{\theta}_{1}},\mathcal{F})$.

    We apply \cref{p:bbb} with parameters $\epsilon/m = \Delta \epsilon$ and $\Delta$.
    This gives a scale $\delta_{1} \in 2^{-\N} \cap (0,\tfrac{1}{2}\Delta]$, and a family $\mathfrak{P}_{1} \subset \mathcal{D}_{\delta_{1}}$ satisfying $\mathcal{H}_{\infty}^{t - \epsilon/m}(\mathcal{F} \cap \mathfrak{P}_{1}) \geq \mathcal{H}^{t}_{\infty}(\mathcal{F})$ for all $t \in [1+\epsilon/m,\tfrac{3}{2}]$.
    Set $\mathcal{F}_{1} \coloneqq \mathcal{F} \cap \mathfrak{P}_{1}$, thus 
    \begin{equation}\label{form131}
        \mathcal{F}_{1} \subset \mathcal{D}_{\delta_{1}}(\mathcal{F}) \quad \text{and} \quad \mathcal{H}^{t - \epsilon/m}_{\infty}(\mathcal{F}_{1}) \geq \mathcal{H}^{t}_{\infty}(\mathcal{F}) \text{ for } t \in [1 + \epsilon,\tfrac{3}{2}].
    \end{equation}
    Moreover, for each $\theta \in \mathcal{D}_{\delta_{1}}([0,1))$ (in particular $\theta \in \mathcal{D}_{\delta_{1}}(\bm{\theta}_{1})$), \cref{p:bbb} gives a family $\mathcal{T}_{\theta}$ of $\delta_{1}$-tubes parallel to (the slope determined by the left endpoint of) $\theta$, and associated $(\delta_1 \times \Delta)$-rectangles $\mathcal{R}_{\theta}$, such that
    \begin{enumerate}[nl,a]
        \item\label{a} $\cup \mathcal{F}_{1} \cap (\cup \{10T : T \in \mathcal{T}_{\theta}\}) = \emptyset$,
        \item\label{b} $\mathcal{H}^{t - \epsilon}_{\infty}(\mathcal{P}_{\bm{\theta}_{1}} \cap \mathcal{D}_{\delta_{1}}(\cup \mathcal{R}_{\theta})) \geq \mathcal{H}^{t}_{\infty}(\mathcal{P}_{\bm{\theta}_{1}})$, for $\theta \in \mathcal{D}_{\delta_{1}}(\bm{\theta}_{1})$ and $t \in [1 + \epsilon,\tfrac{3}{2}]$.
    \end{enumerate}
    We now define 
    \begin{equation*}
        \mathcal{P}_{\theta} \coloneqq \mathcal{P}_{\bm{\theta}_{1}} \cap \mathcal{D}_{\delta_{1}}(\cup \mathcal{R}_{\theta}), \qquad \theta \in \mathcal{D}_{\delta_{1}}(\bm{\theta}_{1}).
    \end{equation*}
    Thus $\mathcal{H}^{t - \epsilon}_{\infty}(\mathcal{P}_{\theta}) \geq \mathcal{H}^{t}_{\infty}(\mathcal{P}_{\bm{\theta}_{1}})$ for all $t \in [1 + \epsilon,\tfrac{3}{2}]$, and $\mathcal{P}_{\theta} \subset \mathcal{D}_{\delta_{1}}(\mathcal{P}_{\bm{\theta}_{1}})$ for $\theta \in \mathcal{D}_{\delta_{1}}(\bm{\theta}_{1})$.
    This means that the sets $\mathcal{P}_{\theta}$ (for $\theta \in \mathcal{D}_{\delta_{1}}(\bm{\theta}_{1})$) satisfy the requirements \cref{i}-\cref{ii} in the statement of \cref{p:bbb-iter}.
    The claim $\diam_{\theta}(\mathcal{P}_{\theta}) \leq 2\Delta$ follows from the inclusion $\mathcal{P}_{\theta} \subset \mathcal{D}_{\delta_{1}}(\cup \mathcal{R}_{\theta})$ (and $\delta_{1} \leq \Delta/2$), and recalling the key properties of the rectangles and tubes $\mathcal{R}_{\theta},\mathcal{T}_{\theta}$ from \cref{p:bbb}: each tube $T \in \mathcal{T}_{\theta}$ only contains one rectangle from $\mathcal{R}_{\theta}$, the tubes $T \in \mathcal{T}_{\theta}$ have slope $\theta$, and the tubes $T \in \mathcal{T}_{\theta}$ are $10\delta$-separated.

    A small caveat is that ``$\delta_{1}$'' is not yet the final scale $\delta \in 2^{-\N} \cap (0,\delta_{0}]$ announced in the proposition, and we claimed in the statement to choose $\mathcal{P}_{\theta} \subset \mathcal{D}_{\delta}$.
    Once the final scale $\delta$ has been located (after altogether $m$ steps of the argument), we will re-define the sets $\mathcal{P}_{\theta}$ as follows: for each $\theta \in \mathcal{D}_{\delta_{1}}(\bm{\theta}_{1})$ and $\theta' \in \mathcal{D}_{\delta}(\theta)$, we set $\mathcal{P}_{\theta'} \coloneqq \mathcal{D}_{\delta}(\mathcal{P}_{\theta})$.
    Then $\mathcal{H}^{t - \epsilon}_{\infty}(\mathcal{P}_{\theta'}) \geq \mathcal{H}^{t}_{\infty}(\mathcal{P}_{\bm{\theta}_{1}})$ for all $\theta' \in \mathcal{D}_{\delta}(\bm{\theta}_{1})$.
    We will next check that the pairs $\mathcal{P}_{\theta},\mathcal{F}_{1}$ are $\theta$-disconnected.
    This will clearly imply that the pairs $\mathcal{P}_{\theta'},\mathcal{F}_{1}$ are $\theta'$-disconnected.

    Let us check that $\mathcal{P}_{\theta},\mathcal{F}_{1}$ is $\theta$-disconnected for all $\theta \in \mathcal{D}_{\delta_{1}}(\bm{\theta}_{1})$.
    Indeed, fix $\theta \in \mathcal{D}_{\delta_{1}}(\bm{\theta}_{1})$, and let $\ell \subset \R^{2}$ be a line with $\sigma(\ell) \in \theta$.
    Now, if $[\ell]_{\delta_{1}/2}$ intersects $\cup \mathcal{P}_{\theta} \subset \cup \mathcal{D}_{\delta_{1}}(\cup \mathcal{T}_{\theta})$, then $[0,1]^2\cap [\ell]_{\delta_{1}/2} \subset \cup \{10T : T \in \mathcal{T}_{\theta}\}$.
    Therefore $[\ell]_{\delta_{1}/2}$ does not intersect $\cup \mathcal{F}_{1}$ by \cref{a}.

    Next we deal with the arc $\bm{\theta}_{k}$ with $2 \leq k \leq m$.
    Assume that we have already constructed a decreasing sequence of scales $\delta_{1},\ldots,\delta_{k - 1} \in 2^{-\N} \cap (0,\Delta]$, and a nested sequence of families $\mathcal{F}_{i} \subset \mathcal{D}_{\delta_{i}}(\mathcal{F})$, $1 \leq i \leq k - 1$, satisfying 
    \begin{equation}\label{form14}
        \mathcal{H}_{\infty}^{t - i\epsilon/m}(\mathcal{F}_{i}) \geq \mathcal{H}_{\infty}^{t}(\mathcal{F}), \qquad 1 \leq i \leq k - 1, \, t \in [1 + \epsilon,\tfrac{3}{2}].
    \end{equation}
    (By ``nested'' we mean that the squares in $\mathcal{F}_{i + 1}$ are contained in the union of the squares in $\mathcal{F}_{i}$.) The case $k = 2$ is true by \cref{form131}.
    We also assume that for each $1 \leq i \leq k - 1$, and for each $\theta \in \mathcal{D}_{\delta_{i}}(\bm{\theta}_{i})$, we have constructed a family $\mathcal{P}_{\theta} \subset \mathcal{D}_{\delta_{i}}(\mathcal{P}_{\bm{\theta}_{i}})$ such that $\mathcal{P}_{\theta},\mathcal{F}_{k - 1}$ is $\theta$-disconnected.
    We verified this just above in the case $k = 2$.

    We next construct the scale $\delta_{k} \in 2^{-\N} \cap (0,\delta_{k - 1}]$, the family $\mathcal{F}_{k} \subset \mathcal{D}_{\delta_{k}}(\mathcal{F}_{k - 1}) \subset \mathcal{D}_{\delta_{k}}(\mathcal{F})$, and the sets $\mathcal{P}_{\theta} \subset \mathcal{D}_{\delta_{k}}(\mathcal{P}_{\bm{\theta}_{k}})$ for $\theta \in \mathcal{D}_{\delta_{k}}(\bm{\theta}_{k})$.
    To do this, we apply \cref{p:bbb} with parameters $\epsilon/m = \Delta \epsilon$ and $\delta_{k - 1}$.
    This gives a scale $\delta_{k} \in 2^{-\N} \cap (0,\tfrac{1}{2}\delta_{k - 1}]$ and a family $\mathfrak{P}_{k} \subset \mathcal{D}_{\delta_{k}}$ with the property 
    \begin{equation}\label{form15}
        \mathcal{H}^{t - k \epsilon/m}_\infty(\mathcal{F}_{k - 1} \cap \mathfrak{P}_{k}) \geq \mathcal{H}^{t - (k - 1)\epsilon/m}_\infty(\mathcal{F}_{k - 1}) \stackrel{\cref{form14}}{\geq} \mathcal{H}^{t}_{\infty}(\mathcal{F}), \quad t \in [1 + \epsilon,\tfrac{3}{2}].
    \end{equation}
    Define $\mathcal{F}_{k} \coloneqq \mathcal{F}_{k - 1} \cap \mathfrak{P}_{k}$.
    Moreover, for each $\theta \in \mathcal{D}_{\delta_{k}}(\bm{\theta}_{k})$, \cref{p:bbb} gives a family $\mathcal{T}_{\theta}$ of $\delta_{k}$-tubes parallel to $\theta$, and associated $(\delta_{k} \times \delta_{k - 1})$-rectangles $\mathcal{R}_{\theta}$ such that
    \begin{itemize}
        \item[($a_{k}$)] $\cup \mathcal{F}_{k} \cap (\cup \{10T : T \in \mathcal{T}_{\theta}\}) = \emptyset$, and
        \item[($b_{k}$)] $\mathcal{H}^{t - \epsilon}_{\infty}(\mathcal{P}_{\bm{\theta}_{k}} \cap \mathcal{D}_{\delta_{k}}(\cup \mathcal{R}_{\theta})) \geq \mathcal{H}^{t}_{\infty}(\mathcal{P}_{\bm{\theta}_{k}})$, for $\theta \in \mathcal{D}_{\delta_{k}}(\bm{\theta}_{k})$ and $t \in [1 + \epsilon,\tfrac{3}{2}]$.
    \end{itemize}
    We now define 
    \begin{equation*}
        \mathcal{P}_{\theta} \coloneqq \mathcal{P}_{\bm{\theta}_{k}} \cap \mathcal{D}_{\delta_{k}}(\cup \mathcal{R}_{\theta}), \qquad \theta \in \mathcal{D}_{\delta_{k}}(\bm{\theta}_{k}).
    \end{equation*}
    Thus $\mathcal{P}_{\theta} \subset \mathcal{D}_{\delta_{k}}(\mathcal{P}_{\bm{\theta}_{k}})$ and $\mathcal{H}_\infty^{t - \epsilon}(\mathcal{P}_{\theta}) \geq \mathcal{H}^{t}_{\infty}(\mathcal{P}_{\bm{\theta}_{k}})$ for all $\theta \in \mathcal{D}_{\delta_{k}}(\bm{\theta}_{k})$ and $t \in [1 + \epsilon,\tfrac{3}{2}]$.
    We also note that $\diam_{\theta}(\mathcal{P}_{\theta}) \leq 2\delta_{k - 1} \leq 2\Delta$ for all $\theta \in \mathcal{D}_{\delta_{k}}(\bm{\theta}_{k})$.

    To complete the induction, it remains to check that every pair $\mathcal{P}_{\theta},\mathcal{F}_{k}$ is $\theta$-disconnected for $\theta \in \mathcal{D}_{\delta_{i}}(\bm{\theta}_{i})$, $1 \leq i \leq k$.
    For $1 \leq i \leq k - 1$, this follows immediately from the $\theta$-disconnectedness of $\mathcal{P}_{\theta},\mathcal{F}_{k - 1}$, and $\cup \mathcal{F}_{k} \subset \cup \mathcal{F}_{k - 1}$.
    For $i = k$, the argument is the same as the argument we already recorded above for the $\theta$-disconnectedness of the pairs $\mathcal{P}_{\theta},\mathcal{F}_{1}$, with $\theta \in \mathcal{D}_{\delta_{1}}(\bm{\theta}_{1})$, now using ($a_{k}$) and $\mathcal{P}_{\theta} \subset \mathcal{D}_{\delta_{k}}(\cup \mathcal{T}_{\theta})$.
    We do not repeat the details.

    We complete the proof by setting $\delta \coloneqq \delta_{m}$, and $\mathcal{F}' \coloneqq \mathcal{F}_{m}$.
    Then $\mathcal{H}^{t - \epsilon}_{\infty}(\mathcal{F}') \geq \mathcal{H}^{t}_{\infty}(\mathcal{F})$ for all $t \in [1 + \epsilon,\tfrac{3}{2}]$ by \cref{form15} (with $k = m$).
    As already mentioned when dealing with the case $k = 1$, we finally have to redefine all the sets $\mathcal{P}_{\theta} \subset \mathcal{D}_{\delta_{k}}(\bm{\theta}_{k})$, $1 \leq k \leq m$, in a trivial way, to comply with the requirement that they are all subsets of $\mathcal{D}_{\delta}$, and indexed by $\mathcal{D}_{\delta}([0,1))$.

    For $1 \leq k \leq m$, $\theta \in \mathcal{D}_{\delta_{k}}(\bm{\theta}_{k})$, and $\theta' \in \mathcal{D}_{\delta}(\theta)$, we set $\mathcal{P}_{\theta'} \coloneqq \mathcal{D}_{\delta}(\mathcal{P}_{\theta})$.
    With this definition $\mathcal{P}_{\theta'} \subset \mathcal{D}_{\delta}(\mathcal{P}_{\bm{\theta}})$ and $\mathcal{H}^{t - \epsilon}_{\infty}(\mathcal{P}_{\theta'}) \geq \mathcal{H}^{t}_{\infty}(\mathcal{P}_{\bm{\theta}})$ for all $\bm{\theta} \in \mathcal{D}_{\Delta}([0,1))$, all $\theta' \in \mathcal{D}_{\delta}([0,1))$, and all $t \in [1 + \epsilon,\tfrac{3}{2}]$.
    The $\theta'$-disconnectedness of the pairs $\mathcal{P}_{\theta'},\mathcal{F}'$ follows from the $\theta$-disconnectedness of the pairs $\mathcal{P}_{\theta},\mathcal{F}'$.
    The bound $\diam_{\theta'}(\mathcal{P}_{\theta'}) \leq \Delta$ is inherited from $\diam_{\theta}(\mathcal{P}_{\theta}) \leq \Delta$.
\end{proof} 
 
 We are then equipped to prove \cref{thm1}.
\begin{proofref}{thm1}
    Recall that $\Delta \in 2^{-\N}$.
    We may assume that $\Delta \in (0,\tfrac{1}{2}]$.
    Namely, if $\Delta = 1$ then we are given a single non-empty family $\mathcal{P}_{[0,1)} \subset \mathcal{D}_{1}$, so $\mathcal{P}_{[0,1)} = \{[0,1)^{2}\}$.
    We then define $\delta \coloneqq 1/2$ and $\mathcal{P}_{[0,1/2)} \coloneqq \mathcal{D}_{1/2} \eqqcolon \mathcal{P}_{[1/2,1)}$.
    The reader may check that all properties \cref{A}-\cref{D} satisfies (\cref{D} is vacuous, because $\mathcal{D}_{1}([0,1))$ contains a single interval).

    We then assume that $\Delta \in (0,\tfrac{1}{2}]$.
    Let us enumerate 
    \begin{equation*}
        \mathcal{D}_{\Delta}([0,1)) \eqqcolon \{\bm{\theta}_{1},\ldots,\bm{\theta}_{m}\}
    \end{equation*}
    with $m = \Delta^{-1} \geq 2$.
    The proof consists of $m$ applications of \cref{p:bbb-iter}.
    At step $k \in \{1,\ldots,m\}$, ``the sets associated to $\bm{\theta}_{k}$ are disconnected from the sets associated to all the other arcs $\bm{\theta}_{j}$ with $j \neq k$.''
    This isn't precise sense at the moment, since the ``sets associated to $\bm{\theta}_{j}$'' are not properly defined.

    Write $\delta_{0} \coloneqq \Delta$.
    Fix $0 \leq k < m$, and assume that the following objects have been constructed.
    First, a decreasing sequence of scales $\delta_{0},\delta_{1},\ldots,\delta_{k} \in 2^{-\N} \cap (0,\Delta]$.

    Second, for each $0 \leq j \leq k$ and $\theta \in \mathcal{D}_{\delta_{j}}([0,1))$, a family $\mathcal{P}_{\theta} \subset \mathcal{D}_{\delta_{j}}$ as follows:
    \begin{enumerate}[nl, label=(I\arabic*)]
        \item\label{I1} If $0 \leq i \leq j$, $\theta_{i} \in \mathcal{D}_{\delta_{i}}([0,1))$ and $\theta_{j} \in \mathcal{D}_{\delta_{j}}(\theta_{i})$, then $\mathcal{P}_{\theta_{j}} \subset \mathcal{D}_{\delta_{j}}(\mathcal{P}_{\theta_{i}})$.
        \item\label{I2} If $\theta \in \mathcal{D}_{\delta_{k}}(\bm{\theta})$ for some $\bm{\theta} \in \mathcal{D}_{\Delta}([0,1))$, then
            \begin{equation*}
                \mathcal{H}^{t - k\epsilon/m}_{\infty}(\mathcal{P}_{\theta}) \geq \mathcal{H}^{t}_{\infty}(\mathcal{P}_{\bm{\theta}}), \qquad t \in [1 + \epsilon,\tfrac{3}{2}].
            \end{equation*}
        \item\label{I3} If $1 \leq i \leq m$ and $j \in \{1,\ldots,k\} \, \setminus \, \{i\}$, then the following holds:
            Let $\theta_{i} \in \mathcal{D}_{\delta_{k}}(\bm{\theta}_{i})$ and $\theta_{j} \in \mathcal{D}_{\delta_{k}}(\bm{\theta}_{j})$.
            Then the pair $\mathcal{P}_{\theta_{i}},\mathcal{P}_{\theta_{j}}$ is $\theta_{i}$-disconnected.
        \item\label{I4} If $1 \leq k \leq m$ and $j \in \{1,\ldots,m\} \, \setminus \, \{k\}$, then $\diam_{\theta}(\mathcal{P}_{\theta}) \leq \Delta$ for all $\theta \in \mathcal{D}_{\delta_{k}}(\bm{\theta}_{j})$.
    \end{enumerate}
    Note that conditions \cref{I1}-\cref{I2} with $k = 0$ are trivially satisfied by the families $\mathcal{P}_{\bm{\theta}_{1}},\ldots,\mathcal{P}_{\bm{\theta}_{m}}$ provided to us by the statement of the proposition, while \cref{I3}-\cref{I4} are vacuous for $k = 0$.

    We will next construct the scale $\delta_{k + 1}$ and the families $\mathcal{P}_{\theta} \subset \mathcal{D}_{\delta_{k + 1}}$, $\theta \in \mathcal{D}_{\delta_{k + 1}}([0,1))$, by applying \cref{p:bbb-iter} with parameters $\delta_{k} \in 2^{-\N} \cap (0,\Delta]$, $\epsilon/m$, and $M \in \N$ to be determined in a moment.
    The families $\mathcal{P}_{\theta} \subset \mathcal{D}_{\delta_{k}}$, $\theta \in \mathcal{D}_{\delta_{k}}([0,1))$, to which \cref{p:bbb-iter} is applied are the ones provided by the inductive hypothesis.
    The family $\mathcal{M}$ is defined by
    \begin{equation*}
        \mathcal{M} \coloneqq \{\mathcal{P}_{\theta_{k}} : \theta_{k} \in \mathcal{D}_{\delta_{k}}(\bm{\theta}_{k + 1})\}.
    \end{equation*}
    Set $M \coloneqq |\mathcal{M}|$.
    The results of the application of \cref{p:bbb-iter} are the following.
    We are provided with a scale $\delta_{k + 1} = \delta_{k + 1}(\delta_{k},\epsilon/m,M) \in (0,\tfrac{1}{2}\delta_{k}]$.
    For each $\theta \in \mathcal{D}_{\delta_{k + 1}}([0,1))$ we are provided with a family $\overline{\mathcal{P}}_{\theta} \subset \mathcal{D}_{\delta_{k + 1}}$ with the following properties (we use the overline symbol to signify that the sets $\overline{\mathcal{P}}_{\theta}$ are not the ``final'' sets of generation $k + 1$):
    \begin{enumerate}[nl,a]
        \item If $\theta_{k} \in \mathcal{D}_{\delta_{k}}([0,1))$ and $\theta \in \mathcal{D}_{\delta_{k + 1}}(\theta_{k})$, then $\overline{\mathcal{P}}_{\theta} \subset \mathcal{D}_{\delta_{k + 1}}(\mathcal{P}_{\theta_{k}})$.
        \item $\diam_{\theta}(\overline{\mathcal{P}}_{\theta}) \leq \delta_{k} \leq \Delta$ for all $\theta \in \mathcal{D}_{\delta_{k + 1}}([0,1))$.
        \item If $\theta_{k} \in \mathcal{D}_{\delta_{k}}([0,1))$ and $\theta \in \mathcal{D}_{\delta_{k + 1}}(\theta_{k})$, then
            \begin{equation*} \mathcal{H}^{t - (k + 1)\epsilon/m}_{\infty}(\overline{\mathcal{P}}_{\theta}) \geq \mathcal{H}^{t - k\epsilon/m}_\infty(\mathcal{P}_{\theta_{k}}), \qquad t \in [1 + \epsilon,\tfrac{3}{2}].
            \end{equation*}
    \end{enumerate}
    Moreover, for each $\mathcal{P}_{\theta_{k}} \in \mathcal{M}$, $\theta_{k} \in \mathcal{D}_{\delta_{k}}(\bm{\theta}_{k + 1})$, we are provided with a family $\mathcal{P}_{\theta_{k}}' \subset \mathcal{D}_{\delta_{k + 1}}(\mathcal{P}_{\theta_{k}})$ such that 
    \begin{equation*}
        \mathcal{H}^{t - (k + 1)\epsilon/m}_{\infty}(\mathcal{P}_{\theta_{k}}') \geq \mathcal{H}^{t - k\epsilon/m}_{\infty}(\mathcal{P}_{\theta_{k}}), \qquad t \in [1 + \epsilon,\tfrac{3}{2}],
    \end{equation*}
    and such that 
    \begin{equation}\label{form18}
        \text{every pair $\overline{\mathcal{P}}_{\theta},\mathcal{P}_{\theta_{k}}'$ is $\theta$-disconnected, for $\theta \in \mathcal{D}_{\delta_{k + 1}}([0,1))$ and $\theta_{k} \in \mathcal{D}_{\delta_{k}}(\bm{\theta}_{k + 1})$.}
    \end{equation}

    We are now prepared to define the sets $\mathcal{P}_{\theta}$, $\theta \in \mathcal{D}_{\delta_{k + 1}}([0,1))$, of generation $k + 1$.
    Set
    \begin{equation}\label{form17}
        \mathcal{P}_{\theta} \coloneqq
        \begin{cases}
            \overline{\mathcal{P}}_{\theta}, & \theta \in \mathcal{D}_{\delta_{k + 1}}([0,1)) \, \setminus \, \mathcal{D}_{\delta_{k + 1}}(\bm{\theta}_{k + 1}), \\
            \mathcal{P}_{\theta_{k}}', & \theta \in \mathcal{D}_{\delta_{k + 1}}(\theta_{k}), \, \theta_{k} \in \mathcal{D}_{\delta_{k}}(\bm{\theta}_{k + 1}).
        \end{cases}
    \end{equation} 

    We then verify that these sets $\mathcal{P}_{\theta}$ satisfy \cref{I1}-\cref{I4}.
    To verify \cref{I1}, fix $0 \leq i \leq j \leq k + 1$.
    If $j \leq k$, the property \cref{I1} follows from induction, and the case $i = k + 1 = j$ is trivial, so we only need to consider pairs $(i,k + 1)$ with $i < k$.
    If $\theta_{i} \in \mathcal{D}_{\delta_{i}}([0,1))$ and $\theta \in \mathcal{D}_{\delta_{k + 1}}(\theta_{i})$, we need to prove that $\mathcal{P}_{\theta} \subset \mathcal{D}_{\delta_{k + 1}}(\mathcal{P}_{\theta_{i}})$.
    In both cases of the definition of $\mathcal{P}_{\theta}$, the following is true: if $\theta_{k} \in \mathcal{D}_{\delta_{k}}([0,1))$ is the dyadic parent of $\theta$, then $\mathcal{P}_{\theta} \subset \mathcal{D}_{\delta_{k + 1}}(\mathcal{P}_{\theta_{k}})$.
    Now the claim follows by applying the inductive hypothesis to the pair $(i,k)$.

    Next, we verify property \cref{I2}.
    Fix $\theta \in \mathcal{D}_{\delta_{k + 1}}(\bm{\theta})$ with $\bm{\theta} \in \mathcal{D}_{\Delta}([0,1))$.
    Let $\theta_{k} \in \mathcal{D}_{\delta_{k}}(\bm{\theta})$ be the dyadic parent of $\theta$.
    In both cases of the definition of $\mathcal{P}_{\theta}$, it holds 
    \begin{equation*}
        \mathcal{H}^{t - (k + 1)\epsilon/m}_{\infty}(\mathcal{P}_{\theta}) \geq \mathcal{H}_{\infty}^{t - k\epsilon/m}(\mathcal{P}_{\theta_{k}}) \geq \mathcal{H}^{t}_{\infty}(\mathcal{P}_{\bm{\theta}}), \qquad t \in [1 + \epsilon,\tfrac{3}{2}],
    \end{equation*}
    where the second inequality uses \cref{I2} inductively.
    This verifies \cref{I2}.

    We move on to property \cref{I3}.
    Fix $1 \leq i \leq m$ and $j \in \{1,\ldots,k + 1\} \, \setminus \, \{i\}$, and let $\theta_{i} \in \mathcal{D}_{\delta_{k + 1}}(\bm{\theta}_{i})$ and $\theta_{j} \in \mathcal{D}_{\delta_{k + 1}}(\bm{\theta}_{j})$.
    The claim is that $\mathcal{P}_{\theta_{i}},\mathcal{P}_{\theta_{j}}$ is $\theta_{i}$-disconnected.

    Consider first the case where $j \leq k$.
    Let $\bar{\theta}_{i} \in \mathcal{D}_{\delta_{k}}(\bm{\theta}_{i})$ and $\bar{\theta}_{j} \in \mathcal{D}_{\delta_{k}}(\bm{\theta}_{j})$ be the dyadic parents of $\theta_{i},\theta_{j}$.
    Since $j \in \{1,\ldots,k\} \, \setminus \, \{i\}$, our inductive hypothesis \cref{I3} guarantees that the pair $\mathcal{P}_{\bar{\theta}_{i}},\mathcal{P}_{\bar{\theta}_{j}}$ is $\bar{\theta}_{i}$-disconnected.
    In particular, if $\ell \subset \R^{2}$ is a line with $\sigma(\ell) \in \theta_{i} \subset \bar{\theta}_{i}$, then $[\ell]_{\delta_{k}/2}$ does not intersect both $\cup \mathcal{P}_{\bar{\theta}_{i}},\cup \mathcal{P}_{\bar{\theta}_{j}}$.
    Since $\cup \mathcal{P}_{\theta_{i}} \subset \cup \mathcal{P}_{\bar{\theta}_{i}}$ and $\cup \mathcal{P}_{\theta_{j}} \subset \cup \mathcal{P}_{\bar{\theta}_{j}}$, we infer that $[\ell]_{\delta_{k}/2}$ also does not intersect both $\cup \mathcal{P}_{\theta_{i}},\cup \mathcal{P}_{\theta_{j}}$.
    Thus $\mathcal{P}_{\theta_{i}},\mathcal{P}_{\theta_{j}}$ is $\theta_{i}$-disconnected.

    Consider finally the case $j = k + 1$, thus $i \in \{1,\ldots,m\} \, \setminus \, \{k + 1\}$.
    By the definition \cref{form17} of the sets $\mathcal{P}_{\theta}$,
    \begin{equation*}
        \mathcal{P}_{\theta_{i}} = \overline{\mathcal{P}}_{\theta_{i}} \quad \text{and} \quad \mathcal{P}_{\theta_{j}} = \mathcal{P}_{\theta_{k}}' \subset \mathcal{D}_{\delta_{k + 1}}(\mathcal{P}_{\theta_{k}}),
    \end{equation*} 
    where $\theta_{k} \in \mathcal{D}_{\delta_{k}}(\bm{\theta}_{k + 1})$ is the dyadic parent of $\theta_{j}$.
    But now the pair $\overline{\mathcal{P}}_{\theta_{i}},\mathcal{P}_{\theta_{k}}'$ is $\theta_{i}$-disconnected by construction, see \cref{form18}.
    Thus $\mathcal{P}_{\theta_{i}},\mathcal{P}_{\theta_{j}}$ is $\theta_{i}$-disconnected.

    Finally, we verify property \cref{I4}.
    Fix $j \in \{1,\ldots,m\} \, \setminus \, \{k + 1\}$ and $\theta \in \mathcal{D}_{\delta_{k + 1}}(\bm{\theta}_{j})$.
    In this case $\mathcal{P}_{\theta}$ coincides with $\overline{\mathcal{P}}_{\theta}$ by \cref{form17}, and property (b) yields $\diam_{\theta}(\mathcal{P}_{\theta}) \leq \Delta$.

    We have now verified all the properties \cref{I1}-\cref{I4} for the scale $\delta_{k + 1}$ and the families $\mathcal{P}_{\theta} \subset \mathcal{D}_{\delta_{k + 1}}$ with $\theta \in \mathcal{D}_{\delta_{k + 1}}([0,1))$.
    We complete the proof of \cref{thm1} as follows.
    Let $\delta \coloneqq \delta_{m} \in 2^{-\N} \cap (0,\Delta]$, and let $\mathcal{P}_{\theta} \subset \mathcal{D}_{\delta}$, $\theta \in \mathcal{D}_{\delta}([0,1))$, be the scale, and the families, obtained at the (final) step $m$ of the construction.
    We claim that these objects satisfy the claims \cref{A}-\cref{D} of \cref{thm1}.
    For \cref{A}, if $\bm{\theta} \in \mathcal{D}_{\Delta}([0,1))$ and $\theta \in \mathcal{D}_{\delta}(\bm{\theta})$, the inclusion $\mathcal{P}_{\theta} \subset \mathcal{D}_{\delta}(\mathcal{P}_{\bm{\theta}})$ follows from the nestedness property \cref{I1} applied with $i = 0$ and $j = m$.

    For \cref{B}, if $\theta \in \mathcal{D}_{\delta}([0,1)) \, \setminus \, \mathcal{D}_{\delta}(\bm{\theta}_{m})$, the diameter bound $\diam_{\theta}(\mathcal{P}_{\theta}) \leq \Delta$ follows directly from property \cref{I4}.
    Consider then the case $\theta \in \mathcal{D}_{\delta}(\bm{\theta}_{m})$.
    Let $\bar{\theta} \in \mathcal{D}_{\delta_{m - 1}}(\bm{\theta}_{m})$ be the dyadic parent of $\theta$.
    Then $m \in \{1,\ldots,m\} \, \setminus \, \{m - 1\}$, so \cref{I4} applied with $(j,k) = (m,m - 1)$ shows that $\diam_{\bar{\theta}}(\mathcal{P}_{\bar{\theta}}) \leq \Delta$.
    Now $\diam_{\theta}(\mathcal{P}_{\theta}) \leq \diam_{\theta}(\mathcal{P}_{\bar{\theta}}) \leq \Delta$ by $\theta \subset \bar{\theta}$ and $\mathcal{P}_{\theta} \subset \mathcal{P}_{\bar{\theta}}$.

    For \cref{C}, the lower bound $\mathcal{H}^{t - \epsilon}_{\infty}(\mathcal{P}_{\theta}) \geq \mathcal{H}^{t}_{\infty}(\mathcal{P}_{\bm{\theta}})$ for $\bm{\theta} \in \mathcal{D}_{\Delta}([0,1))$, $\theta \in \mathcal{D}_{\delta}(\bm{\theta})$, and $t \in [1 + \epsilon,\tfrac{3}{2}]$ follows from \cref{I2} applied with $k = m$.

    For \cref{D}, we need to check that if $\bm{\theta}_{i},\bm{\theta}_{j} \in \mathcal{D}_{\Delta}([0,1))$ with $i,j \in \{1,\ldots,m\}$, $i \neq j$, and $\theta_{i} \in \mathcal{D}_{\delta}(\bm{\theta}_{i})$ and $\theta_{j} \in \mathcal{D}_{\delta}(\bm{\theta}_{j})$, then $\mathcal{P}_{\theta_{i}},\mathcal{P}_{\theta_{j}}$ is $\theta_{i}$-disconnected and $\theta_{j}$-disconnected.
    This follows directly from \cref{I3} applied with $k = m$, and to both pairs $(i,j)$ and $(j,i)$.
\end{proofref}

\subsection{Construction of a set with large visible parts in all directions}\label{s:lower-bound}
We now construct the compact set $K \subset \R^{2}$ for the lower bound in \cref{main}.
Fix $\epsilon > 0$.
We will first construct a compact set $K \subset \R^{2}$ such that $\vis^{2}_{\sigma}(K) \geq t - \epsilon$ for all $\sigma \in [0,1)$.
At the end of the argument, we will explain the (standard but notationally cumbersome) edits required to get rid of the $\epsilon$.

Write $\epsilon_{0} \coloneqq 0$, and set
\begin{equation*}
    \epsilon_{j} \coloneqq \epsilon \cdot (\tfrac{1}{2} + \ldots + 2^{-j}) \in (0,\epsilon), \qquad j \geq 1.
\end{equation*}
We will recursively construct a sequence of dyadic scales $\delta_{0} > \delta_{1} > \delta_{2} > \ldots > 0$ and associated families $\{\mathcal{P}_{\theta} \subset \mathcal{D}_{\delta_{k}} : \theta \in \mathcal{D}_{\delta_{k}}([0,1))\}$.
Set $\delta_{0} \coloneqq 1$ and write $\mathcal{P}_{[0,1)} \coloneqq \{[0,1)^{2}\}$.
Note that 
\begin{equation*}
    \mathcal{H}^{3/2}_{\infty}(\mathcal{P}_{\theta}) = \mathcal{H}^{3/2}_{\infty}([0,1)^{2}) = 1, \qquad \theta \in \mathcal{D}_{\delta_{0}}([0,1)).
\end{equation*}

Assume next that the scales $\delta_{0} > \ldots > \delta_{k}$ have already been constructed for some $k \geq 0$.
Assume also that to every $j \in \{0,\ldots,k\}$ we have associated a family of the form $\{\mathcal{P}_{\theta} \subset \mathcal{D}_{\delta_{j}} : \theta \in \mathcal{D}_{\delta_{j}}([0,1))\}$, such that the following properties are valid:
\begin{enumerate}[nl,label=(J\arabic*)]
    \item\label{J1} Let $k \geq 1$.
        If $\bm{\theta} \in \mathcal{D}_{\delta_{k - 1}}([0,1))$ and $\theta \in \mathcal{D}_{\delta_{k}}(\bm{\theta})$, then $\mathcal{P}_{\theta} \subset \mathcal{D}_{\delta_{k}}(\mathcal{P}_{\bm{\theta}})$.
    \item\label{J2} Let $k \geq 1$.
        Then $\diam_{\theta}(\mathcal{P}_{\theta}) \leq 2\delta_{k - 1}$ for $\theta \in \mathcal{D}_{\delta_{k}}([0,1))$.
    \item\label{J3} Let $k \geq 0$.
        Then $\mathcal{H}^{3/2 - \epsilon_{k}}_{\infty}(\mathcal{P}_{\theta}) \geq 1$ for all $\theta \in \mathcal{D}_{\delta_{k}}([0,1))$.
    \item\label{J4} Let $k \geq 1$, $\bm{\theta},\bm{\theta}' \in \mathcal{D}_{\delta_{k - 1}}([0,1))$ with $\bm{\theta} \neq \bm{\theta}'$, and $\theta \in \mathcal{D}_{\delta_{k}}(\bm{\theta})$ and $\theta' \in \mathcal{D}_{\delta_{k}}(\bm{\theta}')$.
        Then the pair $\mathcal{P}_{\theta},\mathcal{P}_{\theta'}$ is $\theta$-disconnected and $\theta'$-disconnected.
\end{enumerate}
Only \cref{J3} says something non-vacuous for $k = 0$, and that condition with $\epsilon_{0} = 0$ is satisfied by the initial families $\mathcal{P}_{\theta} = \{p_{\theta}\}$, $\theta \in \mathcal{D}_{\delta_{0}}([0,1))$.

We next construct the scale $\delta_{k + 1} \in 2^{-\N} \cap (0,\tfrac{1}{2}\delta_{k}]$ and the family $\{\mathcal{P}_{\theta} \subset \mathcal{D}_{\delta_{k + 1}} : \theta \in \mathcal{D}_{\delta_{k + 1}}([0,1))\}$ by applying \cref{thm1} with parameters 
\begin{equation*}
    \Delta \coloneqq \delta_{k} \quad \text{and} \quad \epsilon \cdot 2^{-k - 1}.
\end{equation*}
The properties \cref{J1}-\cref{J4} are nothing but restatements of \cref{thm1}~\cref{A}-\cref{D}, but let us say a few words as a reminder.

The nestedness property \cref{J1} is immediate from \cref{thm1}~\cref{A}.
Regarding \cref{J2}, \cref{thm1}~\cref{B} tells us that $\diam_{\theta}(\mathcal{P}_{\theta}) \leq 2\Delta = 2\delta_{k}$ for $\theta \in \mathcal{D}_{\delta_{k + 1}}([0,1))$.
Regarding \cref{J3}, noting that $\epsilon_{k + 1} = \epsilon_{k} + \epsilon \cdot 2^{-k - 1}$, \cref{thm1}~\cref{C} tells us that
\begin{equation*}
    \mathcal{H}_{\infty}^{3/2 - \epsilon_{k + 1}}(\mathcal{P}_{\theta}) = \mathcal{H}_{\infty}^{3/2 - \epsilon_{k} - \epsilon \cdot 2^{-k - 1}}(\mathcal{P}_{\theta}) \stackrel{\textup{\cref{C}}}{\geq} \mathcal{H}^{3/2 - \epsilon_{k}}_\infty(\mathcal{P}_{\bm{\theta}}) \geq 1, \qquad \theta \in \mathcal{D}_{\delta_{k + 1}}([0,1)).
\end{equation*}
Finally, to verify \cref{J4}, fix distinct $\bm{\theta},\bm{\theta}' \in \mathcal{D}_{\delta_{k}}([0,1))$, and $\theta \in \mathcal{D}_{\delta_{k + 1}}(\bm{\theta})$ and $\theta' \in \mathcal{D}_{\delta_{k + 1}}(\bm{\theta}')$.
Since we applied \cref{thm1} with $\Delta = \delta_{k}$, \cref{thm1}~\cref{D} now directly tells us that $\mathcal{P}_{\theta},\mathcal{P}_{\theta'}$ is $\theta$-disconnected and $\theta'$-disconnected.

We now use the objects $\{\delta_{k}\}_{k \in \N}$ and the associated families $\{\mathcal{P}_{\theta} \subset \mathcal{D}_{\delta_{k}} : \theta \in \mathcal{D}_{\delta_{k}}([0,1))\}_{k \in \N}$ to build a compact set $K \subset [0,1]^{2}$ with large bi-visible parts.
For $k \in \N$, write
\begin{equation*}
    K_{k} \coloneqq \bigcup_{\theta \in \mathcal{D}_{\delta_{k}}([0,1))} \overline{\cup \mathcal{P}_{\theta}} \subset [0,1]^{2}.
\end{equation*}
Evidently each $K_{k}$ is compact.
We claim that $K_{k + 1} \subset K_{k}$ for all $k \in \N$.
This follows from \cref{J1}: if $\theta \in \mathcal{D}_{\delta_{k + 1}}([0,1))$, then $\mathcal{P}_{\theta} \subset \mathcal{D}_{\delta_{k + 1}}(\mathcal{P}_{\bm{\theta}})$, where $\bm{\theta} \in \mathcal{D}_{\delta_{k}}([0,1))$ is the dyadic parent of $\theta$.
Consequently 
\begin{equation*}
    \overline{\cup \mathcal{P}_{\theta}} \subset \overline{\cup \mathcal{P}_{\bm{\theta}}} \subset K_{k}, \qquad \theta \in \mathcal{D}_{\delta_{k + 1}}([0,1)),
\end{equation*}
which gives $K_{k + 1} \subset K_{k}$.
We now define the compact non-empty set 
\begin{equation*}
    K \coloneqq \bigcap_{k \geq 0} K_{k} \subset [0,1]^{2}.
\end{equation*}
Next, we define a compact subset $K(\sigma) \subset K$ for each $\sigma \in [0,1)$.
Fix $\sigma \in [0,1)$, and let $\{\theta_{k}(\sigma)\}_{k \in \N}$ be the sequence of dyadic arcs satisfying $\sigma \in \theta_{k}(\sigma) \in \mathcal{D}_{\delta_{k}}([0,1))$, $k \in \N$.
Define
\begin{equation}\label{form20}
    K_{k}(\sigma) \coloneqq \overline{\cup \mathcal{P}_{\theta_{k}(\sigma)}}, \qquad \sigma \in [0,1), \, k \in \N.
\end{equation}
Then $K_{k + 1}(\sigma) \subset K_{k}(\sigma)$ by \cref{J1} (same argument that proved $K_{k + 1} \subset K_{k}$).
Therefore
\begin{equation*}
    K(\sigma) \coloneqq \bigcap_{k \geq 0} K_{k}(\sigma), \qquad \sigma \in [0,1),
\end{equation*}
is a non-empty compact set.
Furthermore $K(\sigma) \subset K$, since $K_{k}(\sigma) \subset K_{k}$ for all $k \geq 0$.

Next, we claim that $K(\sigma) \subset \vis^{2}_{\sigma}(K)$ for all $\sigma \in [0,1)$.
Fix $\sigma \in [0,1)$ and $z \in K(\sigma)$.
Let $\ell \coloneqq \ell_{z,\sigma}$ (the line with slope $\sigma$ through $z$).
We claim that 
\begin{equation}\label{form19}
    \diam(\ell \cap K) \leq \diam(\ell \cap K_{k}) \leq 2\delta_{k - 2}, \qquad k \geq 2.
\end{equation}
(The first inequality is trivial.)
Taking $k\to\infty$, the second inequality will show that $\diam(\ell \cap K) = 0$, verifying that $z \in \vis_{\sigma}^{2}(K)$.

To prove \cref{form19}, fix $k \geq 2$, and let $\bm{\theta} \in \mathcal{D}_{\delta_{k - 1}}([0,1))$ and $\theta \in \mathcal{D}_{\delta_{k}}([0,1))$ be the $\delta_{k - 1}$-arc and $\delta_{k}$-arc containing $\sigma$, respectively.
Then
\begin{equation}\label{form21}
    z \in K(\sigma) \subset K_{k}(\sigma) \stackrel{\cref{form20}}{=} \overline{\cup \mathcal{P}_{\theta}}.
\end{equation}
Now comes a central observation.
Let $\theta' \in \mathcal{D}_{\delta_{k}}([0,1)) \, \setminus \, \mathcal{D}_{\delta_{k}}(\bm{\theta})$.
Then $\mathcal{P}_{\theta},\mathcal{P}_{\theta'}$ is $\theta$-disconnected by \cref{J4}.
Since $\ell$ is a line with slope in $\theta$, and $[\ell]_{\delta_{k}/2}$ intersects $\cup \mathcal{P}_{\theta}$ by \cref{form21}, we see that $[\ell]_{\delta_{k}/2} \cap (\cup \mathcal{P}_{\theta'}) = \emptyset$.
In particular
\begin{equation*}
    \ell \cap \left(\overline{\cup \mathcal{P}_{\theta'}}\right) = \emptyset, \qquad \theta' \in \mathcal{D}_{\delta_{k}}([0,1)) \, \setminus \, \mathcal{D}_{\delta_k}(\bm{\theta}).
\end{equation*}
Therefore
\begin{equation*}
    K_{k} \cap \ell \subset \bigcup_{\theta' \in \mathcal{D}_{\delta_{k}}(\bm{\theta})} \overline{\cup \mathcal{P}_{\theta'}} \, \stackrel{\textup{\cref{J1}}}{\subset} \, \overline{\cup \mathcal{P}_{\bm{\theta}}}.
\end{equation*}
Recall that $\diam_{\bm{\theta}}(\mathcal{P}_{\bm{\theta}}) \leq 2\delta_{k - 2}$ by \cref{J2}.
Since $\ell$ is a line with slope $\sigma(\ell) \in \bm{\theta}$, 
\begin{equation*}
    \diam(\overline{\cup \mathcal{P}_{\bm{\theta}}} \cap \ell) \leq \diam(\cup \mathcal{P}_{\bm{\theta}} \cap [\ell]_{\delta_{k - 1}/2}) \leq 2\delta_{k - 2}.
\end{equation*}
Combining the previous two displayed equations proves \cref{form19}.

We have now shown that $K(\sigma) \subset \vis^{2}_{\sigma}(K)$ for all $\sigma \in [0,1)$.
It remains to prove that $\Hd K(\sigma) \geq \tfrac{3}{2} - \epsilon$ for all $\sigma \in [0,1)$.
Let $\{U_{i}\}_{i \in \N}$ be a cover of $K(\sigma)$ by bounded open sets.
Since $K(\sigma)$ is compact,
\begin{equation*}
    \dist\Big(K(\sigma),\R^{2} \, \setminus \, \bigcup_{i \in \N} U_{i} \Big) \eqqcolon \eta > 0.
\end{equation*}
Therefore $K_{k}(\sigma) \subset \cup \{U_{i}\}_{i \in \N}$ for all $k \in \N$ so large that $\delta_{k} \leq \eta/2$.
For any such $k \in \N$,
\begin{equation}\label{form22} \sum_{i \in \N} \diam(U_{i})^{3/2 - \epsilon} \gtrsim \mathcal{H}^{3/2 - \epsilon}_{\infty}(K_{k}(\sigma)) \geq \mathcal{H}_{\infty}^{3/2 - \epsilon}(\mathcal{P}_{\theta_{k}(\sigma)}) \stackrel{\textup{\cref{J3}}}{\geq} 1.\end{equation} 
Therefore $\mathcal{H}^{3/2 - \epsilon}_\infty(K(\sigma)) > 0$, and $\Hd K(\sigma) \geq 3/2 - \epsilon$ for all $\sigma \in [0,1)$.

Let us finally explain how the construction is modified to get the stronger conclusion $\Hd \vis^{2}_{\sigma}(K) \geq 3/2$ for all $\sigma \in [0,1)$.
Recall the construction of the sequences $\{\delta_{k}\}_{k \in \N}$ and the families $\{\mathcal{P}_{\theta} : \theta \in \mathcal{D}_{\delta_{k}}([0,1))\}_{k \in \N}$.
At an arbitrary stage $k(1) \in \N$, it holds $\mathcal{H}^{3/2}_{\infty}(\mathcal{P}_{\theta}) \geq c_{1} > 0$ for all $\theta \in \mathcal{D}_{\delta_{k(1)}}([0,1))$, where $c_{1}$ is the $(3/2)$-dimensional (dyadic) Hausdorff content of a single $\delta_{k(1)}$-square.
From this point on, we may continue the construction in such a way that simultaneously 
\begin{equation*}
    \mathcal{H}^{3/2 - \epsilon_{k}}_{\infty}(\mathcal{P}_{\theta'}) \geq 1 \quad \text{and} \quad \mathcal{H}^{3/2 - \epsilon_{k}/2}_{\infty}(\mathcal{P}_{\theta'}) \geq c_{1}, \qquad \theta' \in \mathcal{D}_{\delta_{k}}([0,1)), \, k \geq k(1).
\end{equation*}
(This simultaneous control is possible because \cref{thm1}~\cref{C} holds for all $t \in [1 + \epsilon,\tfrac{3}{2}]$.)
We repeat this trick countably many times during the construction.
For every $n \in \N$, we designate an index $k(n) \in \N$, and a (decreasing) sequence of lower bounds $c_{1}\ldots,c_{n}> 0$ such that 
\begin{equation*}
    \mathcal{H}_{\infty}^{3/2 - \epsilon_{k}}(\mathcal{P}_{\theta}) \geq 1, \, \mathcal{H}_{\infty}^{3/2 - \epsilon_{k}/2}(\mathcal{P}_{\theta}) \geq c_{1}, \quad \ldots \quad \mathcal{H}_{\infty}^{3/2 - \epsilon_{k}/2^{n}}(\mathcal{P}_{\theta}) \geq c_{n},
\end{equation*}
for all $\theta \in \mathcal{D}_{\delta_{k}}([0,1))$ with $k \geq k(n)$.
Finally the estimate \cref{form22} will show that 
\begin{displaymath} \mathcal{H}^{3/2 - \epsilon/2^{n}}(K(\sigma)) \gtrsim c_{n} \end{displaymath}
for every $n \in \N$, and $\sigma \in [0,1)$.
Therefore $\Hd K(\sigma) \geq 3/2$ for all $\sigma \in [0,1)$.
\begin{remark}\label{r:s1}
    Often visible parts are indexed by directions $e \in S^{1}$ instead of $\sigma \in [-1,1]$.
    For instance, we have stated the main result in this way in the abstract.
    We thank E.\ Järvenpää for pointing out that \cref{main} implies the existence of a compact set $K \subset \R^{2}$ whose (bi-)visible parts are $\tfrac{3}{2}$-dimensional for all $e \in S^{1}$.
    In fact, a union of four rotated and suitably translated copies of the set $K_{0}$ in \cref{main} has the desired property (``$4$'' comes from the fact that the slopes in $[0,1)$ correspond to directions $e \in S^{1}$ making an angle in $[0,\pi/4)$ with the positive $x$-axis.)
    To be precise, let $R_{\varphi}$ be the rotation by angle $\varphi \in [0,\pi)$.
    Then, assuming $K_{0} \subset B(0,\tfrac{1}{4})$ (as we may by dilating), a set of the form
    \begin{equation*}
        K \coloneqq (K_{0} + w_{0}) \cup (R_{\pi/4}(K_{0}) + w_{1}) \cup (R_{\pi/2}(K_{0}) + w_{2}) + (R_{3\pi/4}(K_{0}) + w_{3})
    \end{equation*}
    has the desired property.
    There is plenty of freedom in choosing the translation vectors $w_{0},\ldots,w_{3}$.
    One only needs to ensure that lines passing through $B(w_{j},\tfrac{1}{4})$ with angle in $[j\pi/4,(j + 1)\pi/4)$ do not intersect the discs $B(w_{i},\tfrac{1}{4})$ with $i \in \{0,1,2,3\} \, \setminus \, \{j\}$.
    One concrete choice is given by $(w_{0},w_{1},w_{2},w_{3}) = ((0,0),(-1,2),(-2,0),(2,3))$.
\end{remark} 

\begin{acknowledgements}
    T.O.\ is supported by the European Research Council (ERC) under the European Union’s \emph{Horizon Europe research and innovation programme} (grant agreement no.\ 101087499).
    A.R.\ is supported by the Research Council of Finland via the project \emph{Approximate incidence geometry}, grant no.\ 355453.
\end{acknowledgements}

\appendix

\section{High-low lemma for rays}\label{s:highLowAppendix}

In this section we prove \cref{thm:highLowRays}, repeated below:

\begin{theorem}\label{thm:highLowRaysRestated}
    Let $\mathbf{A},\mathbf{B} \subset \Omega$ be finite sets.
    Then,
    \begin{equation*}
        \Big| \frac{\mathcal{J}_{w}^{+}(\mathbf{A},\mathbf{B})}{w|\mathbf{A}||\mathbf{B}|} - \frac{\mathcal{J}^{+}_{w/2}(\mathbf{A},\mathbf{B})}{(w/2)|\mathbf{A}||\mathbf{B}|} \Big| \leq A_{\chi}\Big(\frac{M_{1 \times w}(\mathbf{A})M_{w \times w}(\mathbf{B})}{|\mathbf{A}||\mathbf{B}|} \cdot w^{-3}\log w^{-1} \Big)^{1/2}, \quad w \in (0,\tfrac{1}{2}],
    \end{equation*}
    where the constant $A_{\chi} > 0$ depends only on $\chi$.
\end{theorem}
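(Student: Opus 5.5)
We follow the proof of the high-low inequality for lines in \cite[Appendix A.1]{zbmath:8038252}. Write $\varphi_{w} \coloneqq \chi_{w} \ast \chi_{w/2}$. Unwinding \cref{form40}, the quantity to bound is
\begin{displaymath} \Big| \frac{\mathcal{J}_{w}^{+}}{w} - \frac{\mathcal{J}_{w/2}^{+}}{w/2}\Big| = |\langle \varphi_{w} \ast f_{w} - \varphi_{w/2} \ast f_{w/2}, g\rangle|. \end{displaymath}
Since $w^{-1}\mathbf{1}_{[\ell^{+}_{\omega}]_{w/2}}$ and $(w/2)^{-1}\mathbf{1}_{[\ell^{+}_{\omega}]_{w/4}}$ agree away from the endpoint, the difference $h_{\omega} \coloneqq \varphi_{w} \ast w^{-1}\mathbf{1}_{[\ell^{+}_{\omega}]_{w/2}} - \varphi_{w/2} \ast (w/2)^{-1}\mathbf{1}_{[\ell^{+}_{\omega}]_{w/4}}$ is supported in the $2w$-neighbourhood of $\ell^{+}_{\omega}$. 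Its Fourier transform is essentially concentrated on frequencies $|\xi| \sim w^{-1}$ (it is a difference of two approximate identities at comparable scales, so it has vanishing mean across the ray).

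We localise $g$ by inserting a smooth cutoff $\psi$ adapted to the $w$-neighbourhood of the support of $g$. We then apply Cauchy--Schwarz in the form
\begin{displaymath} |\langle \textstyle\sum_{\omega} h_{\omega}, g\rangle| \leq \|\textstyle\sum_{\omega} h_{\omega}\|_{L^{2}}\, \|\tilde g\|_{L^{2}}, \end{displaymath}
where $\tilde g = \tilde\varphi_{w} \ast g$ for a suitable bump $\tilde\varphi_{w}$ at scale $w$. This is legitimate because $h_{\omega}$ is smooth at scale $w$ and we may write $h_{\omega} = \tilde h_{\omega} \ast \tilde\varphi_{w}$. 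The second factor satisfies $\|\tilde g\|_{2}^{2} \lesssim w^{-2} |\mathbf{B}| M_{w\times w}(\mathbf{B})$ exactly as for lines. The whole issue is therefore the bound
\begin{displaymath} \|\textstyle\sum_{\omega \in \mathbf{A}} h_{\omega}\|_{2}^{2} \lesssim w^{-1}\log(w^{-1})\, |\mathbf{A}| M_{1 \times w}(\mathbf{A}). \end{displaymath}
Dividing by $|\mathbf{A}|^{2}|\mathbf{B}|^{2}$ then gives the claimed inequality.

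To bound the $L^{2}$ norm, expand $\sum_{\omega,\omega'} \langle h_{\omega}, h_{\omega'}\rangle$. For lines, $\langle h_{\omega}, h_{\omega'}\rangle$ vanishes, or is rapidly decaying, unless $d_{\mathcal{A}}(\ell_{\omega},\ell_{\omega'}) \lesssim w$. The reason is that $\widehat{h_{\omega}}$ lives on a $w^{-1}$-annulus intersected with a $1$-neighbourhood of the line through $0$ normal to $\ell_{\omega}$, and two such sets are essentially disjoint once the slopes differ by more than $w$. Rays break this: truncation at the base point $z_{\omega}$ smears $\widehat{h_{\omega}}$ angularly.

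Here is how we handle it. Write the ray cutoff as $\mathbf{1}_{\{r \ge 0\}}$ along $\ell_{\omega}$, split the ray into a part at distance $\ge 1$ from $z_\omega$ (trivial, since $\Omega$ is bounded) and dyadic pieces at distance $\sim 2^{j}w$ from the base point, $0 \le j \lesssim \log w^{-1}$, with a smooth partition of unity. The piece at distance $\sim \rho = 2^{j}w$ is a $(w\times\rho)$ smooth bump with mean-zero transversal profile. For two such pieces (with parameters $\rho,\rho'$) coming from rays with slope difference $\theta$, the inner product decays rapidly unless $\theta \lesssim w/\min(\rho,\rho')$ and the pieces overlap. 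Summing, for fixed $\omega$ and fixed pair of dyadic scales, the number of $\omega'$ interacting with $\omega$ is controlled by rectangles of the form $\mathbf{R}_{\rho \times w \times w/\rho}$. These are covered by $\lesssim 1$ sets counted in $M_{1\times w}$ after accounting for the inner product size $\lesssim w^{-2}\cdot w\rho\cdot(\text{normalisation})$.

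The bookkeeping should give a contribution $\lesssim w^{-1}|\mathbf{A}|M_{1\times w}(\mathbf{A})$ per scale $j$, using Schur's test or Cauchy--Schwarz across scales. The $\lesssim \log w^{-1}$ scales then produce the extra log factor. The main obstacle is this near-endpoint analysis: showing that the short pieces (small $\rho$), which are nearly isotropic and hence lack orthogonality, are few enough. Concretely, a piece of length $\rho$ has $L^{2}$ mass $\sim w^{-1}\rho$ in the normalisation used, which compensates the larger angular interaction set of size $w/\rho$. We would try first to verify this for the $\rho\sim w$ piece directly via the Frostman-type bound $M_{w\times w}$-free estimate $|\{\omega' : z_{\omega'} \in B(z_\omega, Cw)\}|$, and if that fails, to bound it crudely and absorb it into the $\log$.
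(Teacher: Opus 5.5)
Your reduction matches the paper's. You factor out $\chi_{w/2}$, apply Cauchy--Schwarz, bound $\|\chi_{w/2}\ast g\|_{2}^{2}\lesssim w^{-2}|\mathbf{B}|M_{w\times w}(\mathbf{B})$, and reduce to
\[ \sum_{\omega_1,\omega_2\in\mathbf{A}}\Big|\int\Phi^{+}_{\omega_1}\Phi^{+}_{\omega_2}\psi\Big|\lesssim w^{-1}\log(w^{-1})\,|\mathbf{A}|\,M_{1\times w}(\mathbf{A}). \]
But this estimate is the entire content of the theorem, and your proposal does not establish it. You say the bookkeeping ``should give'' the bound and leave the near-endpoint case to a fallback plan.

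\textbf{The per-$\omega$ bookkeeping fails.} You organise the sum as ``fix $\omega$ and a pair of scales, then count the interacting $\omega'$''. No bound of this type can hold. Take one long ray $\omega_0$, together with $\sim w^{-2}$ rays $\omega'$ such that:
\begin{itemize}
\item their base points lie within $w$ of $\ell_{\omega_0}$;
\item their lines are $w$-separated in $d_{\mathcal{A}}$;
\item they cross $\ell_{\omega_0}$ at angles $\sim 1$.
\end{itemize}
Then $M_{1\times w}(\mathbf{A})\lesssim 1$. Yet the long piece of $\omega_0$ meets the truncated, non-mean-zero endpoint piece of every $\omega'$. Each of these inner products can be $\sim 1$, so the row sum at $\omega_0$ is $\sim w^{-2}\gg w^{-1}\log w^{-1}$.

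In your own terms, for the scale pair $(\rho,\rho')=(1,w)$ your threshold $\theta\lesssim w/\min(\rho,\rho')$ is $\sim 1$. The interacting set is then $\mathbf{R}_{1\times w\times 1}$, not $\mathbf{R}_{\rho\times w\times w/\rho}$. Even in the same-scale case, $\mathbf{R}_{\rho\times w\times w/\rho}$ needs $\sim\rho^{-1}$ cells of $M_{1\times w}$, not $O(1)$; your mass-versus-angle compensation does repair that case.

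\textbf{The missing idea.} Pairs in which a base point lies within $O(w)$ of the other line must be summed from the other side, with that ray fixed. The lines through a $w$-ball form a $w\times 1$ strip in line space. So only $\lesssim(2^{-j}/w)M_{1\times w}(\mathbf{A})$ rays lie at $d_{\mathcal{A}}$-distance $\sim 2^{-j}$. With the crude bound $\min\{w^{-1},d_{\mathcal{A}}^{-1}\}$, this gives $w^{-1}M_{1\times w}(\mathbf{A})$ per dyadic scale. That is exactly where the single $\log w^{-1}$ comes from.

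\textbf{Your Cauchy--Schwarz alternative loses a log.} Applying Cauchy--Schwarz across the $\sim\log w^{-1}$ dyadic pieces does remove the cross-scale terms. Same-scale pieces are genuinely fine: for $\theta\gtrsim w/\rho$ both base points are $\gtrsim\rho\theta\gg w$ from the other line. The two vanishing moments then give $w^{2}/(\rho^{2}\theta^{3})$, which is polynomial rather than ``rapid'' decay but suffices. However, you pay the number of scales twice and end with $\log^{2}w^{-1}$ under the square root. That is weaker than the stated inequality, although it would suffice for the paper's applications.

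\textbf{The paper's route.} The paper avoids the dyadic decomposition entirely. If $\dist(z_{\omega_1},\ell_{\omega_2})\ge 20w$ and $\dist(z_{\omega_2},\ell_{\omega_1})\ge 20w$, elementary geometry shows that $\Phi^{+}_{\omega_1}\Phi^{+}_{\omega_2}$ is either identically $0$ or identically $\Phi_{\omega_1}\Phi_{\omega_2}$. So the line bound $\min\{w^{-1},w^{2}/d_{\mathcal{A}}^{3}\}$ applies verbatim to these pairs. Only the remaining ``bad'' pairs need the one-sided count above.
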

\begin{proof}
    The implicit constants in the $\lesssim$ notation in this argument may depend on $\chi$.
    Abbreviate $B(w) \coloneqq \mathcal{J}_{w}^{+}(\mathbf{A},\mathbf{B})/(|\mathbf{A}||\mathbf{B}|w)$.
    With this notation, using the definition \cref{form40},
    \begin{equation*}
        B(w) - B(w/2) = \tfrac{1}{|\mathbf{A}||\mathbf{B}|}\langle \chi_{w/2} \ast (\chi_{w} \ast f_{w} - \chi_{w/4} \ast f_{w/2}),g \rangle.
    \end{equation*}
    Therefore, by Cauchy-Schwarz,
    \begin{equation}\label{form30a}
        |B(w) - B(w/2)| \leq \tfrac{1}{|\mathbf{A}||\mathbf{B}|}\|\chi_{w} \ast f_{w} - \chi_{w/4} \ast f_{w/2}\|_{L^{2}([-2,2]^{2})}\|\chi_{w/2} \ast g\|_{L^{2}([-2,2]^{2})}.
    \end{equation}
    (The integral may be restricted to $[-2,2]^{2}$ since $\spt g = P[\mathbf{B}] \subset [-1,1]^{2}$.)  For the second factor, we use the definition of $M_{w \times w}(\mathbf{B})$ (see \cref{form29}) to deduce that
    \begin{equation}\label{form33a}
        \|\chi_{w/2} \ast g\|_{2}^{2} \lesssim \|g\|\|\chi_{w/2} \ast g\|_{L^{\infty}} \lesssim w^{-2} \cdot |\mathbf{B}|M_{w \times w}(\mathbf{B}).
    \end{equation}
    To bound the first factor in \cref{form30a}, we define
    \begin{equation}\label{form41}
        \Phi_{\omega}^{+} \coloneqq \chi_{w} \ast \frac{\mathbf{1}_{[\ell^{+}_{\omega}]_{w/2}}}{w} - \chi_{w/4} \ast \frac{\mathbf{1}_{[\ell^{+}_{\omega}]_{w/4}}}{w/2}, \qquad \omega \in \mathbf{A}.
    \end{equation}
    We also define a version of the $\Phi$-function where $\ell_{\omega}^{+}$ is replaced by $\ell_{\omega}$:
    \begin{equation}\label{form41a}
        \Phi_{\omega} \coloneqq \chi_{w} \ast \frac{\mathbf{1}_{[\ell_{\omega}]_{w/2}}}{w} - \chi_{w/4} \ast \frac{\mathbf{1}_{[\ell_{\omega}]_{w/4}}}{w/2}, \qquad \omega \in \mathbf{A}.
    \end{equation}
    With this notation, it holds
    \begin{equation*}
        \chi_{w} \ast f_{w} - \chi_{w/4} \ast f_{w/2} = \sum_{\omega \in \mathbf{A}} \Phi_{\omega}^{+}.
    \end{equation*}
    Let $\psi \in C_{c}^{\infty}(\R^{2})$ be a function with $\mathbf{1}_{[-2,2]^{2}} \leq \psi \leq \mathbf{1}_{[-3,3]^{2}}$.
    Then,
    \begin{equation}\label{form32a}
        \|\chi_{w} \ast f_{w} - \chi_{w/4} \ast f_{w/2}\|_{L^{2}([-2,2]^{2})}^{2} \leq \sum_{\omega_{1},\omega_{2} \in \mathbf{A}} \Big| \int \Phi^{+}_{\omega_{1}}\Phi^{+}_{\omega_{2}}\psi \Big|.
    \end{equation}
    The most technical argument in \cite[Appendix A.1]{zbmath:8038252} concerns the related integral $\int \Phi_{\omega_{1}}\Phi_{\omega_{2}}\psi$.
    It is proved (in an unnumbered display towards the end of \cite[Appendix A.1]{zbmath:8038252}) that
    \begin{equation}\label{form44a}
        \Big| \int \Phi_{\omega_{1}}\Phi_{\omega_{2}}\psi \Big| \lesssim \min \{w^{-1},w^{2}/d_{\mathcal{A}}(\ell_{\omega_{1}},\ell_{\omega_{2}})^{3}\}.
    \end{equation}
    This estimate is (as far as we can tell) not true with $\Phi_{\omega_{j}}^{+}$ in place of $\Phi_{\omega_{j}}$.
    However, it is true if the origin of $\ell_{1}$ stays at distance $\sim w$ away from $\ell_{2}$, and vice versa.

    \begin{claim}\label{c4}
        If $\dist(z_{\omega_{1}},\ell_{\omega_{2}}) \geq 20w$ and $\dist(z_{\omega_{2}},\ell_{\omega_{1}}) \geq 20w$, then
        \begin{equation}\label{form44}
            \Big| \int \Phi_{\omega_{1}}^{+}\Phi_{\omega_{2}}^{+}\psi \Big| \lesssim \min \{w^{-1},w^{2}/d_{\mathcal{A}}(\ell_{\omega_{1}},\ell_{\omega_{2}})^{3}\}.
        \end{equation}
    \end{claim}
    The proof is based on \cref{form44a} and the following geometric observation:
    \begin{claim}\label{c3}
        Assume that $\dist(z_{\omega_{1}},\ell_{\omega_{2}}) \geq 20w$ and $\dist(z_{\omega_{2}},\ell_{\omega_{1}}) \geq 20w$.
        Then
        \begin{equation}\label{form42}
            \Phi_{\omega_{1}}^{+}\Phi_{\omega_{2}}^{+} \equiv 0 \quad \text{or} \quad \Phi_{\omega_{1}}^{+}\Phi_{\omega_{2}}^{+} \equiv \Phi_{\omega_{1}}\Phi_{\omega_{2}}.
        \end{equation}
    \end{claim}
    \begin{proof}[of \cref{c4} assuming \cref{c3}]
        If the first alternative in \cref{form42} holds, then \cref{form44} is clear.
        If the second alternative holds, then
        \begin{equation*}
            \Big| \int \Phi_{\omega_{1}}^{+}\Phi_{\omega_{2}}^{+}\psi\Big| = \Big| \int \Phi_{\omega_{1}}\Phi_{\omega_{2}}\psi \Big|,
        \end{equation*}
        and \cref{form44} is a consequence of \cref{form44a}.
    \end{proof}
    We then prove \cref{c3}.
    \begin{proofref}{c3}
        Abbreviate $\ell_{\omega_{j}} \eqqcolon \ell_{j}$ and $\ell_{\omega_{j}}^{+} \eqqcolon \ell_{j}^{+}$ and similarly $\Phi_{\omega_{j}} \eqqcolon \Phi_{j}$ and $\Phi_{\omega_{j}}^{+} \eqqcolon \Phi_{j}^{+}$.
        Let us also write
        \begin{equation*}
            \ell_{j} = \{(x, a_{j}x + b_{j}): x \in \mathbb{R}\} \quad \text{and} \quad \ell_{j}^{+} = \{a_{j}x + b_{j} : x \geq c_{j}\}
        \end{equation*}
        for some $a_{j},c_{j} \in [-1,1]$.
        With this notation $z_{j} \coloneqq z_{\omega_{j}} = (c_{j},a_{j}c_{j} + b_{j})$.

        Assume first that $\ell_{1}^{+} \cap \ell_{2}^{+} = \emptyset$.
        We claim that $\Phi_{1}^{+}\Phi_{2}^{+} \equiv 0$.
        Assume to the contrary that $(\Phi_{1}^{+}\Phi_{2}^{+})(x,y) > 0$ for some $(x,y) \in \R^{2}$.
        We may and will assume that $\max\{c_{1},c_{2}\} = c_{1}$.
        Note that
        \begin{equation*}
            \spt \Phi_{j}^{+} \subset \{(x',y') : x' \geq c_{j} - 3w \text{ and } |y' - (a_{j}x' + b_{j})| \leq 3w\}, \qquad j \in \{1,2\}.
        \end{equation*}
        Therefore $x \geq \max\{c_{1},c_{2}\} - 3w = c_{1} - 3w$, and
        \begin{equation*}
            |h(x)| \leq 6w \quad \text{where} \quad h(x) \coloneqq (a_{1}x + b_{1}) - (a_{2}x + b_{2}).
        \end{equation*}
        We first dispose of the case where $c_{1} - 3w \leq x \leq c_{1}$.
        Since $h$ is $2$-Lipschitz, $|h(c_{1})| \leq 12w$.
        This implies $\dist(z_{1},\ell_{2}) < 20w$ contrary to hypothesis.
        So, we may assume that $x \geq c_{1}$.

        Either $h(x) \geq 0$ or $h(x) < 0$, and the treatment of these cases is similar, so we only consider $h(x) \geq 0$.
        Note that $h' = a_{1} - a_{2}$ is constant.
        Assume first that $h' \geq 0$, in which case $h(c_{1}) \leq h(x) \leq 6w$.
        There are now two options: either $h(c_{1}) \geq 0$ or $h(c_{1}) < 0$.
        In the first case $h(c_{1}) \in [0,h(x)] \subset [0,6w]$, which implies that $\dist(z_{1},\ell_{2}) < 20w$.
        In the opposite case $h(c_{1}) < 0$, there exists a point $\xi \in (c_{1},x] = (\max\{c_{1},c_{2}\},x]$ with $h(\xi) = 0$.
        This gives $\ell_{1}^{+} \cap \ell_{2}^{+} \neq \emptyset$, again contrary to assumption.

        Assume finally that $h' < 0$.
        Since $h(x) \geq 0$, there exists $\xi \geq x \geq \max\{c_{1},c_{2}\}$ such that $h(\xi) = 0$.
        This means that $\ell^{+}_{1} \cap \ell^{+}_{2} \neq \emptyset$, again contrary to assumption.
        We have now reached a contradiction in all cases, and proved that $\ell_{1}^{+} \cap \ell^{+}_{2} = \emptyset$ implies $\Phi_{1}^{+}\Phi_{2}^{+} \equiv 0$.

        Next, assume that $\ell_{1}^{+} \cap \ell^{+}_{2} \neq \emptyset$.
        In this case we will no longer use the hypothesis $\max\{c_{1},c_{2}\} = c_{1}$.
        We claim that $\Phi_{1}^{+}\Phi_{2}^{+} \equiv \Phi_{1}\Phi_{2}$.
        Assume to the contrary that there exists $(x,y) \in \R^{2}$ such that
        \begin{equation*}
            (\Phi_{1}^{+}\Phi_{2}^{+})(x,y) \neq (\Phi_{1}\Phi_{2})(x,y).
        \end{equation*}
        Then either $\Phi_{1}^{+}(x,y) \neq \Phi_{1}(x,y)$ or $\Phi_{2}^{+}(x,y) \neq \Phi_{2}(x,y)$; let us assume that $\Phi_{1}^{+}(x,y) \neq \Phi_{1}(x,y)$.
        Then $x \leq c_{1} + 3w$, since $\Phi_{1}^{+}(x',y') = \Phi_{1}(x',y')$ for $x' > c_{1} + 3w$ and $y' \in \R$.

        Recall that $h(x) = (a_{1}x + b_{1}) - (a_{2}x + b_{2})$.
        Observe that $h(x) \leq 6w$, since otherwise $(\Phi_{1}^{+}\Phi_{2}^{+})(x,y) = 0 = (\Phi_{1}\Phi_{2})(x,y)$.
        We dispose of the special case where $c_{1} \leq x \leq c_{1} + 2w$.
        Then $h(c_{1}) \leq 12w$, and therefore $\dist(z_{1},\ell_{2}) < 20w$ contrary to hypothesis.
        So, we may assume that $x < c_{1}$.

        Since we assumed that $\ell_{1}^{+} \cap \ell^{+}_{2} \neq \emptyset$, and $x < c_{1}$, there exists $\xi \geq c_{1} > x$ such that $h(\xi) = 0$.
        But now $h(c_{1}) \in [h(\xi),h(x)]$ (since $h$ is monotone), and consequently $|h(c_{1})| \leq 6w$.
        This implies that $\dist(z_{1},\ell_{2}) < 20w$, contrary to hypothesis.
        We have now proved that $\ell_{1}^{+} \cap \ell_{2}^{+} \neq \emptyset$ implies $\Phi_{1}^{+}\Phi_{2}^{+} \equiv \Phi_{1}\Phi_{2}$, and the proof of \cref{c3} is complete.
    \end{proofref}
    We proceed to estimate the right hand side of \cref{form32a}.
    Write
    \begin{equation*}
        \mathbf{Bad} \coloneqq \{(\omega_{1},\omega_{2}) \in \mathbf{A} \times \mathbf{A} : \dist(z_{\omega_{1}},\ell_{\omega_{2}}) \leq 20w \text{ or } \dist(z_{\omega_{2}},\ell_{\omega_{1}}) \leq 20w\}.
    \end{equation*}
    Also set $\mathbf{Bad}(\omega_{1}) \coloneqq \{\omega_{2} \in \mathbf{A} : (\omega_{1},\omega_{2}) \in \mathbf{Bad}\}$, $\omega_{1} \in \mathbf{A}$.
    Fixing $\omega_{1} \in \mathbf{A}$, and using \cref{c4},
    \begin{align*}
        \sum_{\omega_{2} \in \mathbf{A} \, \setminus \, \mathbf{Bad}(\omega_{1})} \Big| \int \Phi_{\omega_{1}}^{+}\Phi_{\omega_{2}}^{+}\psi \Big|
        & \lesssim w^{-1} |\{\omega_{2} \in \mathbf{A} : \ell_{\omega_{2}} \in B_{d_{\mathcal{A}}}(\ell_{\omega_{1}},w)\}|\\
        & \quad + \sum_{w \leq 2^{-j} \leq 100} w^{2} 2^{3j} |\{\omega_{2} \in \mathbf{A} : \ell_{\omega_{2}} \in B_{d_{\mathcal{A}}}(\ell_{\omega_{1}},2^{-j})\}|\\
        & \leq w^{-1}M_{1 \times w}(\mathbf{A}) + w^{2} \sum_{w \leq 2^{-j} \leq 100} 2^{3j} M_{1 \times 2^{-j}}(\mathbf{A}).
    \end{align*}
    Note that since $B_{d_{\mathcal{A}}}(\ell_{0},r)$ can be covered by $\lesssim (r/w)^{2}$ many $w$-balls for $r \geq w$, it holds $M_{1 \times r}(\mathbf{A}) \lesssim (r/w)^{2}M_{1 \times w}(\mathbf{A})$.
    Inputting this back into the estimate above,
    \begin{equation*} \sum_{\omega_{2} \in \mathbf{A} \, \setminus \, \mathbf{Bad}(\omega_{1})} \Big| \int \Phi_{\omega_{1}}^{+}\Phi_{\omega_{2}}^{+}\psi \Big|  \lesssim w^{-1}M_{1 \times w}(\mathbf{A}) + \sum_{w \leq 2^{-j} \leq 100} 2^{j} M_{1 \times w}(\mathbf{A}) \lesssim w^{-1}M_{1 \times w}(\mathbf{A}).
    \end{equation*}
    Summing over $\omega_{1} \in \mathbf{A}$, we therefore find
    \begin{equation}\label{form46}
        \sum_{(\omega_{1},\omega_{2}) \in (\mathbf{A} \times \mathbf{A}) \, \setminus \, \mathbf{Bad}} \Big| \int \Phi^+_{\omega_{1}}\Phi^+_{\omega_{2}}\psi \Big| \lesssim w^{-1}|\mathbf{A}|M_{1 \times w}(\mathbf{A}).
    \end{equation}

    We next aim for a similar bound for the sum over $(\omega_{1},\omega_{2}) \in \mathbf{Bad}$, which we organise as follows:
    \begin{align*}
        \sum_{(\omega_{1},\omega_{2}) \in \mathbf{Bad}} \Big| \int \Phi^+_{\omega_{1}}\Phi^+_{\omega_{2}}\psi \Big|
        \leq{}& \sum_{\omega_{1} \in \mathbf{A}} \sum_{\substack{\omega_{2} \in \mathbf{A}\\\dist(z_{\omega_{1}},\ell_{\omega_{2}}) \leq 20w}} \Big| \int \Phi^+_{\omega_{1}}\Phi^+_{\omega_{2}}\psi \Big|\\*
              &+ \sum_{\omega_{2} \in \mathbf{A}} \sum_{\substack{\omega_{1} \in \mathbf{A}\\\dist(z_{\omega_{2}},\ell_{\omega_{1}}) \leq 20w}} \Big| \int \Phi^+_{\omega_{1}}\Phi^+_{\omega_{2}}\psi \Big|\\*
              &\eqqcolon \Sigma_{1} + \Sigma_{2}.
    \end{align*}
    The bounds for $\Sigma_{1}$ and $\Sigma_{2}$ are symmetric, so we only consider $\Sigma_{1}$.
    Now we no longer have the ``orthogonality bound'' \cref{form44} available, so we will have to make do with the following ``non-cancellative'' bound:
    \begin{equation}\label{form47}
        \Big| \int \Phi_{\omega_{1}}^{+}\Phi^{+}_{\omega_{2}}\psi \Big| \lesssim \min\{w^{-1},d_{\mathcal{A}}(\ell_{\omega_{1}},\ell_{\omega_{2}})^{-1}\}.
    \end{equation}
    This follows from the observations that $\|\Phi_{\omega_{1}}^{+}\Phi_{\omega_{2}}^{+}\|_{L^{\infty}} \lesssim w^{-2}$, and if $d_{\mathcal{A}}(\ell_{\omega_{1}},\ell_{\omega_{2}}) \geq w$, then
    \begin{equation*}
        (\spt \Phi_{\omega_{1}}^{+}) \cap (\spt \Phi_{\omega_{2}}^{+})
    \end{equation*}
    is contained in a rectangle of dimensions $\sim w \times (w/d_{\mathcal{A}}(\ell_{1},\ell_{2}))$.
    For $d_{\mathcal{A}}(\ell_{\omega_{1}},\ell_{\omega_{2}}) \leq w$, one relies on the trivial fact that $\spt (\Phi_{\omega_{1}}^{+}\psi)$ is contained in a rectangle of dimensions $\sim w \times 1$.

    Using \cref{form47}, one gets
    \begin{equation}\label{form48}
        \Sigma_{1} \lesssim \sum_{\omega_{1} \in \mathbf{A}} \Big( \sum_{\substack{\omega_{2} \in \mathbf{A}\\d_{\mathcal{A}}(\ell_{\omega_{1}},\ell_{\omega_{2}}) \leq w}} w^{-1}
        + \sum_{\substack{\omega_{2} \in \mathbf{A}\\d_{\mathcal{A}}(\ell_{\omega_{1}},\ell_{\omega_{2}}) \geq w\\\dist(z_{\omega_{1}},\ell_{\omega_{2}}) \leq 20w}} d_{\mathcal{A}}(\ell_{\omega_{1}},\ell_{\omega_{2}})^{-1} \Big).
    \end{equation}
    The first sum in brackets is evidently bounded by $w^{-1}M_{1 \times w}(\mathbf{A})$.
    To bound the second sum, we make the following observation: the relevant family of lines
    \begin{equation*}
        A(z_{\omega_{1}}) \coloneqq \{\ell_{2} \in \mathcal{A}(2,1) : \dist(z_{\omega_{1}},\ell_{2}) \leq 20w\},
    \end{equation*}
    viewed as a set of parameters in $[-1,1]^{2}$, is contained in the $O(w)$-neighbourhood of a certain line, determined by $z_{\omega_{1}}$.
    We partition $A(z_{\omega_{1}}) \, \setminus \, B_{d_{\mathcal{A}}}(\ell_{\omega_{1}},w)$ according to the distance between $\ell_{\omega_1}$ and $\ell_2\in A(z_{\omega_1})$:
    \begin{equation*}
        A(z_{\omega_{1}}) \, \setminus \, B_{d_{\mathcal{A}}}(\ell_{\omega_{1}},w) \eqqcolon \bigcup_{w \leq 2^{-j} \leq 1} A_{j},
    \end{equation*}
    where $A_{j} \coloneqq \{\ell_{2} \in A(z_{\omega_{1}}) : d_{\mathcal{A}}(\ell_{\omega_{1}},\ell_{2}) \in [2^{-j},2^{-j + 1})\}$.
    In parameter space, $A_{j}$ is contained in a rectangle of dimensions $\sim w \times 2^{-j}$.
    Therefore $A_{j}$ may be covered by a family $\mathcal{B}_{j}$ of $w$-balls of cardinality $\sim (2^{-j}/w)$.
    Since $|\{\omega_{2} \in \mathbf{A} : \ell_{\omega_{2}} \in B\}| \leq M_{1 \times w}(\mathbf{A})$ for each $B \in \mathcal{B}_{j}$, we obtain
    \begin{align*}
        \sum_{\substack{\omega_{2} \in \mathbf{A} : d_{\mathcal{A}}(\ell_{\omega_{1}},\ell_{\omega_{2}}) \geq w\\d(z_{\omega_{1}},\ell_{\omega_{2}}) \leq 20w}} d(\ell_{\omega_{1}},\ell_{\omega_{2}})^{-1}
        & \lesssim \sum_{w \leq 2^{-j} \leq 1} 2^{j} \cdot |\{\omega_{2} \in \mathbf{A} : \ell_{\omega_{2}} \in A_{j}\}|\\
        &  \leq \sum_{w \leq 2^{-j} \leq 1} 2^{j} \sum_{B \in \mathcal{B}_{j}} |\{\omega_{2} \in \mathbf{A} : \ell_{\omega_{2}} \in B\}|\\
        & \lesssim (w^{-1}\log w^{-1})M_{1 \times w}(\mathbf{A}).
    \end{align*}
    Substituting this estimate back into \cref{form48} yields $\Sigma_{1} \lesssim (w^{-1} \log w^{-1})|\mathbf{A}|M_{1 \times w}(\mathbf{A})$, and therefore
    \begin{equation*}
        \sum_{(\omega_{1},\omega_{2}) \in \mathbf{Bad}}\Big| \int \Phi_{\omega_{1}}^{+}\Phi_{\omega_{2}}^{+}\psi \Big| \lesssim (w^{-1}\log w^{-1})|\mathbf{A}|M_{1 \times w}(\mathbf{A}).
    \end{equation*}
    Combining this estimate with the bound \cref{form46} for the ``good'' line-pairs, we find
    \begin{equation*}
        \sum_{\omega_{1},\omega_{2} \in \mathbf{A}} \Big| \int \Phi_{\omega_{1}}^{+}\Phi_{\omega_{2}}^{+}\psi \Big| \lesssim (w^{-1} \log w^{-1})|\mathbf{A}|M_{1 \times w}(\mathbf{A}).
    \end{equation*}
    Substituting this bound into \cref{form32a},
    \begin{equation*}
        \|\chi_{w} \ast f_{w} - \chi_{w/4} \ast f_{w/2}\|_{L^{2}([-2,2]^{2})}^{2} \lesssim (w^{-1} \log w^{-1})|\mathbf{A}|M_{1 \times w}(\mathbf{A}).
    \end{equation*}
    Finally, plugging this estimate and \cref{form33a} back into \cref{form30a} yields
    \begin{equation*}
        |B(w) - B(w/2)| \lesssim \Big(\frac{M_{1 \times w}(\mathbf{A})M_{w \times w}(\mathbf{B})}{|\mathbf{A}||\mathbf{B}|} \cdot w^{-3} \log w^{-1}\Big)^{1/2}.
    \end{equation*}
    This completes the proof of \cref{thm:highLowRays}.
\end{proof}

\section{Basic building block}\label{appA}

The set $\mathfrak{P}$ and the tubes $\mathcal{T}_{\sigma}$ claimed in \cref{p:bbb} can be obtained by ``dualising'' a suitable grid construction (\cref{fig1}) which was used to construct the main examples in \cite{zbmath:7846313}.
We start by recalling the details of that construction from \cite[Appendix A.1]{zbmath:7846313}, and then we will show to transform this object into the one claimed in \cref{p:bbb}.

We define a piece of notation.
Recall that $\pi_{\zeta}(x,y) \coloneqq \zeta x + y$ for $\zeta \in [0,1)$ and $(x,y) \in \R^{2}$.
If $\mathcal{P} \subset \mathcal{D}_{\delta}$ is a family of dyadic $\delta$-squares, $\pi_{0}(\cup \mathcal{P})$ is a family of dyadic $\delta$-intervals.
We write $Y_{\mathcal{P}} \subset \delta \Z$ for the left-endpoints of these intervals.

\begin{lemma}\label{lemma1a}
    Let $\tau \in (1,2]$, $s \in (0,2 - \tau]$, and $\Delta \in 2^{-\N}$.
    For all $\delta \in 2^{-\N} \cap (0,\Delta]$ sufficiently small in terms of $\Delta,s,\tau$, there exists a set $\mathcal{P} \subset \mathcal{D}_{\delta}([-3,3]^{2})$ with the following properties.
    \begin{enumerate}[nl, label=(P\arabic*)]
        \item\label{P1} If $\sigma \in \delta \Z \cap [0,1)$, then the vertical ``slice'' $\{p \in \mathcal{P} : p \cap (\{\sigma\} \times \R) \neq \emptyset\}$ contains a $\sim \delta^{\tau - 1}$-separated subset $\mathcal{P}_{\sigma}$ with the following property: if $I \subset [-2,2]$ is an interval of length $\ell(I) \geq \Delta$, then 
            \begin{equation}\label{form24} |Y_{\mathcal{P}_{\sigma}} \cap I| \sim \ell(I)\delta^{1 -\tau}.
            \end{equation}

        \item\label{P2} There exists a $\delta^{s}$-separated subset $\Sigma \subset \delta \Z \cap [0,1]$ such that $|\pi_{\zeta}(\mathcal{P})|_{\delta} \lesssim \delta^{-(s + \tau)/2}$ for all $\zeta \in \Sigma$, and such that $\Sigma$ has the following property.
            If $I \subset [0,1]$ is an interval of length $\ell(I) \geq \Delta$, then $|\Sigma \cap I| \sim \ell(I)\delta^{-s}$.
    \end{enumerate}
\end{lemma}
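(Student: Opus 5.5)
We will construct $\mathcal{P}$ as a ``fractional grid'', following \cite[Appendix A.1]{zbmath:7846313}. The plan is to take a $\delta^{(\tau - 1)}$-spaced set of vertical lines, each carrying a $\delta^{\tau-1}$-arithmetic-progression-like set, and to choose the progressions so that, for all slopes $\zeta$ in a $\delta^{s}$-separated arithmetic progression $\Sigma$, the sheared images $\pi_{\zeta}$ collapse.

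Concretely, set $N \coloneqq \delta^{-(s+\tau)/2}$. The heuristic is that a grid $A \times B$ with $|A| = \delta^{-a}$, $|B| = \delta^{-b}$ projects, in direction $\zeta = p/q$ with $q$ small, onto roughly $|A+\zeta B|$ points. We first take $\Sigma \coloneqq \{k\delta^{s} : 0 \le k < \delta^{-s}\}$, a $\delta^{s}$-progression, so that \emph{all} slopes in $\Sigma$ share a common denominator $\delta^{-s}$. We then let the $y$-coordinates range over $Y \coloneqq \delta^{(s+\tau)/2}\Z \cap [-2,2]$ rounded to $\delta\Z$, and the $x$-coordinates (the vertical slices $\sigma$) over all of $\delta\Z \cap [0,1)$. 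To get \cref{P2} we restrict each slice: on the slice at $x = \sigma$ we keep only those $y \in Y$ with $y \in \delta^{s}\sigma\Z + \ldots$. More precisely, we use the standard trick of choosing $\mathcal{P} = \{(x,y) : x \in \delta\Z\cap[0,1),\ y \in -\zeta x + G\}$ with $G$ a progression of step $\delta^{(s+\tau)/2}$, but with $\zeta x$ replaced by a quantity that is simultaneously, up to $O(\delta)$, in a progression for every $\zeta \in \Sigma$. Since $\zeta x$ for $\zeta = k\delta^{s}$ and $x \in \delta\Z$ lies in $\delta^{1+s}\Z$, it is not in $G$ in general. The cleaner route (the one in the cited appendix) is a two-scale grid: write $x = x_{1} + x_{2}$ with $x_{1} \in \delta^{(\tau-s)/2}\Z$ coarse and $x_{2} \in \delta\Z \cap [0,\delta^{(\tau-s)/2})$ fine. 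Then $\zeta x_{1} \in \delta^{(\tau+s)/2}\Z$ for every $\zeta \in \Sigma$, so it is absorbed into $G$, while $|\zeta x_{2}| \le \delta^{(\tau-s)/2}$ contributes a range which we kill by taking $y$ in $G + [0,\delta^{(\tau-s)/2}]$ suitably thinned. The step to settle is the exponent bookkeeping: we want slice separation $\delta^{\tau-1}$, giving $\delta^{1-\tau}$ points per unit length in each slice, and $\delta^{-(s+\tau)/2}$ projected $\delta$-intervals. We check this by counting $|\pi_{\zeta}(\mathcal{P})|_{\delta} \lesssim |G|\cdot(\text{range of }\zeta x_{2} + \text{fine part of } y)/\delta$.

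Once the exponents are arranged, the verifications run as follows. \cref{P1}, including \cref{form24} at scales $\ell(I) \ge \Delta$, follows because each slice is a union of progressions whose spacing is $\ll \Delta$ for $\delta$ small, so the count in any interval of length $\ge \Delta$ is proportional to the length; here we also extract a $\sim\delta^{\tau-1}$-separated subset. Similarly, $|\Sigma \cap I| \sim \ell(I)\delta^{-s}$ is immediate for the progression $\Sigma$ once $\delta^{s} \ll \Delta$. The constraints $\tau \in (1,2]$ and $s \le 2-\tau$ are precisely what make the slice density $\delta^{1-\tau}$ at most $\delta^{-1}$ and the projection bound nontrivial, i.e.\ $(s+\tau)/2 \le 1$.

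The main obstacle is designing the fine structure so that the projection bound in \cref{P2} holds uniformly for \emph{every} $\zeta \in \Sigma$, not just for one slope, while each individual vertical slice retains density $\delta^{1-\tau}$. I would first try the explicit lattice choice $y \in \delta^{(s+\tau)/2}\Z + \delta^{\tau-1}\{0,\ldots\}$ coupled to $x$ via the coarse/fine splitting above, check it with the sumset heuristic $|A + \zeta B|$, and, if it fails, simply import the construction of \cite[Appendix A.1]{zbmath:7846313} verbatim, verifying \cref{P1}-\cref{P2} against it.
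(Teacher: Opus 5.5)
\textbf{There is a genuine gap: you never actually define $\mathcal{P}$.} The step you call ``the main obstacle'' is the whole content of the lemma. The fallback of importing \cite[Appendix A.1]{zbmath:7846313} is announced but not carried out.

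Your starting observation is correct and useful: the product grid $\delta^{(\tau-s)/2}\Z\times\delta^{(\tau+s)/2}\Z$ satisfies $\pi_{k\delta^{s}}(\cdot)\subset\delta^{(\tau+s)/2}\Z$ for every $k$. But this grid meets only $\sim\delta^{-(\tau-s)/2}\ll\delta^{-1}$ of the $\delta$-columns. Condition (P1) requires \emph{every} column $\sigma\in\delta\Z\cap[0,1)$ to carry a $\delta^{\tau-1}$-spaced progression. Your opening description, with $\delta^{\tau-1}$-spaced vertical lines, fails (P1) for the same reason.

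The mechanism you suggest for spreading the grid out does not work as stated. The count $|G|\cdot(\text{range})/\delta$ is at least $\delta^{-(\tau+s)/2}\cdot\delta^{(\tau-s)/2}\cdot\delta^{-1}=\delta^{-1-s}$ for $\zeta\sim 1$. That is worse than the trivial bound $\delta^{-1}$, so no exponent bookkeeping will close it. What is needed is that $y$ be an \emph{exact} function of $x_{2}$ modulo the column step, chosen so that the values $\zeta x_{2}+y$ land in a single progression of step $\gtrsim\delta^{(\tau+s)/2}$ for all $\zeta\in\Sigma$ at once. For the natural such choice you must also give up $\Sigma=\delta^{s}\Z$ exactly.

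\textbf{A way to complete your route.} Take $h\in\delta\N$ with $h\sim\delta^{(\tau-s)/2}$ and $g\coloneqq 2\delta^{s}h$, so that $\delta\leq g\sim\delta^{(\tau+s)/2}$. Let $\Lambda=\{(nh+m\delta,\,mg):m,n\in\Z\}$; this is the product grid under the shear $(x,y)\mapsto(x+\tfrac{\delta}{g}y,y)$, i.e.\ in your notation $y\equiv(x_{2}/\delta)g$ modulo $hg/\delta$. Let $\mathcal{P}$ be the $\delta$-squares meeting $\Lambda\cap[-3,3]^{2}$.
\begin{itemize}
\item Each column $x=\sigma\in\delta\Z$ is the full progression $\sigma g/\delta+(hg/\delta)\Z$, with $hg/\delta\sim\delta^{\tau-1}$. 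This gives (P1).
\item Set $\zeta_{k}\coloneqq kg/(h-k\delta)$. Then $\pi_{\zeta_{k}}(nh+m\delta,mg)=(kn+m)(\zeta_{k}\delta+g)$, so $\pi_{\zeta_{k}}(\Lambda\cap[-3,3]^{2})$ has $\lesssim g^{-1}\sim\delta^{-(s+\tau)/2}$ elements. Rounding $\zeta_{k}$ to $\delta\Z$ moves projections by only $O(\delta)$.
\item The gaps $\zeta_{k+1}-\zeta_{k}=gh/((h-k\delta)(h-(k+1)\delta))$ lie in $[g/h,4g/h]$ while $\zeta_{k+1}\leq 1$. Hence the rounded $\zeta_{k}$ form a $\delta^{s}$-separated set with gaps $\lesssim\delta^{s}$, which gives (P2).
\end{itemize}
Note that $\zeta_{k}-kg/h$ reaches $\sim\delta/g\gg\delta$ when $\zeta_{k}\sim 1$. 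This is why an exact progression cannot be used with this lattice.

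\textbf{Comparison with the paper.} The construction above would be a self-contained alternative, with $\Sigma$ dense down to scale $\sim\delta^{s}$. The paper does something different: it imports the slightly rotated grid of $\delta^{-\tau}$ squares from \cite[Appendix A.1]{zbmath:7846313}, with $\Sigma$ a maximal $\delta^{s}$-separated subset of the slope family $\mathcal{E}$ there, and uses Claims A.2, A.4 and A.5. The only non-verbatim points it checks are exactly the ones your fallback skips:
\begin{itemize}
\item the slice progression from Claim A.2 must cover $[-2,2]$, which is why the grid is enlarged to $[-3,3]^{2}$;
\item Claim A.5 gives the density of $\Sigma$ only for $\ell(I)\geq C\delta^{s/2}\log(1/\delta)$, which is why $\delta$ must be small in terms of $\Delta$.
\end{itemize}
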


The family $\mathcal{P} \subset \mathcal{D}_{\delta}([-3,3]^{2})$ in \cref{lemma1a} is illustrated in \cref{fig1}, reproduced from \cite[Appendix A.1]{zbmath:7846313}.
The family $\mathcal{P}$ is obtained by slightly rotating a grid of $\delta$-squares of cardinality $\delta^{-\tau}$ in such a way that every vertical ``slice'' contains the expected number $\sim \delta^{1 - \tau}$ of $\delta$-squares.

\begin{figure}[t]
    \begin{center}
        \input{figures/grid.tex}
        \caption{The set in \cref{lemma1a}.}\label{fig1}
    \end{center}
\end{figure}

Slightly coarser versions of \cref{P1}-\cref{P2} are stated in \cite[(P1)-(P2)]{zbmath:7846313}; the level precision in \cref{P1}-\cref{P2} was not needed in \cite{zbmath:7846313}.
However, the proofs in \cite[Appendix A.1]{zbmath:7846313} give exactly what we need in \cref{P1}-\cref{P2}.
We explain this in the following two remarks.

\begin{remark}
    Regarding \cref{P1}, it seems likely that one can simply take 
    \begin{equation*}
        \mathcal{P}_{\sigma} \coloneqq \{p \in \mathcal{P} : p \cap (\{\sigma\} \times \R) \neq \emptyset\},
    \end{equation*}
    but the existence of the claimed subset is directly guaranteed by \cite[Claim A.2]{zbmath:7846313}.
    The proof of \cite[Claim A.2]{zbmath:7846313} shows that $\{p \in \mathcal{P} : p \cap (\{\sigma\} \times \R) \neq \emptyset\}$ contains an arithmetic progression with gap $\delta^{\tau - 1}$ and length $2\delta^{1 - \tau}$.
    
    Now $\mathcal{P}_{\sigma}$ can be defined to be this progression, and the ``density property'' \cref{form24} holds for $\delta > 0$ so small that $2\delta^{\tau - 1} \leq \Delta$.
    (A technical point here is that since \cref{form24} is supposed to hold for all intervals $I \subset [-2,2]$, in particular for $I = [-2,2]$, we need the progression to have diameter $\geq 2$.
    This is the reason why we initially defined $\mathcal{P}$ to be grid in $[-3,3]^{2}$ instead of just $[0,1]^{2}$ as opposed to \cite{zbmath:7846313}.
    With this modification, one can check from the proof of \cite[Claim A.2]{zbmath:7846313} that the progression $Y_{\mathcal{P}_{\sigma}}$ "covers" $[-2,2]$ in the sense \cref{form24}.)
\end{remark} 
\begin{remark}
    The set $\Sigma$ in \cref{P2} is defined to be a maximal $\delta^{s}$-separated subset of the family ``$\mathcal{E}$'' appearing in \cite[(A.3)]{zbmath:7846313}.
    Now \cite[Claim A.5]{zbmath:7846313} states that $|\Sigma \cap I| \sim \ell(I)\delta^{-s}$ for all intervals $I \subset [0,1]$ of length $\ell(I) \geq C\delta^{s/2}\log(1/\delta)$, where $C \geq 1$ is absolute.
    In particular, \cref{P2} holds provided that $\delta > 0$ is so small that $C\delta^{s/2}\log(1/\delta) \leq \Delta$.
    The upper bound $|\pi_{\zeta}(\mathcal{P})|_{\delta} \lesssim \delta^{-(s + \tau)/2}$, for $\zeta \in \Sigma$, is established in \cite[Claim A.4]{zbmath:7846313}.
\end{remark}

We are then ready to prove \cref{p:bbb}, which we restate here:
\begin{restatement}{p:bbb}
    For every $\epsilon,\Delta \in 2^{-\N}$, there exists $\delta_{0} = \delta_{0}(\epsilon,\Delta) \in (0,\tfrac{1}{2}\Delta]$ such that the following holds for all $\delta \in 2^{-\N} \cap (0,\delta_{0}]$.
    There exists a family $\mathfrak{P} \subset \mathcal{D}_{\delta}$ with the following property.
    Assume that $t \in [1 + \epsilon,\tfrac{3}{2}]$, and $\mathcal{Q} \subset \mathcal{D}_{\Delta}$.
    Then 
    \begin{equation}\label{form3app}
        \mathcal{H}_{\infty}^{t - \epsilon}(\mathcal{Q} \cap \mathfrak{P}) \geq \mathcal{H}^{t}_{\infty}(\mathcal{Q}).
    \end{equation}
    Moreover, for each $\sigma \in \delta \Z \cap [0,1)$ there exists a family $\mathcal{R}_{\sigma}$ of $(\delta \times \Delta)$-rectangles whose longer sides have slope $\sigma$, with the following properties:
    \begin{enumerate}
        \item Let $\mathcal{T}_{\sigma}$ be the $\delta$-tubes spanned by the rectangles $\mathcal{R}_{\sigma}$.
            Every tube in $\mathcal{T}_{\sigma}$ contains exactly one element of $\mathcal{R}_{\sigma}$, and the tubes in $\mathcal{T}_{\sigma}$ are $10\delta$-separated.
        \item $\cup \mathfrak{P} \cap (\cup \{10T : T \in \mathcal{T}_{\sigma}\}) = \emptyset$.
        \item If $t \in [1 + \epsilon,\tfrac{3}{2}]$, and $\mathcal{Q} \subset \mathcal{D}_{\Delta}$  then 
            \begin{equation*} \mathcal{H}^{t - \epsilon}_{\infty}(\mathcal{Q} \cap \mathcal{D}_{\delta}(\cup \mathcal{R}_{\sigma})) \geq \mathcal{H}^{t}_{\infty}(\mathcal{Q}).
            \end{equation*}
    \end{enumerate}
\end{restatement} 
\begin{proof}
    Fix $\epsilon,\Delta \in 2^{-\N}$ as in the statement (we may assume that $\epsilon \in (0,1)$).
    Fix also $\underline{\Delta} \in 2^{-\N}\cap(0,\tfrac{1}{100}\Delta]$ so small that
    \begin{equation}\label{form9a}
        \underline{\Delta}^{-\epsilon/2} \geq \max\{C,\Delta^{-1}\},
    \end{equation} 
    where $C > 0$ is an absolute constant to be determined later.
    Then, we apply \cref{lemma1a} with this $\underline{\Delta}$,
    \begin{equation*}
        \tau \coloneqq \tfrac{3}{2} \quad \text{and} \quad s \coloneqq (1 - \epsilon)/2 \in (0,2 - \tau],
    \end{equation*}
    and $\delta \in (0,\underline{\Delta}]$ so small that the conditions of \cref{lemma1a} are satisfied, and additionally
    \begin{equation}\label{form7a}
        C\delta^{\epsilon/2} \leq \underline{\Delta}^{4}/2.
    \end{equation}
    (The largest ``$\delta$'' so that the previous conditions are met is now defined to be the scale threshold ``$\delta_{0}$'' in \cref{p:bbb}.)
    Let $\mathcal{P} \subset \mathcal{D}_{\delta}([-3,3]^{2})$ be the resulting set.

    \begin{figure}[t]
        \begin{center}
            \input{figures/rectangle_choice.tex}
            \caption{The choice of the rectangles $R_{T}$, $T \in \mathcal{T}_{\sigma}$, shown in red.
            For artistic reasons the picture shows the rectangles $T_{\sigma,b_{j}} \cap \bar{\pi}_{\sigma}^{-1}(I_{i})$; the real ones defined in \cref{def:rectangles} have twice the length.}\label{fig3}
        \end{center}
    \end{figure}

    We now define the families of rectangles and tubes appearing in the statement.
    Fix $\sigma \in \delta \Z \cap [0,1)$, and recall the (subset of the) vertical ``slice'' $\mathcal{P}_{\sigma}$ from \cref{lemma1a}~\cref{P1}.
    We define the $\delta$-tube family $\mathcal{T}_{\sigma}$ as the dual of $\mathcal{P}_{\sigma}$, under standard point-line duality.
    Abbreviate $Y_{\sigma} \coloneqq Y_{\mathcal{P}_{\sigma}}$ (recall the notation from \cref{lemma1a}), and define the family of (parallel slope-$\sigma$) lines
    \begin{equation*}
        \mathcal{L}_{\sigma} \coloneqq \{\ell_{\sigma,b} : b \in Y_{\sigma}\},
    \end{equation*}
    where $\ell_{\sigma,b} = \{(x,y) \in \R^{2} : y = \sigma x + b\}$.
    Finally, define the $\delta$-tubes 
    \begin{equation*}
        \mathcal{T}_{\sigma} \coloneqq \{T_{\sigma,b} : b \in Y_{\sigma}\} \coloneqq  \{[\ell_{\sigma,b}]_{\delta/2} : b \in Y_{\sigma}\}.
    \end{equation*}
    Note that the lines in $\mathcal{L}_{\sigma}$ have roughly the same separation as the squares in $\mathcal{P}_{\sigma}$, which is $\sim \delta^{\tau - 1} = \delta^{1/2}$ according to \cref{lemma1a}~\cref{P1}.
    In particular, the tubes in $\mathcal{T}_{\sigma}$ are $10\delta$-separated (assuming $\delta > 0$ small enough).
    This proves \cref{p:bbb}~\cref{1}.

    For each $T \in \mathcal{T}_{\sigma}$, we plan to pick a single $(\delta \times \Delta)$-rectangle $R_{T} \subset T$.
    Once this has been accomplished, we will define the family of $(\delta \times \Delta)$-rectangles appearing in the statement of \cref{p:bbb} as
    \begin{equation*} \mathcal{R}_{\sigma} \coloneqq \{R_{T} : T \in \mathcal{T}_{\sigma}\}.
    \end{equation*}
    Likely one (working) way of choosing the rectangles $R_{T}$ would be a uniformly random selection among all those $(\delta \times \Delta)$-subrectangles of $T$ which hit $[0,1]^{2}$ (or an arbitrary rectangle if $T \cap [0,1]^{2} = \emptyset$).
    For technical reasons, a deterministic choice is slightly preferable, where the rectangles are chosen as well-spread as possible.
    The idea can be deciphered from \cref{fig3}, but also need an analytic expression to work with.

    Let $\bar{\pi}_{\sigma}$ be the orthogonal projection to the subspace parallel to lines in $\mathcal{L}_{\sigma}$ (or the tubes $\mathcal{T}_{\sigma}$).
    Enumerate $\mathcal{D}_{\Delta/2}([-2,2]) \eqqcolon \{I_{1},\dots,I_{m}\}$, where $m \sim \Delta^{-1}$.
    Enumerate also $Y_{\sigma} \coloneqq \{b_{1},\ldots,b_{n}\}$ in increasing order, where $n = |Y_{\sigma}| \sim \delta^{-1/2}$.
    Note that $n \gg m$.
    Now, for each $b_{j} \in Y_{\sigma}$, define the $(\delta \times \Delta)$-rectangle
    \begin{equation}\label{def:rectangles} R_{\sigma,b_{j}} \coloneqq T_{\sigma,b_{j}} \cap \bar{\pi}_{\sigma}^{-1}(2I_{i}) \subset T_{\sigma,b_{j}}, \qquad j = i (\mathrm{mod\,} m).
    \end{equation}

    Now the tubes and rectangles in the statement of \cref{p:bbb} have been defined.
    Next, we define the family $\mathfrak{P} \subset \mathcal{D}_{\delta}$, and verify the (remaining) properties \cref{form3app} and (2)-(3).
    Let
    \begin{equation*}
        \mathcal{T} \coloneqq \bigcup_{\sigma \in \delta \Z \cap [0,1)} \mathcal{T}_{\sigma}.
    \end{equation*}
    Thus, $\mathcal{T}$ is roughly the ``tube dual'' of $\mathcal{P}$, or more precisely the dual of the subset obtained as the union of the sets $\mathcal{P}_{\sigma}$, $\sigma \in \delta \Z \cap [0,1)$.
    Recall from \cref{lemma1a}~\cref{P2} that $|\pi_{\zeta}(\mathcal{P})|_{\delta} \lesssim \delta^{-(s + \tau)/2} = \delta^{-1 + \epsilon/2}$ for all $\zeta \in \Sigma$.
    Write $10\mathcal{T} \coloneqq \{[\ell]_{5\delta} : [\ell]_{\delta/2} \in \mathcal{T}\}$.
    For $\zeta \in [0,1]$, write also $(10\mathcal{T})_{\zeta}$ for the vertical $\zeta$-``slice'' of the tube family $10\mathcal{T}$:
    \begin{equation*} (10\mathcal{T})_{\zeta} \coloneqq \{q = [\zeta,\zeta + \delta) \times [b,b + \delta) \in \mathcal{D}_{\delta} : q \cap 10T \neq \emptyset \text{ for some } T \in \mathcal{T}\}.
    \end{equation*}
    A standard (easy to check) fact about point-line duality is the following: the $\pi_{\zeta}$-projection of $P \subset \R^{2}$ equals the $y$-coordinates of the vertical $\zeta$-slice of the dual line family $\{\ell_{a,b} : (a,b) \in P\}$.
    In our situation, this principle implies that the $y$-coordinates of the squares in $(10\mathcal{T})_{\zeta}$ are contained in the $O(\delta)$-neighbourhood of $\pi_{\zeta}(\mathcal{P})$.
    In particular,
    \begin{equation}\label{form6}
        |(10\mathcal{T})_{\zeta}| \leq C|\pi_{\zeta}(\mathcal{P})|_{\delta} \leq C\delta^{-1 + \epsilon/2}, \qquad \zeta \in \Sigma,
    \end{equation}
    where $C > 0$ is absolute.
    We are now prepared to define the set $\mathfrak{P}$:
    \begin{equation}\label{form5a}
        \mathfrak{P} \coloneqq \bigcup_{\zeta \in \Sigma} \{q = [\zeta,\zeta + \delta) \times [b,b + \delta) \in \mathcal{D}_{\delta} : q \notin (10\mathcal{T})_{\zeta}\}.
    \end{equation} 
    In words, we include in $\mathfrak{P}$ all the dyadic $\delta$-squares around the vertical segments $\{\zeta\} \times [0,1]$, $\zeta \in \Sigma$, which are not contained in the $10\delta$-neighbourhood of any tube in $\mathcal{T}$.

    Now the set $\mathfrak{P}$ and the families $\mathcal{R}_{\sigma},\mathcal{T}_{\sigma}$ have been defined, so it remains to prove the claimed properties in \cref{p:bbb}, namely \cref{form3app} and (3).
    The property (2) is already clear from the construction of $\mathfrak{P}$.

    We start with \cref{form3app}.
    Let $\mathcal{Q} \subset \mathcal{D}_{\Delta}$ be arbitrary, and fix $t \in [1 + \epsilon,\tfrac{3}{2}]$.
    The claim is that $\mathcal{H}_{\infty}^{t - \epsilon}(\mathcal{Q} \cap \mathfrak{P}) \geq \mathcal{H}^{t}_{\infty}(\mathcal{Q})$.
    Since $\delta \leq \underline{\Delta} \leq \Delta$, this claim is formally equivalent to the claim $\mathcal{H}^{t - \epsilon}(\mathcal{D}_{\underline{\Delta}}(\mathcal{Q}) \cap \mathfrak{P}) \geq \mathcal{H}^{t}_{\infty}(\mathcal{D}_{\underline{\Delta}}(\mathcal{Q}))$.
    Replacing $\mathcal{Q}$ by $\mathcal{D}_{\underline{\Delta}}(\mathcal{Q})$ without altering notation, we will from now on assume that $\mathcal{Q} \subset \mathcal{D}_{\underline{\Delta}}$.

    Let $\mathcal{W} \subset \mathcal{D}$ be an arbitrary cover of $\mathcal{Q} \cap \mathfrak{P}$.
    We need to show that $\sum_{W \in \mathcal{W}} \ell(W)^{t - \epsilon} \geq \mathcal{H}^{t}(\mathcal{Q})$.
    Reducing $\mathcal{W}$ to its maximal elements, we may assume that $\mathcal{W}$ is disjoint.
    We may also assume that every $W \in \mathcal{W}$ intersects $Q \cap \mathfrak{P}$ for some $Q \in \mathcal{Q}$.
    Since $\mathfrak{P} \subset \mathcal{D}_{\delta}$, and $t < 2$, we may finally assume that $\ell(W) \geq \delta$ for all $W \in \mathcal{W}$ (as explained after \cref{def:content}).

    Let $\mathcal{Q}_{0} \subset \mathcal{Q} \subset \mathcal{D}_{\underline{\Delta}}$ be the squares in $\mathcal{Q}$ which are not strictly contained in any element of $\mathcal{W}$.
    We claim that
    \begin{equation}\label{form4a}
        \sum_{\substack{W \in \mathcal{W}\\W \subset Q}} \ell(W)^{t - \epsilon} \geq \ell(Q)^{t}, \qquad Q \in \mathcal{Q}_{0}, \, t \in [1 + \epsilon,\tfrac{3}{2}].
    \end{equation} 
    Let us see how this implies $\mathcal{H}^{t - \epsilon}_{\infty}(\mathcal{Q} \cap \mathfrak{P}) \geq \mathcal{H}^{t}_{\infty}(\mathcal{Q})$.
    Let 
    \begin{equation*}
        \mathcal{W}' \coloneqq \mathcal{Q}_{0} \cup \mathcal{W}'',
    \end{equation*}
    where $\mathcal{W}''$ consists of all the squares in $\mathcal{W}$ which are not contained in any element of $\mathcal{Q}$.
    We claim that $\mathcal{W}'$ is a cover of $\mathcal{Q}$.
    Indeed, if $Q \in \mathcal{Q}_{0}$, then $Q$ is evidently covered by $\mathcal{W}'$.
    On the other hand, if $Q \in \mathcal{Q} \, \setminus \, \mathcal{Q}_{0}$, then by definition $Q$ is strictly contained in some $W \in \mathcal{W}$.
    Then $\ell(W) > \ell(Q) = \underline{\Delta}$, so $W$ cannot be contained in any element of $\mathcal{Q}$, hence $W \in \mathcal{W}''$.
    Thus $Q \subset W \in \mathcal{W'}$.

    Now we know that $\mathcal{W}'$ is a cover of $\mathcal{Q}$.
    We next claim that that every element of $\mathcal{W}$ either lies in $\mathcal{W}''$, or then is contained in some element of $\mathcal{Q}_{0}$ (these two subsets of $\mathcal{W}$ are evidently disjoint).
    Indeed, if $W \in \mathcal{W}$, then $W \cap \mathcal{Q} \cap \mathfrak{P} \neq \emptyset$ for some $Q \in \mathcal{Q}$.
    Now, if $Q \subsetneq W$, it holds $W \in \mathcal{W}''$ ($\ell(W) > \ell(Q) = \underline{\Delta}$, so $W$ cannot be contained in any element of $\mathcal{Q}$).
    Alternatively, $W \subset Q$.
    But now $Q \in \mathcal{Q}_{0}$, because $Q$ cannot be strictly contained in any element of $\mathcal{W}$: if $Q$ was strictly contained in some $W' \in \mathcal{W}$, then $W \subsetneq W'$, violating the disjointness of $\mathcal{W}$.
    Thus $W \subset Q \in \mathcal{Q}_{0}$.

    Based on the previous claim, we can now decompose
    \begin{equation*}
        \sum_{W \in \mathcal{W}} \ell(W)^{t - \epsilon} = \sum_{W \in \mathcal{W}''} \ell(W)^{t - \epsilon} + \sum_{Q \in \mathcal{Q}_{0}} \sum_{\substack{W \in \mathcal{W}\\W \subset Q}} \ell(W)^{t - \epsilon} \stackrel{\cref{form4a}}{\geq} \sum_{W \in \mathcal{W}''} \ell(W)^{t} + \sum_{Q \in \mathcal{Q}_{0}} \ell(Q)^{t}.
    \end{equation*} 
    Since $\mathcal{Q}_{0} \cup \mathcal{W}'' = \mathcal{W}'$ is a cover of $\mathcal{Q}$, the right hand side is bounded from below by $\mathcal{H}^{t}_{\infty}(\mathcal{Q})$.
    This completes the proof of $\mathcal{H}^{t - \epsilon}_{\infty}(\mathcal{Q} \cap \mathfrak{P}) \geq \mathcal{H}^{t}_{\infty}(\mathcal{Q})$, assuming \cref{form4a}.

    We next prove \cref{form4a}.
    Fix $Q \coloneqq I \times J \in \mathcal{Q}_{0}$, where $I,J \in \mathcal{D}_{\underline{\Delta}}([0,1)$.
    Let us first record some properties of $Q \cap \mathfrak{P}$.
    By definition, recall \cref{form5a}, $\mathfrak{P}$ consists of certain squares of the form $[\zeta,\zeta + \delta) \times [b,b + \delta) \subset [0,1)^{2}$, where $\zeta \in \Sigma$ and $b \in \delta \Z$.
    By \cref{lemma1a}~\cref{P2}, the set $\Sigma$ is $\delta^{s}$-separated, and $|\Sigma \cap I| \sim \underline{\Delta} \delta^{-s}$.
    This implies that 
    \begin{equation}\label{form8a}
        |\mathfrak{P} \cap W| \lesssim
        \begin{cases}
            \ell(W)^{2} \delta^{-1 - s}, & \delta^{s} \leq \ell(W) \leq \underline{\Delta}, \\
            \ell(W)\delta^{-1}, & \delta \leq \ell(W) \leq \delta^{s},
        \end{cases}
        \qquad W \in \mathcal{D}.
    \end{equation} 
    We also need a lower bound for $|\mathfrak{P} \cap Q|$.
    Recall from \cref{form6} that $|(10\mathcal{T})_{\zeta}| \leq C\delta^{-1 + \epsilon/2}$ for all $\zeta \in \Sigma$.
    Therefore, 
    \begin{align}
        |\mathfrak{P} \cap Q| & \geq |\Sigma \cap I| \cdot ((\underline{\Delta}/\delta) - |(10\mathcal{T})_{\zeta}|) \notag\\
                              &\label{form12} \gtrsim \underline{\Delta} \delta^{-s} \cdot ((\underline{\Delta}/\delta) - C\delta^{-1 + \epsilon/2}) \stackrel{\cref{form7a}}{\gtrsim} \underline{\Delta}^{2}\delta^{-1 - s}.
    \end{align} 
    Now, to establish \cref{form4a}, note that $\mathfrak{P} \cap Q$ is covered by the squares $W \in \mathcal{W}$ with $W \subset Q$ (otherwise $Q$ would be strictly contained in some element of $\mathcal{W}$ contrary to the definition of $Q \in \mathcal{Q}_{0}$).
    The argument splits to two cases: (i) at least half of the squares in $\mathfrak{P} \cap Q$ are contained in squares $W \in \mathcal{W}$ with $\delta^{s} \leq \ell(W) \leq \underline{\Delta}$, or (ii) at least half of the squares in $\mathfrak{P} \cap Q$ are contained in squares $W \in \mathcal{W}$ with $\delta \leq \ell(W) \leq \delta^{s}$.

    Assume that case (i) occurs.
    Then, using $t - \epsilon - 2 \leq 0$, we estimate
    \begin{equation*}
        \sum_{\substack{W \in \mathcal{W}\\W \subset Q}} \ell(W)^{t - \epsilon} \geq \underline{\Delta}^{t - \epsilon - 2}\delta^{1 + s} \sum_{\substack{W \in \mathcal{W}\\W \subset Q, \delta^{s} \leq \ell(W) \leq \underline{\Delta}}} \ell(W)^{2}\delta^{-1 - s} \stackrel{\mathclap{\cref{form8a}-\cref{form12}}}{\gtrsim}\qquad\underline{\Delta}^{t - \epsilon} = \underline{\Delta}^{-\epsilon}\ell(Q)^{t}.
    \end{equation*}
    This gives \cref{form4a}, provided that the constant $C$ in \cref{form9a} was chosen sufficiently large.

    Assume next that case (ii) occurs.
    This time, using $1 + \epsilon \leq t \leq \tfrac{3}{2}$, we estimate
    \begin{align*}
        \sum_{\substack{W \in \mathcal{W}\\W \subset Q}} \ell(W)^{t - \epsilon}
    & \geq \delta^{t - \epsilon} \sum_{\substack{W \in \mathcal{W}\\W \subset Q, \delta \leq \ell(W) \leq \delta^{s}}} \ell(W)\delta^{-1}\\
    & \stackrel{\cref{form8a}-\cref{form12}}{\gtrsim} \delta^{t - \epsilon} \cdot \underline{\Delta}^{2}\delta^{- s - 1} = \underline{\Delta}^{2}\delta^{t - \epsilon/2 - 3/2} \geq \underline{\Delta}^{2}\delta^{-\epsilon/2}.
    \end{align*}
    Since $\underline{\Delta}^{2}\delta^{-\epsilon/2} \geq C\underline{\Delta}^{t}$ by \cref{form7a}, this yields \cref{form4a} in case (ii).
    The proof of \cref{form4a} (and \cref{form3app}) is therefore complete.

    We move on to the proof of \cref{p:bbb}~\cref{3}.
    Fix $\sigma \in \delta \Z \cap [0,1)$, $\mathcal{Q} \subset \mathcal{D}_{\Delta}$ and $t \in [1 + \epsilon,\tfrac{3}{2}]$.
    The claim is that $\mathcal{H}^{t - \epsilon}_{\infty}(\mathcal{Q} \cap \mathcal{D}_{\delta}(\cup \mathcal{R}_{\sigma}))) \geq \mathcal{H}^{t}_{\infty}(\mathcal{Q})$.
    The proof greatly resembles the proof of \cref{form3app}.
    Again, we may assume that $\mathcal{Q} \subset \mathcal{D}_{\underline{\Delta}}$.
    We abbreviate $\mathfrak{R}_{\sigma} \coloneqq \mathcal{D}_{\delta}(\cup \mathcal{R}_{\sigma})$.
    Let $\mathcal{W}$ be an arbitrary cover of $\mathcal{Q} \cap \mathfrak{R}_{\sigma}$ by dyadic cubes with side $\leq 1$.
    We may (again) assume that the elements in $\mathcal{W}$ are disjoint and have side-length $\geq \delta$.

    As in the proof of \cref{form3app}, let $\mathcal{Q}_{0} \subset \mathcal{Q}$ be the squares in $\mathcal{Q}$ which are not strictly contained in any element of $\mathcal{W}$.
    Our claim is formally the same as \cref{form4a}, namely
    \begin{equation}\label{form10a}
        \sum_{\substack{W \in \mathcal{W}\\W \subset Q}} \ell(W)^{t - \epsilon} \geq \ell(Q)^t, \qquad Q \in \mathcal{Q}_{0}, \, t \in [1 + \epsilon,\tfrac{3}{2}].
    \end{equation}
       This implies $\mathcal{H}^{t - \epsilon}_{\infty}(\mathcal{Q} \cap \mathfrak{R}_{\sigma}) \geq \mathcal{H}^{t}_{\infty}(\mathcal{Q})$ by repeating the argument below \cref{form4a}.

    Let us prove \cref{form10a}.
    Fix $Q \in \mathcal{Q}_{0} \subset \mathcal{D}_{\underline{\Delta}}$.
    We first record some properties of $Q \cap \mathfrak{R}_{\sigma}$.
    We already discussed earlier that the tubes $T \in \mathcal{T}_{\sigma}$ are $\sim \delta^{1/2}$-separated.
    This implies the following non-concentration condition for $\mathfrak{R}_{\sigma}$:
    \begin{equation}\label{form11a}
        |\mathfrak{R}_{\sigma} \cap W| \lesssim
        \begin{cases}
            \ell(W)^{2}\delta^{-3/2}, & \delta^{1/2} \leq \ell(W) \leq \underline{\Delta},
            \\ \ell(W)\delta^{-1}, & \delta \leq \ell(W) \leq \delta^{1/2},
        \end{cases}
        \qquad W \in \mathcal{D}, \, \sigma \in \delta \Z \cap [0,1].
\end{equation} 
    We also need a lower bound for $|\mathfrak{R}_{\sigma} \cap Q|$.
    For this purpose, we start by noting that if $\sigma \in \delta \Z \cap [0,1]$, then many elements of $\mathcal{T}_{\sigma}$ intersect $Q$ significantly.
    More precisely, there exists an interval $I_{Q} \subset [-2,2]$ of length $\ell(I_Q) \sim \underline{\Delta}$ such that 
       \begin{equation}\label{form16}
        b \in I_{Q} \quad \Longrightarrow \quad |T_{\sigma,b} \cap Q|_{\delta} \sim \underline{\Delta}/\delta,
    \end{equation} 
    where (as before) $T_{\sigma,b} = [\ell_{\sigma,b}]_{\delta/2}$, see \cref{fig4} for the idea.

    Now, we apply the ``density property'' of $\mathcal{P}_{\sigma}$ stated in \cref{lemma1a}~\cref{P1} to $I_{Q}$:
    \begin{equation*}
        |Y_{\sigma} \cap I_{Q}| \sim \underline{\Delta} \delta^{-1/2}, \qquad \sigma \in [0,1].
    \end{equation*}
    Now all the tubes $T = T_{\sigma, b} \in \mathcal{T}_{\sigma}$ with $b \in Y_{\sigma} \cap I_{Q}$ intersect $Q$ significantly.
    However, this is not enough to say anything about $Q \cap \mathfrak{R}_{\sigma}$, because $\mathfrak{R}_{\sigma} = \mathcal{D}_{\delta}(\cup \mathcal{R}_{\sigma})$ is defined as the union of the $(\Delta \times \delta)$-rectangles $R_{T} \subset T$, not the full tubes.
    To get around this, we have to recall from \cref{def:rectangles} how the rectangles $R_{T} \subset T$ were selected.
    \begin{figure}[t]
        \begin{center}
            \input{figures/rect_int.tex}
            \caption{The square $Q \in \mathcal{D}_{\underline{\Delta}}$ (drawn in blue) intersects many rectangles $R_{T}$, $T \in \mathcal{T}_{\sigma}$ (drawn in red).}
            \label{fig4}
        \end{center}
    \end{figure}

    Recall the (increasing) enumeration $Y_{\sigma} = \{b_{1},\ldots,b_{n}\}$.
    Fix $b_{j} \in Y_{\sigma} \cap I_{Q}$ (so ``$T_{\sigma,b_{j}}$ intersects $Q$ significantly'' in the sense of \cref{form16}), and let $R_{\sigma,b_{j}} \subset T_{\sigma,b_{j}}$ be the (unique) element of $\mathcal{R}_{\sigma}$ contained in $T_{\sigma,b_{j}}$.
    Recall from \cref{def:rectangles} that 
    \begin{equation*}
        R_{\sigma,b_{j}} = T_{\sigma,b_{j}} \cap \bar{\pi}_{\sigma}^{-1}(2I_{i(j)})
    \end{equation*}
    for the interval $I_{i(j)} \in \{I_{1},\ldots,I_{m}\} \coloneqq \mathcal{D}_{\Delta/2}([-2,2])$ satisfying $j = i(j) (\mathrm{mod\, } m)$.

    Next note that the (hypothetical) inclusion $\bar{\pi}_{\sigma}(Q) \subset 2I_{i(j)}$ implies $R_{\sigma,b_{j}} \cap Q = T_{\sigma,b_{j}} \cap Q$, and therefore $R_{\sigma,b_{j}}$ intersects $Q$ significantly -- more precisely $|R_{\sigma,b_{j}} \cap Q|_{\delta} \sim \underline{\Delta}/\delta$.
    On the other hand, since $\ell(Q) = \underline{\Delta} \leq \tfrac{1}{100}\Delta$, there exists at least one index $i_{Q} \in \{1,\ldots,m\}$ such that $\bar{\pi}_{\sigma}(Q) \subset 2I_{i_{Q}}$.
    Summarising these two observations, we conclude the following: whenever $b_{j} \in Y_{\sigma} \cap I_{Q}$ is such that $i(j) = i_{Q}$, then $|R_{\sigma,b_{j}} \cap Q|_{\delta} \sim \underline{\Delta}/\delta$.

    Are there (m)any elements $b_{j} \in Y_{\sigma} \cap I_{Q}$ such that $i(j) = i_{Q}$? By the definition of ``$i(j)$'', this requirement is equivalent to $j = i_{Q} (\mathrm{mod\,} m)$.
    Finally, note that $Y_{\sigma} \cap I_{Q} = \{b_{k},\ldots,b_{l}\}$ for some $k,l \in \{1,\ldots,n\}$ with $l - k \sim |Y_{\sigma} \cap I_{Q}| \sim \underline{\Delta}\delta^{-1/2}$.
    Evidently,
    \begin{equation*}
        |\{b_{j} \in Y_{\sigma} \cap I_{Q} : j = i_{Q}(\mathrm{mod\,} m)\}| \sim (l - k)/m \sim \Delta\underline{\Delta} \delta^{-1/2} \stackrel{\cref{form9a}}{\geq} \underline{\Delta}^{1 + \epsilon/2}\delta^{-1/2}.
    \end{equation*}
    Therefore,
    \begin{equation*}
        |\{b_{j} \in Y_{\sigma} \cap I_{Q} : |R_{\sigma,b_{j}} \cap Q|_{\delta} \sim \underline{\Delta}/\delta\}| \gtrsim \underline{\Delta}^{1 + \epsilon/2}\delta^{-1/2},
    \end{equation*}
    and finally
    \begin{equation}\label{form25}
        |\mathfrak{R}_{\sigma} \cap Q| \gtrsim |\{b_{j} \in Y_{\sigma} \cap I_{Q} : |R_{\sigma,b_{j}} \cap Q|_{\delta} \sim \underline{\Delta}/\delta\}| \cdot (\underline{\Delta}/\delta) \gtrsim \underline{\Delta}^{2 + \epsilon/2}\delta^{-3/2}.
    \end{equation} 
    The upper bound \cref{form11a} and the lower bound above are the same as in \cref{form8a}-\cref{form12}, except that $s = (1 - \epsilon)/2$ has been replaced by $1/2$, and the lower bound has the additional (small) factor $\underline{\Delta}^{\epsilon/2}$.
    Now virtually the same argument as in the proof of \cref{form4a} yields \cref{form10a}.
    We repeat the details for completeness.
    First, assume that at least half of the squares in $\mathfrak{R}_{\sigma} \cap Q$ are contained in such squares $W \in \mathcal{W}$ with $\delta^{1/2} \leq \ell(W) \leq \underline{\Delta}$.
    Then,
    \begin{align*}
        \sum_{\substack{W \in \mathcal{W}\\W \subset Q}} \ell(W)^{t - \epsilon}
    & \geq \underline{\Delta}^{t - \epsilon - 2}\delta^{3/2} \sum_{\substack{W \in \mathcal{W}\\W \subset Q, \delta^{1/2} \leq \ell(W) \leq \underline{\Delta}}} \ell(W)^{2}\delta^{-3/2} \stackrel{\cref{form11a} + \cref{form25}}{\gtrsim} \underline{\Delta}^{t - \epsilon/2}.
    \end{align*} 
    This yields \cref{form10a}, using \cref{form9a}, and recalling $\ell(Q) = \underline{\Delta}$.

    Finally, assume that at least half of the squares in $\mathfrak{R}_{\sigma} \cap Q$ are contained in squares $W \in \mathcal{W}$ with $\delta \leq \ell(W) \leq \delta^{1/2}$.
    Then, recalling that $t \in [1 + \epsilon,\tfrac{3}{2}]$, 
    \begin{align*}
        \sum_{\substack{W \in \mathcal{W}\\W \subset Q}} \ell(W)^{t - \epsilon}
    & \geq \delta^{t - \epsilon} \sum_{\substack{W \in \mathcal{W}\\W \subset Q, \delta \leq \ell(W) \leq \delta^{1/2}}} \ell(W)\delta^{-1}\\
    & \stackrel{\mathclap{\cref{form11a} + \cref{form25}}}{\gtrsim}\qquad\underline{\Delta}^{2 + \epsilon/2}\delta^{t - \epsilon - 3/2} \geq \underline{\Delta}^{2 + \epsilon/2}\delta^{-\epsilon/2}.
    \end{align*}
    The right hand side is $\geq C\underline{\Delta}^{t}$ by \cref{form7a}.
    This completes the proof of \cref{form10a}, and the proof of \cref{p:bbb}.
\end{proof}
\end{document}

%% file: figures/grid.tex
\begin{tikzpicture}[x=1cm,y=1cm, scale=0.7]
    \foreach \i in {0,...,7} {
        \foreach \j in {0,...,7} {
            \fill
                ({0.5*\i-0.08},{0.5*\j-0.08})
                rectangle ++(0.16,0.16);
        }
    }

    \draw[densely dotted,->,>=stealth,line width=0.9pt]
        (3.88,1.80)
        .. controls (4.42,2.30) and (5.18,2.30) ..
        node[pos=0.5,above=1.5mm] {rotate}
        (5.72,1.78);

    \foreach \i in {0,...,7} {
        \foreach \j in {0,...,7} {
            \fill
                ({6.0+0.5*\i+\j/6-0.08},{0.5*\j-0.08})
                rectangle ++(0.16,0.16);
        }
    }

    \draw[red,densely dotted,line width=0.55pt]
        (8.1667,-0.13) -- (8.1667,3.63);
\end{tikzpicture}

%% file: figures/rectangle_choice.tex
\begin{tikzpicture}[x=1cm,y=1cm, scale=0.7]
    \definecolor{intervalblue}{RGB}{157,187,216}
    \definecolor{intervalred}{RGB}{237,54,36}

    \begin{scope}[rotate=22.2]
        \foreach \r in {1,...,16} {
            \pgfmathsetmacro{\rowy}{-0.34*(\r-1)}
            \pgfmathtruncatemacro{\redcolumn}{mod(\r-1,4)}

            \draw[fill=intervalblue,draw=black,line width=0.35pt]
                (0,{\rowy-0.09}) rectangle (10,{\rowy+0.09});
            \fill[intervalred]
                ({2.5*\redcolumn-0.005},{\rowy-0.095})
                rectangle
                ({2.5*(\redcolumn+1)+0.005},{\rowy+0.095});

            \ifnum\r<7
                \node[left, font=\tiny, yshift=-1mm]
                    at (0,\rowy) {$b_{\r}$};
            \fi

            \ifnum\r>6
                \ifnum\r<10
                    \node[left, font=\tiny, yshift=-1mm]
                        at (0,\rowy) {$\cdot$};
                \fi
            \fi
        }

        \draw[line width=0.7pt] (0,0.62)
            -- node[above]{$I_1$} (2.5,0.62)
            -- node[above]{$I_2$} (5,0.62)
            -- node[above]{$I_3$} (7.5,0.62)
            -- node[above]{$I_4$} (10,0.62);

        \draw[densely dotted,line cap=round,line width=0.65pt]
            (0,0.62) -- (0,-0.095);
        \draw[densely dotted,line cap=round,line width=0.65pt]
            (2.5,0.62) -- (2.5,-0.435);
        \draw[densely dotted,line cap=round,line width=0.65pt]
            (5,0.62) -- (5,-0.775);
        \draw[densely dotted,line cap=round,line width=0.65pt]
            (7.5,0.62) -- (7.5,-1.115);
        \draw[densely dotted,line cap=round,line width=0.65pt]
            (10,0.62) -- (10,-1.115);
    \end{scope}
\end{tikzpicture}

%% file: figures/rect_int.tex
\begin{tikzpicture}[x=1cm,y=1cm, scale=0.7]
    \definecolor{intervalred}{RGB}{237,54,36}
    \definecolor{squareblue}{RGB}{80,190,215}

    \fill[squareblue]
        (9.90,1.48) rectangle (12.80,4.26);

    \begin{scope}[rotate=7]
        \foreach \r in {0,...,14} {
            \pgfmathsetmacro{\y}{0.215*\r}
            \draw[black,line width=0.55pt]
                (0,{\y-0.0275}) rectangle
                (16,{\y+0.0275});
        }

        \foreach \r/\xa/\xb in {
            3/0/8,   4/8/16,
            8/0/8,   9/8/16,
            13/0/8, 14/8/16
        } {
            \pgfmathsetmacro{\y}{0.215*\r}
            \draw[
                intervalred,
                line width=2.15pt,
                line cap=butt
                ] (\xa,\y) -- (\xb,\y);
        }
    \end{scope}
\end{tikzpicture}